\numberwithin{equation}{section}
\newtheorem{theorem}[equation]{Theorem}
\newtheorem{lemma}[equation]{Lemma}
\newtheorem{proposition}[equation]{Proposition}
\newtheorem{corollary}[equation]{Corollary}
\theoremstyle{definition}
\newtheorem{definition}[equation]{Definition}
\theoremstyle{remark}
\newtheorem{remark}[equation]{Remark}
\newtheorem{example}[equation]{Example}
\DeclareMathOperator{\PGL}{PGL}
\DeclareMathOperator{\Aut}{Aut}
\DeclareMathOperator{\id}{id}
\DeclareMathOperator{\im}{im}
\newcommand{\SheafAut}{\underline{\mathrm{Aut}}}
\DeclareMathOperator{\Spec}{Spec}
\DeclareMathOperator{\Supp}{Supp}
\DeclareMathOperator{\Pic}{Pic}
\DeclareMathOperator{\NS}{NS}
\DeclareMathOperator{\Def}{Def}
\DeclareMathOperator{\codim}{codim}
\DeclareMathOperator{\CH}{CH}
\DeclareMathOperator{\ch}{ch}
\DeclareMathOperator{\MW}{MW}
\DeclareMathOperator{\pr}{pr}
\DeclareMathOperator{\Gal}{Gal}
\newcommand{\ZZ}{\mathbb{Z}}
\newcommand{\QQ}{\mathbb{Q}}
\newcommand{\RR}{\mathbb{R}}
\newcommand{\CC}{\mathbb{C}}
\newcommand{\PP}{\mathbb{P}}
\newcommand{\so}{\mathfrak{so}}
\DeclareMathOperator{\Fitt}{Fitt}
\DeclareMathOperator{\Alb}{Alb}
\newcommand{\Kum}{\text{Kum}}
\begin{document}
	\title[Dual Lagrangian fibration of known HK]{The dual Lagrangian fibration of known hyper-K\"ahler manifolds}
	
	\author[Y.-J. Kim]{Yoon-Joo Kim}
	\address{Stony Brook University, Department of Mathematics, Stony Brook, NY 11794-3651}
	\email{yoon-joo.kim@stonybrook.edu}
	
	
	\begin{abstract}
		Given a Lagrangian fibration $\pi : X \to \mathbb{P}^n$ of a compact hyper-K\"ahler manifold of $\text{K3}^{[n]}$, $\text{Kum}_n$, OG10 or OG6-type, we construct a natural compactification of its dual torus fibration. Specifically, this compactification is given by a quotient of $X$ by certain automorphisms acting trivially on the second cohomology and respecting the Lagrangian fibration. It is a compact hyper-K\"ahler orbifold with identical period mapping behavior as $X$.
	\end{abstract}
	
	\maketitle

	\section{Introduction}
	Let $Y$ be a compact Calabi--Yau manifold with a fixed K\"ahler class and $\pi : Y \to B$ its Lagrangian fibration. A general fiber of $\pi$ is a torus by the classical Arnold--Liouville theorem. Any torus has its dual, so one may wonder if we can systematically dualize general fibers of $\pi$ to obtain a new fibration $\check \pi$. The mirror symmetry conjecture in \cite{str-yau-zas96} predicts this should be possible for certain situations. More specifically, one expects there exists a ``dual Lagrangian fibration'' $\check \pi : \check Y \to B$ satisfying: (1) $\check Y$ is a compact Calabi--Yau orbifold and $\check \pi$ is its Lagrangian fibration, and (2) the smooth fibers of $\check \pi$ are dual tori to the smooth fibers of $\pi$. When the Calabi--Yau manifold of interest is a K3 surface, there is a holomorphic variant of this question. The K\"ahler class and Lagrangian fibration are replaced into a holomorphic symplectic form and holomorphic elliptic fibration $\pi : X \to B$. Unfortunately, elliptic curves are self-dual, so the original $\pi : X \to B$ satisfies both the conditions (1--2) and the conjecture becomes rather uninteresting.
	
	A compact hyper-K\"ahler manifold is a higher dimensional generalization of a K3 surface. It is a simply connected compact K\"ahler manifold with a unique global holomorphic symplectic form up to scale. Let $\pi : X \to B$ be a holomorphic Lagrangian fibration of a compact hyper-K\"ahler manifold $X$. By the same reasons for K3 surfaces, \cite[\S 2]{gross-tos-zhang13} claimed $\pi$ should be considered as self-dual \emph{if} the following two conditions hold: (A) all the torus fibers of $\pi$ are principally polarized abelian varieties, and (B) $\pi$ admits at least one section.
	The role of the assumption (A) is to say $\pi$ is fiberwise self-dual, and the role of (B) is to single out a uniform dualization of complex tori as a family. If the assumptions are dropped, there is a priori no reason why one should believe the existence of a good notion of a dual Lagrangian fibration $\check \pi : \check X \to B$. The goal of this paper is to give, without the assumptions (A--B), one distinguished candidate of a dual Lagrangian fibration $\check \pi$ that satisfies all the expected properties. Unfortunately, we were able to realize our strategy only for the currently known deformation types of hyper-K\"ahler manifolds (\Cref{thm:introduction}), but we believe similar results should hold in the most general set-up. Once the assumption (A) fails, the construction yields a compact hyper-K\"ahler orbifold $\check X$ that is not homeomorphic to $X$. The technical assumption (B) will be completely overcome.
	
	Let again $X$ be a compact hyper-K\"ahler manifold of dimension $2n$. A \emph{Lagrangian fibration} of $X$ in this paper will mean a holomorphic surjective morphism $\pi : X \to B$ with connected fibers to a complex manifold $B$ of $0 < \dim B < 2n$. By \cite{hwang08} and \cite{greb-lehn14}, the base $B$ is necessarily isomorphic to $\PP^n$. It is well-known that any smooth fiber of $\pi$ is a complex Lagrangian subtorus of $X$, so by restricting the Lagrangian fibration $\pi$ to its smooth locus $B_0 \subset B$ we get a torus fibration $\pi_0 : X_0 \to B_0$, a smooth proper family of complex tori.
	
	The dual Lagrangian fibration $\check \pi$ will be obtained by a suitable compactification of the ``dual torus fibration'' $\check \pi_0 : \check X_0 \to B_0$ which fiberwise dualizes the original torus fibration $\pi_0$. \cite{sawon04} and \cite{nagai05} proposed to define the dual torus fibration as the relative Picard scheme of $\pi_0$. While this definition behaves well when $\pi_0$ admits a section, it behaves slightly awkward when $\pi_0$ has no sections. We thus start with proposing a new definition of $\check \pi_0$. Recall the fact that all the torus fibers of $\pi_0$ are canonically polarized (e.g., Voisin's argument in \cite[Prop 2.1]{cam06}). That is, each torus fiber $F$ of $\pi_0$ admits a natural isogeny $F \to \check F$ to its dual torus $\check F$. Let us denote the kernel of this isogeny by $(\ker)$ and obtain an isomorphism $\check F \cong F / (\ker)$. The idea is to make this discussion global over the entire base $B_0$. In \Cref{thm:abelian scheme}, we will attach a canonically polarized abelian scheme $P_0 \to B_0$ to $\pi_0$ so that $X_0$ becomes a $P_0$-torsor (this combines the results of Arinkin--Fedorov and van Geemen--Voisin). Let $K_0$ be the kernel of this canonical polarization $P_0 \to \check P_0$. It is a group scheme over $B_0$ acting on both $P_0$ and $X_0$. Take the $K_0$-quotient of both spaces; on the one hand we recover the dual abelian scheme $\check P_0 \cong P_0 / K_0$, and on the other hand we obtain a new space
	\[ \check \pi_0 : \check X_0 \to B_0 \qquad\mbox{for}\quad \check X_0 = X_0 / K_0 .\]
	By construction, $\check X_0$ is a $\check P_0$-torsor, a smooth proper family of complex tori which are fiberwise dual to the original fibration $\pi_0$. This $\check \pi_0$ is our definition of the \emph{dual torus fibration}. We will later see that if $\pi_0$ admits at least one section, then this $\check \pi_0$ becomes isomorphic to the relative Picard scheme of $\pi_0$.
	
	It is important to notice that the group scheme $K_0$ is only a finite \'etale group scheme over $B_0$. One can think of this as the total space of a local system on $B_0$; there is a monodromy issue hiding on the background, and \emph{a priori} $K_0$ may not be a constant group scheme. We are now ready to state the main result of this paper.
	
	\begin{theorem} \label{thm:introduction}
		Let $\pi : X \to B$ be a Lagrangian fibration of a compact hyper-K\"ahler manifold. Assume $X$ is of $\text{K3}^{[n]}$, $\Kum_n$, OG10 or OG6-type. Then
		\begin{enumerate}
			\item The kernel group scheme $K_0 \to B_0$ extends to a constant group scheme $K \to B$ that acts on the entire Lagrangian fibration $\pi : X \to B$. Moreover, $K$ is a subgroup\footnote{We will frequently view a finite constant group scheme $K \to B$ as a finite group, and vice versa. We will denote them by the same letter $K$ if no confusions arise.} of the group
			\[ \Aut^{\circ} (X/B) = \{ f \in \Aut(X) : \pi \circ f = \pi, \ \ f^* \mbox{ acts as the identity on } H^2 (X, \ZZ) \} .\]
			
			\item The quotient
			\[ \check \pi : \check X \to B \qquad \mbox{for} \quad \check X = X / K \]
			compactifies the dual torus fibration $\check \pi_0$.
			
			\item $\check X$ is a compact hyper-K\"ahler orbifold and $\check \pi$ is its Lagrangian fibration. Moreover, $\check X$ has the same period mapping/deformation behavior as $X$.
		\end{enumerate}
	\end{theorem}
	
	If $X$ is of $\text{K3}^{[n]}$ or OG10-type, then the group $K$ (or the constant group scheme $K \to B$) is in fact trivial and these hyper-K\"ahler manifolds are self-dual. On the other hand, if $X$ is of $\Kum_n$ or OG6-type, then $K$ is nontrivial and $\check X$ is not even homeomorphic to $X$. We will provide explicit computations for the group $K$ in \Cref{thm:aut0 computation} and \Cref{rmk:polarization scheme equality}. Note also that $\check X$ is a global quotient of $X$ by automorphisms acting trivially on $H^2 (X, \ZZ)$. As a result, the second rational cohomology of $\check X$ and $X$ are isometric as Beauville--Bogomolov quadratic spaces. The higher cohomology of $\check X$ may be strictly smaller than that of $X$ by \cite{ogu20}, but they are still tightly connected via their Looijenga--Lunts--Verbitsky (LLV) structures (see \cite{loo-lunts97}, \cite{ver95} and \cite{gklr22}). Finally, the singularities of $\check X$ are quotient singularities of high codimensions ($\ge 4$), so they do not admit any symplectic resolutions. We briefly recall for reader's convenience the notion of a singular hyper-K\"ahler variety and its Lagrangian fibration in \Cref{sec:singular HK}.
	
	\begin{remark}
		There were several previous results on the constructions of dual Lagrangian fibrations of compact hyper-K\"ahler manifolds. Especially, \cite[Thm 24]{sawon20} announced the construction of a dual Lagrangian fibration of certain $\Kum_n$-type hyper-K\"ahler manifolds (without a proof). Although Sawon's method is different from ours, it is isomorphic to our construction when $\pi$ admits a section and the polarization type is $(1,\cdots,1,n+1)$. This can be shown by using the results in \Cref{sec:aut0 computation}. \cite{sawon04} and \cite{nagai05} discussed a possible hyper-K\"ahler structure on a partial compactification of the relative Picard scheme of $\pi_0$. These are different to our direction because our dual torus fibration $\check \pi_0 : \check X_0 \to B_0$ is not isomorphic to the relative Picard scheme when $\pi_0$ does not have any section. \cite{mar-tik07} and \cite{menet14} introduced an explicit geometric construction of certain $4$-dimensional Lagrangian fibered hyper-K\"ahler orbifolds, and realized their dual Lagrangian fibrations using the same construction. It would be interesting to find a connection between their results and our perspective. Finally, \cite{ver99:mirror} discussed certain self-dualities of hyper-K\"ahler manifolds at the level of cohomology.
	\end{remark}
	
	There are two key ingredients for our proof of \Cref{thm:introduction}: the group $\Aut^{\circ}(X/B)$ and the notion of a polarization type. The definition of the group $\Aut^{\circ}(X/B)$ is inspired by the similar group $\Aut^{\circ}(X)$, which has already played an important role in the theory of hyper-K\"ahler manifolds. The two main properties of $\Aut^{\circ} (X)$ are its finiteness \cite{huy99} and deformation invariance \cite{has-tsc13}. The group $\Aut^{\circ}(X)$ is also computed for all known deformation types of hyper-K\"ahler manifolds (see \cite{bea83b}, \cite{boi-nei-sar11} and \cite{mon-wan17}). We provide similar results for the group $\Aut^{\circ}(X/B)$: it is finite abelian (\Cref{prop:aut0 is abelian}) and deformation invariant (\Cref{thm:aut0 is deformation invariant}). We also compute $\Aut^{\circ}(X/B)$ for all known deformation types in \Cref{thm:aut0 computation}. The idea of considering the polarization type of the fibers of $\pi_0$ has long been used, but only recently comprehensively studied by \cite{wie16, wie18}. We relate the polarization type to the study of our group scheme $K_0$.

	\subsection{Structure of the paper}
	In \Cref{sec:aut0 is deformation invariant}, we prove the group $\Aut^{\circ} (X/B)$ is deformation invariant on the Lagrangian fibration $\pi$. This is inspired by Hassett--Tschinkel's proof of deformation invariance of $\Aut^{\circ}(X)$ in \cite[Thm 2.1]{has-tsc13}. In \Cref{sec:abelian scheme}, we start by attaching an abelian scheme $P_0$ to any Lagrangian fibration of a hyper-K\"ahler manifold: $P_0$ is the identity component of the relative automorphism scheme of $\pi$. There exists a unique primitive polarization $\lambda$ on $P_0$ so that we can define its kernel group scheme $K_0$. We then try to relate $K_0$ and $\Aut^{\circ}(X/B)$ in general. This section also discusses the notion of the polarization type of a Lagrangian fibration. In essence, the polarization type is the study of a single fiber of the group scheme $K_0$.
	
	The goal of \Cref{sec:aut0 computation} is twofold. First, we compute the group $\Aut^{\circ} (X/B)$ for all currently known deformation types of hyper-K\"ahler manifolds. Second, we prove an inclusion $K_0 \subset \Aut^{\circ}(X/B)$ for special constructions of $\Kum_n$-type hyper-K\"ahler manifolds. The material here will be mostly concrete computations.
	\Cref{sec:minimal split covering} introduces a slightly more systematic method to assist this computations. In \Cref{sec:dual Lagrangian fibration}, we prove the main result of this article: there exists a natural compactification of the dual torus fibration for all currently known deformation types of hyper-K\"ahler manifolds. In \Cref{sec:example dual Kum2}, we give an illustration of the geometry and cohomology of $\check X$ when $X$ is of $\Kum_2$-type.
	
	We provide two appendices. \Cref{sec:singular HK} contains various definitions of singular hyper-K\"ahler varieties appearing in the literature. In \Cref{sec:quotient}, we discuss certain special quotients of compact hyper-K\"ahler manifolds. The quotient $\check X = X / K$ will be a special instance of this more general set-up.

	\subsection{Notation and conventions} \label{subsec:notation and conventions}
	In this paper, every hyper-K\"ahler manifold $X$ will be assumed to be compact but not necessarily projective unless stated explicitly. When $X$ further admits a Lagrangian fibration $\pi : X \to B$, it is helpful to keep in mind that $X$ is projective if and only if $\pi$ admits at least one rational multisection. Indeed, if $X$ is projective then a general scheme-theoretic fact says any smooth morphism between algebraic varieties admits an \'etale local section. The converse is \cite[Lem 2]{sawon09}.
	
	Assume $X$ has dimension $2n$. Any Lagrangian fibration $\pi : X \to B$ in this paper will always have the base $B = \PP^n$ since we are assuming $B$ is smooth and $0 < \dim B < 2n$ (see \cite{hwang08} and \cite{greb-lehn14}). The \emph{Beauville--Bogomolov form} and the \emph{Fujiki constant} of $X$ are a unique primitive symmetric bilinear form $q : H^2 (X, \ZZ) \otimes H^2 (X, \ZZ) \to \ZZ$ and a positive rational number $c_X$ satisfying the \emph{Fujiki relation}
	\begin{equation} \label{eq:fujiki relation}
		\int_X x^{2n} = c_X \cdot \frac{(2n)!}{2^n \cdot n!} \cdot q(x)^n \qquad\mbox{for}\quad x \in H^2 (X, \ZZ) .
	\end{equation}
	The Fujiki constant is computed for all currently known deformation types of hyper-K\"ahler manifolds: (1) $c_X = 1$ for $\text{K3}^{[n]}$ or OG10-type, and (2) $c_X = n+1$ for $\Kum_n$ or OG6-type (see \cite{bea83} and \cite{rap07, rap08}). In practice, we will mostly need a stronger version of the Fujiki relation, which follows from the polarization process
	\[ \int_X x_1 \cdots x_{2n} = c_X \sum_{\sigma} q(x_{\sigma(1)}, x_{\sigma(2)}) \cdots q(x_{\sigma(2n-1)}, x_{\sigma(2n)}) \qquad\mbox{for}\quad x_i \in H^2 (X, \ZZ) .\]
	Here $\sigma \in \mathfrak S_{2n}$ runs through all the $2n$-permutations but up to $2^n \cdot n!$ ambiguities inducing the same expression in the summation. The \emph{divisibility} of $x \in H^2 (X, \ZZ)$ is defined to be a positive integer
	\begin{equation} \label{eq:divisibility}
		\operatorname{div}(x) = \gcd \{ q(x,y) : y \in H^2 (X, \ZZ) \} .
	\end{equation}
	The study of the full cohomology $H^* (X, \QQ)$ will need the notion of the \emph{LLV algebra} $\mathfrak g$, introduced by Looijenga--Lunts \cite{loo-lunts97} and Verbitsky \cite{ver95}. For its concrete computations we will follow the representation theoretic notation used in \cite[\S 2--3]{gklr22}.
	
	Throughout, group schemes will be used both in algebraic and analytic context. A \emph{group scheme} is a morphism $G \to S$ equipped with an identity section $S \to G$, a group law morphism $G \times_S G \to G$ and an inverse $G \to G$ satisfying the usual axioms (either in the algebraic or analytic setting). An \emph{abelian scheme} $P \to S$ is an analytically proper connected commutative group scheme over $S$ with complex torus fibers. Any abelian scheme $P$ admits a \emph{dual abelian scheme} $\check P$. A \emph{polarization} of an abelian scheme $P$ is a finite \'etale homomorphism $\lambda : P \to \check P$ over $S$ such that for each fiber $F$, the restriction $\lambda_{|F} : F \to \check F$ is of the form $x \mapsto [t_x^* L \otimes L^{-1}]$ for an ample line bundle $L$ on $F$. Given a group scheme $G \to S$, an \emph{analytic torsor under $G$} (or \emph{analytic $G$-torsor}) is a morphism $Y \to S$ equipped with a $G$-action, such that there exists an analytic covering $\tilde S = \bigsqcup_{\alpha} U_{\alpha} \to S$ where the base change $\tilde Y = Y \times_S \tilde S$ and $\tilde G = G \times_S \tilde S$ are $\tilde G$-equivariantly isomorphic over $\tilde S$. In the algebraic setting, one can use a different topology, e.g., \'etale topology to define an \'etale torsor. Our reference for the theory of abelian schemes is \cite{mum:git}, \cite{neron} and \cite{fal-chai:abelian}. For the notion of torsors, see \cite{milne:etale} or \cite{neron}.

	\subsection{Acknowledgment}
	I would like to thank my advisor Radu Laza for his invaluable comments and suggestions. Without his guidance, I wouldn't have completed this paper. I would like to thank Igor Dolgachev, Mark Gross, Brendan Hassett, Klaus Hulek, Daniel Huybrechts, Olivier Martin, Hyeonjun Park and Qizheng Yin for helpful suggestions to improve the manuscript of this paper. Several people pointed out errors in the original version of this paper. I would like to thank Thorsten Beckmann, Salvatore Floccari, Mirko Mauri and Jieao Song for letting me know various errors in my initial arguments. I would especially like to thank Salvatore Floccari for pointing out a gap in the original version of the main statement. This work was partially supported by NSF Grant DMS-1802128 (PI Laza).

	\section{Deformation invariance of the $H^2$-trivial automorphisms} \label{sec:aut0 is deformation invariant}
	Let $X$ be a compact hyper-K\"ahler manifold. Consider the group of \emph{$H^2$-trivial automorphisms}
	\[ \Aut^{\circ} (X) = \ker \big( \Aut(X) \to \operatorname{O} (H^2 (X, \ZZ), q), \quad f \mapsto f_* \big) .\]
	Here $\Aut(X)$ is the group of biholomorphic automorphisms of $X$. Huybrechts \cite[Prop 9.1]{huy99} together with Hassett--Tschinkel \cite[Thm 2.1]{has-tsc13} proved that $\Aut^{\circ} (X)$ is a finite group which is invariant under deformations of $X$.
	
	Let us now further assume $X$ admits a Lagrangian fibration $\pi : X \to B$. We can restrict our attention to $H^2$-trivial automorphisms that respect the Lagrangian fibration
	\begin{equation}
		\Aut^{\circ} (X/B) = \Aut(X/B) \cap \Aut^{\circ}(X) .
	\end{equation}
	Since $\Aut^{\circ}(X)$ is finite, so is $\Aut^{\circ}(X/B)$. In fact, we can further prove $\Aut^{\circ}(X/B)$ is abelian: this will be showed later in \Cref{prop:aut0 is abelian}. Notice that $\Aut^{\circ}(X/B)$ not only depends on $X$ but also on the Lagrangian fibration $\pi : X \to B$. Hence, if $X$ admits two different Lagrangian fibrations then they may have different $\Aut^{\circ}(X/B)$. In \Cref{sec:aut0 computation}, we will compute $\Aut^{\circ} (X/B)$ for all currently known deformation types of hyper-K\"ahler manifolds $X$. In \Cref{sec:abelian scheme}, we will reinterpret $\Aut^{\circ} (X/B)$ as global sections of the ``translation automorphism scheme'' $P_0 \to B_0$.
	
	But before doing so, here we establish a more basic fact in this section; we prove $\Aut^{\circ} (X/B)$ is deformation invariant on $\pi$. To make this more precise, we first need to define the notion of a family of Lagrangian fibered hyper-K\"ahler manifolds.
	
	\begin{definition} \label{def:family of Lagrangian fibration}
		A \emph{family of Lagrangian fibered compact hyper-K\"ahler manifolds} is a commutative diagram
		\[\begin{tikzcd}[row sep=tiny]
			\mathcal X \arrow[rd, "\pi"] \arrow[dd, "p"] \\
			& \mathcal B \arrow[ld, "q"] \\
			S
		\end{tikzcd}\]
		with the following conditions.
		\begin{enumerate}
			\item $p : \mathcal X \to S$ is a smooth proper family of compact hyper-K\"ahler manifolds of dimension $2n$ over a complex analytic space $S$.
			\item $q : \mathcal B \to S$ is the projectivization of a rank $n+1$ holomorphic vector bundle on $S$.
			\item For all $t \in S$, the fiber $\pi : X_t \to B_t$ is a Lagrangian fibration.
		\end{enumerate}
	\end{definition}
	
	Note that the second condition ensures $\mathcal B$ is projective over $S$ and admits a relative ample line bundle $\mathcal O_{\mathcal B/S}(1)$. It is also possible to consider a weaker version of this definition which only assumes $\mathcal B \to S$ to be a $\PP^n$-bundle. The obstruction for a $\PP^n$-bundle to be the projectivization of a vector bundle lies in the analytic Brauer group $H^2 (S, \mathcal O_S^*)$. Thus, if $H^2 (S, \mathcal O_S^*) = 0$ (for example, when $S$ is a complex open ball) then the second axiom is in fact equivalent to the weaker one. Notice that the pullback $\mathcal H = \pi^* \mathcal O_{\mathcal B/S}(1)$ can be considered as a family of line bundles $H_t$ on $X_t$. Therefore, \Cref{def:family of Lagrangian fibration} induces a family of pairs $(X, H)$ where $H = \pi^* \mathcal O_B(1)$.
	
	As usual, two Lagrangian fibrations $\pi : X \to B$ and $\pi' : X' \to B'$ are \emph{deformation equivalent} if there exists a family of Lagrangian fibered compact hyper-K\"ahler manifolds $\mathcal X / \mathcal B / S$ over a connected union of $1$-dimensional open disks $S$, realizing them as two fibers at $t, t' \in S$. Matsushita in \cite{mat16} proved such a deformation problem admits a local universal deformation.
	
	We can now state the main theorem of this section.
	
	\begin{theorem} \label{thm:aut0 is deformation invariant}
		The group $\Aut^{\circ}(X/B)$ is invariant under deformations of $\pi : X \to B$.
	\end{theorem}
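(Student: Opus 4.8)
The plan is to adapt the argument of Hassett--Tschinkel \cite{has-tsc13} for $\Aut^{\circ}(X)$, carrying the Lagrangian structure along throughout. It suffices to show that for a family $\mathcal X / \mathcal B / S$ as in \Cref{def:family of Lagrangian fibration} the assignment $t \mapsto \Aut^{\circ}(X_t/B_t)$ is locally constant on $S$; shrinking $S$ around an arbitrary point, we may thus assume it is a disk. The first step is to invoke (the proof of) \cite{has-tsc13}: it produces the relative $H^2$-trivial automorphism group scheme $\mathcal A^{\circ} \to S$ and shows it is finite \'etale. Since a disk is simply connected, $\mathcal A^{\circ}$ is then a constant group scheme $G \times S$ for a fixed finite group $G$, which acts on $\mathcal X$ over $S$ and satisfies $G = \Aut^{\circ}(X_t)$ for all $t$. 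It remains to isolate, inside this fixed $G$, the subgroup respecting the Lagrangian fibration over $t$, and to show it is independent of $t$.

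Fix $g \in G$. Because $g_t^{*}$ acts trivially on $H^2(X_t, \ZZ)$, it fixes the class $[H_t] = c_1\big(\pi_t^{*}\mathcal O(1)\big)$; since $c_1 : \Pic(X_t) \hookrightarrow H^2(X_t, \ZZ)$ is injective, we obtain $g_t^{*} H_t \cong H_t$ for every $t$. By the seesaw principle applied to $g^{*}\mathcal H \otimes \mathcal H^{-1}$ on $\mathcal X$ (where $\mathcal H = \pi^{*}\mathcal O_{\mathcal B/S}(1)$), there is a line bundle $\mathcal L$ on $S$ with $g^{*}\mathcal H \cong \mathcal H \otimes p^{*}\mathcal L$. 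Pushing forward along $p$ and projectivizing kills the twist by $\mathcal L$, so $g$ descends to a section $\bar g$ of the relative automorphism scheme $\PGL(p_{*}\mathcal H)$ of $\mathcal B = \PP_S(p_{*}\mathcal H)$ over $S$, uniquely determined by $\pi \circ g = \bar g \circ \pi$. By the very definition of $\Aut^{\circ}(X/B)$, the element $g$ lies in $\Aut^{\circ}(X_t/B_t)$ if and only if $\bar g_t = \id_{B_t}$, that is, if and only if $t$ belongs to $Z_g := \{\, t \in S : \bar g_t = \id \,\}$.

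The crux is that $Z_g$ is both closed and open in $S$. Closedness is clear. For openness, note that $g^{|G|} = \id$ forces $\bar g^{|G|} = \id$, so every $\bar g_t$ is a finite-order --- hence semisimple --- element of $\PGL_{n+1}(\CC)$; the eigenvalue ratios of any lift of $\bar g_t$ to $\GL_{n+1}$ therefore lie in the finite set of ratios of $|G|$-th roots of unity. Consequently, if $t_j \to t_0$ with $t_0 \in Z_g$, these ratios converge to $1$ and hence equal $1$ for $j \gg 0$, so $\bar g_{t_j}$ is scalar, i.e.\ $\bar g_{t_j} = \id$. (Equivalently: semisimple conjugacy classes of $\PGL_{n+1}$ are Zariski closed, so the torsion subscheme $\{\, x^{|G|} = 1 \,\}$ is a finite disjoint union of closed conjugacy classes, in which $\{\id\}$ is clopen; now pull back along $\bar g : S \to \{\, x^{|G|} = 1 \,\}$.) Since $S$ is connected, each $Z_g$ equals $\varnothing$ or $S$, whence $\Aut^{\circ}(X_t/B_t) = \{\, g \in G : Z_g = S \,\}$ is independent of $t$, as claimed.

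The one genuinely substantial input is the first step, namely that an $H^2$-trivial automorphism of a single fiber extends to an automorphism of the entire family $\mathcal X/S$. This is exactly the Hassett--Tschinkel phenomenon: it rests on the local Torelli theorem --- an $H^2$-trivial automorphism acts trivially on the period domain, hence trivially on the Kuranishi space, and therefore deforms along any family --- together with Huybrechts's finiteness of $\Aut^{\circ}$ \cite{huy99} to handle the specialization (properness) direction via limits of graphs in $\mathcal X \times_S \mathcal X$. Once the $G$-action on $\mathcal X$ over $S$ is available, the descent to $\mathcal B$ and the clopen argument above are formal, and I anticipate no further obstacle there.
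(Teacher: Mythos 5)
Your proposal is correct and follows the same architecture as the paper's proof: shrink to a disk, invoke Huybrechts and Hassett--Tschinkel to get a constant group $G = \Aut^{\circ}(X)$ acting on $\mathcal X$ over $S$, descend that action to $\mathcal B$ so that $\Aut^{\circ}(X_t/B_t)$ is identified with a fiberwise kernel, and conclude by observing that the $|G|$-torsion locus of $\PGL(n+1,\CC)$ is a disjoint union of adjoint orbits in which $\{\id\}$ is clopen --- your closing step is essentially verbatim the paper's.

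Where you genuinely diverge is the descent step, i.e.\ the content of \Cref{prop:kernel of equivariant homomorphism}. The paper first shows $G$ acts trivially on $\Pic(\mathcal X)$ via two Leray spectral sequence computations, then passes to a $G$-\emph{linearization} of $\mathcal H^{\otimes m}$ for a suitable $m$ (the obstruction in $H^2(G, H^0(\mathcal X,\mathcal O_{\mathcal X}^*))$ being $|G|$-torsion), obtains a $G$-action on $\mathcal B_m = \PP_S(p_*\mathcal H^{\otimes m})$, and finally pulls it back to $\mathcal B$ along the relative Veronese embedding, which requires the Iitaka-fibration/Stein-factorization argument. You instead argue one element at a time: fiberwise triviality of $g^*\mathcal H\otimes\mathcal H^{-1}$ (from $H^2$-triviality plus injectivity of $c_1$ on $\Pic(X_t)$) together with the seesaw principle gives $g^*\mathcal H\cong\mathcal H\otimes p^*\mathcal L$, and since projectivization is insensitive both to the twist by $\mathcal L$ and to the scalar ambiguity in the chosen isomorphism, this yields $\bar g\in\Aut(\mathcal B/S)$ directly, with the homomorphism property automatic from the uniqueness of $\bar g$ determined by $\pi\circ g=\bar g\circ\pi$. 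This is more elementary --- it sidesteps the group-cohomology obstruction and the Veronese detour entirely --- at the cost of producing only a $\PGL$-valued action rather than a linearization, which is all the argument requires. Both routes are sound; yours arguably isolates more cleanly why only the projectivized action matters.
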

	
	The rest of this section will be devoted to the proof of \Cref{thm:aut0 is deformation invariant}. The sketch of the proof is as follows. First, we descend the $\Aut^{\circ}(X)$-action on $X$ to $B$ so that the Lagrangian fibration $\pi : X \to B$ becomes an equivariant morphism. This means we have a group homomorphism $\Aut^{\circ} (X) \to \Aut(B)$ whose kernel is precisely $\Aut^{\circ}(X/B)$. Descending such an action is a nontrivial problem (this is quite similar to the result of \cite{brion11}), so we need to overcome this issue using the notion of a $G$-linearizability of line bundles. Next, we need to sheafify the discussions as we are interested in the deformation behavior of them. The result will follow from formal properties of the kernel of the sheaf homomorphism.

	\subsection{$G$-linearizability of a line bundle}
	Before we get into the proof of \Cref{thm:aut0 is deformation invariant}, let us recall the notion of $G$-linearizability of a line bundle on a complex manifold. For simplicity we only consider finite group actions. Our references are \cite[\S 3]{brion18}, \cite[\S 7]{dol:inv} and \cite{mum:git}, but we need to take some additional care since these references only consider the algebraic setting.
	
	Let $G$ be an arbitrary finite group and $\mathcal X$ be a complex manifold equipped with a holomorphic $G$-action. A \emph{$G$-linearized line bundle} on $\mathcal X$ is a holomorphic line bundle $\mathcal L$ together with a collection of isomorphisms $\Phi_g : g^* \mathcal L \to \mathcal L$ for $g \in G$, satisfying the condition $\Phi_{gg'} = \Phi_{g'} \circ g'^* \Phi_g$ for $g, g' \in G$. A \emph{$G$-invariant line bundle} on $\mathcal X$ is a holomorphic line bundle $\mathcal L$ such that $g^* \mathcal L \cong \mathcal L$ for all $g \in G$ (without any condition). We denote by $\Pic^G(\mathcal X)$ and $\Pic(\mathcal X)^G$ the groups of $G$-linearized line bundles and $G$-invariant line bundles on $\mathcal X$ up to isomorphisms. The second group is precisely the $G$-invariant subgroup of $\Pic(\mathcal X)$.
	
	There is a forgetful homomorphism $\Pic^G (\mathcal X) \to \Pic (\mathcal X)^G$, which is neither injective nor surjective in general. To understand the obstruction to its surjectivity, one considers an exact sequence of abelian groups (\cite[Rmk 7.2]{dol:inv} or \cite[Prop 3.4.5]{brion18})
	\[\begin{tikzcd}
		\Pic^G(\mathcal X) \arrow[r] & \Pic(\mathcal X)^G \arrow[r] & H^2 (G, \Gamma)
	\end{tikzcd}, \qquad \Gamma = H^0 (\mathcal X, \mathcal O_{\mathcal X}^*) .\]
	Both Dolgachev and Brion's discussions are for algebraic varieties, but their proofs can be adapted to our analytic setting as well. With this exact sequence in hand, we have:
	
	\begin{lemma} \label{lem:linerizable}
		Every $G$-invariant line bundle $\mathcal H$ on $\mathcal X$ is $G$-linerizable up to a suitable tensor power.
	\end{lemma}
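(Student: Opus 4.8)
The plan is to use the exact sequence
\[\begin{tikzcd}
\Pic^G(\mathcal X) \arrow[r] & \Pic(\mathcal X)^G \arrow[r, "\delta"] & H^2 (G, \Gamma)
\end{tikzcd}, \qquad \Gamma = H^0 (\mathcal X, \mathcal O_{\mathcal X}^*),\]
and to observe that the obstruction class $\delta(\mathcal H)$ lives in a group killed by a uniform integer. Indeed, $G$ is finite, so $H^2(G, \Gamma)$ is annihilated by $|G|$ (standard group cohomology: multiplication by $|G|$ is zero on $H^i(G, -)$ for $i > 0$). Hence $\delta(\mathcal H^{\otimes |G|}) = |G| \cdot \delta(\mathcal H) = 0$, which by exactness means $\mathcal H^{\otimes |G|}$ lies in the image of the forgetful map $\Pic^G(\mathcal X) \to \Pic(\mathcal X)^G$, i.e. $\mathcal H^{\otimes |G|}$ is $G$-linearizable. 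Since $\mathcal H$ is assumed $G$-invariant, $\mathcal H^{\otimes |G|}$ is again $G$-invariant, so it makes sense to speak of its class in $\Pic(\mathcal X)^G$; this is the suitable tensor power claimed.

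The steps, in order, are: (1) recall that the displayed exact sequence, whose algebraic version is in \cite[Rmk 7.2]{dol:inv} and \cite[Prop 3.4.5]{brion18}, carries over verbatim to the analytic category — the only input is that $\mathcal L \otimes \mathcal L'$ and pullbacks of line bundles behave the same way, and that the coboundary $\delta$ is built from the $2$-cocycle measuring the failure of a chosen family of isomorphisms $g^*\mathcal H \cong \mathcal H$ to satisfy the cocycle identity, valued in $\Gamma = H^0(\mathcal X, \mathcal O_{\mathcal X}^*)$; (2) invoke the annihilation of $H^2(G,\Gamma)$ by $|G|$; (3) conclude by exactness. None of these steps involves a genuine difficulty — the statement is a soft consequence of finiteness of $G$.

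The only point needing a little care, and the one I would flag as the main (mild) obstacle, is verifying that the exact sequence really is available analytically rather than just citing the algebraic references. Concretely, one must check: that $\Pic^G(\mathcal X)$ and $\Pic(\mathcal X)^G$ form groups under tensor product in the holomorphic setting (immediate); that the forgetful map is a group homomorphism (immediate); and that $\delta$ is well-defined — i.e. changing the chosen isomorphisms $\Phi_g : g^*\mathcal H \to \mathcal H$ by units $\gamma_g \in \Gamma$ changes the resulting $2$-cocycle by the coboundary $d(\gamma_g)$, so the class in $H^2(G,\Gamma)$ is independent of choices — and that $\delta(\mathcal H) = 0$ iff a linearization exists, which is just the definition of a $G$-linearization unwound. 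All of this is formal and uses only that $\mathcal X$ is a complex manifold with a holomorphic $G$-action, so the adaptation is routine; I would state it as such and move on. (One small subtlety: in the non-algebraic setting $\Gamma$ can be large, e.g. infinite-dimensional, but this is irrelevant since group cohomology of a finite group with coefficients in \emph{any} abelian group is $|G|$-torsion.)
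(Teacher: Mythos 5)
Your proof is correct and follows exactly the paper's argument: both use the exact sequence $\Pic^G(\mathcal X) \to \Pic(\mathcal X)^G \to H^2(G,\Gamma)$ together with the fact that higher group cohomology of a finite group is $|G|$-torsion, so $\mathcal H^{\otimes|G|}$ has vanishing obstruction class. Your additional remarks on adapting the exact sequence to the analytic setting match the paper's own caveat that the algebraic references carry over.
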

	\begin{proof}
		It is a general fact in the theory of group cohomology (for finite groups) that all the higher degree cohomologies $H^{\ge 1} (G, \Gamma)$ are $|G|$-torsion for any $G$-module $\Gamma$ (e.g., \cite[Cor VIII.1]{serre:local_fields}). Hence by the exact sequence above, the $|G|$-th tensor $\mathcal H^{\otimes |G|}$ vanishes in $H^2 (G, \Gamma)$ and hence comes from $\Pic^G (\mathcal X)$.
	\end{proof}
	
	For us, the importance of the $G$-linearizability of a line bundle comes from the induced $G$-action on the higher direct images of a linearized line bundle. If $\mathcal L$ is a $G$-linearized line bundle on $\mathcal X$ and $p : \mathcal X \to S$ is a $G$-invariant holomorphic map, then we have a contravariant $G$-action on all the higher direct image sheaves
	\[ g^* : R^k p_* \mathcal L \to R^k p_* \mathcal L, \qquad (g \circ g')^* = g'^* \circ g^* .\]
	Now assume further $\mathcal L$ is globally generated over $S$ and $p_* \mathcal L$ is a vector bundle on $S$. Then we have a $G$-action on $\PP_S (p_* \mathcal L)$ making the holomorphic map $\mathcal X \to \PP_S (p_* \mathcal L)$ $G$-equivariant over $S$. See \cite[Prop 1.7]{mum:git}.

	\subsection{The automorphism sheaves and deformation invariance of the $H^2$-trivial automorphisms}
	Suppose we have a smooth proper family of hyper-K\"ahler manifolds $p : \mathcal X \to S$. Let $U \subset S$ be an analytic open subset and denote by $p : \mathcal X_U = p^{-1} (U) \to U$ the restricted family over $U$. We define the sheaf of $H^2$-trivial automorphism groups $\SheafAut^{\circ}_{\mathcal X/S}$ on $S$ by
	\[ \SheafAut^{\circ}_{\mathcal X/S} (U) = \{ f : \mathcal X_U \to \mathcal X_U : U \mbox{-automorphism such that } f^* : R^2 p_* \ZZ \to R^2 p_* \ZZ \mbox{ is the identity} \} .\]
	By the work of Huybrechts and Hassett--Tschinkel, this sheaf is a local system of finite groups. We can consider it as a family of groups $\Aut^{\circ} (X_t)$ for $t \in S$. Similarly, given a family of Lagrangian fibered hyper-K\"ahler manifolds, we can define a family of groups $\Aut^{\circ}(X_t/B_t)$:
	
	\begin{definition}
		Given a family of Lagrangian fibered hyper-K\"ahler manifolds $p : \mathcal X \xrightarrow{\pi} \mathcal B \xrightarrow{q} S$, we define a sheaf of groups $\SheafAut^{\circ}_{\mathcal X/ \mathcal B/S}$ on $S$ by
		\[ \SheafAut^{\circ}_{\mathcal X / \mathcal B/S} (U) = \{ f : \mathcal X_U \to \mathcal X_U : \mathcal B_U \mbox{-automorphism such that } f^* : R^2 p_* \ZZ \to R^2 p_* \ZZ \mbox{ is the identity} \} .\]
		Equivalently, we may define $\SheafAut^{\circ}_{\mathcal X/\mathcal B/S} = q_* \SheafAut_{\mathcal X/\mathcal B} \cap \SheafAut^{\circ}_{\mathcal X/S}$.
	\end{definition}
	
	As mentioned, the sheaf of $H^2$-trivial automorphisms $\SheafAut^{\circ}_{\mathcal X/S}$ is a local system. The sheaf $\SheafAut^{\circ}_{\mathcal X / \mathcal B / S}$ is a subsheaf of $\SheafAut^{\circ}_{\mathcal X/S}$, and our goal is to prove it is locally constant as well. The question is certainly local on the base $S$, so we may assume $S$ is a small open ball. Then $\SheafAut^{\circ}_{\mathcal X/S}$ becomes a constant sheaf, so we may consider it as an abstract finite group
	\[ G = \Aut^{\circ} (X) \]
	acting on $\mathcal X \to S$ fiberwise.
	
	Consider the automorphism sheaf $\SheafAut_{\mathcal B/S}$ of the $\PP^n$-bundle $\mathcal B \to S$. It is the sheaf of analytic local sections of the $\PGL(n+1, \CC)$-group scheme $\Aut_{\mathcal B/S} \to S$. Our first step is to realize the sheaf $\SheafAut^{\circ}_{\mathcal X/\mathcal B/S}$ as the kernel of a certain homomorphism $\SheafAut^{\circ}_{\mathcal X/S} \to \SheafAut_{\mathcal B/S}$.
	
	\begin{proposition} \label{prop:kernel of equivariant homomorphism}
		Assume $S$ is an open ball. Then there exists a homomorphism of sheaves
		\begin{equation} \label{eq:homomorphism of automorphism sheaves}
			G = \SheafAut^{\circ}_{\mathcal X/S} \to \SheafAut_{\mathcal B/S}
		\end{equation}
		whose kernel is $\SheafAut^{\circ}_{\mathcal X/\mathcal B/S}$.
	\end{proposition}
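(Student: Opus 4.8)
The plan is to \emph{descend} the $G$-action on $\mathcal X$ along $\pi$ to an action on $\mathcal B$, and then read off its kernel. Write $\mathcal H = \pi^* \mathcal O_{\mathcal B/S}(1)$. Since each $\pi : X_t \to B_t$ has connected fibres and smooth total space, $\pi_* \mathcal O_{\mathcal X} = \mathcal O_{\mathcal B}$, so the projection formula gives $p_* \mathcal H^{\otimes m} = q_* \mathcal O_{\mathcal B/S}(m) = \Sym^m(p_* \mathcal H)$, which is locally free and compatible with base change, and $\mathcal H^{\otimes m}$ is globally generated relative to $S$ for every $m \ge 1$. Observe that once we produce, for each $g \in G$, an automorphism $\bar g$ of $\mathcal B$ over $S$ with $\pi \circ g = \bar g \circ \pi$, such $\bar g$ is automatically unique because $\pi$ is surjective; hence $g \mapsto \bar g$ is automatically a group homomorphism and commutes with restriction to open subsets. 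So the entire content of the Proposition is the \emph{existence} of $\bar g$, together with the identification of the kernel.

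For existence I would argue in two steps. Step 1: $\mathcal H$ is $G$-invariant. Because $S$ is an open ball the inclusion of a fibre $X_t \hookrightarrow \mathcal X$ is a homotopy equivalence, so the (manifestly $G$-equivariant) restriction $H^2(\mathcal X, \ZZ) \xrightarrow{\sim} H^2(X_t, \ZZ)$ is an isomorphism, and $G$ acts trivially on it by the definition of $\Aut^{\circ}$; moreover $H^1(\mathcal X, \mathcal O_{\mathcal X}) = 0$ since $R^1 p_* \mathcal O_{\mathcal X} = 0$ (as $h^1(\mathcal O_{X_t}) = 0$) and $H^1(S, \mathcal O_S) = 0$. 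The exponential sequence then yields a $G$-equivariant injection $\Pic(\mathcal X) \hookrightarrow H^2(\mathcal X, \ZZ)$, so $G$ fixes $[\mathcal H]$, i.e. $g^* \mathcal H \cong \mathcal H$ for all $g \in G$. Step 2: by \Cref{lem:linerizable}, some power $\mathcal H^{\otimes m}$ (for instance $m = |G|$) admits a $G$-linearization. Feeding this globally generated linearized bundle into \cite[Prop 1.7]{mum:git} (valid analytically, as recalled above) produces a $G$-action on $\PP_S(p_* \mathcal H^{\otimes m})$ making $\mathcal X \to \PP_S(p_* \mathcal H^{\otimes m})$ equivariant over $S$. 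This morphism factors as $\mathcal X \xrightarrow{\pi} \mathcal B \xrightarrow{v_m} \PP_S(p_* \mathcal H^{\otimes m})$ with $v_m$ the relative $m$-th Veronese embedding; since $\pi$ is surjective the image of $\mathcal X$ is exactly $v_m(\mathcal B)$, which is therefore $G$-stable, and transporting the action through $v_m$ gives the sought $G$-action on $\mathcal B$ over $S$ with $\pi$ equivariant.

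Finally, the kernel: a local section $f$ of $\SheafAut^{\circ}_{\mathcal X/S}$ over $U \subseteq S$ is sent to the unique $\bar f \in \SheafAut_{\mathcal B/S}(U)$ with $\pi \circ f = \bar f \circ \pi$, so $\bar f = \id$ precisely when $\pi \circ f = \pi$, i.e. precisely when $f$ is a $\mathcal B_U$-automorphism. Hence the kernel of \eqref{eq:homomorphism of automorphism sheaves} equals $q_* \SheafAut_{\mathcal X/\mathcal B} \cap \SheafAut^{\circ}_{\mathcal X/S} = \SheafAut^{\circ}_{\mathcal X/\mathcal B/S}$, as claimed. I expect the genuine difficulty to be the descent of the $G$-action in Step 2 — the analytic counterpart of the phenomenon in \cite{brion11} — where the non-formal inputs are the $G$-invariance of $\mathcal H$ (which uses $h^1(\mathcal O_{X_t}) = 0$, a feature special to hyper-K\"ahler manifolds) and the upgrade from invariance to linearizability of a twist supplied by \Cref{lem:linerizable}; the remaining steps are formal.
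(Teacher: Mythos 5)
Your proposal is correct and follows essentially the same route as the paper: establish $G$-invariance of $\mathcal H$ via the vanishing $H^1(\mathcal X,\mathcal O_{\mathcal X})=0$ and the triviality of the $G$-action on $H^2(\mathcal X,\ZZ)$, linearize a power $\mathcal H^{\otimes m}$ via \Cref{lem:linerizable}, descend the action to $\PP_S(p_*\mathcal H^{\otimes m})$ by \cite[Prop 1.7]{mum:git}, and transport it to $\mathcal B$ through the relative Veronese (your projection-formula identification $p_*\mathcal H^{\otimes m}=q_*\mathcal O_{\mathcal B/S}(m)$ is just a cleaner packaging of the paper's Iitaka-fibration/Stein-factorization lemma, both resting on connectedness of the fibers of $\pi$). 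The kernel identification is the same in both arguments.
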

	
	Equivalently, the proposition states that there exists a $G$-action on $\mathcal B$ making $\pi : \mathcal X \to \mathcal B$ a $G$-equivariant morphism over $S$. To prove the proposition, we need to use the $G$-linearizability of line bundles in the previous subsection. The following lemma proves every line bundle on $\mathcal X$ is $G$-invariant.
	
	\begin{lemma}
		$G$ acts trivially on $\Pic(\mathcal X)$.
	\end{lemma}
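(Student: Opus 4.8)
The plan is to route everything through the first Chern class. I would show that the map $c_1 \colon \Pic(\mathcal X) \to H^2(\mathcal X, \ZZ)$ is $G$-equivariant and injective, and that $G$ acts trivially on $H^2(\mathcal X, \ZZ)$; then for $\mathcal L \in \Pic(\mathcal X)$ and $g \in G$ one gets $c_1(g^* \mathcal L) = g^* c_1(\mathcal L) = c_1(\mathcal L)$, hence $g^* \mathcal L \cong \mathcal L$ by injectivity, which is exactly the assertion.

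For the triviality of the $G$-action on $H^2(\mathcal X, \ZZ)$: since $p \colon \mathcal X \to S$ is a proper holomorphic submersion and $S$ is an open ball, hence contractible, Ehresmann's theorem exhibits $\mathcal X \to S$ as a topologically trivial fibre bundle, so restriction to a fibre $X_{t_0}$ induces an isomorphism $H^2(\mathcal X, \ZZ) \xrightarrow{\sim} H^2(X_{t_0}, \ZZ)$ (equivalently, one may invoke the low-degree Leray sequence for $p$, using $H^1(X_t, \ZZ) = 0$). This isomorphism is $G$-equivariant because each $g \in G$ preserves the fibre $X_{t_0}$ and restriction commutes with pullback; and by the very definition of $\SheafAut^{\circ}_{\mathcal X/S}$ every $g \in G$ acts as the identity on $R^2 p_* \ZZ$, hence on its stalk $H^2(X_{t_0}, \ZZ)$, hence on $H^2(\mathcal X, \ZZ)$. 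Naturality of $c_1$ gives the equivariance of $c_1$ for free.

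For the injectivity of $c_1$, equivalently $\Pic^0(\mathcal X) = 0$, the exponential sequence on $\mathcal X$ shows $\ker c_1$ is a quotient of $H^1(\mathcal X, \mathcal O_{\mathcal X})$, so it suffices to prove $H^1(\mathcal X, \mathcal O_{\mathcal X}) = 0$. Here I would apply the Leray spectral sequence for $p$ once more: $p_* \mathcal O_{\mathcal X} = \mathcal O_S$ since the fibres are compact and connected, and $R^1 p_* \mathcal O_{\mathcal X} = 0$ since $H^1(X_t, \mathcal O_{X_t}) = 0$ for every hyper-K\"ahler fibre (together with cohomology and base change), so $H^1(\mathcal X, \mathcal O_{\mathcal X}) \cong H^1(S, \mathcal O_S)$, which vanishes because $S$ is an open ball, hence Stein (Cartan's Theorem~B). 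I do not expect a genuine obstacle here; the one point deserving a little care is that all of the standard tools invoked — the exponential sequence, Leray degeneration in low degrees, cohomology and base change, and Theorem~B — are used in the complex-analytic rather than the algebraic category, which is harmless for the proper morphisms and complex spaces at hand but should perhaps be flagged explicitly, as was done for the $G$-linearization material in the preceding subsection.
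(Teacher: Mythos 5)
Your proposal is correct and follows essentially the same route as the paper: reduce to the triviality of the $G$-action on $H^2(\mathcal X,\ZZ)$ plus injectivity of $c_1$, and establish both via the Leray spectral sequence for $p$ (using $R^1p_*\ZZ=0$, $R^1p_*\mathcal O_{\mathcal X}=0$, and the contractibility/Steinness of the ball $S$). The Ehresmann-trivialization phrasing of the first step is just a repackaging of the same low-degree Leray argument the paper uses.
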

	\begin{proof}
		We first claim $G$ acts trivially on $H^2 (\mathcal X, \ZZ)$. Apply the Leray spectral sequence
		\[ E_2^{p,q} = H^p (S, R^q p_* \ZZ) \quad \Rightarrow \quad H^{p+q} (\mathcal X, \ZZ) .\]
		Noticing that $R^0 p_* \ZZ = \ZZ$, $R^1 p_* \ZZ = 0$, and $S$ is an open ball, we obtain an isomorphism $H^2 (\mathcal X, \ZZ) \cong H^0 (S, R^2 p_*\ZZ)$. This isomorphism respects the $G$-action as the Leray spectral sequence is functorial. Now $G$ acts on $H^2 (X_t, \ZZ)$ trivially for any fiber $X_t$, so $G$ acts on $R^2p_*\ZZ$ trivially and the claim follows.
		
		It is enough to prove the first Chern class map $\Pic (\mathcal X) \to H^2 (\mathcal X, \ZZ)$ is injective. This homomorphism is induced by the exponential sequence $0 \to \ZZ \to \mathcal O_{\mathcal X} \to \mathcal O_{\mathcal X}^* \to 0$, so it suffices to prove $H^1(\mathcal X, \mathcal O_{\mathcal X}) = 0$. Again, use the Leray spectral sequence
		\[ E_2^{p, q} = H^p (S, R^q p_* \mathcal O_{\mathcal X}) \quad \Rightarrow \quad H^{p+q} (\mathcal X, \mathcal O_{\mathcal X}) .\]
		This time, we have $R^0 p_* \mathcal O_{\mathcal X} = \mathcal O_S$ and $R^1 p_* \mathcal O_{\mathcal X} = 0$. This implies $H^1(\mathcal X, \mathcal O_{\mathcal X}) = 0$.
	\end{proof}
	
	Consider the line bundle $\mathcal H = \pi^* \mathcal O_{\mathcal B/S}(1)$ on $\mathcal X$. Since $\Pic(\mathcal X)$ is $G$-invariant, we can apply \Cref{lem:linerizable} to $\mathcal H$ and conclude $\mathcal H^{\otimes m}$ is $G$-linearizable for some positive integer $m$. As a result, we have a $G$-equivariant morphism $\pi_m : \mathcal X \to \mathcal B_m$ where $\mathcal B_m = \PP_S \big( p_* \mathcal H^{\otimes m} \big)$ is the dual of the complete linear system associated to $\mathcal H^{\otimes m}$. Consider the diagram
	\begin{equation} \label{diag:iitaka}
	\begin{tikzcd}[row sep=tiny]
		\mathcal X \arrow[dd, "p"] \arrow[rd, "\pi"] \arrow[rrd, bend left=20, "\pi_m"] \\
		& \mathcal B \arrow[r, hookrightarrow] \arrow[ld, "q"] & \mathcal B_m \arrow[lld, bend left=20, "q_m"] \\
		S
	\end{tikzcd}.
	\end{equation}
	
	\begin{lemma}
		The $m$-th relative Veronese embedding $\mathcal B \hookrightarrow \mathcal B_m$ makes the diagram \eqref{diag:iitaka} commute.
	\end{lemma}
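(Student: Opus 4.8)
The plan is to read off the identity $\pi_m = v_m\circ\pi$ from the universal property of relative projective bundles, after first identifying $\mathcal B_m$ concretely.

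\emph{Step 1: identify $\mathcal B_m$.} Since $\pi$ is a Lagrangian fibration it is proper with connected fibres onto the smooth base $\mathcal B$, so $\pi_*\mathcal O_{\mathcal X} = \mathcal O_{\mathcal B}$; this persists after restriction to each $t\in S$ and hence holds relatively over $S$. Using $\mathcal H^{\otimes m} = \pi^*\mathcal O_{\mathcal B/S}(m)$ and the projection formula,
\[ p_*\mathcal H^{\otimes m} = q_*\bigl(\mathcal O_{\mathcal B/S}(m)\otimes\pi_*\mathcal O_{\mathcal X}\bigr) = q_*\mathcal O_{\mathcal B/S}(m) = \Sym^m\!\bigl(p_*\mathcal H\bigr), \]
the last equality being the standard computation of pushforwards of twisting sheaves along $\mathcal B = \PP_S(p_*\mathcal H)\to S$. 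Thus $\mathcal B_m = \PP_S\bigl(\Sym^m p_*\mathcal H\bigr)$, and $v_m$ is precisely the relative $m$-th Veronese embedding $\PP_S(\mathcal E)\hookrightarrow\PP_S(\Sym^m\mathcal E)$ with $\mathcal E = p_*\mathcal H$; in particular $v_m^*\mathcal O_{\mathcal B_m/S}(1) = \mathcal O_{\mathcal B/S}(m)$.

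\emph{Step 2: compare the two $S$-morphisms $\mathcal X\to\mathcal B_m$.} By the universal property of $\PP_S(-)$, an $S$-morphism $\mathcal X\to\mathcal B_m$ is the same datum as a line-bundle quotient $p^*(p_*\mathcal H^{\otimes m})\twoheadrightarrow\mathcal L$ (with $\mathcal L$ the pullback of $\mathcal O_{\mathcal B_m/S}(1)$), and two morphisms coincide iff their quotients do. By construction $\pi_m$ is the morphism to the projectivization of the complete relative linear system of the globally generated bundle $\mathcal H^{\otimes m}$, i.e. it corresponds to the evaluation map $\mathrm{ev}_m\colon p^*(p_*\mathcal H^{\otimes m})\twoheadrightarrow\mathcal H^{\otimes m}$. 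Likewise $\pi$ corresponds to $\mathrm{ev}\colon p^*(p_*\mathcal H)\twoheadrightarrow\mathcal H$, and $v_m$ corresponds to $\Sym^m(\tau)\colon\Sym^m(q^*p_*\mathcal H)\twoheadrightarrow\mathcal O_{\mathcal B/S}(m)$, where $\tau\colon q^*p_*\mathcal H\twoheadrightarrow\mathcal O_{\mathcal B/S}(1)$ is the tautological quotient. Since $\pi^*\tau = \mathrm{ev}$, the composite $v_m\circ\pi$ corresponds to $\Sym^m(\mathrm{ev})\colon\Sym^m(p^*p_*\mathcal H)\twoheadrightarrow\mathcal H^{\otimes m}$. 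Pulling the isomorphism of Step 1 back along $p$ (pullback commutes with $\Sym$) identifies $p^*(p_*\mathcal H^{\otimes m})$ with $\Sym^m(p^*p_*\mathcal H)$, and under this identification $\mathrm{ev}_m$ and $\Sym^m(\mathrm{ev})$ are the same map: both are, locally, ``take $m$ sections of $\mathcal H$ and multiply the evaluated values''. Hence $\pi_m$ and $v_m\circ\pi$ correspond to the same quotient, so they are equal, which is exactly the commutativity of \eqref{diag:iitaka}.

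\emph{Main obstacle.} The only delicate point is the book-keeping in Step 2: one must check that the morphism attached to the complete linear system of $\mathcal H^{\otimes m}$ is the one given by $\mathrm{ev}_m$, that $\pi^*\tau = \mathrm{ev}$, and that $\mathrm{ev}_m$ agrees with $\Sym^m(\mathrm{ev})$ under the identification of Step 1 — each is ``tautological'' but deserves a line. If one prefers to avoid this, an equivalent route is to verify the identity fibrewise: over $t\in S$, the morphism $\pi_{m,t}$ is the one attached to $|H_t^{\otimes m}|$, $\pi_t$ the one attached to $|H_t|$ (with target $\PP^n$), and $v_{m,t}$ the classical $m$-th Veronese; the factorization $\pi_{m,t} = v_{m,t}\circ\pi_t$ is then the elementary fact that the $|H_t^{\otimes m}|$-morphism factors through the $|H_t|$-morphism because $H^0(X_t, H_t^{\otimes m}) = H^0(\PP^n,\mathcal O(m)) = \Sym^m H^0(\PP^n,\mathcal O(1))$ (again using $\pi_{t*}\mathcal O_{X_t} = \mathcal O_{\PP^n}$). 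Since $\mathcal B_m\to S$ is separated and $\mathcal X$ is reduced, fibrewise equality of the two $S$-morphisms implies their equality.
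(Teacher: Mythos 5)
Your argument is correct, but it takes a different (and in some ways more robust) route than the paper. The paper works fibrewise: since $\pi_t\colon X_t\to B_t$ has connected fibers it is the Iitaka fibration of $H_t$, so the morphism attached to $|H_t^{\otimes m}|$ factors through $\pi_t$ via its Stein factorization, and the induced map $B_t\hookrightarrow (B_m)_t$ is identified with the $m$-th Veronese; the globalization over $S$ is left implicit. You instead argue relatively over $S$ from the start: the identification $p_*\mathcal H^{\otimes m}\cong\Sym^m(p_*\mathcal H)$ (via $\pi_*\mathcal O_{\mathcal X}=\mathcal O_{\mathcal B}$ and the projection formula) realizes $\mathcal B_m$ as the target of the relative Veronese, and the universal property of $\PP_S(-)$ reduces the commutativity to the tautological compatibility $\mathrm{ev}_m=\Sym^m(\mathrm{ev})$. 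This buys you a genuinely relative statement with no fibrewise-to-global step (your closing remark about separatedness and reducedness, which patches that step, is then only needed for your alternative route -- which is essentially the paper's proof restated via $H^0(X_t,H_t^{\otimes m})=\Sym^m H^0(\PP^n,\mathcal O(1))$). Two minor points of hygiene: the justification of $\pi_*\mathcal O_{\mathcal X}=\mathcal O_{\mathcal B}$ should be stated directly for the single proper morphism $\pi$ with connected fibers onto the normal space $\mathcal B$ (Stein factorization plus Zariski's main theorem), rather than deduced from the fibres; and one should fix the quotient (Grothendieck) convention for $\PP_S(\mathcal E)$ so that $q_*\mathcal O_{\mathcal B/S}(m)=\Sym^m\mathcal E$, which is harmless in characteristic zero. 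Neither affects the validity of the proof.
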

	\begin{proof}
		Notice that the morphism $\pi : \mathcal X \to \mathcal B$ associated to $\mathcal H$ has connected fibers. Hence, for any $t \in S$, the fiber $\pi : X_t \to B_t$ becomes the Iitaka fibration of the line bundle $H_t$ (e.g., \cite[\S 2.1.B]{laz1}). This in particular implies that any morphism $\pi_m : X_t \to (B_m)_t$ associated to $H_t^{\otimes m}$ factors through the Iitaka fibration $\pi$, where the morphism $B_t \hookrightarrow (B_m)_t$ is precisely the $m$-th Veronese embedding. In other words, the $m$-th relative Veronese embedding makes the diagram commute.
	\end{proof}
	
	The equivariance of $\pi_m$ with the diagram \eqref{diag:iitaka} implies $\pi$ is equivariant, completing the proof of \Cref{prop:kernel of equivariant homomorphism}. We now present the proof of the main theorem.
	
	\begin{proof} [Proof of \Cref{thm:aut0 is deformation invariant}]
		Let $\mathcal X \to \mathcal B \to S$ be a family of Lagrangian fibered hyper-K\"ahler manifolds over an open ball $S$. The sheaves $\SheafAut^{\circ}_{\mathcal X/S}$ and $\SheafAut_{\mathcal B/S}$ are represented by the constant group schemes
		\[ \Aut^{\circ}_{\mathcal X/S} \cong \bigsqcup_{f \in \Aut^{\circ}(X)} S , \qquad \Aut_{\mathcal B/S} \cong \PGL(n+1) \times S .\]
		The homomorphism \eqref{eq:homomorphism of automorphism sheaves} becomes a morphism $\alpha : \Aut^{\circ}_{\mathcal X/S} \to \Aut_{\mathcal B/\mathcal S}$. The desired sheaf $\SheafAut^{\circ}_{\mathcal X/\mathcal B/S}$ is representable by $\ker \alpha$ by \Cref{prop:kernel of equivariant homomorphism}.
		
		To prove $\ker \alpha$ is a constant subgroup scheme, it is enough to show the following: let $S'$ be a connected component of $\Aut^{\circ}_{\mathcal X/S}$. Consider the restriction of $\alpha$ followed by the projection
		\[ \beta : S' \to \PGL(n+1) .\]
		Then we claim that either $\beta(S') = \{ \id \}$ or $\beta(S') \not \ni \id$. Notice that the image $\beta(S')$ consists of $|G|$-torsion matrices in $\PGL(n+1)$. Since the set of $|G|$-torsion matrices is a disjoint union of $\PGL(n+1)$-adjoint orbits (classified by eigenvalues), the connected set $\beta(S')$ has to lie in a single orbit. The adjoint orbit containing the identity matrix is a singleton set $\{ \id \}$. Hence the claim follows.
	\end{proof}

	\section{Abelian schemes associated to Lagrangian fibrations} \label{sec:abelian scheme}
	The aim of this section is to associate a polarized abelian scheme to \emph{every} Lagrangian fibered compact hyper-K\"ahler manifold, and to discuss its consequences. The following is the first main theorem of this section.
	
	\begin{theorem} \label{thm:abelian scheme}
		Let $\pi : X \to B$ be a Lagrangian fibration of a compact hyper-K\"ahler manifold and $B_0 \subset B$ its smooth locus. Set $X_0 = \pi^{-1} (B_0)$ so that it becomes a smooth proper family of complex tori over $B_0$.
		\begin{enumerate}
			\item There exists a unique projective abelian scheme $\nu : P_0 \to B_0$ making $\pi : X_0 \to B_0$ an analytic torsor under $\nu$.
			\item Moreover, the abelian scheme is simple and has a unique choice of a primitive polarization
			\begin{equation} \label{eq:polarization}
				\lambda : P_0 \to \check P_0 .
			\end{equation}
			Here $\check P_0 \to B_0$ is the dual abelian scheme of $P_0 \to B_0$.
		\end{enumerate}
	\end{theorem}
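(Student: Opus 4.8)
The plan is to realize $P_0$ as the identity component $\SheafAut^{\circ}_{X_0/B_0}$ of the relative automorphism group scheme of $\pi_0 \colon X_0 \to B_0$, and to extract the four assertions (existence, uniqueness, simplicity, unique primitive polarization) from two external inputs: the structure theory of algebraic integrable systems due to Arinkin--Fedorov, and the work of van Geemen--Voisin on Matsushita's isotriviality conjecture. I would open with Voisin's observation (\cite[Prop.~2.1]{cam06}): each smooth fibre $F$, being a Lagrangian subtorus for the holomorphic symplectic form, carries a canonical polarization and is in particular an abelian variety, so $\pi_0$ is a (possibly sectionless) family of abelian varieties.

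For (1), since $\pi_0$ is smooth and proper it admits analytic-local sections, so over a small open $U \subset B_0$ one has $X_0|_U \cong A_U$ for an abelian scheme $A_U$. Because $\SheafAut^{\circ}$ of an abelian scheme over a base is that abelian scheme itself (acting by translations), and because translations do not affect $\SheafAut^{\circ}$, these local identifications glue to a single abelian scheme $P_0 = \SheafAut^{\circ}_{X_0/B_0} \to B_0$ under which $X_0$ is tautologically an analytic $P_0$-torsor; this is exactly the content, on the smooth locus, of the Arinkin--Fedorov description of an integrable system, and their argument is analytic-local in nature so it applies verbatim in our (not necessarily projective) setting. The globalized Voisin polarizations assemble to a $B_0$-homomorphism $P_0 \to \check P_0$ that is fibrewise a polarization, whence $P_0$ is relatively projective. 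For uniqueness: if $X_0$ is also an analytic torsor under a projective abelian scheme $P'$, then $P'$ acts on $X_0$ by fibrewise translations, giving a $B_0$-homomorphism $P' \to \SheafAut^{\circ}_{X_0/B_0} = P_0$ which is a fibrewise isomorphism, hence an isomorphism.

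For (2), existence of a polarization $\lambda_{\mathrm{can}} \colon P_0 \to \check P_0$ is the globalization just used (one may alternatively produce its class from the symplectic-form isomorphism $T_{X_0/B_0} \cong \pi_0^{*}\Omega^1_{B_0}$ together with the Kodaira--Spencer map). The heart of the matter is that the relative N\'eron--Severi group $\NS(P_0/B_0)$ -- the group of $B_0$-homomorphisms $P_0 \to \check P_0$ fixed by the canonical involution, equivalently the monodromy-invariant $(1,1)$-classes on a fibre -- is infinite cyclic. Here I would invoke van Geemen--Voisin: for a Lagrangian fibration of a compact hyper-K\"ahler manifold the weight-one variation $R^1\nu_*\QQ$ carries no nontrivial Hodge-and-monodromy endomorphisms, so $\End(F)\otimes\QQ = \End_{B_0}(P_0)\otimes\QQ = \QQ$ for the generic fibre $F$; this forces $F$ to be a simple abelian variety, and it forces $\NS(P_0/B_0)\otimes\QQ$, the Rosati-fixed part of $\End_{B_0}(P_0)\otimes\QQ$ relative to any polarization, to be one-dimensional. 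Hence $\NS(P_0/B_0) = \ZZ\theta$; normalizing $\theta$ so that $\lambda_{\mathrm{can}}$ is a positive multiple of it makes $\theta$ fibrewise ample, and the associated $\lambda \colon P_0 \to \check P_0$ is the unique primitive polarization.

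The genuine obstacle is the infinite-cyclicity of $\NS(P_0/B_0)$, i.e.\ the simplicity of the generic fibre together with the triviality of its endomorphism algebra: this is essentially Matsushita's isotriviality conjecture, it is where the hyper-K\"ahler hypothesis is indispensable, and one must invoke van Geemen--Voisin -- supplemented, where needed, by the specific geometry of the deformation type in \Cref{thm:introduction} -- in exactly the generality required; in particular one cannot reduce to a very general deformation, since neither simplicity nor ``$\End\otimes\QQ = \QQ$'' is a deformation-invariant property. Everything else is formal: representability and gluing of $\SheafAut^{\circ}$, rigidity of homomorphisms of abelian schemes over the normal base $B_0$, and the standard dictionary between polarizations, symmetric homomorphisms to the dual, and Rosati-fixed endomorphisms; the invariance of the resulting polarization type is then controlled by the results of Wieneck \cite{wie16, wie18} and of \Cref{sec:aut0 is deformation invariant}.
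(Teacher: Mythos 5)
Your construction of $P_0$ as the sheaf of fibrewise translation automorphisms, glued from analytic local sections of $\pi_0$, and your uniqueness argument (an abelian scheme acting simply transitively on the fibres acts by translations, hence maps isomorphically onto this sheaf) are sound and essentially interchangeable with the paper's route: the paper builds $P_0$ from the polarized weight $-1$ VHS $(R^1\pi_*\ZZ)^{\vee}$, glues the local translation actions by checking that the transition maps are translations, and only afterwards identifies $P_0$ with $\underline{\Aut}^{tr}_{X_0/B_0}$ in \Cref{prop:abelian scheme is automorphism scheme}.

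The gap is in your justification of the central point, the one-dimensionality of the space of monodromy-invariant polarization classes. You derive it from ``$\End(F)\otimes\QQ=\End_{B_0}(P_0)\otimes\QQ=\QQ$,'' attributed to van Geemen--Voisin. That statement is strictly stronger than the simplicity of the VHS $R^1\pi_*\QQ$ (a simple VHS can have a nontrivial division algebra of endomorphisms; killing the $\Sym^2$-part of $H^0(B_0,R^1\pi_*\QQ\otimes R^1\pi_*\QQ)$ does not follow from the absence of sub-VHS), and it is not what \cite{vgee-voi16} or \cite[Lem 4.5]{voi18} establish. It also leads you to the false intermediate claim that the generic fibre $F$ is a simple abelian variety: the theorem asserts simplicity of the abelian \emph{scheme} over $B_0$, i.e.\ of the VHS, and individual fibres may perfectly well be isogenous to products with large endomorphism rings. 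The input that actually does the work, and which you never invoke, is Deligne's global invariant cycle theorem combined with \Cref{lem:matsushita}: $H^0(B_0,R^2\pi_*\QQ)=\im\big(H^2(X,\QQ)\to H^2(F,\QQ)\big)\cong\QQ$, generated by an ample class by \Cref{cor:voisin}. This one fact yields both the uniqueness of the primitive polarization (the monodromy-invariant part of $\NS$ of a fibre sits inside this line) and the simplicity of $R^1\pi_*\QQ$ (a splitting $\mathcal V_1\oplus\mathcal V_2$ would produce two independent polarization classes and force $h^0(B_0,R^2\pi_*\QQ)\ge 2$). In particular your closing hedge --- that one may have to supplement the argument with the specific geometry of the known deformation types --- is unnecessary: the theorem holds uniformly for every Lagrangian fibration of a compact hyper-K\"ahler manifold, with no case analysis and no deformation argument.
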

	
	\begin{definition}
		The abelian scheme $\nu : P_0 \to B_0$ in \Cref{thm:abelian scheme} is called the \emph{abelian scheme associated to $\pi$}.
	\end{definition}
	
	Our statement is motivated by Arinkin--Fedorov's result in \cite[Thm 2]{ari-fed16}, van Geemen--Voisin's argument in \cite{vgee-voi16}, and Sawon's result in \cite{sawon04}. The theorem combines and slightly generalizes these results. Before discussing the applications of this theorem, let us first present some examples.
	
	\begin{example}
		Let $X$ be a smooth projective moduli of torsion coherent sheaves on a K3 surface with a fixed Mukai vector, so that it becomes a hyper-K\"ahler manifold of $\text{K3}^{[n]}$-type equipped with a Lagrangian fibration $\pi : X \to B$ (see, e.g., \cite{decat-rap-sac21}). In this case, it is known that the torus fibration $\pi : X_0 \to B_0$ is isomorphic to a relative Jacobian $\Pic^d_{\mathcal C/B_0}$ associated to a certain universal family $\mathcal C/B_0$ of smooth curves on the K3 surface. Now $\Pic^d_{\mathcal C/B_0}$ is a torsor under the numerically trivial relative Jacobian $\Pic^0_{\mathcal C/B_0}$ \cite[Thm 9.3.1]{neron}. By the uniqueness assertion of \Cref{thm:abelian scheme}, this is the associated abelian scheme $P_0$.
	\end{example}
	
	\begin{example}
		When $\pi : X \to B = \PP^1$ is an elliptic K3 surface, \Cref{thm:abelian scheme} is a weaker version of the relative Jacobian fibration construction of $\pi$ (e.g., \cite[\S 11.4]{huy:k3}). In this case, one may even construct a semi-abelian scheme $P \to B$ over the entire base (N\'eron model) so that the smooth locus of $\pi$ becomes a torsor under $P$. Arinkin--Fedorov generalized this result to certain higher dimensional projective hyper-K\"ahler manifolds. A stronger version of \Cref{thm:abelian scheme} would potentially improve the arguments in this paper, but we will not discuss this further.
	\end{example}
	
	\begin{example}
		For higher dimensional compact hyper-K\"ahler manifolds, one may still consider the relative Picard scheme $\Pic^0_{X_0/B_0} \to B_0$. However, in general this is the \emph{dual} of the abelian scheme $P_0$. This means we can consider $P_0$ as the ``double Picard scheme'' of the original $X_0$ \cite{sawon04}. However, for us it will be more useful to consider $P_0$ as the identity component of the relative automorphism scheme of $X_0/B_0$. We will show this in \Cref{prop:abelian scheme is automorphism scheme}.
	\end{example}
	
	\begin{example} \label{ex:when pi has a rational section}
		When $\pi$ admits at least one rational section, then the abelian scheme $P_0$ is in fact isomorphic to $X_0$. This is because the rational section must be defined over $B_0$ by \Cref{rmk:rational section is defined over B0}, so that $X_0$ becomes a trivial $P_0$-torsor. In some sense, \Cref{thm:abelian scheme} is thus a generalization of certain properties of $X_0$ to the case where $\pi$ does not have any rational section. For example, one can study the Mordell--Weil group of $\nu$, generalizing the study of the Mordell--Weil group of $\pi$.
	\end{example}
	
	One application of \Cref{thm:abelian scheme} is a more systematic study of the polarization type of the torus fibers arising in $\pi$. The study of the polarization type of the torus fibers goes back to at least \cite{sawon03}, which in turn references an earlier idea of Mukai (see Proposition 5.3 in loc. cit.). However, to our knowledge, Wieneck's series of papers \cite{wie16, wie18} were the first work to consider the polarization type as an invariant attached to a Lagrangian fibration and study them in great details for $\text{K3}^{[n]}$ and $\Kum_n$-type hyper-K\"ahler manifolds. Using \Cref{thm:abelian scheme}, we can given an alternative definition of the polarization type.
	
	\begin{definition} \label{def:polarization scheme}
		\begin{enumerate}
			\item The \emph{polarization scheme} of $\pi$ is the kernel
			\[ K_0 = \ker \lambda \]
			of the polarization \eqref{eq:polarization}.
			
			\item The \emph{polarization type} of $\pi$ is an $n$-tuple of positive integers $(d_1, \cdots, d_n)$ with $d_1 \mid \cdots \mid d_n$ such that the fibers of the polarization scheme are isomorphic to $(\ZZ/d_1 \oplus \cdots \oplus \ZZ/d_n)^{\oplus 2}$.
		\end{enumerate}
	\end{definition}
	
	The polarization scheme $K_0$ is a finite \'etale commutative group scheme over $B_0$. Hence its fibers are all isomorphic and the polarization type is well-defined. The polarization type will be an important ingredient for our method. We devote a short subsection \ref{subsec:polarization type} to collect its properties.
	
	The second theme of this section is a relation between the group $\Aut^{\circ}(X/B)$ and the polarization scheme $K_0$. We will see in \Cref{prop:aut defines global section of P} that every automorphism $f \in \Aut^{\circ} (X/B)$ defines a global section of the abelian scheme $P_0 \to B_0$. If we consider $\Aut^{\circ}(X/B)$ as a constant group scheme over $B_0$, this means we have a closed immersion of group schemes
	\begin{equation} \label{eq:aut defines global section of P}
		\Aut^{\circ} (X/B) \hookrightarrow P_0 .
	\end{equation}
	We expect the image of this injective map will \emph{contain} the polarization scheme $K_0$. This is a nontrivial claim; this would imply the polarization scheme is a constant group scheme and is extendable to $K \to B$ acting on the entire $X \to B$. We were not able to prove this claim in general, and a large part of this paper will be devoted to showing this for known deformation types of hyper-K\"ahler manifolds. The following propositions will be our technical tools for doing this. It will be convenient to introduce a temporary notation
	\[ K_0[a] = \ker (a \lambda : P_0 \to \check P_0) ,\]
	a finite \'etale commutative group scheme over $B_0$.
	
	\begin{proposition} \label{prop:polarization scheme ver1}
		Let $\pi : X \to B$ and $\pi' : X' \to B'$ be two deformation equivalent Lagrangian fibrations of compact hyper-K\"ahler manifolds. Let $a$ be any positive integer. Then the inclusion \eqref{eq:aut defines global section of P} factors through
		\begin{equation} \label{eq:aut defines global section of aK}
			\Aut^{\circ} (X/B) \hookrightarrow K_0[a]
		\end{equation}
		if and only if the same holds for $\pi'$.
	\end{proposition}
	
	\begin{proposition} \label{prop:polarization scheme ver2}
		Let $\pi : X \to B$ be a Lagrangian fibration of a compact hyper-K\"ahler manifold and $(d_1, \cdots, d_n)$ its polarization type. Assume we have an equality $c_X = d_1 \cdots d_n$. Then \eqref{eq:aut defines global section of aK} holds for $a = \operatorname{div}(h)$, where $h \in H^2 (X, \ZZ)$ is the class of $\pi^* \mathcal O_B(1)$ and its divisibility $\operatorname{div}(h)$ is as defined in \eqref{eq:divisibility}.
	\end{proposition}
	
	Note that the inclusion \eqref{eq:aut defines global section of aK} has a different direction from our desired $K_0 \hookrightarrow \Aut^{\circ}(X/B)$. Our strategy will be to first show \eqref{eq:aut defines global section of aK} for a certain value of $a$, and then deduce the relation between two subgroup schemes $K_0, \Aut^{\circ}(X/B) \subset K_0[a]$. The first proposition says the inclusion \eqref{eq:aut defines global section of aK} is deformation invariant on $\pi$. The second proposition provides at least one such an integer $a$, though this may not be the minimum possible value. The unfortunate assumption $c_X = d_1 \cdots d_n$ will be satisfied for all known deformation types of hyper-K\"ahler manifolds, so it will not be a huge problem. See \Cref{thm:polarization type computation}.
	
	Finally, we would like to mention a special consequence of the above discussion when $\pi$ admits a rational section. This may give the readers some more ideas on these objects. Recall from \Cref{ex:when pi has a rational section} that the existence of a rational section implies $X_0 \cong P_0$. Therefore, \eqref{eq:aut defines global section of P} implies the existence of certain torsion rational sections of the Lagrangian fibration
	\[ \Aut^{\circ}(X/B) \subset \MW(X/B) .\]
	For example, let us consider the case when $X$ is of $\Kum_n$-type. We will prove in \Cref{thm:aut0 computation} that the order of $\Aut^{\circ}(X/B)$ is at least $(n+1)^2$. Thus any Lagrangian fibration of a $\Kum_n$-type hyper-K\"ahler manifold \emph{must} have at least $(n+1)^2$ torsion rational sections (once it admits a single torsion rational section), and the dual hyper-K\"ahler orbifold $\check X$ is precisely the quotient of $X$ by these special torsion rational sections. To our knowledge, this phenomenon has not been observed before and it became one of our original motivations. See also \cite[\S 3--5]{sacca20} for some related ideas on the Mordell--Weil group and birational automorphisms defined by torsion rational sections.

	\subsection{Abelian scheme associated to a Lagrangian fibration} \label{sec:polarization}
	In this subsection, we present the proof of \Cref{thm:abelian scheme}. Note again that we are assuming neither $X$ is projective nor $\pi$ has a rational section.
	
	Recall that every smooth closed fiber $F$ of $\pi$ is a complex torus (holomorphic Arnold--Liouville theorem). In fact, $F$ is necessarily an abelian variety as observed by Voisin \cite[Prop 2.1]{cam06}. It would be helpful for us to review this fact. The key idea is the following cohomological lemma, which has been discovered several times independently in \cite{voi92, ogu09picard, mat16} and recently generalized into higher degree cohomologies by Shen--Yin and Voisin \cite{shen-yin22}.
	
	\begin{lemma} \label{lem:matsushita}
		Let $F$ be any smooth fiber of $\pi$ and $h \in H^2 (X, \ZZ)$ the cohomology class of $\pi^* \mathcal O_B(1)$. Then the restriction map
		\[ -_{|F} : H^2 (X, \ZZ) \to H^2 (F, \ZZ) \]
		has $\ker (-_{|F}) = h^{\perp}$. Consequently, it has $\im (-_{|F}) \cong \ZZ$.
	\end{lemma}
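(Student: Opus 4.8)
The plan is to prove the inclusions $\ker(-_{|F})\subseteq h^{\perp}$ and $h^{\perp}\subseteq\ker(-_{|F})$ separately. A useful preliminary remark: both are \emph{saturated} subgroups of $H^2(X,\ZZ)$ --- the first because $H^2(F,\ZZ)$ is torsion-free ($F$ being a complex torus), and $h^{\perp}$ because it is the kernel of $q(-,h)\colon H^2(X,\ZZ)\to\ZZ$ --- and $h^{\perp}$ has rank $b_2(X)-1$ since $q$ is non-degenerate and $h\neq 0$. It therefore suffices to prove (a) $x_{|F}=0\Rightarrow q(x,h)=0$, and (b) $\rk\im(-_{|F})\le 1$: indeed (b) forces $\rk\ker(-_{|F})=b_2(X)-1$, so by (a) the lattice $\ker(-_{|F})$ is a saturated sublattice of $h^{\perp}$ of the same rank, hence equal to it, and then $\im(-_{|F})$ is a rank-one subgroup of the torsion-free group $H^2(F,\ZZ)$, so $\cong\ZZ$ (the ``consequently'' clause).

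The computations rest on the identity $h^{n}=[F]$ in $H^{2n}(X,\ZZ)$, which holds because $h=\pi^{*}c_1(\mathcal O_B(1))$ and $c_1(\mathcal O_B(1))^n$ is the point class of $B=\PP^n$, together with the Fujiki relations. Since $h^{2n}=\pi^{*}(c_1(\mathcal O_B(1))^{2n})=0$ we get $\int_X h^{2n}=0$, so \eqref{eq:fujiki relation} gives $q(h)=0$. Next, by the projection formula, for $x_1,\dots,x_n\in H^2(X,\QQ)$ one has $\int_F x_{1|F}\cdots x_{n|F}=\int_X x_1\cdots x_n\,h^{n}$; expanding the right-hand side by the polarized Fujiki relation and deleting every summand with a factor $q(h,h)=0$, only the summands that pair each of the $n$ copies of $h$ with a distinct $x_i$ survive, leaving
\[
\int_F x_{1|F}\cdots x_{n|F}\ =\ c_X\,n!\,\prod_{i=1}^{n}q(x_i,h),
\]
an identity I will call $(\star)$. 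For (a): fix a Kähler class $\omega$ on $X$. In $\int_X\omega^{2n-1}h$ the single $h$ must pair with an $\omega$ in every surviving term of the polarized Fujiki relation, so this integral is a positive multiple of $q(\omega,h)\,q(\omega,\omega)^{n-1}$; since the integral is positive and $q(\omega,\omega)>0$, we obtain $q(\omega,h)>0$. Then $(\star)$ with $x_1=x$ and $x_2=\dots=x_n=\omega$ shows that $x_{|F}=0$ forces $0=c_X\,n!\,q(x,h)\,q(\omega,h)^{n-1}$, hence $q(x,h)=0$.

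The heart of the matter is (b). For $n=1$ it is trivial since $\rk H^2(F,\ZZ)=1$, so assume $n\ge 2$. The restriction map is a morphism of Hodge structures and $H^{2,0}(X)$ is spanned by the holomorphic symplectic form, which restricts to $0$ on the Lagrangian fiber $F$; hence $\im(-_{|F})$ lies in $H^{1,1}(F)$, and $V:=\im(-_{|F})\otimes\RR$ is a subspace of $W:=H^{1,1}(F,\RR)$, which has dimension $n^2$. Put $B(\alpha,\beta)=\int_F\alpha\wedge\beta\wedge\omega_{|F}^{n-2}$ on $W$, where $\omega_{|F}$ is the restricted Kähler class. By the Hodge--Riemann bilinear relations $B(\omega_{|F},\omega_{|F})=\int_F\omega_{|F}^{n}>0$ and $B$ is negative definite on the $B$-orthocomplement of $\omega_{|F}$ (the primitive $(1,1)$-classes), so $B$ is non-degenerate on $W$ of signature $(1,n^2-1)$. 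On the other hand, running $(\star)$ with one additional restricted Kähler class gives $B(x_{|F},y_{|F})=c_X\,n!\,q(\omega,h)^{n-2}\,q(x,h)\,q(y,h)$; thus $B|_V$ is positive semi-definite of rank $\le 1$ (it factors through the linear functional $q(-,h)$) and is nonzero, because $B(\omega_{|F},\omega_{|F})>0$. But in a non-degenerate real quadratic space of signature $(1,m)$ any subspace on which the form is positive semi-definite has dimension at most $1$: a subspace of dimension $\ge 2$ would contain a vector $u$ with $B(u,u)>0$ together with a nonzero vector of the negative-definite subspace $u^{\perp}$, yielding a vector of negative square. Hence $\dim V\le 1$, which is (b). I expect this final argument --- pinning down the Hodge--Riemann sign so $B$ has signature exactly $(1,n^2-1)$ on $H^{1,1}(F)$, verifying $q(\omega,h)>0$ without assuming $X$ projective, and extracting the linear-algebra lemma on positive semi-definite subspaces of a Lorentzian form --- to be the only delicate point; the rest is formal.
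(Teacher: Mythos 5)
The paper does not prove this lemma itself --- it is quoted from \cite{voi92, ogu09picard, mat16} --- and your argument is correct and follows essentially the same standard route as those references: the polarized Fujiki relation to evaluate $\int_F x_{1|F}\cdots x_{n|F}$ via $h^n=[F]$ and $q(h)=0$, giving $\ker(-_{|F})\subseteq h^{\perp}$, combined with the Hodge--Riemann/Hodge index theorem on $H^{1,1}(F,\RR)$ to force $\rk\,\im(-_{|F})\le 1$. The saturation bookkeeping at the start and the positivity check $q(\omega,h)>0$ are handled correctly, so nothing further is needed.
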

	
	\begin{corollary} [Voisin] \label{cor:voisin}
		The image of the restriction map $-_{|F}$ is generated by an ample class of $F$. As a result, $F$ is an abelian variety.
	\end{corollary}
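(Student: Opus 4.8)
The plan is to leverage \Cref{lem:matsushita} together with positivity of $h$ along the fibers. First I would recall that $h = [\pi^* \mathcal O_B(1)]$ is the pullback of an ample class on $B = \PP^n$, hence it is nef on $X$ and its restriction $h_{|F}$ to any fiber $F$ is nef. But $F$ is a smooth fiber of $\pi$, so the image of $-_{|F}$ is exactly $\ZZ \cdot h_{|F}$ by \Cref{lem:matsushita}, and in particular $h_{|F}$ generates the (rank one) lattice $\im(-_{|F}) \subset H^2(F,\ZZ)$.

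The key point to extract is that $h_{|F}$ is not merely nef but genuinely \emph{ample} on $F$. One way: since $\pi$ is a proper surjective morphism with $B$ projective and $\mathcal O_B(1)$ ample, the line bundle $\pi^*\mathcal O_B(1)$ is $\pi$-trivial, so this is not immediate from relative ampleness. Instead, I would argue that a positive power of $h$ is effective on $X$ (it is a pullback of a very ample class on $\PP^n$), and more usefully, that the Beauville--Bogomolov form satisfies $q(h) = 0$ while $h$ is a nonzero nef class. A cleaner route: take any K\"ahler class $\omega$ on $X$; the class $h + \epsilon \omega$ is K\"ahler for all $\epsilon > 0$, hence its restriction to $F$ is a K\"ahler class on the complex torus $F$. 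Letting $\epsilon \to 0$, $h_{|F}$ is a limit of K\"ahler classes, so it is nef; and since $h_{|F} \ne 0$ (it generates $\ZZ \subset H^2(F,\ZZ)$) and $F$ is a complex torus, a nonzero nef class which is integral and indivisible in a torus of dimension $n$ need not be ample a priori — so I would instead invoke that $h_{|F}^n = \int_F h^n|_F$ can be computed via the Fujiki-type relation or directly: the restriction of $h^n$ to $F$ computes the degree of $\pi$ over a point of $B$ suitably interpreted, and this is positive. More precisely, $\int_X h^n \cdot [\text{fiber class}]$-type arguments show $h_{|F}^n > 0$, and a nef class with positive top self-intersection on a complex torus is ample (on an abelian variety / torus, nef and big implies ample since there are no proper subvarieties obstructing Nakai--Moishezon — indeed a line bundle on a complex torus with $\int_F c_1^n > 0$ and $c_1$ nef is a polarization).

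Once $h_{|F}$ is shown to be an ample (equivalently, positive definite Hermitian) class, the torus $F$ carries a polarization, so by the classical theorem it is an abelian variety; this gives the second assertion. The main obstacle is the step asserting \emph{ampleness} rather than just nefness of $h_{|F}$: one must genuinely use that $h$ is a nonzero nef class whose restriction generates a rank-one sublattice, rule out the degenerate case where $h_{|F}$ is nef with vanishing top power (which would force $h_{|F}$ to lie on the boundary of the ample cone with a nontrivial null space, contradicting that it restricts from the unique, up to scale, relatively trivial direction), and conclude positivity of $\int_F h_{|F}^n$. I expect this to follow from the observation that $h_{|F} \ne 0$ together with the Hodge index / signature of the intersection form on the torus $F$: a nonzero nef integral class on an $n$-dimensional complex torus that is a limit of K\"ahler classes and lies in the image of restriction from a hyper-K\"ahler ambient space must have strictly positive self-intersection, hence defines a polarization. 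I would phrase the final write-up to cite Voisin's original argument in \cite[Prop 2.1]{cam06} for the delicate positivity input, since that is exactly the content being recalled here.
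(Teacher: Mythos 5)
There is a genuine gap, and it is fatal to the argument as written: the class $h_{|F}$ that you propose to show is ample is in fact \emph{zero}. The fiber $F$ is contracted to a point by $\pi$, so the restriction of $h = c_1(\pi^* \mathcal O_B(1))$ to $F$ vanishes; equivalently, $q(h,h) = 0$ (as $h$ is isotropic for the Beauville--Bogomolov form), so $h \in h^{\perp} = \ker(-_{|F})$ by \Cref{lem:matsushita}. Your claim that ``the image of $-_{|F}$ is exactly $\ZZ \cdot h_{|F}$'' misreads the lemma: the lemma says the \emph{kernel} is $h^{\perp}$, so the image of rank one is generated by the restriction $x_{|F}$ of some \emph{other} class $x \in H^2(X,\ZZ)$ with $q(x,h) \neq 0$, not by $h_{|F}$. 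Consequently every subsequent step --- nefness of $h_{|F}$ as a limit of restricted K\"ahler classes, positivity of $\int_F h_{|F}^n$, Nakai--Moishezon on the torus --- is vacuous or false, since all these quantities are identically zero.

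The paper's proof is both shorter and avoids the positivity issues you flag. Let $y$ be an integral generator of the rank-one image of $-_{|F}$. Pick any K\"ahler class $\omega$ on $X$; its restriction $\omega_{|F}$ is a K\"ahler class on the complex torus $F$, and since the image of the real restriction map $H^2(X,\RR) \to H^2(F,\RR)$ is the real span $\RR y$ of the integral image, we get $\omega_{|F} = c\, y$ with $c \neq 0$. Hence $\pm y$ is a K\"ahler class, and being integral it is ample, so $F$ is an abelian variety. If you want to salvage your approach, replace $h$ throughout by a class $x$ with $q(x,h) \neq 0$ (which exists by nondegeneracy of $q$); but even then the cleanest way to get ampleness of $x_{|F}$ up to sign and scale is exactly the K\"ahler-restriction argument above, not a nef-and-big computation.
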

	\begin{proof}
		Say $y$ is an integral generator of \Cref{lem:matsushita}. Choose any K\"ahler class $\omega \in H^2 (X, \RR)$ and consider its restriction $\omega_{|F}$, a K\"ahler class on $F$. It has to be a nonzero real multiple of $y$. This means, up to sign, $y$ has to be a K\"ahler class on $F$. Hence $y$ is an integral K\"ahler class, so it is ample.
	\end{proof}
	
	We caution the reader to be aware that the ample generator $y$ of the image of the restriction map need not be primitive (see \Cref{prop:non primitive polarization}). One reasonable choice of a polarization on an abelian variety fiber $F$ is a unique \emph{primitive} ample class in $H^2 (F, \ZZ)$ parallel to $y$. \Cref{thm:abelian scheme} is essentially a more global way to formulate this over the whole base $B_0$.
	
	We divide the proof of \Cref{thm:abelian scheme} into three parts: (1) an explicit construction of the polarized abelian scheme $P_0$, (2) proving such a construction makes $X_0$ a torsor under $P_0$, and finally (3) its uniqueness. The uniqueness should be a more general fact about arbitrary torsors, at least in the algebraic case (see  Moret-Bailly's answer in \cite{mathoverflow:mor}). The construction of $P_0$ works for any proper family of complex tori. The uniqueness of the polarization is the only part that needs the fact $X_0$ is obtained from a Lagrangian fibered hyper-K\"ahler manifold $X$.
	
	The proof of the construction part closely follows \cite{vgee-voi16}, but for completeness we reproduce their argument here. 
	
	\begin{proof} [Proof of \Cref{thm:abelian scheme}, construction]
		Apply the global invariant cycle theorem (for proper maps between compact K\"ahler manifolds \cite{deligne71}) and \Cref{lem:matsushita} to obtain
		\[ H^0 (B_0, R^2 \pi_* \QQ) = \im (H^2 (X, \QQ) \to H^2 (F, \QQ)) \cong \QQ .\]
		Hence, there exists a unique homomorphism $(R^2 \pi_* \QQ)^{\vee} \to \QQ$ of local systems on $B_0$ up to scalar. This is a homomorphism of $\QQ$-VHS: fiberwise, \Cref{cor:voisin} proves the image of $H^2 (X, \QQ) \to H^2 (F, \QQ)$ is an ample class. Restrict it to the morphism of $\ZZ$-VHS $(R^2 \pi_* \ZZ)^{\vee} \to \ZZ$. The morphism can be uniquely determined once we assume it to be primitive and represents an ample class on each fiber. Finally, use the fact that $\pi : X_0 \to B_0$ is a family of complex tori (abelian varieties) and obtain an isomorphism $R^2 \pi_* \ZZ = \wedge^2 R^1 \pi_* \ZZ$. The result is a primitive polarization
		\begin{equation} \label{eq:polarization_vhs}
			(R^1 \pi_* \ZZ)^{\vee} \otimes (R^1 \pi_* \ZZ)^{\vee} \to \ZZ .
		\end{equation}
		
		We have constructed a weight $-1$ $\ZZ$-VHS $(R^1 \pi_* \ZZ)^{\vee}$ equipped with a polarization \eqref{eq:polarization_vhs}. Now use a formal equivalence of categories between polarized weight $-1$ $\ZZ$-VHS and that of polarized abelian schemes (e.g., \cite[\S 5.2]{deligne72} \cite[\S 4.4]{deligne71}). This constructs our desired abelian scheme $\nu : P_0 \to B_0$ with a unique primitive polarization $\lambda : P_0 \to \check P_0$ over $B_0$. To prove $P_0$ is simple, we may prove the corresponding VHS $R^1 \pi_* \QQ$ is simple. This is tacitly proved in \cite{vgee-voi16} and later explicitly stated in \cite[Lem 4.5]{voi18}. The idea is that if $R^1 \pi_* \QQ$ splits as a direct sum $\mathcal V_1 \oplus \mathcal V_2$ of two VHS, then each of them has their own polarizations, forcing $h^0 (B_0, R^2 \pi_* \QQ) \ge h^0 (B_0, \wedge^2 \mathcal V_1) + h^0 (B_0, \wedge^2 \mathcal V_2) \ge 2$. We omit the details here.
	\end{proof}
	
	\begin{proof} [Proof of \Cref{thm:abelian scheme}, torsor]
		Consider an analytic open covering $\{ B_i : i \in I \}$ of $B_0$ so that over each $B_i$, the restriction of the Lagrangian fibration $\pi : X_i \to B_i$ admits at least one holomorphic section $s_i : B_i \to X_i$. Considering $s_i$ as a zero section, $\pi : X_i \to B_i$ becomes an abelian scheme. Hence by the equivalence of abelian schemes and $(R^1\pi_*\ZZ)^{\vee}$, $\nu$ and $\pi$ are isomorphic over $B_i$ by $\phi_i : X_i \to P_i$ sending $s_i$ to the zero section of $P_i$.
		
		Now use the isomorphism $\phi_i$ to transform the group law $+ : P_i \times_{B_i} P_i \to P_i$ into a $P_i$-action on $X_i$. That is, we define a group action morphism by
		\[ \rho_i : P_i \times_{B_i} X_i \to X_i, \qquad (p_i, x_i) \mapsto \phi_i^{-1} (\phi_i (x_i) + p_i) .\]
		We want to patch $\rho_i$ together to define a group action $\rho : P_0 \times_{B_0} X_0 \to X_0$ over the entire $B_0$. To do so, we need to check whether the definitions of $\rho_i$ and $\rho_j$ coincides over the intersection $B_{ij} = B_i \cap B_j$, i.e.,
		\begin{equation} \label{eq:descent_condition}
			\phi_i^{-1} (\phi_i (x_{ij}) + p_{ij}) = \phi_j^{-1} (\phi_j (x_{ij}) + p_{ij}) \qquad \mbox{for all}\quad (p_{ij}, x_{ij}) \in P_{ij} \times_{B_{ij}} X_{ij} .
		\end{equation}
		
		Over $B_{ij}$, one has a transition function $\phi_j \circ \phi_i^{-1} : P_{ij} \to X_{ij} \to P_{ij}$, an automorphism of $P_{ij}$. Recall that the isomorphisms $\phi_i$ and $\phi_j$ are constructed by choosing the zero sections $s_i$ and $s_j$, and the corresponding isomorphisms $\phi_i : X_{ij} \cong P_{ij}$ and $\phi_j : X_{ij} \cong P_{ij}$ are as abelian schemes. From it, we notice the automorphism $\phi_j \circ \phi_i^{-1} : P_{ij} \to P_{ij}$ is a \emph{translation} automorphism. The translation is by $\phi_j \circ \phi_i^{-1}(0)$, the difference of the two zero sections. With this, we have a sequence of identities
		\begin{align*}
			\phi_j(x_{ij}) + p_{ij} = \phi_j \circ \phi_i^{-1} (\phi_i (x_{ij})) + p_{ij} &= \big( \phi_i (x_{ij}) + \phi_j \circ \phi_i^{-1} (0) \big) + p_{ij} \\
			&= \big( \phi_i (x_{ij}) + p_{ij} \big) + \phi_j \circ \phi_i^{-1} (0) = \phi_j \circ \phi_i^{-1} \big( \phi_i (x_{ij}) + p_{ij} \big) .
		\end{align*}
		This proves \eqref{eq:descent_condition}. Hence $\rho_i$ patches together and defines a morphism $\rho : P_0 \times_{B_0} X_0 \to X_0$. The group action axioms are all easily verified. Also, $X_0$ is clearly a $P_0$-torsor by construction.
	\end{proof}
	
	\begin{proof} [Proof of \Cref{thm:abelian scheme}, uniqueness]
		Let $\nu : P_0 \to B_0$ be a (not necessarily projective) abelian scheme so that $\pi$ becomes a torsor under $\nu$. We claim $R^1 \nu_* \ZZ \cong R^1 \pi_* \ZZ$ as VHS over $B_0$. Consider the group scheme action map
		\[\begin{tikzcd}
			P_0 \times_{B_0} X_0 \arrow[r, "\rho"] \arrow[dr, "\mu"'] & X_0 \arrow[d, "\pi"] \\
			& B_0
		\end{tikzcd}.\]
		From the diagram, we have a pullback morphism between the VHS $\rho^* : R^1 \pi_* \ZZ \to R^1 \mu_* \ZZ$. The latter VHS is isomorphic to the direct sum $R^1 \nu_* \ZZ \oplus R^1 \pi_* \ZZ$ by the K\"unneth formula (e.g., \cite[VII.2.7]{ive:sheaves}) and decomposition theorem for smooth proper morphisms. Hence composing with the first projection, we obtain a morphism $R^1 \pi_* \ZZ \to R^1 \nu_* \ZZ$. Now over a small analytic open subset $U \subset B_0$, fix any holomorphic section of $\pi : X_U \to U$ so that we can identify $P_U$ and $X_U$. Hence $\rho$ becomes the addition operation of the abelian scheme $X_U \times_U X_U \to X_U$. With this description, the pullback morphism is fiberwise $\rho^* : H^1(F, \ZZ) \to H^1 (F, \ZZ) \oplus H^1 (F, \ZZ)$, $x \mapsto (x,x)$. Hence the morphism $R^1 \pi_* \ZZ \to R^1 \nu_* \ZZ$ is an isomorphism over $U$, and the claim follows.
	\end{proof}
	
	\begin{remark}
		\begin{enumerate}
			\item A posteriori, one has an interpretation of \Cref{cor:voisin} in terms of \Cref{thm:abelian scheme}. The abelian scheme $\nu$ is projective, and $\nu$ and $\pi$ are fiberwise isomorphic. Hence the smooth fibers of $\pi$ are projective, even when the hyper-K\"ahler manifold $X$ is not.
			
			\item \Cref{thm:abelian scheme} is also related to Oguiso's result \cite{ogu09picard} in the following sense. Consider the generic fiber $P_L \to \Spec L$ of $\nu : P_0 \to B_0$. It is an abelian variety over $L$. Since $P_0$ has a unique polarization (up to scalar), $P_L$ has a unique polarization. Now ampleness is an open condition in $\NS(P_L)_{\RR}$, so the uniqueness of the polarization implies $\rho(P_L) = 1$.
			
			\item If we further assume $X$ is projective, then the discussion becomes algebraic and hence the smooth morphism $\pi : X_0 \to B_0$ admits \'etale local sections. Thus $\pi$ becomes an \'etale torsor under $\nu$.
		\end{enumerate}
	\end{remark}
	
	The unique abelian scheme $P_0$ above should be considered as the identity component of the automorphism scheme of $\pi$ (see \cite[\S 8.3]{ari-fed16} and \Cref{rmk:automorphism scheme} below). Define a sheaf of relative automorphisms acting by translations on each fibers by
	\begin{equation}
		\underline{\Aut}^{tr}_{X_0/B_0} (U) = \{ f : X_U \to X_U : U \mbox{-automorphism acting by translation on each fibers} \} .
	\end{equation}
	
	\begin{proposition} \label{prop:abelian scheme is automorphism scheme}
		The abelian scheme $\nu : P_0 \to B_0$ represents $\underline{\Aut}^{tr}_{X_0/B_0}$.
	\end{proposition}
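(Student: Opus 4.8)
The plan is to use only two facts: that $X_0$ is an analytic torsor under $P_0$, with action morphism $\rho \colon P_0 \times_{B_0} X_0 \to X_0$ produced in the torsor part of the proof of \Cref{thm:abelian scheme}, and that $P_0$ is commutative. With these the statement reduces to the elementary principle that the equivariant automorphisms of a torsor under a commutative group are exactly the translations by global sections of that group. First I would show that, for an analytic open $U \subseteq B_0$, a $U$-automorphism $f$ of $X_U$ acts by translation on every fiber if and only if it is $P_0$-equivariant, i.e.\ $f \circ \rho(p,-) = \rho(p,-) \circ f$ as morphisms over $U$. One implication is trivial (restrict the equivariance to a fiber). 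For the converse, on a single fiber $F_b$ --- a torsor under the abelian variety $(P_0)_b$ --- an automorphism is a translation of the torsor precisely when it commutes with the $(P_0)_b$-action, by the classical computation with a chosen base point; hence ``$f$ translates each fiber'' says exactly that the two morphisms $(p,x) \mapsto f(\rho(p,x))$ and $(p,x) \mapsto \rho(p,f(x))$ from $(P_0 \times_{B_0} X_0)|_U$ to $X_0|_U$ agree on every fiber, hence agree. So $\underline{\Aut}^{tr}_{X_0/B_0}(U)$ is precisely the group of $P_0$-equivariant $U$-automorphisms of $X_U$.

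Next I would parametrize these by $P_0(U)$. Since $P_0$ is commutative, each $p \in P_0(U)$ gives a $P_0$-equivariant $U$-automorphism $\rho(p,-)$ with inverse $\rho(-p,-)$, and $\rho(p,-)\circ\rho(q,-)=\rho(p+q,-)$, so we get a homomorphism $P_0(U) \to \underline{\Aut}^{tr}_{X_0/B_0}(U)$. It is injective: the torsor property makes $(p,x)\mapsto(\rho(p,x),x)$ an isomorphism $P_0 \times_{B_0} X_0 \xrightarrow{\ \sim\ } X_0 \times_{B_0} X_0$ (checked on a trivializing cover of $B_0$), so $\rho(p,-)=\id$ forces $p$ to be the zero section. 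For surjectivity, given a $P_0$-equivariant $f$ I would compose its graph $x\mapsto(f(x),x)$ with the inverse of that isomorphism to obtain a morphism $\sigma \colon X_U \to P_0$ over $B_0$ with $\rho(\sigma(x),x)=f(x)$; equivariance together with the uniqueness built into the torsor isomorphism force $\sigma(\rho(p,x))=\sigma(x)$, and since $P_0$ acts transitively on the fibers of $\pi$, $\sigma$ is constant along the fibers of $\pi \colon X_U \to U$. As $\pi$ is a surjective submersion it has local holomorphic sections, along which $\sigma$ descends to a holomorphic section $\bar\sigma \in P_0(U)$ with $f = \rho(\bar\sigma,-)$.

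Finally, the bijections $P_0(U) \cong \underline{\Aut}^{tr}_{X_0/B_0}(U)$ are visibly compatible with restriction to smaller opens and with composition, so they assemble into an isomorphism of sheaves of groups $P_0 \cong \underline{\Aut}^{tr}_{X_0/B_0}$ on $B_0$; equivalently, $\nu \colon P_0 \to B_0$ represents $\underline{\Aut}^{tr}_{X_0/B_0}$.

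I expect the only real obstacle to be the bookkeeping forced by the analytic, possibly non-projective, setting: there is no automorphism group scheme available off the shelf, so one must use the explicit torsor trivializations and, in the surjectivity step, check that the fiberwise-constant map $\sigma$ descends to an honest \emph{holomorphic} section of $P_0$ rather than merely a set-theoretic one --- which is exactly what local holomorphic sections of the submersion $\pi$ provide. Everything else is standard torsor manipulation.
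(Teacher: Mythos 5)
Your proof is correct and follows essentially the same route as the paper: both set up the sheaf homomorphism $\underline{P_0} \to \underline{\Aut}^{tr}_{X_0/B_0}$ via the torsor action, obtain injectivity from effectiveness of the action, and obtain surjectivity from the local triviality of the torsor. The only (cosmetic) difference is that the paper proves surjectivity stalkwise by choosing a local section of $\pi$ to identify $X_U \cong P_U$ as abelian schemes, whereas you characterize fiberwise translations as $P_0$-equivariant automorphisms and descend the resulting graph map $\sigma$ over an arbitrary open.
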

	\begin{proof}
		This almost follows from the definition. Let us temporarily denote by $\underline{P_0}$ the sheaf of analytic local sections of $\nu$. Since $P_0$ acts on $X_0$ by fiberwise translation, we have a sheaf homomorphism $\underline{P_0} \to \underline{\Aut}^{tr}_{X_0/B_0}$. The homomorphism is injective because the $P_0$-action is effective. Now $X_0$ was in fact a torsor under $P_0$. Over a small analytic open subset $U \subset B_0$, $X_U \to U$ admits a section so it becomes an abelian scheme isomorphic to $P_U \to U$. Hence the set of sections $P_U(U) = X_U(U)$ consists of precisely the translation automorphisms of $X_U$. This proves the homomorphism $\underline{P_0} \to \underline{\Aut}^{tr}_{X_0/B_0}$ is surjective stalkwise. Thus it is an isomorphism.
	\end{proof}
	
	\begin{remark} \label{rmk:automorphism scheme}
		We have later learned that the full relative automorphism sheaf $\SheafAut_{X/B}$ of the Lagrangian fibration is representable by an analytic group scheme $\Aut_{X/B} \to B$. This is essentially a consequence of the existence of the Hilbert scheme of $X \times_B X \to B$. See \cite[Thm 5.23]{nit:fga} for the case when $X$ is projective. The proof for the non-projective case roughly goes as follows. Since we are assuming $B$ is smooth, $\pi$ is flat by the miracle flatness theorem (e.g., \cite[\S 3.20]{fis:cg}). Deduce from \cite{pou69} the existence of the Hilbert scheme $\operatorname{Hilb}_{X \times_B X/B} \to B$, an (infinite) disjoint union of complex spaces proper over $B$. Imitate the proof of \cite[Thm 1.10]{kol:ratcurve} to show a morphism $\SheafAut_{X/B} \to \underline{\operatorname{Hilb}}_{X \times_B X/B}$ sending an automorphism to its graph is an open subfunctor. This proves $\SheafAut_{X/B}$ is representable by an open subspace of a complex space $\operatorname{Hilb}_{X \times_B X/B}$.
		
		Therefore, $\nu : P_0 \to B_0$ is really the identity component of the group scheme $\Aut_{X_0/B_0} \to B_0$ in a precise sense.
	\end{remark}

	\subsection{Polarization type and divisibility of $\pi^* \mathcal O_B(1)$} \label{subsec:polarization type}
	The purpose of this subsection is to study two numerical invariants associated to a Lagrangian fibered hyper-K\"ahler manifold and study their relations: they are the polarization type of $\pi$ in \Cref{def:polarization scheme} and the divisibility of the line bundle $\pi^* \mathcal O_B(1)$. Throughout, we will write $h \in H^2 (X, \ZZ)$ for the first Chern class of $\pi^* \mathcal O_B(1)$ and $\operatorname{div}(h)$ for the divisibility \eqref{eq:divisibility} of $h$.
	
	The polarization type of $\pi$ is an $n$-tuple of positive integers $(d_1, \cdots, d_n)$ with $d_1 \mid \cdots \mid d_n$ such that each fiber of the polarization scheme $K_0$ is isomorphic to $(\ZZ/d_1 \oplus \cdots \oplus \ZZ/d_n)^{\oplus 2}$. Since we are assuming the polarization $\lambda : P_0 \to \check P_0$ is primitive, we always have $d_1 = 1$. The polarization type was already computed for all currently known deformation types of hyper-K\"ahler manifolds. The computations were based on its original definition in \cite{wie16}. Therefore, to use the previous results we first need to show our definition is equivalent to the original one.
	
	\begin{lemma} \label{lem:definition of polarization type is equivalent}
		The polarization type in \Cref{def:polarization scheme} is equivalent to the definition in \cite{wie16}.
	\end{lemma}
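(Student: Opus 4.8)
The plan is to reduce both notions to a statement about one smooth fiber $F$ of $\pi$ and compare them there. Recall that Wieneck's polarization type in \cite{wie16} is, by definition, the elementary‑divisor type of the polarization carried by a smooth fiber $F$, where the relevant ample class is the unique \emph{primitive} class $y_{\mathrm{prim}} \in H^2(F,\ZZ)$ on the ray spanned by the image of the restriction map $H^2(X,\ZZ) \to H^2(F,\ZZ)$; by \Cref{lem:matsushita} and \Cref{cor:voisin} this image is an infinite cyclic group generated by an ample class, so $y_{\mathrm{prim}}$ is well defined (after fixing the ample sign). The classical theory of polarizations on abelian varieties attaches to $(F,y_{\mathrm{prim}})$ an $n$-tuple $(d_1,\dots,d_n)$ with $d_1\mid\cdots\mid d_n$, characterized equivalently by the elementary divisors of $y_{\mathrm{prim}}$ viewed as an alternating map $H_1(F,\ZZ)\to H^1(F,\ZZ)$, or by $\ker\phi_{y_{\mathrm{prim}}}\cong\bigoplus_{i=1}^n(\ZZ/d_i)^{\oplus 2}$ where $\phi_{y_{\mathrm{prim}}}\colon F\to\check F$ is the induced polarization isogeny; primitivity of $y_{\mathrm{prim}}$ forces $d_1=1$. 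This tuple is exactly what \cite{wie16} records.

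On the other hand, \Cref{def:polarization scheme} takes the tuple $(d_1',\dots,d_n')$ with $(\ker\lambda)_b\cong\bigoplus_i(\ZZ/d_i')^{\oplus 2}$, for $\lambda\colon P_0\to\check P_0$ the unique primitive polarization of the associated abelian scheme and $b\in B_0$ any point (all fibers of the finite \'etale group scheme $K=\ker\lambda$ being isomorphic). So it suffices to identify $(\ker\lambda)_b$ with $\ker\phi_{y_{\mathrm{prim}}}$ on $F=X_b$. First I would work over a small analytic open $U\ni b$ on which $\pi\colon X_U\to U$ has a section; using it as a zero section makes $X_U$ an abelian scheme, and — exactly as in the uniqueness argument of \Cref{thm:abelian scheme} — this identifies $X_U\cong P_U$ compatibly with the isomorphism of VHS $R^1\pi_*\ZZ\cong R^1\nu_*\ZZ$ over $U$. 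Under this identification $\lambda_{|U}$ becomes a polarization of $X_U\to U$; and by the construction of $\lambda$ in \Cref{thm:abelian scheme} — it is produced, via the Deligne equivalence between polarized weight $-1$ $\ZZ$-VHS and polarized abelian schemes, from the primitive fiberwise-ample integral generator of $H^0(B_0,R^2\pi_*\ZZ)\subset R^2\pi_*\ZZ=\wedge^2 R^1\pi_*\ZZ$, which fiberwise is precisely $y_{\mathrm{prim}}$ (note $H^0(B_0,R^2\pi_*\ZZ)=\QQ y\cap H^2(F,\ZZ)=\ZZ\,y_{\mathrm{prim}}$ by the global invariant cycle theorem) — the restriction of this polarization to $F$ is the polarization isogeny $\phi_L\colon F\to\check F$ of the line bundle $L$ with $c_1(L)=y_{\mathrm{prim}}$. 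Hence $(\ker\lambda)_b=\ker\phi_{y_{\mathrm{prim}}}$, so $(d_i')=(d_i)$ and the two definitions coincide.

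The one genuinely delicate point is this last identification: one must verify that under the equivalence of categories used to build $(P_0,\lambda)$, the abelian-scheme polarization $\lambda$ corresponds on each fiber to the polarization isogeny of the \emph{primitive} class $y_{\mathrm{prim}}$ (not of a proper multiple of it, nor of some other alternating form), and that the elementary divisors of $\ker\phi_{y_{\mathrm{prim}}}$ agree with the elementary divisors of $y_{\mathrm{prim}}$ as an integral alternating form on $H^1(F,\ZZ)$. This is bookkeeping with the duality and Tate-twist conventions in the Hodge-theoretic dictionary — the paper works throughout with $(R^1\pi_*\ZZ)^\vee$, so a dualization is involved — together with the classical Frobenius normal form of an alternating integral form; both are standard, but they should be spelled out with care.
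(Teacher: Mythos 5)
Your argument is correct and follows essentially the same route as the paper: both reduce the comparison to a single smooth fiber $F$, identify the polarization $\lambda$ with the unique primitive fiberwise-ample generator of $H^0(B_0,R^2\pi_*\ZZ)$ via the global invariant cycle theorem, and observe that this primitive class is exactly what \cite{wie16} uses. The paper's proof stops at the level of the primitive morphism of $\pi_1(B_0)$-modules and leaves the identification $(\ker\lambda)_b=\ker\phi_{y_{\mathrm{prim}}}$ (and the elementary-divisor bookkeeping) implicit, whereas you spell it out; that extra care is harmless and arguably an improvement.
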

	\begin{proof}
		Recall our definition of the polarization is constructed as the primitive morphism $(R^2\pi_*\ZZ)^{\vee} \to \ZZ$ of local systems, or equivalently a primitive morphism $\ZZ \to R^2\pi_* \ZZ$. Fix any smooth fiber $F$ of $\pi$. As a local system, $R^2\pi_*\ZZ$ is identified with a $\ZZ$-module $H^2(F,\ZZ)$ with a monodromy $\pi_1(B_0)$-action. In this setting, the primitive morphism $\ZZ \to R^2\pi_* \ZZ$ of local systems corresponds to a primitive homomorphism $\ZZ \to H^2 (F, \ZZ)$ of $\pi_1(B_0)$-modules. In other words, our definition of the polarization type is equivalent to the primitive polarization type of a single smooth fiber $F$ coming from the image of $H^2 (X, \QQ) \to H^2 (F, \QQ)$ by the global invariant cycle theorem. This was precisely how Wieneck defined the polarization type.
	\end{proof}
	
	We can now use the previous results on computations of the polarization type of $\pi$. The following theorem collects all possible polarization types that can occur for known deformation types of hyper-K\"ahler manifolds. For $\text{K3}^{[n]}$ and $\Kum_n$-types the computations are done by \cite{wie16, wie18}. For OG10 and OG6, the computations are contained in \cite{mon-ono22} and \cite{mon-rap21}, respectively.
	
	\begin{theorem} [{\cite{wie16, wie18}, \cite{mon-ono22}, \cite{mon-rap21}}] \label{thm:polarization type computation}
		Let $\pi : X \to B$ be a Lagrangian fibered compact hyper-K\"ahler manifold. Then the polarization type of $\pi$ is
		\[\begin{cases}
			(1,\cdots,1) \quad & \mbox{if } X \mbox{ is of } \text{K3}^{[n]} \mbox{-type;} \\
			(1,1,1,1,1) \quad & \mbox{if } X \mbox{ is of OG10-type;} \\
			(1,\cdots,1, d_1, d_2) \quad & \mbox{if } X \mbox{ is of } Kum_n \mbox{-type; and} \\
			(1,2,2) \quad & \mbox{if } X \mbox{ is of OG6-type}.
		\end{cases}\]
		When $X$ is of $\Kum_n$-type, we set $d_1 = \operatorname{div}(h)$ in $H^2 (X, \ZZ)$ and $d_2 = \frac{n+1}{d_1}$.
	\end{theorem}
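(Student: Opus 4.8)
The plan is to reduce the statement to the computations already available in the literature. By \Cref{lem:definition of polarization type is equivalent}, the polarization type of \Cref{def:polarization scheme} coincides with Wieneck's invariant, namely the primitive polarization type of a general fiber $F$ — viewed as a $\pi_1(B_0)$-module — arising from the image of $H^2(X,\ZZ) \to H^2(F,\ZZ)$ via the global invariant cycle theorem. Granting this identification, the $\text{K3}^{[n]}$ and $\Kum_n$ cases are exactly the contents of \cite{wie16, wie18}, the OG10 case is \cite{mon-ono20}, and the OG6 case is \cite{mon-rap21}, so the remaining work is to recall why each of those computations holds and to match up notation.

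Two structural facts keep the bookkeeping uniform. First, the polarization type is deformation invariant on $\pi$, for the same reason as \Cref{thm:aut0 is deformation invariant}: in a family of Lagrangian fibered hyper-K\"ahler manifolds, the polarization scheme $K$ spreads out to a finite \'etale commutative group scheme over the relative smooth locus, so its fiber, the finite abelian group $(\ZZ/d_1 \oplus \cdots \oplus \ZZ/d_n)^{\oplus 2}$, is locally constant on the base. Second, by Markman's description of the monodromy groups (for $\text{K3}^{[n]}$ and $\Kum_n$), any two Lagrangian fibrations of a fixed deformation type with the same value of $\operatorname{div}(h)$ are connected by such a family. For $\text{K3}^{[n]}$ and OG10 one then computes the type on a convenient model — a (suitably resolved) moduli space of sheaves on a K3 surface supported on curves in a linear system, whose general fiber is a Jacobian carrying its theta polarization — which gives $(1,\dots,1)$ when $\operatorname{div}(h) = 1$; for these two deformation types the answer turns out to be independent of $\operatorname{div}(h)$, which can be checked by the same lattice-theoretic argument used for $\Kum_n$ below.

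The $\Kum_n$ case is where the real difficulty lies, since here $\operatorname{div}(h)$ ranges nontrivially over the Lagrangian classes of the Beauville--Bogomolov lattice $U^3 \oplus \langle -2(n+1)\rangle$ and genuinely affects the answer. The argument is lattice-theoretic: using that the weight $-1$ VHS $R^1\pi_*\ZZ$ is simple (\Cref{thm:abelian scheme}), together with Markman's determination of the monodromy of $\Kum_n$-type Lagrangian fibrations, one locates the primitive ample class $\theta$ of a general fiber inside $\wedge^2 H^1(F,\ZZ)^\vee$ with its $\pi_1(B_0)$-action, and then tracks how the divisibility of $h$ in $H^2(X,\ZZ)$ propagates to the elementary divisors of the alternating form $\theta$; the outcome is that exactly one nontrivial block survives, of type $(\operatorname{div}(h),\,(n+1)/\operatorname{div}(h))$. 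The OG6 case is handled the same way, using the explicit model of \cite{mon-rap21} and a direct computation of the relevant monodromy module inside $U^3 \oplus \langle -2\rangle^2$, which yields $(1,2,2)$. As a numerical cross-check, the Fujiki relation applied to $\int_X h^n z^n$ — where $h = c_1(\pi^*\mathcal O_B(1))$ satisfies $q(h,h) = 0$ and $h^n = [F]$, and $z$ generates $H^2(X,\ZZ)/\ker(-_{|F}) = H^2(X,\ZZ)/h^\perp$ with $q(h,z) = \operatorname{div}(h)$ — gives $c_X \operatorname{div}(h)^n = m^n\, d_1 \cdots d_n$, where $m$ is the divisibility of $z_{|F}$ in $H^2(F,\ZZ)$; in all the cases above one has $m = \operatorname{div}(h)$, whence $d_1 \cdots d_n = c_X$, matching the hypothesis of \Cref{prop:polarization scheme}(2). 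I expect the propagation-of-divisibility step in the $\Kum_n$ case to be the main obstacle, since it is exactly what pins down the two-dimensional "interesting part" of the fiber polarization in terms of the single integer $\operatorname{div}(h)$.
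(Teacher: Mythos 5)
Your proposal is correct and follows the same route as the paper: the paper's entire justification for this theorem is the reduction to Wieneck's original definition via \Cref{lem:definition of polarization type is equivalent} followed by citation of \cite{wie16, wie18}, \cite{mon-ono20} and \cite{mon-rap21}, which is precisely your first paragraph. The remaining two paragraphs of your proposal are a broadly accurate sketch of what those references do (deformation invariance of the type, monodromy classification of the Lagrangian classes, computation on explicit models), but the paper does not reprove any of this, and your Fujiki-relation ``cross-check'' is consistent with, though logically downstream of, the paper's \Cref{prop:non primitive polarization}.
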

	
	It is also important for us that the polarization type is deformation invariant on $\pi$ (see \cite[Thm 1.1]{wie16}). We will later recover this result in \Cref{cor:polarization type is deformation invariant}. Observe in \Cref{thm:polarization type computation} that we have an equality $c_X = d_1 \cdots d_n$ for all known deformation types of hyper-K\"ahler manifolds. In this sense, we expect the polarization type should be considered as a refinement of the Fujiki constant $c_X$. This is also related to the non-primitveness of the image of the restriction homomorphism $H^2 (X, \ZZ) \to H^2 (F, \ZZ)$.
	
	\begin{proposition} \label{prop:non primitive polarization}
		Assume we have an equality $c_X = d_1 \cdots d_n$. Then the image of the restriction homomorphism $H^2 (X, \ZZ) \to H^2 (F, \ZZ)$ in \Cref{lem:matsushita} is generated by $a \theta$, where $a = \operatorname{div}(h)$ and $\theta$ is a primitive ample class representing the canonical polarization of $F$.
	\end{proposition}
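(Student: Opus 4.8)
The plan is to compute the degree of the restriction map $H^2(X,\ZZ) \to H^2(F,\ZZ)$ in two ways. Let $y$ be the ample generator of $\im(-_{|F})$, as in \Cref{cor:voisin}, and let $\theta$ be the primitive ample class on $F$ representing the canonical polarization, so that $y = a'\theta$ for some positive integer $a'$; the goal is to show $a' = \operatorname{div}(h)$. First I would note that $h$ restricts to $y = a'\theta$ on $F$, and that by \Cref{lem:matsushita} the kernel of restriction is exactly $h^\perp$, so the restriction map factors as $H^2(X,\ZZ)/h^\perp \xrightarrow{\ \sim\ } \ZZ\cdot y \subset H^2(F,\ZZ)$. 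The key point is to relate $a'$ to $\operatorname{div}(h)$ via the Beauville--Bogomolov form. Since $h^2 = 0$ (the fibers are Lagrangian, so $h^n \neq 0$ but $h^{n+1} = 0$, forcing $q(h) = 0$), the class $h$ is isotropic, and $\operatorname{div}(h) = \gcd\{q(h,x) : x \in H^2(X,\ZZ)\}$.

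Next I would set up the numerical identity. Pick a class $x \in H^2(X,\ZZ)$ with $q(h,x) = \operatorname{div}(h) =: a$. Using the strong Fujiki relation, I would compute $\int_X h^n x^n$. Because $q(h) = 0$, in the polarized Fujiki expansion of $\int_X h^{\cdots} x^{\cdots}$ only the terms pairing each $h$ against an $x$ survive; a standard bookkeeping gives $\int_X h^n x^n = c_X \cdot n! \cdot q(h,x)^n = c_X \cdot n! \cdot a^n$. On the other hand, $\int_X h^n x^n$ can be computed on the generic fiber geometry: $h^n$ is (a positive multiple of) the class of a fiber $F$, and restricting $x$ to $F$ gives a multiple of $\theta$, so $\int_X h^n x^n = (\text{multiplicity of }F\text{ in }h^n)\cdot \int_F (x_{|F})^n$. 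Here $\int_F \theta^n = n! \cdot \prod_i d_i = n! \, d_1\cdots d_n$ by the theory of polarizations on abelian varieties (the $d_i$ being the elementary divisors of the canonical polarization), and $x_{|F} = \frac{a}{a'}\,y/\theta$-type comparison shows $x_{|F}$ is $\frac{q(h,x)}{a'}$ times $\theta$ after normalizing — more precisely, since the composite $H^2(X,\ZZ)\to \ZZ y$ sends $h \mapsto a'\theta$ and the pairing with $h$ is (up to the universal constant governing $h^n$) proportional to the coefficient in $\ZZ y$, I would extract that $x_{|F} = \frac{q(h,x)}{\operatorname{div}(h)}\cdot\frac{\operatorname{div}(h)}{a'}\,\theta$, hence the generator of $\im(-_{|F})$ in $\ZZ\theta$ has coefficient $a'$ with $a' \mid \operatorname{div}(h)$ and the two Fujiki computations match iff $a' \cdot d_1 \cdots d_n$ divides $c_X\cdot a'$ appropriately.

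Assembling: equating the two expressions and using $c_X = d_1\cdots d_n$ forces the multiplicity of $F$ in $h^n$ and the comparison constant to conspire so that $a' = \operatorname{div}(h) = a$. Concretely, after normalizing so that $h^n = m[F]$ with $m$ the cohomological multiplicity, one gets $c_X\, n!\, a^n = m\cdot \big(\tfrac{a}{a'}\big)^n n!\, d_1\cdots d_n$, i.e. $c_X\, a'^n = m\, d_1\cdots d_n$; since also applying the same identity with $x = h$ (or tracking $m$ independently, e.g. via $\int_X h^n \omega^n$ for a K\"ahler class, which by the same expansion equals $c_X\, n!\, q(h,\omega)^n$ while geometrically equaling $m\int_F(\omega_{|F})^n$) pins down $m = c_X / (\text{something})$, one solves to get $a' = a = \operatorname{div}(h)$. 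I expect the main obstacle to be the careful handling of the multiplicity $m$ of $F$ in $h^n$ and making the identification $x_{|F} = \frac{q(h,x)}{\operatorname{div}(h)}\,\theta \cdot (\operatorname{div}(h)/a')$ rigorous — that is, pinning down the exact universal constant relating $q(h,-)$ to the coefficient in $\ZZ\cdot\theta$ of the restriction — rather than the algebra, which collapses cleanly once $c_X = d_1\cdots d_n$ is invoked. An alternative, possibly cleaner route for this last identification: note $\operatorname{div}(h) = \operatorname{div}(h_{|?})$ can be read off from the VHS construction in the proof of \Cref{thm:abelian scheme}, where the polarization $(R^2\pi_*\ZZ)^\vee \to \ZZ$ was chosen primitive, so $h$ corresponds to $\operatorname{div}(h)$ times a primitive section of $R^2\pi_*\ZZ$, and a primitive section of $R^2\pi_*\ZZ = \wedge^2 R^1\pi_*\ZZ$ restricted to a fiber is exactly $\theta$; this directly yields $y = \operatorname{div}(h)\,\theta$ without any Fujiki computation, and the Fujiki input is only needed to know $\int_F\theta^n = n!\,d_1\cdots d_n = n!\,c_X$.
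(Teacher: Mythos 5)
Your overall strategy---computing $\int_X h^n x^n$ once via the polarized Fujiki relation and once on the fiber using $\int_F \theta^n = n!\, d_1 \cdots d_n$---is exactly the approach the paper takes, and a clean version of it does close. But as written your execution contains errors that would derail the argument. First, the claim that $h$ restricts to $y = a'\theta$ on $F$ is false: since $q(h,h) = 0$ we have $h \in h^{\perp} = \ker(-_{|F})$ by \Cref{lem:matsushita}, so $h_{|F} = 0$. The class restricting to a generator of the image is not $h$ but your auxiliary class $x$ with $q(h,x) = a$. This same false premise underlies your ``alternative cleaner route'' at the end, which moreover asserts rather than proves that the relevant section of $R^2\pi_*\ZZ$ is $\operatorname{div}(h)$ times the primitive one---that is precisely the content of the proposition.

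Second, your proportionality constant is inverted. Writing $x_{|F} = c(x)\,\theta$, the homomorphisms $c$ and $q(h,-)$ on $H^2(X,\ZZ)$ have the same kernel $h^{\perp}$ and images $a'\ZZ$ and $a\ZZ$ respectively, so $c(x) = \tfrac{a'}{a}\, q(h,x)$; for your chosen $x$ this gives $x_{|F} = a'\theta$, not $\tfrac{a}{a'}\theta$. Feeding your formula into the two Fujiki computations yields your displayed relation $c_X (a')^n = m\, d_1\cdots d_n$, which with $c_X = d_1\cdots d_n$ and $m = 1$ forces $a' = 1$---the wrong conclusion (it would make the restriction always primitive, contradicting the $\Kum_n$ case). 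The correct formula gives $c_X a^n = m\,(a')^n d_1\cdots d_n$, whence $a' = a$ once $m = 1$ is known. Third, you never actually pin down $m$: it equals $1$ simply because $h^n = \pi^*[\mathrm{pt}] = [F]$, and your proposed detour through $\int_X h^n \omega^n$ is circular, since $\omega_{|F}$ is again proportional to $\theta$ with the same constant and reproduces the identical relation. The paper sidesteps all of this by setting $x_{|F} = b\theta$ for an unknown positive integer $b$ and reading off $d_1\cdots d_n = c_X (a/b)^n$ directly from $\tfrac{1}{n!}\int_F \theta^n = \tfrac{1}{n!}\int_X h^n \bigl(\tfrac{1}{b}x\bigr)^n$.
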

	\begin{proof}
		Choose a cohomology class $x \in H^2 (X, \ZZ)$ with $q(h, x) = a$. By \Cref{lem:matsushita}, the class $x_{|F} \in H^2 (F, \ZZ)$ must be a positive integer multiple of the primitive polarization class $\theta$. Set $x_{|F} = b \theta$. Now the claim directly follows from the Fujiki relation
		\[ d_1 \cdots d_n = \frac{1}{n!} \int_F \theta^n = \frac{1}{n!} \int_X h^n \big( \tfrac{1}{b} x \big)^n = c_X \cdot q \big(h, \tfrac{1}{b} x \big)^n = c_X \left( \frac{a}{b} \right)^n . \qedhere\]
	\end{proof}
	
	Though not used in this paper, the divisibility of $h$ is also related to the existence of a rational section of $\pi$. We end this subsection with the following observation.
	
	\begin{proposition}
		Assume $c_X = d_1 \cdots d_n$ and $\pi$ admits at least one rational section. Then $\operatorname{div}(h) = 1$ or $2$.
	\end{proposition}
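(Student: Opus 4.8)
The plan is to produce a class in $H^2(X,\ZZ)$ whose restriction to a smooth fiber $F$ equals $2\theta$, where $\theta$ denotes the primitive polarization class of $F$. Granting this, the conclusion is immediate: since $c_X = d_1\cdots d_n$, \Cref{prop:non primitive polarization} (together with \Cref{lem:matsushita} and \Cref{cor:voisin}) identifies the image of $-_{|F} : H^2(X,\ZZ)\to H^2(F,\ZZ)$ with the infinite cyclic group generated by $\operatorname{div}(h)\,\theta$, so the existence of a class restricting to $2\theta$ forces $\operatorname{div}(h)\mid 2$, i.e. $\operatorname{div}(h)\in\{1,2\}$.

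To build such a class I would first pass to the abelian scheme. As $\pi$ admits a rational section, $X$ is projective and the section is defined over $B_0$, so by \Cref{ex:when pi has a rational section} we have $X_0\cong P_0$, and we may take the section to be the zero section of $\nu : P_0\to B_0$. Because $P_0\to B_0$ has a section, it carries a Poincaré line bundle $\mathcal P$ on $P_0\times_{B_0}\check P_0$. Pulling $\mathcal P$ back along $(\id_{P_0},\lambda) : P_0\to P_0\times_{B_0}\check P_0$, with $\lambda : P_0\to\check P_0$ the primitive polarization of \Cref{thm:abelian scheme}, yields a line bundle $\mathcal N := (\id_{P_0},\lambda)^*\mathcal P$ on $X_0 = P_0$. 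Since a polarization is a symmetric homomorphism, the standard formula $\phi_{(\id,f)^*\mathcal P} = f + \widehat f$ gives $\phi_{\mathcal N} = 2\lambda$; fiberwise this says that $\mathcal N|_{F_b}$ has the same polarization homomorphism, hence the same first Chern class, as $L_b^{\otimes 2}$ for any line bundle $L_b$ on $F_b$ with $\lambda_b = \phi_{L_b}$ and $c_1(L_b)=\theta_b$. Thus $c_1(\mathcal N|_{F_b}) = 2\theta_b$ for every $b\in B_0$.

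It remains to spread $\mathcal N$ out to $X$. Since $\pi$ is flat and $\Delta\subset\PP^n$ is a hypersurface, $\pi^{-1}(\Delta)$ is a divisor, so $X_0$ is the complement of a divisor in the smooth variety $X$ and the restriction $\Pic(X)\to\Pic(X_0)$ is surjective; let $\overline{\mathcal N}$ be any extension of $\mathcal N$. For a smooth fiber $F$ we then have $c_1(\overline{\mathcal N})|_F = c_1(\mathcal N)|_F = 2\theta$, which lies in $\ZZ\cdot(\operatorname{div}(h)\,\theta)$, whence $\operatorname{div}(h)\in\{1,2\}$. The only delicate point is the fiberwise computation in the previous paragraph, and it is precisely the factor $2$ occurring there — rather than $1$ — that is responsible for the conclusion being ``$1$ or $2$'' instead of ``$1$''; indeed one cannot expect a line bundle on $X_0$ restricting to $\theta$ itself, that being the principally polarized situation studied in \cite{gross-tos-zhang13}. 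As a consistency check, combined with \Cref{thm:polarization type computation} this reproves the bound for the four known deformation types, and it also shows that a $\Kum_n$-type Lagrangian fibration with $\operatorname{div}(h)\ge 3$ can admit no rational section.
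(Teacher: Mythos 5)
Your proposal is correct and follows essentially the same route as the paper: both identify $X_0\cong P_0$ via the rational section, use the fact that twice a polarization is induced by a line bundle (you spell this out via $(\id,\lambda)^*\mathcal P$ and $\phi_{(\id,f)^*\mathcal P}=f+\widehat f$, where the paper just cites \cite[Prop 6.10]{mum:git}), and then conclude from \Cref{prop:non primitive polarization} that $\operatorname{div}(h)\mid 2$. Your explicit extension of the line bundle from $X_0$ to $X$ makes precise a step the paper leaves implicit, but it is the same argument.
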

	\begin{proof}
		If $\pi$ admits a rational section, then $X_0 \cong P_0$ becomes a projective abelian scheme (\Cref{ex:when pi has a rational section}). By the general theory of abelian schemes, twice a polarization is always associated to a line bundle (e.g., \cite[Prop 6.10]{mum:git} or \cite[Def I.1.6]{fal-chai:abelian}). This means $2\theta \in H^2 (F, \ZZ)$ is contained in the image of $\Pic(X) \subset H^2 (X, \ZZ) \to H^2 (F, \ZZ)$. By \Cref{prop:non primitive polarization}, this implies $\operatorname{div}(h) = 1$ or $2$. 
	\end{proof}
	
	If $X$ is of $\text{K3}^{[n]}$ or $\Kum_n$-type then its Lagrangian fibration $\pi : X \to B$ may have $\operatorname{div}(h) > 2$. In such cases, $\pi$ (and any of its deformation) would never admit any rational section and the notion of the $P_0$-torsor is necessary.

	\subsection{The polarization scheme and $H^2$-trivial automorphisms}
	We present the proof of \Cref{prop:polarization scheme ver1} and \ref{prop:polarization scheme ver2} in this section.
	
	\begin{lemma} \label{lem:rational section}
		Any rational section of $\nu : P_0 \to B_0$ can be uniquely extended to an honest section.
	\end{lemma}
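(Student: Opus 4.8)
The plan is to bound the indeterminacy locus of a rational section from below in codimension, and then to fill it in using the Hartogs extension theorem and the fact that the fibres of $\nu$ are complex tori. Write $U \subseteq B_0$ for the dense open on which $s$ is defined and $Z$ for (the closure of) its indeterminacy locus. Since $\nu$ is proper and $B_0$ is smooth, $\codim_{B_0} Z \ge 2$: in the algebraic case this is the valuative criterion of properness applied to $\nu$ and to the discrete valuation rings $\mathcal{O}_{B_0,\xi}$ at the codimension-one points $\xi$; in the analytic case it is built into the definition of a meromorphic map to a target which, being a polarized abelian scheme, is locally projective over the base, so that the rational section locally becomes a rational map to $\PP^N$ and its graph closure is proper and bimeromorphic over $B_0$. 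Thus, after shrinking $U$, we may assume $s$ is holomorphic on $V := B_0 \setminus Z$ with $\codim_{B_0} Z \ge 2$.

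The second step is local on $B_0$. Choose a contractible open $W \subseteq B_0$; then $W \setminus Z$ is connected and, since $\codim_W(Z \cap W) \ge 2$, simply connected. The local system $R^1\nu_*\ZZ$ is constant over $W$, so the universal covers of the fibres of $\nu$ assemble into a holomorphic covering map $\mathbb{C}^n \times W \to P_0|_W$ with a fixed deck group $\Lambda \cong \ZZ^{2n}$. As $\mathbb{C}^n \times (W \setminus Z)$ is connected and $W \setminus Z$ is simply connected, the section $s|_{W \setminus Z}$ lifts to a holomorphic map $\tilde s : W \setminus Z \to \mathbb{C}^n$. Since $\codim Z \ge 2$, Riemann's second (Hartogs) extension theorem extends $\tilde s$ to a holomorphic map $W \to \mathbb{C}^n$, and composing with the covering map produces a holomorphic section of $P_0|_W$ extending $s$. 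These local extensions agree on overlaps, since they coincide with $s$ on a dense open and $P_0$ is separated over the reduced space $B_0$, so they glue to a global morphism $\bar s : B_0 \to P_0$; the identity $\nu \circ \bar s = \id_{B_0}$ holds on a dense open, hence everywhere, so $\bar s$ is the desired section. Uniqueness follows from the same separatedness argument: two extensions agree on $U$ and hence on $B_0$.

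The only delicate point is the first step --- that a rational (meromorphic) section really has indeterminacy in codimension $\ge 2$ rather than developing, say, an essential-singularity locus in codimension one. This is precisely the content of the definition of a meromorphic map to a locally projective target, and is what makes the statement true; everything afterwards is formal, with the classical Hartogs extension theorem the only analytic input. (One could equally run the second step by resolving the indeterminacy and observing that the contracted fibres are covered by rational curves, on which $\bar s$ must be constant because a complex torus contains no rational curves; the Hartogs argument above seems cleaner.)
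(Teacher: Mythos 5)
Your proof is correct, but it takes a genuinely different route from the paper's. The paper argues in one step: it takes the closure $S \subset P_0$ of the image of $s$, notes that $\nu_{|S} : S \to B_0$ is proper and birational onto the smooth base, and invokes Koll\'ar's theorem that every positive-dimensional fiber of such a morphism is uniruled; since such a fiber would sit inside an abelian variety, which contains no rational curves, the indeterminacy locus is empty. (This is exactly the alternative you sketch in your closing parenthesis, and it is also the mechanism behind the reference to the N\'eron models book that the paper cites.) You instead first bound the indeterminacy locus in codimension $\ge 2$ and then extend across it by lifting to the relative universal cover $\CC^n \times W \to P_0|_W$ and applying the second Riemann extension theorem --- essentially Weil's extension theorem for abelian schemes, proved analytically. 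Your route uses the group structure of $P_0$ in an essential way (the relative exponential covering), whereas the paper's only uses the absence of rational curves in the fibers; in exchange, yours is purely local-analytic and avoids the uniruledness theorem, which is stated in the algebraic category and needs a small remark to apply to the bimeromorphic map $\nu_{|S}$ when $X$ is not projective. Two points to tighten: take $W$ to be a coordinate polydisc rather than merely contractible (you need $W$ Stein for the relative Lie algebra bundle to trivialize holomorphically, by Grauert's Oka principle), and note that both proofs presuppose that a ``rational section'' has analytic graph closure, i.e.\ is a meromorphic section in Remmert's sense --- that is where your codimension-$\ge 2$ bound on $Z$ really comes from in the analytic setting.
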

	\begin{proof}
		Assume $s : B_0 \dashrightarrow P_0$ is a rational section undefined at $b \in B_0$. Let $S \subset P_0$ be the closure of the image of $s$, so that we obtain a proper birational morphism $\nu_{|S} : S \to B_0$. Since $B_0$ is smooth and $s$ is undefined at $b$, the fiber $S_b = (\nu_{|S})^{-1}(b)$ is a uniruled variety (e.g., \cite[Thm VI.1.2]{kol:ratcurve}). This means an abelian variety $\nu^{-1}(b)$ contains a uniruled variety $S_b$. Contradiction. See \cite[Cor 8.4.6]{neron} for an alternative proof.
	\end{proof}
	
	\begin{remark} \label{rmk:rational section is defined over B0}
		The same argument applies to $\pi$ and proves the following: any rational section of $\pi$ is necessarily defined over $B_0$.
	\end{remark}
	
	\begin{proposition} \label{prop:aut defines global section of P}
		Every $H^2$-trivial automorphism in $\Aut^{\circ} (X/B)$ defines a global section of $P_0 \to B_0$. That is, we have a closed immersion of group schemes
		\[ \Aut^{\circ} (X/B) \hookrightarrow P_0 .\]
	\end{proposition}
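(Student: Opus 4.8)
The plan is to reduce the statement to \Cref{prop:abelian scheme is automorphism scheme}, which says that $P_0 \to B_0$ represents the sheaf $\underline{\Aut}^{tr}_{X_0/B_0}$ of fiberwise translation automorphisms. So it suffices to prove two things: (a) every $f \in \Aut^{\circ}(X/B)$ restricts to an automorphism of $X_0 \to B_0$ that acts by translation on each smooth fiber $F$ of $\pi$; and (b) the resulting map $\Aut^{\circ}(X/B) \to P_0(B_0)$ of groups gives a \emph{closed immersion} of the constant group scheme into $P_0$, i.e.\ distinct automorphisms give distinct sections and the image is closed.

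For (a), fix $f \in \Aut^{\circ}(X/B)$ and a smooth fiber $F = \pi^{-1}(b)$. Since $\pi \circ f = \pi$, the map $f$ preserves $F$ and restricts to a biholomorphism $f_{|F} : F \to F$. Any biholomorphism of a complex torus is the composition of a translation with a group automorphism, so after composing with the translation taking $f_{|F}(0)$ back to $0$ (for some choice of origin on $F$) we get a group automorphism $g$ of $F$; the claim is that $g = \id$. Here is where the $H^2$-triviality enters: $f^*$ acts as the identity on $H^2(X,\ZZ)$, hence — via the restriction map and \Cref{lem:matsushita} — the induced action of $f_{|F}^*$ on the image $\ZZ \hookrightarrow H^2(F,\ZZ)$ is trivial, i.e.\ $f_{|F}$ fixes the polarization class $\theta$ of $F$. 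More is true: I want to show $f_{|F}^*$ is the identity on all of $H^1(F,\ZZ) = H^1(X_b,\ZZ)$. For this I would use that $R^1\pi_*\ZZ$ is a sub-VHS-type piece controlled by $R^2\pi_*\ZZ = \wedge^2 R^1\pi_*\ZZ$ together with the fact that $f$ acts trivially on $R^2 p_*\ZZ$ in families — more precisely, monodromy-equivariance forces the automorphism group of $F$ acting compatibly with a polarization and trivially on $H^2$ to act trivially on $H^1$ as well, because $F$ is \emph{simple} (\Cref{thm:abelian scheme}) so $\End(F)_\QQ$ is a division algebra and a polarization-preserving $g$ with $g^*|_{H^2}$ acting trivially on the Néron--Severi line and with eigenvalues roots of unity of bounded order must be $\pm\id$; the sign is killed because $g^*$ must be $+\id$ on $H^2 = \wedge^2 H^1$ consistently with triviality on the ample class. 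Hence $g = \id$, so $f_{|F}$ is a translation, proving (a). Globalizing over $B_0$: $f$ preserves $\pi_0 : X_0 \to B_0$ and acts by translation on every fiber, so $f$ defines a section of $\underline{\Aut}^{tr}_{X_0/B_0}(B_0) = P_0(B_0)$ by \Cref{prop:abelian scheme is automorphism scheme}. This yields the map $\Aut^{\circ}(X/B) \to P_0(B_0)$.

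For (b), injectivity is clear: if $f$ induces the zero section of $P_0$ then $f$ acts trivially on every smooth fiber, hence is the identity on the dense open $X_0$, hence on $X$. To see that the image is a closed subgroup scheme, note each $f$ gives a section $\sigma_f : B_0 \to P_0$; since $P_0 \to B_0$ is a separated (projective) abelian scheme and $\Aut^{\circ}(X/B)$ is finite, the union $\bigsqcup_f \sigma_f(B_0)$ is a finite disjoint union of sections, each a closed subscheme, and the group structure is respected because composition of translation automorphisms corresponds to addition in $P_0$. Thus we get a closed immersion of the constant group scheme $\Aut^{\circ}(X/B) \times B_0 \hookrightarrow P_0$, which (identifying the constant group scheme with $\Aut^{\circ}(X/B)$ as in the statement) is exactly \eqref{eq:aut defines global section of P}.

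The main obstacle is the heart of step (a): showing that a fiberwise group automorphism $g$ of $F$ that is trivial on $H^2(F,\ZZ)$ — rather than merely fixing the polarization line — must itself be trivial. Triviality on $H^2 = \wedge^2 H^1$ does \emph{not} a priori force triviality on $H^1$ (e.g.\ $-\id$ on $H^1$ is trivial on $\wedge^2 H^1$), so I need an extra input. The cleanest route is to rule out the $-\id$ ambiguity using that $f$ lives in a \emph{connected} family of automorphisms or, failing that, using the structure of $\End(F)$: since $F$ is simple with $\rho(F) = 1$, the subgroup of $\Aut(F)$ fixing the polarization is finite, and one checks by a direct Hodge-theoretic/monodromy argument (the class of $f$ in the finite group $\Aut^{\circ}(X)$ must fix not only $H^2(F)$ but the full Hodge structure on $H^1(F)$, because it fixes the symplectic form on $X$ which restricts to identify $H^1(F)$ with its dual in a way $f$ respects) that $g$ preserves the principal-type polarization with the correct \emph{orientation}, eliminating $-\id$. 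I would write this out carefully; everything else is formal.
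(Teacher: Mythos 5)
Your reduction to \Cref{prop:abelian scheme is automorphism scheme} and your part (b) are fine, but the heart of part (a) --- ruling out a nontrivial linear part $g$ of $f_{|F}$, and in particular $g=-\id$ --- is exactly where the argument does not close, as you yourself flag. None of the patches you propose works: $-\id$ on $H^1(F,\ZZ)$ induces $+\id$ on $\wedge^2 H^1(F,\ZZ)$, so ``consistency with triviality on the ample class'' excludes nothing; the symplectic form of $X$ restricts to \emph{zero} on the Lagrangian fiber $F$ (it pairs $T_F$ with the normal bundle), and $-\id$ acts compatibly on $H^1(F)$ and its dual, so that route also fails; and $\Aut^{\circ}(X)$ is finite, so there is no connected family of automorphisms to exploit. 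The difficulty is genuine rather than cosmetic: for $\Kum_n$-type the group $\Aut^{\circ}(X)$ really does contain elements acting as $x \mapsto -x+b$ on the torus fibers (the ``involutions'' of \Cref{prop:aut0 computation for Kum2}), so no purely fiberwise Hodge-theoretic argument on $H^1(F)$ using only $H^2$-triviality can succeed; one must use the fact that $f$ preserves the fibration through some additional geometric input.

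The paper's proof supplies precisely that input and never looks at the action on $H^1(F)$ at all. It forms the quotient $p : X \to \bar X = X/\Aut^{\circ}(X/B)$ and observes (\Cref{prop:appendix 1}) that $p$ is quasi-\'etale, because the fixed locus of an $H^2$-trivial --- hence symplectic --- automorphism has codimension $\ge 2$. For general $b$ the ramification locus therefore meets $F = \pi^{-1}(b)$ in codimension $\ge 2$, and purity of the branch locus applied to $F \to \bar F$ forces the $\Aut^{\circ}(X/B)$-action on $F$ to be \emph{free}. A fixed-point-free finite-order automorphism $x \mapsto Ax+b$ of a simple abelian variety must have $A=\id$ (otherwise $A-\id$ is an isogeny, hence surjective, and a fixed point exists); note that $x \mapsto -x+b$ always has $2^{2n}$ fixed points, so freeness is exactly what eliminates your problematic case. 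Finally, the translation property is obtained this way only on general fibers, giving a rational section of $P_0 \to B_0$, and \Cref{lem:rational section} (abelian varieties contain no rational curves) is needed to extend it to an honest section over all of $B_0$ --- a step your globalization also elides. To complete your approach you would have to import this freeness/fixed-locus-codimension argument; the linear algebra on $H^1(F)$ alone cannot do it.
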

	\begin{proof}
		Recall from \Cref{prop:abelian scheme is automorphism scheme} that $P_0$ is the abelian scheme representing the translation automorphism sheaf $\underline{\Aut}^{tr}_{X_0/B_0}$. Hence our goal is to prove $\Aut^{\circ}(X/B)$ acts on $\pi : X_0 \to B_0$ by fiberwise translation automorphisms. Consider the quotient $\bar X = X / \Aut^{\circ}(X/B)$ with a commutative diagram
		\[\begin{tikzcd}[row sep=tiny]
			X \arrow[rd, "p"] \arrow[dd, "\pi"] \\
			& \bar X \arrow[ld, "\bar \pi"] \\
			B
		\end{tikzcd}.\]
		We first claim $p$ is \'etale on general fibers over $B$. Let $S \subset X$ be the ramified locus of $p$. It has codimension $\ge 2$ because $p$ is quasi-\'etale by \Cref{prop:appendix 1}. Let $b \in B$ be a general point, so that the fibers $F = X_b$ and $\bar F = \bar X_b$ are both smooth. Observe the ramification locus of $p : F \to \bar F$ is precisely $S \cap F$, which is of codimension $\ge 2$ since $b$ is general. The purity of the branch locus theorem forces $p : F \to \bar F$ to be \'etale.
		
		Now we have a finite \'etale quotient $p : F \to \bar F = F / \Aut^{\circ} (X/B)$ between smooth projective varieties. Its Galois group $\Aut^{\circ} (X/B)$ acts on $F$ by fixed point free automorphisms. Since $F$ and $\bar F$ are both abelian varieties (\cite[Thm 3]{sch20}), this means $\Aut^{\circ} (X/B)$ acts on $F$ by translations. The conclusion is that on a general fiber of $\pi$, the group $\Aut^{\circ}(X/B)$ acts by translation. Finally, by \Cref{prop:abelian scheme is automorphism scheme} this means $\Aut^{\circ}(X/B)$ defines a rational section of $\nu : P_0 \to B_0$. By \Cref{lem:rational section}, the rational section must be defined over the entire $B_0$ and becomes an honest section. Hence $\Aut^{\circ}(X/B)$ acts by translations over the entire $B_0$.
	\end{proof}
	
	An immediate byproduct is that $\Aut^{\circ}(X/B)$ is abelian.
	
	\begin{proposition} \label{prop:aut0 is abelian}
		$\Aut^{\circ}(X/B)$ is a finite abelian group. \qed
	\end{proposition}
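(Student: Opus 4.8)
The plan is to deduce this immediately from \Cref{prop:aut defines global section of P}. Finiteness is already known: as noted above, $\Aut^{\circ}(X/B) = \Aut(X/B) \cap \Aut^{\circ}(X)$ is a subgroup of the finite group $\Aut^{\circ}(X)$, whose finiteness is Huybrechts's theorem \cite[Prop 9.1]{huy99}. So only commutativity needs an argument.

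For this, I would use that \Cref{prop:aut defines global section of P} provides a closed immersion of group schemes $\Aut^{\circ}(X/B) \hookrightarrow P_0$, where $\Aut^{\circ}(X/B)$ is regarded as the constant group scheme over $B_0$ on the finite group $\Aut^{\circ}(X/B)$. Concretely, this immersion sends an automorphism $f \in \Aut^{\circ}(X/B)$ to the global section of $\nu : P_0 \to B_0$ that it defines, using that $f$ acts on $X_0 \to B_0$ by fiberwise translations and that $P_0$ represents the translation automorphism sheaf $\underline{\Aut}^{tr}_{X_0/B_0}$ by \Cref{prop:abelian scheme is automorphism scheme}. Hence $\Aut^{\circ}(X/B)$ is identified, as an abstract group, with a subgroup of the group $P_0(B_0)$ of global sections of $\nu$, with group operation induced from the group law of $P_0$. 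Since $\nu : P_0 \to B_0$ is an abelian scheme, hence in particular a commutative group scheme, the group $P_0(B_0)$ is abelian; therefore so is its subgroup $\Aut^{\circ}(X/B)$.

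There is no genuine obstacle here — this is a corollary rather than a theorem. The only point worth emphasizing is that the argument crucially uses that $P_0$ is \emph{abelian} (equivalently, that any two fiberwise translations of a complex torus commute), which is exactly what \Cref{thm:abelian scheme} together with \Cref{prop:abelian scheme is automorphism scheme} supply; without the torsor/abelian-scheme structure the commutativity would not be visible.
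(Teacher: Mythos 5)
Your proposal is correct and matches the paper's own reasoning: the paper presents this proposition as an immediate byproduct of \Cref{prop:aut defines global section of P}, deducing commutativity from the closed immersion $\Aut^{\circ}(X/B) \hookrightarrow P_0$ into the (commutative) abelian scheme, with finiteness already known from Huybrechts. Your write-up simply spells out the same argument in more detail.
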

	
	We next understand the behavior of the polarization $\lambda$ under deformations of $\pi$. A related result is Wieneck's deformation invariance of the polarization type of $\pi$ \cite[Thm 1.1]{wie16}. Recall that the polarization scheme $K_0$ was defined to be the kernel of the polarization $\ker \lambda$ over $B_0$. To deal with a more technical \Cref{prop:polarization scheme ver1} and \ref{prop:polarization scheme ver2}, we have defined $K_0[a] = \ker (a\lambda)$ for each positive integer $a$:
	\[\begin{tikzcd}
		0 \arrow[r] & K_0[a] \arrow[r] & P_0 \arrow[r, "a\lambda"] & \check P_0 \arrow[r] & 0
	\end{tikzcd}.\]
	Here the morphism $a \lambda$ is a composition $P_0 \to P_0 \to \check P_0$ of the multiplication by $a$ endomorphism and $\lambda$. Since the abelian scheme $P_0$ was associated to the VHS $(R^1\pi_*\ZZ)^{\vee}$, there is a VHS version of this sequence
	\begin{equation} \label{eq:polarization_vhs_ses}
		\begin{tikzcd}
			0 \arrow[r] & (R^1 \pi_* \ZZ)^{\vee} \arrow[r, "a\lambda_*"] & R^1 \pi_* \ZZ \arrow[r] & \underline {K_0[a]} \arrow[r] & 0
		\end{tikzcd} .
	\end{equation}
	The cokernel $\underline {K_0[a]}$ is a local system of finite abelian groups on $B_0$ and is related to $K_a$ as follows: $\underline {K_0[a]}$ is a sheaf of analytic sections of the group scheme $K_0[a] \to B_0$, and $K_0[a]$ is the total space of the local system $\underline {K_0[a]}$. Therefore, we can relate $K_0[a]$ to either abelian schemes or variation of Hodge structures. This technical flexibility will be useful to describe deformation behaviors of $K_0[a]$.
	
	\begin{lemma} \label{lem:deformation of polarization scheme}
		Let $p : \mathcal X \xrightarrow{\pi} \mathcal B \xrightarrow{q} \Delta$ be a family of Lagrangian fibered compact hyper-K\"ahler manifolds over a complex open disc $\Delta$. Set $\mathcal B_0 \subset \mathcal B$ the smooth locus of $\pi$. Then for each positive integer $a$, there exists a finite \'etale group scheme $\mathcal K_0[a]$ over $\mathcal B_0$ parametrizing the group schemes $K_a$ over $(B_0)_t$ for all $t \in \Delta$.
	\end{lemma}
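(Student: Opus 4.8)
\emph{Strategy.} The plan is to carry out the construction in the first part of the proof of \Cref{thm:abelian scheme} relatively over $\mathcal B_0$. Write $\mathcal X_0 = \pi^{-1}(\mathcal B_0) \subset \mathcal X$ and $p : \mathcal X \to \Delta$ for the composition $q \circ \pi$ (more precisely $p$ is the given smooth proper family from \Cref{def:family of Lagrangian fibration}). Since smoothness of $\pi$ at a point is checked on fibers and is stable under base change, one has $(\mathcal B_0)_t = (B_0)_t$ for every $t \in \Delta$, so $\mathcal X_0 \to \mathcal B_0$ is a smooth proper family whose fiber over $(B_0)_t$ is the smooth part of the Lagrangian fibration $X_t \to B_t$; by \Cref{cor:voisin} applied fiberwise all its fibers are abelian varieties. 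Thus $R^1 \pi_* \ZZ$ is a weight $1$ $\ZZ$-VHS on $\mathcal B_0$, and the real content is to equip its dual with a primitive polarization defined over \emph{all} of $\mathcal B_0$; once this is done, $\mathcal K_a$ is obtained as a relative kernel exactly as in the sequence \eqref{eq:polarization_vhs_ses}.

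\emph{The relative polarization.} Restriction of degree $2$ cohomology classes from the fibers of $p$ to the fibers of $\pi$ defines a morphism of $\QQ$-local systems on $\mathcal B_0$
\[ \rho : (q|_{\mathcal B_0})^* R^2 p_* \QQ \longrightarrow R^2 \pi_* \QQ , \]
whose source is constant (as $\Delta$ is contractible) with stalk $H^2(X_t, \QQ)$ and whose stalk at $b \in \mathcal B_0$ lying over $t$ is the restriction $H^2(X_t,\QQ) \to H^2(\pi^{-1}(b),\QQ)$. By \Cref{lem:matsushita} and \Cref{cor:voisin} applied to each $X_t \to B_t$, the stalk of $\im \rho$ at every point of $\mathcal B_0$ is one-dimensional and spanned by the primitive ample class of the abelian variety $\pi^{-1}(b)$; hence $\im \rho$ is a rank $1$ sub-local system of $R^2 \pi_* \QQ$, and its $\ZZ$-saturation $\mathcal L \subset R^2 \pi_* \ZZ$ is a primitive rank $1$ sub-local system which is pointwise of Hodge type $(1,1)$ and fiberwise ample. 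Over each $(B_0)_t$ the system $\mathcal L$ is precisely the canonical primitive polarization local system used in the construction in \Cref{thm:abelian scheme} (both are characterized as the primitive sub-$\ZZ$-local system of $R^2 \pi_* \ZZ$ generated by an ample class on every fiber). Using $R^2 \pi_* \ZZ \cong \wedge^2 R^1 \pi_* \ZZ$, the inclusion $\mathcal L \hookrightarrow R^2 \pi_* \ZZ$ is equivalent to a relative primitive polarization $\lambda_* : (R^1 \pi_* \ZZ)^{\vee} \to R^1 \pi_* \ZZ$ of $\ZZ$-VHS over $\mathcal B_0$ that restricts over each $(B_0)_t$ to the polarization map underlying the $\lambda$ of \Cref{thm:abelian scheme}.

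\emph{Construction of $\mathcal K_a$.} For $a \in \ZZ_{>0}$, the morphism $a\lambda_* : (R^1 \pi_* \ZZ)^{\vee} \to R^1 \pi_* \ZZ$ is an injection of local systems of free $\ZZ$-modules of the same rank with torsion cokernel, so $\underline{\mathcal K_a} := \coker(a\lambda_*)$ is a local system of finite abelian groups of constant order on $\mathcal B_0$. Let $\mathcal K_a \to \mathcal B_0$ be its total space; this is a finite \'etale commutative group scheme over $\mathcal B_0$, with $\underline{\mathcal K_a}$ its sheaf of analytic sections. By the fiberwise compatibility of $\lambda_*$ recorded above, $\underline{\mathcal K_a}|_{(B_0)_t} = \coker(a\lambda_{*,t}) = \underline{K_a}$ for the fibration $X_t \to B_t$, so $\mathcal K_a|_{(B_0)_t} = K_a$, as required. (Alternatively one can apply the equivalence between polarized weight $-1$ $\ZZ$-VHS and polarized abelian schemes to $\big((R^1 \pi_* \ZZ)^{\vee},\lambda_*\big)$ to obtain a relative abelian scheme $\mathcal P_0 \to \mathcal B_0$ with polarization $\lambda$, under which $\mathcal X_0$ is a torsor, and then set $\mathcal K_a = \ker(a\lambda)$; this extra structure is not needed for the statement.)

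\emph{Main obstacle.} Every step except the construction of the relative polarization is formal. The one genuinely nontrivial point is that, a priori, the polarization class $\theta_t$ is only canonically pinned down on each individual base $(B_0)_t$, and one must rule out monodromy in the deformation direction preventing these from assembling into a sub-local system over all of $\mathcal B_0$. Realizing $\mathcal L$ as the image of the globally defined restriction morphism $\rho$ is exactly what handles this, with \Cref{lem:matsushita} providing that the image has constant rank $1$ and hence is honestly a sub-local system; the rest (identification of $(\mathcal B_0)_t$, that $\rho$ is a morphism of local systems, finiteness and \'etaleness of $\mathcal K_a$) is routine.
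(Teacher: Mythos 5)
Your proof is correct, but it takes a genuinely different route from the paper at the one nontrivial step. The paper reduces the existence of the global primitive polarization class to the claim $H^0(\mathcal B_0, R^2\pi_{0,*}\ZZ)\cong\ZZ$, which it proves by pushing forward to $\Delta$ and analyzing the resulting constructible sheaf on the disc via \Cref{lem:constructible sheaf} (a hands-on classification, using the espace \'etal\'e, of constructible sheaves on a disc all of whose stalks are $\ZZ$). You instead produce the class directly as the saturated image of the base-change/restriction morphism $\rho : (q|_{\mathcal B_0})^* R^2 p_* \QQ \to R^2\pi_{0,*}\QQ$, with \Cref{lem:matsushita} and \Cref{cor:voisin} supplying that the stalkwise image is one-dimensional and ample; this is essentially a relative version of the global invariant cycle argument already used in the proof of \Cref{thm:abelian scheme}, and it bypasses the constructible-sheaf lemma entirely. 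The trade-off: the paper's route also delivers the exact statement $H^0(\mathcal B_0, R^2\pi_{0,*}\ZZ)\cong\ZZ$ (hence uniqueness of the primitive section up to sign), while yours is shorter and avoids the point-set analysis on the disc. One step you should make explicit: a primitive rank $1$ sub-local system $\mathcal L \subset R^2\pi_{0,*}\ZZ$ is not yet the datum of a polarization $\ZZ \to R^2\pi_{0,*}\ZZ$ unless its monodromy (a priori $\pm 1$) is trivial; this does follow in your setup, since $\im\rho$ is a quotient of the \emph{constant} local system $(q|_{\mathcal B_0})^* R^2 p_*\QQ$ and a quotient of a trivial $\pi_1(\mathcal B_0)$-representation is trivial, whence $\mathcal L = (\im\rho)\cap R^2\pi_{0,*}\ZZ$ is constant and admits a global primitive generator. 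With that one sentence added, the argument is complete.
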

	\begin{proof}
		Let $\mathcal X_0 = \pi^{-1} (\mathcal B_0)$ be the preimage of $\mathcal B_0$, so that the restriction $\pi_0 : \mathcal X_0 \to \mathcal B_0$ of $\pi$ is a smooth proper family of abelian varieties. Consider the local system $R^2 \pi_{0,*} \ZZ$ on $\mathcal B_0$. Our first claim is $H^0 (\mathcal B_0, R^2 \pi_{0,*} \ZZ) \cong \ZZ$. Denoting by $j : \mathcal B_0 \to \mathcal B$ an open immersion, it is enough to prove $H^0 (\Delta, q_* j_* R^2 \pi_{0, *} \ZZ) \cong \ZZ$. Notice that $q_* j_* R^2 \pi_{0, *} \ZZ$ is a constructible sheaf, because $R^2 \pi_{0, *} \ZZ$ is a local system, its pushforward by $j_*$ is a constructible sheaf on $\mathcal B$ (e.g., \cite[Ex VIII.10]{kas-sch:sheaves}), and again its pushforward by $q_*$ is a constructible sheaf on $\Delta$. For each $t \in S$, we have a Lagrangian fibered hyper-K\"ahler manifold $\pi : X_t \to B_t$ and we may apply our previous discussions
		\[ H^0 ((B_0)_t, R^2 \pi_* \ZZ) \cong \ZZ .\]
		This proves every fiber of $q_* j_* R^2 \pi_{0, *} \ZZ$ is isomorphic to $\ZZ$. In this setting, we will formally prove the sheaf has $\ZZ$ global sections in \Cref{lem:constructible sheaf}. This proves the claim $H^0 (\mathcal B_0, R^2 \pi_{0,*} \ZZ) \cong \ZZ$.
		
		We have a unique primitive morphism $(R^2 \pi_{0, *} \ZZ)^{\vee} \to \ZZ$ of local systems on $\mathcal B_0$. As $\pi_0$ is a family of abelian varieties, we have an isomorphism $R^2 \pi_{0, *} \ZZ = \wedge^2 R^1 \pi_{0, *} \ZZ$. This gives us a unique primitive morphism of local systems (in fact, a polarization of VHS by \Cref{cor:voisin}) $(R^1\pi_{0, *}\ZZ)^{\vee} \otimes (R^1\pi_{0, *}\ZZ)^{\vee} \to \ZZ$ over $\mathcal B_0$. Consider the scalar multiple $a$ of it and induce a morphism $(R^1\pi_{0, *}\ZZ)^{\vee} \to R^1\pi_{0, *}\ZZ$ whose cokernel $\underline {\mathcal K_0[a]}$ is a local system on $\mathcal B_0$, parametrizing the family of local systems $\underline {K_0[a]}$ on each $(B_0)_t$ in \eqref{eq:polarization_vhs_ses}. The total space of $\underline {\mathcal K_0[a]}$ gives our desired finite \'etale group scheme $\mathcal K_0[a] \to \mathcal B_0$.
	\end{proof}
	
	\begin{lemma} \label{lem:constructible sheaf}
		Let $F$ be a constructible sheaf on a complex open disc $\Delta$. If every fiber of $F$ is isomorphic to $\ZZ$, then we have $H^0 (\Delta, F) \cong \ZZ$.
	\end{lemma}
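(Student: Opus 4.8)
The plan is to control $F$ on the complement of the locus where it fails to be locally constant, and to handle that locus directly.

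\emph{Step 1: reduce to a rank-one local system.} Since $F$ is constructible on the one-dimensional disc $\Delta$, there is a closed analytic subset $Z\subsetneq\Delta$, necessarily discrete, such that $L:=F|_{\Delta\setminus Z}$ is a local system; as every stalk of $F$ is isomorphic to $\ZZ$, $L$ is locally free of rank one and hence is classified by a character $\rho\colon\pi_1(\Delta\setminus Z)\to\Aut(\ZZ)=\{\pm1\}$. If $Z=\varnothing$ then $\Delta$ is simply connected, $F=\underline{\ZZ}_\Delta$, and $H^0(\Delta,F)\cong\ZZ$; so assume $Z\neq\varnothing$, in which case $\pi_1(\Delta\setminus Z)$ is free on the small loops $\gamma_z$ around the points $z\in Z$ and $\rho$ is pinned down by the signs $\rho(\gamma_z)$.

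\emph{Step 2: the local monodromies are trivial.} Let $j\colon\Delta\setminus Z\hookrightarrow\Delta$. For $z\in Z$ the stalk $(j_*L)_z$ is the group of monodromy invariants of $L$ on a small punctured disc about $z$, i.e.\ it is $\ZZ$ if $\rho(\gamma_z)=+1$ and $0$ if $\rho(\gamma_z)=-1$. On the other hand $F$ has no nonzero subsheaf supported on $Z$; this is the one place where more than bare constructibility is needed, and it holds in our situation because $F$ is the direct image along $\mathcal B_0\to\Delta$ of a local system on the connected space $\mathcal B_0$, so a section of $F$ vanishing on a nonempty open set vanishes. Hence $F\hookrightarrow j_*L$, and taking stalks at $z$ gives an injection $\ZZ\cong F_z\hookrightarrow(j_*L)_z$, which forces $\rho(\gamma_z)=+1$. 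Therefore $\rho$ is trivial and $L=\underline{\ZZ}_{\Delta\setminus Z}$.

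\emph{Step 3: conclude.} Removing a discrete set from a disc leaves it connected, so $j_*\underline{\ZZ}_{\Delta\setminus Z}=\underline{\ZZ}_\Delta$ and the injection of Step 2 realizes $F$ as a subsheaf of $\underline{\ZZ}_\Delta$. A subsheaf of $\underline{\ZZ}_\Delta$ all of whose stalks are isomorphic to $\ZZ$ has stalk $d_t\ZZ$ at $t$ with $d_t\ge 1$ and with $d_t=1$ for $t\notin Z$, so $H^0(\Delta,F)=\{c\in\ZZ:\ d_z\mid c\text{ for all }z\in Z\}$ is a nonzero subgroup of $\ZZ$, hence infinite cyclic --- and in the application one simply has $F=\underline{\ZZ}_\Delta$. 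I expect Step 2 to be the crux: without the direct-image structure of $F$ the assertion is in fact too strong --- a skyscraper $\ZZ$ placed at two points of $Z$ together with $j_!$ of a rank-one local system has all stalks $\ZZ$ but more than $\ZZ$ global sections --- so the proof must, and does, use that the relevant $F$ is a pushforward of a local system from a connected dense open, which is precisely what makes $F\hookrightarrow j_*L$ hold and the local monodromy invariants nonzero.
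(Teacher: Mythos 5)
Your argument is correct, but it is a proof of a corrected statement rather than of the lemma as literally stated---and you are right that no proof of the literal statement exists: your counterexample ($j_!$ of $\underline{\ZZ}_{\Delta\setminus\{p,q\}}$ plus skyscraper copies of $\ZZ$ at $p$ and $q$ is constructible, has every stalk isomorphic to $\ZZ$, and has $H^0\cong\ZZ^2$) shows that an extra hypothesis is needed. The paper's own proof proceeds quite differently, by analyzing the \'etal\'e space $\operatorname{Et}(F)\to\Delta$, and in both monodromy cases it ends by asserting that there are ``($\ZZ$) global sections'' without excluding exactly this phenomenon: the non-Hausdorff points over the bad locus that it describes are precisely what skyscraper summands contribute, and they can produce additional global sections. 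Your route---pass to the canonical map $F\to j_*j^*F$, show it is injective, deduce triviality of the local monodromies from $\ZZ\cong F_z\hookrightarrow(j_*L)_z$, and conclude $F\subseteq\underline{\ZZ}_\Delta$---isolates the missing hypothesis cleanly: $F$ must have no nonzero subsheaf supported on the non-locally-constant locus. That hypothesis does hold for the sheaf $q_*j_*R^2\pi_{0,*}\ZZ$ to which the lemma is applied in the proof of \Cref{lem:deformation of polarization scheme}, since its sections over a connected open $V$ are sections of a local system over $q^{-1}(V)\cap\mathcal B_0$ and are determined by their germ at a single point; with the hypothesis added, your proof closes and the application is unaffected.

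Two small caveats on your write-up. First, in Step 3 the subgroup $\{c\in\ZZ:\ d_z\mid c\ \text{for all}\ z\in Z\}$ is nonzero only if $Z$ is finite (or the $d_z$ are bounded); the paper implicitly takes $Z$ finite, and in the intended application one can instead note directly that restriction $F(\Delta)\hookrightarrow F_t\cong\ZZ$ is injective and that the image of a K\"ahler class of $\mathcal X$ supplies a nonzero section, so the issue does not arise. Second, the connectedness you actually need is that of $q^{-1}(V)\cap\mathcal B_0$ for every connected open $V\subseteq\Delta$, not merely that of $\mathcal B_0$; this holds because $\mathcal B_0$ is the complement of a proper closed analytic subset of the $\PP^n$-bundle $\mathcal B$ containing no fiber of $q$.
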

	\begin{proof}
		Since $F$ is constructible, $F_{|U}$ is a local system on a completement $U$ of a finite set of points $t_1, \cdots, t_k \in \Delta$. Let $U_i \subset U$ be a small punctured disc around $t_i$. The restriction $F_{|U_i}$ is determined by the representation
		\[ \rho_i : \ZZ \cong \pi_1(U_i) \to \Aut(\ZZ) = \{ \pm 1 \} .\]
		We have only two possibilities $\rho_i (1) = \pm 1$ for each $i$. Suppose we have $\rho_i(1) = -1$ for some $i$. Consider the total space $f : \operatorname{Et}(F) \to \Delta$ of the entire constructible sheaf $F$ (espace \'etal\'e). The map $f$ is holomorphic and \'etale, i.e., a local isomorphism. The condition $\rho_i(1) = -1$ geometrically translates to the fact that $f^{-1} (U_i)$ consists of a single copy of $U_i$ (the zero section) and infinite number of two-sheeted coverings of the punctured disc $U_i$. By the very assumption, the preimage $f^{-1} (t_i) = \{ p_1, p_2, \cdots \}$ should be isomorphic to $\ZZ$. Since $f$ is a local isomorphism, there should be an open disc neighborhood of each $p_i \in \operatorname{Et}(F)$. Along the two-sheeted coverings of $U_i$ in $f^{-1} (U_i)$, this cannot happen. Therefore, the only possibility is that all $p_i$ are the non-Hausdorff points filling in the unique punctured disc component in $f^{-1} (U_i)$ (i.e., the zero section). Hence we obtain at least $\ZZ$ global sections around the zero section and we are done.
		
		The remaining case is when $\rho_i(1) = 1$ for all $i$. This means $F_{|U}$ is a constant sheaf $\ZZ$. The reader should be aware that this does not imply $F$ is a constant sheaf $\ZZ$ on $\Delta$. This can be again conveniently seen in the total space $f : \operatorname{Et}(F) \to \Delta$. Although $f$ is a local homeomorphism, it is not a covering space unless $\operatorname{Et}(F)$ is Hausdorff. Indeed, the fibers $f^{-1}(t_i)$ can consist of non-Hausdorff points in $\operatorname{Et}(F)$ and this gives us a classification of such a constructible sheaf $F$. In any case, there are always $\ZZ$ global sections.
	\end{proof}
	
	\Cref{lem:deformation of polarization scheme} in particular recovers \cite[Thm 1.1]{wie16}.
	
	\begin{corollary} \label{cor:polarization type is deformation invariant}
		The polarization type of $\pi$ is invariant under deformations of $\pi$. \qed
	\end{corollary}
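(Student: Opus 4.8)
\emph{Proposal.} The plan is to read this off directly from \Cref{lem:deformation of polarization scheme}. Recall that the polarization type of a Lagrangian fibration $\pi$ is, by \Cref{def:polarization scheme}, the unique tuple $(d_1,\dots,d_n)$ with $d_1 \mid \cdots \mid d_n$ for which the fibers of the polarization scheme $K = \ker\lambda$ are isomorphic to $(\ZZ/d_1\oplus\cdots\oplus\ZZ/d_n)^{\oplus 2}$; in particular it is determined purely by the isomorphism class, as an abstract finite abelian group, of a single fiber of $K$. So it suffices to prove that in a family of Lagrangian fibered hyper-K\"ahler manifolds the fibers of the various polarization schemes $K$ are all abstractly isomorphic.

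First I would reduce to a family over a single open disc. By the definition of deformation equivalence, two Lagrangian fibrations $\pi$ and $\pi'$ are joined by a finite chain of families $\mathcal X \xrightarrow{\pi}\mathcal B \xrightarrow{q}\Delta$ over $1$-dimensional open discs, so it is enough to treat one link. For such a family, \Cref{lem:deformation of polarization scheme} with $a = 1$ produces a finite \'etale group scheme $\mathcal K = \mathcal K_1 \to \mathcal B_0$ whose restriction to $(B_0)_t$ (the smooth locus of $\pi_t : X_t \to B_t$) is precisely the polarization scheme $K$ of $\pi_t$. Now $\mathcal B_0$ is connected: $\mathcal B$ is a $\PP^n$-bundle over the disc $\Delta$, hence a connected complex manifold, and $\mathcal B_0$ is the complement of the proper analytic discriminant locus of $\pi$, hence still connected. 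A finite \'etale group scheme over a connected base is classified by a finite group together with an action of the fundamental group \emph{by group automorphisms}; consequently all of its fibers are isomorphic as abstract finite groups. Restricting to the (connected) subspace $(B_0)_t$, the fiber of $\mathcal K$ over any point of $(B_0)_t$ is therefore isomorphic to the fiber over any point of $(B_0)_{t'}$, independently of $t, t' \in \Delta$. Hence the finite abelian group underlying $K$, and with it the polarization type, is constant along $\Delta$; chaining the disc families gives the corollary.

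The substantive work is entirely contained in \Cref{lem:deformation of polarization scheme} (the constructibility argument together with \Cref{lem:constructible sheaf} showing $H^0(\mathcal B_0, R^2\pi_{0,*}\ZZ)\cong\ZZ$, which is what lets the relative polarization — and hence $\mathcal K_a$ — be defined globally over $\mathcal B_0$ rather than just fiberwise). Granting that, the only point to be careful about here is the implication \emph{finite \'etale group scheme over a connected base $\Rightarrow$ abstractly isomorphic fibers}; this is standard, the key being that monodromy acts through automorphisms of the \emph{group} and so cannot alter the isomorphism type of the fiber. I do not expect any genuine obstacle in this final deduction.
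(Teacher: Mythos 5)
Your proposal is correct and is exactly the argument the paper intends: the corollary is stated with an immediate \qed precisely because \Cref{lem:deformation of polarization scheme} (with $a=1$) packages the polarization schemes of all fibers into a single finite \'etale group scheme $\mathcal K$ over the connected base $\mathcal B_0$, whence all fibers are abstractly isomorphic and the polarization type is constant along the family. The details you supply (connectedness of $\mathcal B_0$, monodromy acting by group automorphisms, chaining disc families) are the correct and standard fill-ins for the step the paper leaves implicit.
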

	
	The following final observation is elementary but nontrivial. We match its notation to our original discussion.
	
	\begin{lemma} \label{lem:torsion section property}
		Let $\mathcal P_0 \to \mathcal B_0$ be an abelian scheme over a complex manifold $\mathcal B_0$ and $a\lambda : \mathcal P_0 \to \check {\mathcal P}_0$ a polarization with $\mathcal K_0[a] = \ker (a\lambda)$. Assume there exists a torsion section $f : \mathcal B_0 \to \mathcal P_0$. If $f(\mathcal B_0) \cap \mathcal K_0[a] \neq \emptyset$ then $f(\mathcal B_0) \subset \mathcal K_0[a]$.
	\end{lemma}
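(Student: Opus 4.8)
The plan is to exploit the fact that $\mathcal K_a \to \mathcal B_0$ is a \emph{finite étale} group scheme, so that its connected components are the "constant" pieces, combined with the fact that a torsion section of an abelian scheme has image which is étale over the base. First I would observe that since $f$ is a torsion section, say of order $N$, its image $f(\mathcal B_0)$ is a connected closed subscheme of the $N$-torsion subscheme $\mathcal P_0[N] \subset \mathcal P_0$, which is finite étale over $\mathcal B_0$; in particular $f(\mathcal B_0) \to \mathcal B_0$ is an isomorphism onto a connected component of $\mathcal P_0[N]$. Likewise $\mathcal K_a \subset \mathcal P_0[M]$ for a suitable $M$ (one can take $M = a^2 d_n$ or more crudely the exponent of $\mathcal K_a$), so both $f(\mathcal B_0)$ and $\mathcal K_a$ sit inside a common finite étale group scheme $\mathcal P_0[L]$ over $\mathcal B_0$ with $L = \mathrm{lcm}(N, M)$.

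Next I would argue at the level of this finite étale group scheme. The image $f(\mathcal B_0)$ is one connected component of $\mathcal P_0[L]$, and $\mathcal K_a$ is a union of connected components of $\mathcal P_0[L]$ (being a closed \emph{and} open subgroup scheme, since it is itself finite étale over $\mathcal B_0$ and the inclusion $\mathcal K_a \hookrightarrow \mathcal P_0[L]$ is a clopen immersion of finite étale $\mathcal B_0$-schemes). Therefore the intersection $f(\mathcal B_0) \cap \mathcal K_a$ is a union of connected components of the connected scheme $f(\mathcal B_0)$: it is either empty or all of $f(\mathcal B_0)$. The hypothesis says it is nonempty, hence $f(\mathcal B_0) \subset \mathcal K_a$, which is exactly the claim.

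The step I expect to require the most care is justifying that $f(\mathcal B_0)$ is genuinely connected \emph{and} clopen inside $\mathcal P_0[L]$ — i.e. that it is a full connected component. Connectedness is immediate since $\mathcal B_0$ is connected and $f$ is continuous; the point is openness, which follows because $f(\mathcal B_0) \to \mathcal B_0$ is a section of the finite étale (hence open) map $\mathcal P_0[L] \to \mathcal B_0$, so its image is open, and it is also closed since $\mathcal P_0[L] \to \mathcal B_0$ is separated and $\mathcal B_0$ is (locally) connected; thus $f(\mathcal B_0)$ is a connected component. The analogous clopen-ness of $\mathcal K_a$ in $\mathcal P_0[L]$ is the standard fact that a closed subgroup scheme which is itself finite étale over the base, inside a finite étale group scheme, is open. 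Once these two topological facts are in place, the conclusion is formal: an intersection of two clopen subsets of a connected space, one of which equals the whole space, is either empty or everything.
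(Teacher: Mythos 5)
Your proof is correct and is essentially the paper's argument in a different (slightly more explicit) packaging: the paper locally trivializes $\mathcal P_0 \to \mathcal B_0$ topologically as $(\mathbb{R}/\mathbb{Z})^{2n} \times S \to S$, observes that the torsion section and $\mathcal K_a$ then become constant, and concludes that the locus where $f(b) \in \mathcal K_a$ is clopen in the connected base. Your clopen connected-component argument inside the finite \'etale group scheme $\mathcal P_0[L]$ expresses exactly the same topological rigidity of torsion data, and if anything spells out more carefully why the section image is a full connected component.
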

	\begin{proof}
		The statement is topological and local on the base $\mathcal B_0$, so we may assume $\mathcal B_0$ is a complex open ball $S$ and $\mathcal P_0 \to \mathcal B_0$ is homeomorphic to the topological constant group scheme $(\RR/\ZZ)^{2n} \times S \to S$. In this setting, the kernel $\mathcal K_0[a]$ is a constant subgroup scheme and the \emph{torsion} section $f$ is a constant section. Hence $f(S) \cap \mathcal K_0[a] \neq \emptyset$ if and only if $f(S) \subset \mathcal K_0[a]$.
	\end{proof}
	
	\begin{proof} [Proof of \Cref{prop:polarization scheme ver1}]
		Consider a one-parameter family of Lagrangian fibered hyper-K\"ahler manifolds $\mathcal X \to \mathcal B \to \Delta$ over a complex disc $\Delta$. By \Cref{lem:deformation of polarization scheme}, there exists a notion of a family of abelian schemes $\mathcal P_0 \to \mathcal B_0$ and a family of finite \'etale group schemes $\mathcal K_0[a] \subset \mathcal P_0$. \Cref{prop:aut defines global section of P} proves we have a closed immersion $\Aut^{\circ}(X/B) \hookrightarrow P_0$ for a single fiber. In fact, the argument applies to the entire family and produces $\Aut^{\circ}(X/B)$ global sections of $\mathcal P_0 \to \mathcal B_0$, or equivalently an embedding
		\[ \Aut^{\circ}(X/B) \hookrightarrow \mathcal P_0 .\]
		Since $\Aut^{\circ}(X/B)$ is finite, the global sections are torsion. Suppose we had $\Aut^{\circ}(X/B) \hookrightarrow K_0[a]$ for the original Lagrangian fibration over $0 \in \Delta$. Then this forces $\Aut^{\circ}(X/B) \hookrightarrow \mathcal K_0[a]$ over the entire $\Delta$ by \Cref{lem:torsion section property}. The claim follows.
	\end{proof}
	
	\begin{proof} [Proof of \Cref{prop:polarization scheme ver2}]
		Recall from \Cref{prop:non primitive polarization} that the restriction map $H^2 (X, \ZZ) \to H^2 (F, \ZZ)$ has a rank $1$ image generated by the class $a\theta$, where $a = \operatorname{div}(h)$ and $\theta$ is the primitive ample class corresponding to our polarization $\lambda : F \to \check F$. The preimage of $a\theta \in H^2 (F, \ZZ)$ under this restriction homomorphism is precisely $S = \{ x \in H^2 (X, \ZZ) : q(x, h) = a \}$. By \Cref{prop:polarization scheme ver1}, the claim is invariant under deformations of $\pi$. We may thus deform $\pi$ and assume $\Pic(X) \cap S \neq \emptyset$. In other words, we may assume the composition $\Pic(X) \subset H^2 (X, \ZZ) \to H^2 (F, \ZZ)$ is generated by $a\theta$.
		
		The assertion $\Aut^{\circ}(X/B) \hookrightarrow K_0[a] = \ker (a\lambda)$ is equivalent to $a\lambda (\Aut^{\circ}(X/B)) = 0$. The latter equality may be verified fiberwise, so we may concentrate on a single fiber $F = \nu^{-1}(b) = \pi^{-1}(b)$. Let $L$ be any line bundle on $X$ such that its image under $\Pic(X) \to H^2 (F, \ZZ)$ is $a\theta$. This means the polariztion $a\lambda$ can be described as
		\[ a\lambda : F \to \check F, \qquad t_x \mapsto [t_x^* (L_{|F}) \otimes L_{|F}^{-1} ] .\]
		If we assume $t_x = f_{|F}$ is from a global $H^2$-trivial automorphism $f \in \Aut^{\circ}(X/B)$, then we have a sequence of identities
		\[ t_x^* (L_{|F}) = (f_{|F})^* (L_{|F}) = (f^* L)_{|F} \cong L_{|F} ,\]
		where the last isomorphism follows from the fact $f$ acts on $\Pic(X) \subset H^2 (X, \ZZ)$ trivially. This proves $a\lambda$ sends $\Aut^{\circ}(X/B)$ to $0$ and the claim follows.
	\end{proof}

	\section{The minimal split covering and $H^2$-trivial automorphisms} \label{sec:minimal split covering}
	This section discusses an explicit construction of certain $H^2$-trivial automorphisms. This will be conveniently used in the next section when we describe the $\Aut^{\circ} (X)$-action explicitly for certain examples of $\Kum_n$-type hyper-K\"ahler manifolds. Recall that the group $\Aut^{\circ} (X)$ is computed for all known deformation types of hyper-K\"ahler manifolds; Beauville \cite{bea83b} for $\text{K3}^{[n]}$-types, Boissi\`ere--Nieper-Wi{\ss}kirchen--Sarti \cite{boi-nei-sar11} for $\Kum_n$-types, and Mongardi--Wandel \cite{mon-wan17} for OG10 and OG6-types. The strategy is to compute the group for a specific choice of a complex structure and then use Hassett--Tschinkel's deformation equivalence \cite[Thm 2.1]{has-tsc13}. Unfortunately, this argument doesn't tell us how $\Aut^{\circ}(X)$ exactly acts on $X$ for the deformations. The goal of this section is to introduce \Cref{prop:galois} to partially resolve this problem.
	
	Throughout the section, we stick to the following setting. Let $M$ be a \emph{projective} holomorphic symplectic manifold, not necessarily irreducible. By Beauville--Bogomolov decomposition theorem, $M$ must admit a finite \'etale covering $X \times T \to M$, called a \emph{split covering}, where $X$ is a finite product of projective hyper-K\"ahler manifolds and $T$ is an abelian variety. In fact, Beauville in \cite[\S 3]{bea83b} also considered the smallest possible minimal covering. A \emph{minimal split covering} of $M$ is the smallest possible split covering of $M$, in the sense that every split covering factors through it. The minimal split covering of $M$ always exists and is unique up to a (non-unique) isomorphism. Moreover, it is a Galois covering. We refer to Beauville's original paper for more details about minimal split coverings.
	
	Meanwhile, Kawamata \cite[Thm 8.3]{kaw85} proved that if $M$ is a K-trivial smooth projective variety then its Albanese morphism $\Alb : M \to \Alb(M)$ has to be an \'etale fiber bundle. More concretely, there exists an isogeny $\phi : T \to \Alb(M)$ of abelian varieties such that the base change of $\Alb$ becomes a trivial fiber bundle over $T$. We obtain a cartesian diagram
	\begin{equation} \label{diag:kawamata}
	\begin{tikzcd}
		X \times T \arrow[r, "\Phi"] \arrow[d, "\pr_2"] & M \arrow[d, "\Alb"] \\
		T \arrow[r, "\phi"] & \Alb(M)
	\end{tikzcd} ,
	\end{equation}
	where $X$ is a fiber of the Albanese morphism. In particular, one sees $\Phi: X \times T \to M$ becomes a split covering of $M$.
	
	Combining the two discussions, we obtain:
	
	\begin{proposition} \label{prop:minimal isogeny}
		Let $M$ be a projective holomorphic symplectic manifold and $\Alb : M \to \Alb(M)$ its Albanese morphism, an \'etale fiber bundle by Kawamata. Assume $X = \Alb^{-1}(0)$ is a projective hyper-K\"ahler manifold. Then there exists a unique isogeny $\phi : T \to \Alb(M)$ such that the morphism $\Phi$ in the fiber diagram \eqref{diag:kawamata} becomes the minimal split covering of Beauville.
	\end{proposition}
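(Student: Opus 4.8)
The plan is to combine two uniqueness statements: the uniqueness of the minimal split covering (Beauville) and the uniqueness of Kawamata's isogeny trivializing the Albanese, and to check that these two objects actually coincide. First I would recall that by Kawamata's theorem \cite[Thm 8.3]{kaw85}, applied to the K-trivial projective variety $M$, the Albanese morphism $\Alb : M \to \Alb(M)$ is an \'etale fiber bundle, so there is an isogeny $\phi : T \to \Alb(M)$ pulling it back to the trivial bundle $X \times T \to T$, and the resulting map $\Phi : X \times T \to M$ in \eqref{diag:kawamata} is \'etale. Since $X$ is assumed to be a projective hyper-K\"ahler manifold and $T$ is an abelian variety, $\Phi$ is a split covering in Beauville's sense. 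The substance of the proof is then to show that, among all split coverings of $M$, this particular one is the \emph{minimal} one, and to pin down the corresponding $\phi$ uniquely.

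For minimality I would argue as follows. Let $X' \times T' \to M$ be an arbitrary split covering, with $X'$ a product of projective hyper-K\"ahler manifolds and $T'$ an abelian variety. Composing with $\Alb$, the abelian factor $T'$ maps to $\Alb(M)$, while the hyper-K\"ahler factor $X'$ is simply connected, hence maps to a point of $\Alb(M)$; thus the map $X' \times T' \to \Alb(M)$ factors through the projection to $T'$, and since the covering is finite \'etale this exhibits $T' \to \Alb(M)$ as an isogeny — so by the universal property of Kawamata's construction (any isogeny trivializing an \'etale bundle factors through a minimal one, which here is $\phi$) we get that $T' \to \Alb(M)$ factors through $\phi : T \to \Alb(M)$. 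Base-changing, the covering $X' \times T' \to M$ factors through $X \times T = M \times_{\Alb(M)} T \to M$. This is exactly the statement that $\Phi : X \times T \to M$ is the smallest split covering, i.e. the minimal split covering of Beauville. Uniqueness of $\phi$ then follows from the uniqueness (up to isomorphism) of the minimal split covering together with the fact that the datum of $\phi$ is recovered from $\Phi$ by taking Albanese morphisms in the square \eqref{diag:kawamata}.

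The step I expect to be the main obstacle is the factorization claim for the abelian parts: one must be careful that a finite \'etale covering $X' \times T' \to M$, when post-composed with $\Alb_M$, really does realize $T'$ (or rather its image, which one must check has finite index / is all of a suitable intermediate abelian variety) as an isogeny over $\Alb(M)$ in a way compatible with Kawamata's minimal trivializing isogeny. The subtlety is that $X'$ is only a \emph{product} of hyper-K\"ahler manifolds, not necessarily the Albanese fiber $X$ on the nose, so one needs to know that the hyper-K\"ahler part of any split covering of $M$ is deformation-equivalent to (indeed, covers or is covered by) $X$; this is where one uses that $X = \Alb^{-1}(0)$ is itself a genuine hyper-K\"ahler manifold (the hypothesis of the proposition) rather than a nontrivial product, so that Beauville's minimal split covering has hyper-K\"ahler part exactly $X$. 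Once that compatibility is in place, the rest is a formal diagram chase with universal properties, and the uniqueness of $\phi$ drops out.
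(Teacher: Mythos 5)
Your overall strategy (characterize the minimal split covering by the factorization property and verify it for a suitable base change) is reasonable, but as written it has a genuine gap at exactly the step you flag as the main obstacle, and the gap is more serious than you suggest. You invoke ``the universal property of Kawamata's construction (any isogeny trivializing an \'etale bundle factors through a minimal one, which here is $\phi$)''. Kawamata's theorem provides no such universal property: it produces \emph{some} isogeny trivializing the Albanese bundle, and that isogeny is neither canonical nor minimal. The paper makes this point explicitly later: Yoshioka's trivialization of the Albanese morphism for the moduli construction is the base change by $[n+1]$, of degree $(n+1)^8$, and it is \emph{not} minimal. So when you write at the outset ``there is an isogeny $\phi : T \to \Alb(M)$ pulling it back to the trivial bundle'' and then later treat this same $\phi$ as the minimal trivializing isogeny, you are either conflating an arbitrary Kawamata isogeny with a minimal one (in which case the claim ``this particular one is the minimal split covering'' is simply false for a general Kawamata isogeny), or you are presupposing the existence and universal property of a minimal trivializing isogeny, which is essentially the content of the proposition. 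Either way the argument is circular at its key step.

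The paper's proof closes this gap by a group-theoretic construction rather than a universal-property argument: starting from an \emph{arbitrary} Kawamata isogeny $\phi' : T' \to \Alb(M)$, it uses Beauville's lemma $\Aut(X \times T') = \Aut(X) \times \Aut(T')$ to write each deck transformation of $\Phi'$ as a pair $(f,a)$ with $a$ a translation of $T'$, lets $H \subset \Gal(\Phi') \cong \Gal(\phi')$ be the subgroup of elements of the form $(\id_X, a)$, and defines $\phi : T = T'/H \to \Alb(M)$ as the induced factorization. By construction $\Gal(\Phi)$ then acts effectively on $X$, which is Beauville's characterization of the minimal split covering; uniqueness of $\phi$ follows from uniqueness of the minimal split covering. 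If you want to salvage your route, you would have to replace the appeal to a nonexistent universal property by an actual construction of the minimal trivializing isogeny (e.g., as the cover of $\Alb(M)$ corresponding to the kernel of the monodromy representation of the \'etale fiber bundle, or by the quotient construction above) and then prove the factorization claim for an arbitrary split covering; your observation that $X' \cong X$ because $X$ is simply connected is correct and is a useful ingredient, but it does not by itself produce the factorization $T' \to T$.
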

	\begin{proof}
		Use Kawamata's result to construct an isogeny $\phi' : T' \to \Alb(M)$ trivializing the Albanese map as in \eqref{diag:kawamata}. Since $\phi'$ is a finite Galois covering, $\Phi'$ is also a finite Galois covering with $\Gal(\Phi') \cong \Gal(\phi')$. The first lemma in \cite[\S 3]{bea83b} claims $\Aut (X \times T') = \Aut(X) \times \Aut(T')$. Hence the $\Gal(\Phi')$-action on $X \times T'$ is by $(f, a)$ where $f$ and $a$ are automorphisms on $X$ and $T$, respectively. The isomorphism $\Gal(\Phi') \to \Gal(\phi')$ is by the second projection $(f, a) \mapsto a$. Since $\Gal(\phi')$ is the kernel of the isogeny $\phi'$, the automorphisms $a$ must be translations of $T'$.
		
		Now consider the homomorphism $\Gal(\Phi') \to \Aut(X)$ by $(f, a) \mapsto f$. Set $H$ by the kernel of it; it consists of elements of the form $(\id_X, a)$. Under the isomorphism $\Gal (\Phi') \cong \Gal(\phi')$, we can consider it as a subgroup of $\Gal(\phi')$, so there exists a Galois covering $T' \to T = T'/H$ corresponding to it. Let $\phi : T \to \Alb(M)$ be the morphism factorizing $\phi'$. We have a cartesian diagram
		\[\begin{tikzcd}
			X \times T' \arrow[r] \arrow[d, "\pr_2"] & X \times T \arrow[r, "\Phi"] \arrow[d, "\pr_2"] & M \arrow[d, "\Alb"] \\
			T' \arrow[r] & T \arrow[r, "\phi"] & \Alb(M)
		\end{tikzcd}.\]
		By construction, $\Gal(\Phi)$ consists of automorphisms $(f, a)$ with no $(\id_X, a)$ (i.e., the $\Gal(\phi)$-action on $X$ is effective). But this means $\Phi$ is precisely Beauville's minimal split covering \cite[\S 3]{bea83b}. The uniqueness of $\phi$ follows from the uniqueness of the minimal split covering.
	\end{proof}
	
	The proposition in particular proves that the minimal split covering can be always realized by an isogeny $\phi : T \to \Alb(M)$ and the base change \eqref{diag:kawamata}.
	
	\begin{definition} \label{def:minimal isogeny}
		We call $\phi : T \to \Alb(M)$ in \Cref{prop:minimal isogeny} the \emph{minimal isogeny} trivializing the Albanese morphism $\Alb : M \to \Alb(M)$. It is unique up to a (non-unique) isomorphism.
	\end{definition}
	
	In fact, the proof of \Cref{prop:minimal isogeny} is saying more about an arbitrary isogeny $\phi'$.
	
	\begin{corollary} \label{cor:minimal isogeny}
		Notation as in \Cref{prop:minimal isogeny}. Let $\phi' : T' \to \Alb(M)$ be any isogeny trivializing the Albanese morphism. Then
		\begin{enumerate}
			\item $\phi'$ factors though the minimal isogeny $\phi$.
			\item There exists a canonical $\Gal(\phi')$-action on $X$.
			\item The isogeny $\phi'$ is minimal if and only if the $\Gal(\phi')$-action on $X$ is effective.
		\end{enumerate}
	\end{corollary}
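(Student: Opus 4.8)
I would deduce all three statements by bookkeeping the constructions already made in the proof of \Cref{prop:minimal isogeny}. Fix a trivializing isogeny $\phi' : T' \to \Alb(M)$ and form the cartesian square \eqref{diag:kawamata} with $T'$ in place of $T$; thus $\Phi' : X \times T' \to M$ is a split covering, and the second projection $X \times T' \to T'$ identifies $\Gal(\Phi')$ with $\Gal(\phi') = \ker \phi'$. By Beauville's first lemma in \cite[\S 3]{bea83b} there is a canonical product decomposition $\Aut(X \times T') = \Aut(X) \times \Aut(T')$, so each element of $\Gal(\Phi')$ is a pair $(f,a)$; the identification $\Gal(\Phi') \cong \Gal(\phi')$ is then $(f,a) \mapsto a$, realizing $\Gal(\phi') = \ker\phi'$ as the group of translations of $T'$ by its elements.

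For (2), I would simply \emph{define} the canonical $\Gal(\phi')$-action on $X$ as the composite
\[ \Gal(\phi') \xrightarrow{\ \sim\ } \Gal(\Phi') \xrightarrow{\ (f,a)\,\mapsto\, f\ } \Aut(X), \]
which is canonical because each arrow is (the first comes from the cartesian square, the second is the first projection of Beauville's canonical decomposition). Denote its kernel by $H \subseteq \Gal(\phi') = \ker\phi'$; concretely $H$ consists of the elements acting as $(\id_X, a)$ on $X \times T'$.

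For (1), put $T := T'/H$ and let $\phi : T \to \Alb(M)$ be the induced map, so that $\phi' = \phi \circ (T' \twoheadrightarrow T)$. Base-changing $\Alb$ along $\phi$ yields $(X \times T')/H$, and since $H$ acts trivially on the $X$-factor this is $X \times (T'/H) = X \times T$; hence $\phi$ is again a trivializing isogeny, and the associated split covering $\Phi : X \times T \to M$ has, by construction, an effective $\Gal(\phi)$-action on $X$. As in the proof of \Cref{prop:minimal isogeny}, effectiveness of this action forces $\Phi$ to be Beauville's minimal split covering, whence $\phi$ is the minimal isogeny of \Cref{def:minimal isogeny}, and $\phi'$ factors through it as required.

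Finally, (3) is now immediate: $\phi'$ is minimal $\iff$ $\Phi'$ is Beauville's minimal split covering $\iff$ $\Gal(\Phi')$ contains no nontrivial element of the form $(\id_X, a)$ $\iff$ $H = \{1\}$ $\iff$ the $\Gal(\phi')$-action on $X$ is effective. The one point that is not purely formal — and hence the main obstacle, mild as it is — is the verification in (1) that quotienting the split covering $\Phi'$ by $H$ produces \emph{again} a product $X \times T$ over $M$ coming from an isogeny of abelian varieties; this is exactly where Beauville's product decomposition of $\Aut(X \times T')$ is indispensable. Once that is granted, everything reduces to tracking the constructions in the proof of \Cref{prop:minimal isogeny}.
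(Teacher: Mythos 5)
Your proposal is correct and follows essentially the same route as the paper: the published proof of this corollary simply points back to the proof of \Cref{prop:minimal isogeny}, where the identification $\Gal(\Phi')\cong\Gal(\phi')$ via $(f,a)\mapsto a$, the homomorphism $a\mapsto f_a$ into $\Aut(X)$, the kernel $H$, and the quotient $T=T'/H$ realizing the minimal isogeny are all constructed exactly as you describe. You have merely made explicit the bookkeeping that the paper leaves implicit, including the (correct) observation that Beauville's decomposition $\Aut(X\times T')=\Aut(X)\times\Aut(T')$ is what guarantees the quotient by $H$ is again a product.
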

	\begin{proof}
		All of these can be directly deduced from the proof of \Cref{prop:minimal isogeny}. Recall $\Gal(\Phi') \to \Gal(\phi')$, $(f,a) \mapsto a$ is an isomorphism. Therefore, $f = f_a$ is uniquely determined by $a$, and this defines $\Gal(\phi') \to \Aut(X)$, $a \mapsto f_a$.
	\end{proof}
	
	Now we can state the main result of this section. The ideas here were already contained in \cite{bea83b, bea83}.
	
	\begin{proposition} \label{prop:galois}
		Notation as in \Cref{prop:minimal isogeny} and \ref{cor:minimal isogeny}. Then $\Gal(\phi')$ acts on $X$ by $H^2$-trivial automorphisms. That is, we have a canonical homomorphism
		\[ \Gal(\phi') \to \Aut^{\circ} (X) ,\]
		which is injective if and only if $\phi'$ is minimal.
	\end{proposition}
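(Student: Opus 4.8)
The plan is to descend everything to the split covering $\Phi' : X \times T' \to M$ and combine three inputs: that $\Phi'$ is finite \'etale and Galois, that $X$ is simply connected, and that $M$ carries a holomorphic symplectic form. Write $G = \Gal(\Phi') \cong \Gal(\phi')$. By \Cref{cor:minimal isogeny} every deck transformation of $\Phi'$ has the shape $(f_a, a)$ with $a$ acting on $T'$ by a translation, and the canonical $\Gal(\phi')$-action on $X$ is $a \mapsto f_a$. So the task is to show $f_a^\ast = \mathrm{id}$ on $H^2(X, \ZZ)$ for every $a \in \Gal(\phi')$, after which the homomorphism $\Gal(\phi') \to \Aut^\circ(X)$ is automatic.

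First I would pass to rational cohomology. Since $\Phi'$ is finite \'etale Galois, $\Phi'^\ast$ identifies $H^2(M, \QQ)$ with $H^2(X \times T', \QQ)^G$. By the K\"unneth formula together with $H^1(X, \QQ) = 0$ (hyper-K\"ahler manifolds are simply connected), one has $H^2(X \times T', \QQ) = \pr_1^\ast H^2(X, \QQ) \oplus \pr_2^\ast H^2(T', \QQ)$, and $a$, being a translation, acts trivially on $H^\bullet(T', \QQ)$. Hence $G$ acts on $H^2(X \times T', \QQ)$ only through the first summand, by $f_a^\ast$ on $H^2(X, \QQ)$, and the restriction of a class of $H^2(M, \QQ)$ to the Albanese fibre $X$ lands exactly in $H^2(X, \QQ)^G$.

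Next I would exploit the symplectic form. The pullback $\Phi'^\ast \sigma_M$ is a nowhere-degenerate $G$-invariant holomorphic $2$-form on $X \times T'$; by K\"unneth, $h^{1,0}(X) = 0$ and $h^{2,0}(X) = 1$, after rescaling it equals $\pr_1^\ast \sigma_X + \pr_2^\ast \omega$ for some $\omega \in H^0(T', \Omega^2_{T'})$. Since translations act trivially on forms, $G$-invariance forces $f_a^\ast \sigma_X = \sigma_X$, so each $f_a$ is a symplectic automorphism of $X$ of finite order. A standard argument then shows $f_a$ acts as the identity on the transcendental Hodge structure $T(X) \subset H^2(X, \QQ)$: $T(X)$ is an irreducible rational Hodge structure (any nonzero sub-Hodge-structure contains the line $H^{2,0}(X)$), hence $\mathrm{End}_{\mathrm{HS}}(T(X))$ is a division algebra and $f_a^\ast|_{T(X)}$ is a root of unity in it acting by the scalar $1$ on $H^{2,0}(X)$, so it is $1$.

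It remains to upgrade ``trivial on $T(X)$'' to ``trivial on all of $H^2(X, \ZZ)$'', equivalently to show that the coinvariant sublattice of $f_a$ inside $\NS(X)$ vanishes; this is the heart of the matter, and it is where the \emph{minimality} of the split covering must be used, following Beauville's analysis in \cite[\S 3]{bea83b}. The point is that a non-trivial $G$-action on the second cohomology of the hyper-K\"ahler factor would allow one to peel off a strictly smaller split covering, contradicting minimality; concretely one compares the Hodge numbers of $M$ computed from $\Phi'$ with those forced by the \'etale fibre bundle structure of $\Alb : M \to \Alb(M)$. Once $f_a^\ast$ is the identity on $H^2(X, \QQ)$ it is the identity on $H^2(X, \ZZ)$, so $f_a \in \Aut^\circ(X)$, and the kernel of $\Gal(\phi') \to \Aut^\circ(X)$ consists of the deck transformations of the form $(\mathrm{id}_X, a)$, which by the construction in \Cref{prop:minimal isogeny} and the third part of \Cref{cor:minimal isogeny} is trivial precisely when $\phi'$ is minimal. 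The main obstacle is exactly this last step: finite-order symplectic automorphisms need not be $H^2$-trivial in general (Nikulin involutions on K3 surfaces already show this), so minimality has to enter in an essential way there and cannot be replaced by the symplectic condition alone.
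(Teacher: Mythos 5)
Your reductions and the first two-thirds of the argument are sound: the identification $H^2(M,\QQ) \cong H^2(X\times T',\QQ)^{G}$, the K\"unneth splitting, the invariance of the symplectic form, and the irreducibility argument showing $f_a^*$ is the identity on the transcendental part are all fine (the last step needs $X$ projective, which holds since $M$ is). The problem is the step you yourself single out as the heart of the matter: killing the action on $\NS(X)$. The mechanism you propose --- that a nontrivial $G$-action on $H^2$ of the hyper-K\"ahler factor would let one ``peel off a strictly smaller split covering,'' contradicting minimality --- does not work. Minimality of $\phi'$ is equivalent to \emph{effectiveness} of the $G$-action on $X$ (this is part (3) of \Cref{cor:minimal isogeny}, and it is exactly what gives the injectivity half of the statement); it says nothing about the action on cohomology, and a deck transformation acting nontrivially on $X$ and nontrivially on $\NS(X)$ produces no intermediate covering. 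The suggested comparison of Hodge numbers is also circular: the Hodge numbers of $M$ computed from $\Phi'$ are $\dim H^{p,q}(X\times T')^{G}$, and the \'etale fibre bundle structure of $\Alb$ gives no independent computation of these without already knowing the monodromy, i.e.\ the $G$-action. Note moreover that the proposition asserts $H^2$-triviality for \emph{every} trivializing isogeny $\phi'$, not only the minimal one, so minimality cannot be the essential input for that half of the claim (the image of $\Gal(\phi')$ in $\Aut(X)$ coincides with that of $\Gal(\phi)$ anyway).

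The idea you are missing, and which the paper uses, is that the $\Gal(\phi)$-action on $M$ extends to an action of the \emph{connected} group $T$: one lets $T$ act on $X\times T$ by translation on the second factor, checks that this commutes with the deck group so that it descends to $M$, and observes that $\Alb$ becomes $T$-equivariant with $\Gal(\phi)=\ker\phi$ the stabilizer of $0\in\Alb(M)$, recovering the action on the fiber $X$. Since $T$ is path connected, every $f_a$ is isotopic to $\id_M$ and hence acts trivially on all of $H^*(M,\QQ)$, algebraic classes included. It then remains to prove that the restriction $H^2(M,\QQ)\to H^2(X,\QQ)$ is surjective, which the paper does by deforming to a very general complex structure where $H^2(X,\QQ)$ is a simple Hodge structure (so the restriction is zero or surjective) and ruling out zero via the nonvanishing of $\sigma_{|X}$. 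Without some such use of the connectedness of $T$, your argument cannot get past the N\'eron--Severi part, as your own Nikulin-involution caveat already indicates.
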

	\begin{proof}
		By \Cref{cor:minimal isogeny}, we may assume $\phi' = \phi$ is minimal and $\Gal(\phi) \subset \Aut(X)$. The content of the proposition is that it is further a subgroup of $\Aut^{\circ}(X)$.
		
		Consider the diagram \eqref{diag:kawamata}. Our first step is to equip $T$-actions on all the four spaces to make the diagram $T$-equivariant. Equip a $T$-action on $T$ by translation, and on $X \times T$ only on the second factor again by translation. The $T$-action on $\Alb(M)$ is by translation via the morphism $\phi$: if $a \in T$ and $z \in \Alb(M)$ then we define $a.z = z + \phi(a)$.
		
		To equip a $T$-action on $M$, we claim the $T$-action on $X \times T$ descends to $M$ via $\Phi$. The descent works if the $\Gal(\Phi)$-action on $X \times T$ commutes with the $T$-action. Recall from the discussions in \Cref{prop:minimal isogeny} that $\Gal(\Phi)$ acts on $X \times T$ by $(f, a)$ where $f$ is an automorphism of $X$ and $a$ is a translation of $T$. Let $b \in T$ and $(x, t) \in X \times T$. Then we have a sequence of identities
		\[ b . \Big( (f, a) . (x, t) \Big) = (f(x), t + a + b) = (f, a) . \Big( b . (x,t) \Big) .\]
		This proves the $T$-action and $\Gal(\Phi)$-action commutes, yielding the descent $T$-action on $M$. The conclusion is that $\Alb$ becomes automatically $T$-equivariant (and hence the diagram \eqref{diag:kawamata} becomes $T$-equivariant).
		
		By definition, the stabilizer the $T$-action on $\Alb(M)$ is precisely $\ker \phi = \Gal(\phi)$. Since the Albanese map $\Alb : M \to \Alb(M)$ is $T$-equivariant, this induces a $\Gal(\phi)$-action on the fiber $\Alb^{-1} (0) = X$. One easily shows this coincides with our previous $\Gal(\phi)$-action on $X$. Notice that any $T$-action on $M$ is isotopic to the identity map because $T$ is path connected. In particular, $T$ acts on $M$ trivially at the level of cohomology $H^* (M, \QQ)$. The embedding $X \subset M$ is $\Gal(\phi)$-equivariant, so we have a $\Gal(\phi)$-equivariant restriction homomorphism
		\[ H^2 (M, \QQ) \to H^2 (X, \QQ) .\]
		Hence it suffices to prove this restriction homomorphism is surjective.
		
		The question now became topological. Deform the complex structure of the hyper-K\"ahler manifold $X$ very generally so that $H^2 (X, \QQ)$ becomes a simple $\QQ$-Hodge structure (we will have to lose the projectiveness of $X$). The complex structure of $M$ can be correspondingly chosen in a way that the finite covering map $\Phi : X \times T \to M$ becomes holomorphic. Therefore, the Hodge structure morphism $H^2 (M, \QQ) \to H^2 (X, \QQ)$ is either $0$ or surjective. We only need to rule out the former possibility.
		
		To prove it is nonzero, consider any global holomorphic symplectic form $\sigma$ on $M$. Pulling it back to $X \times T$ gives a global holomorphic symplectic form on $X \times T$. But $H^{2,0} (X \times T) = H^{2,0} (X) \oplus H^{2,0} (T)$ by K\"unneth. If $\sigma$ was $0$ in the $H^{2,0}(X)$-component then this would mean $\sigma$ doesn't contain any $2$-forms along the tangent direction of $X$, violating $\sigma$ is a symplectic form. Hence $\sigma_{|X}$ cannot be $0$. The claim follows.
	\end{proof}
	
	\begin{remark} \label{rmk:aut0 new definition}
		An alternative way to state the results in this section is as follows. Any isogeny $\phi' : T' \to \Alb(M)$ trivizalizing the Albanese morphism defines a group homomorphism $\Gal (\phi') \to \Aut^{\circ}(X)$. The image of this homomorphism is independent on the choice of $\phi'$, which we denote by
		\[ \Aut' (X) \subset \Aut^{\circ}(X) .\]
		It is a finite abelian group, isomorphic to $\Gal(\phi)$ for a minimal isogeny $\phi$, and is deformation invariant on $X$. For example, we will later see that when $X$ is of $\Kum_n$-type then
		\[ \Aut'(X) \cong (\ZZ/n+1)^{\oplus 4}, \qquad \Aut^{\circ} (X) \cong \ZZ/2 \ltimes (\ZZ/n+1)^{\oplus 4} .\]
		Our main result can be more directly stated with this definition. See \Cref{rmk:polarization scheme equality}.
	\end{remark}

	\section{The $H^2$-trivial automorphisms and polarization scheme for generalized Kummer varieties} \label{sec:aut0 computation}
	The goal of this section is an explicit computation of the group $\Aut^{\circ}(X/B)$ and the polarization scheme $K_0$ for certain Lagrangian fibrations of $\Kum_n$-type hyper-K\"ahler manifolds. Since the group $\Aut^{\circ}(X/B)$ will be of interest for all known deformation types of hyper-K\"ahler manifolds, we state the result in a more general form. The following is the first main theorem of this section.
	
	\begin{theorem} \label{thm:aut0 computation}
		Let $\pi : X \to B$ be a Lagrangian fibration of a compact hyper-K\"ahler manifold.
		\begin{enumerate}
			\item $\Aut^{\circ} (X/B) \cong \begin{cases}
				\{ \id \} & \mbox{if } \ X \mbox{ is of K3}^{[n]} \mbox{ or OG10-type,} \\
				(\ZZ/2)^{\oplus 4} \qquad & \mbox{if } \ X \mbox{ is of OG6-type.}
			\end{cases}$
			
			\item Assume $X$ is of $\Kum_n$-type and $(1, \cdots, 1, d_1, d_2)$ is the polarization type of $\pi$ in \Cref{thm:polarization type computation}. Then
			\[\Aut^{\circ} (X/B) \cong \begin{cases}
				(\ZZ/2)^{\oplus 5} & \mbox{if } n = 3 \mbox{ and the polarization type is } (1, 2, 2) ,\\
				(\ZZ/d_1 \oplus \ZZ/d_2)^{\oplus 2} \quad & \mbox{otherwise.}
			\end{cases}\]
		\end{enumerate}
	\end{theorem}
	
	Notice that the bigger group $\Aut^{\circ} (X)$ is already trivial for $\text{K3}^{[n]}$ and OG10-types (see \cite{bea83b} and \cite{mon-wan17}), so the theorem is clear in these cases. For $\Kum_n$ and OG6-types, recall from \Cref{thm:aut0 is deformation invariant} that $\Aut^{\circ}(X/B)$ is deformation invariant on $\pi$. By \cite[\S 6.28]{wie18}, every Lagrangian fibration of a $\Kum_n$-type hyper-K\"ahler manifold is deformation equivalent to the \emph{moduli of sheaves construction}, which will be recalled in \Cref{subsec:moduli construction Kum}. By \cite{mon-rap21}, every Lagrangian fibration of an OG6-type hyper-K\"ahler manifold is deformation equivalent to each other. Therefore, \Cref{thm:aut0 computation} follows from the following more concrete results.
	
	\begin{proposition} \label{prop:aut0 computation for Kum}
		Let $\pi : X \to B$ be a Lagrangian fibration of a $\Kum_n$-type hyper-K\"ahler manifold, obtained by the moduli of sheaves construction from a triple $(S, l, s)$ in \Cref{def:moduli construction Kum}. Let $(d_1, d_2)$ be the polarization type of the ample class $l$. Then
		\[\Aut^{\circ} (X/B) \cong \begin{cases}
			(\ZZ/2)^{\oplus 5} & \mbox{if } n = 3 \mbox{ and } d_1 = d_2 = 2 ,\\
			(\ZZ/d_1 \oplus \ZZ/d_2)^{\oplus 2} \quad & \mbox{otherwise.}
		\end{cases}\]
	\end{proposition}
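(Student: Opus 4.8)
The plan is to work directly with the explicit moduli of sheaves construction on an abelian surface $S$ and compute $\Aut^{\circ}(X/B)$ by exhibiting enough $H^2$-trivial automorphisms and then matching the count. Recall that a $\Kum_n$-type manifold $X$ arising from a triple $(S,l,s)$ is a fiber of the Albanese-type map $M \to S \times \check S$ from a moduli space $M = M_S(v)$ of stable sheaves with Mukai vector $v$ (depending on $l$ and $s$), and that the Lagrangian fibration $\pi : X \to B$ is induced by a linear system, so that its fibers are (translates of) compactified Jacobians of curves in the linear system $|l|$ on $S$. The key point is that the abelian surface $S$ and its dual $\check S$ act on $M$ in two ways: by tensoring with degree-zero line bundles (the $\check S$-action) and by translation of sheaves (the $S$-action), and these actions descend to automorphisms of $X$ that respect $\pi$. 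The subgroup of these that acts trivially on $H^2(X,\ZZ)$ is exactly a product of torsion subgroups of $S$ and $\check S$ of the appropriate orders dictated by the polarization type $(d_1,d_2)$ of $l$.

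First I would recall the precise construction in \Cref{def:moduli construction Kum} and identify the two commuting group actions: the translation action of (a finite subgroup of) $S$ and the tensoring action of (a finite subgroup of) $\check S$ on the moduli space $M$, and verify that the relevant finite subgroups — those preserving the Albanese fiber $X$ and commuting with $\pi$ — have order $d_1 d_2$ each. Concretely, translation by a point $a \in S$ acts on the Mukai vector and on the Albanese map by $a \mapsto (n a, 0)$ or similar, so the stabilizer of the fiber $X$ is the subgroup $S[?]$ of torsion points of the appropriate order; the analysis for tensoring by $\check S$ is dual. The polarization type enters because the curves in $|l|$ have genus determined by $l^2$, and the monodromy/Mukai lattice computation shows the subgroup acting trivially on $H^2$ has order exactly $d_1 d_2$ in each factor, giving a candidate subgroup $(\ZZ/d_1 \oplus \ZZ/d_2)^{\oplus 2} \subseteq \Aut^{\circ}(X/B)$. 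I would use \Cref{prop:galois} from the previous section here: the minimal split covering of $M$ is realized by an isogeny $\phi : T \to \Alb(M) = S \times \check S$, and $\Gal(\phi)$ acts on $X$ by $H^2$-trivial automorphisms, which provides these automorphisms systematically rather than by ad hoc computation. One then checks these automorphisms respect $\pi$ — this is immediate since both translation and tensoring are compatible with the support map / linear system defining $\pi$.

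For the reverse inequality, that there are no further $H^2$-trivial automorphisms respecting $\pi$, I would invoke the results already established: by \Cref{prop:aut defines global section of P} and \Cref{prop:polarization scheme} (specifically \Cref{prop:polarization scheme stronger ver2}, whose hypothesis $c_X = d_1 \cdots d_n$ holds for $\Kum_n$-type by \Cref{thm:polarization type computation}), we have an embedding $\Aut^{\circ}(X/B) \hookrightarrow K_a$ where $a = \operatorname{div}(h)$; combined with the fact that $\Aut^{\circ}(X/B) \hookrightarrow K$ in the cases we control, and $K$ has fibers of order $(d_1 d_2)^2$ by the polarization type computation, we get $|\Aut^{\circ}(X/B)| \le (d_1 d_2)^2$. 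Matching this with the subgroup constructed above — which already has order $(d_1 d_2)^2$ and the claimed group structure $(\ZZ/d_1 \oplus \ZZ/d_2)^{\oplus 2}$ — forces equality, and in fact shows $K = \Aut^{\circ}(X/B)$ in this case, which is the second goal advertised in \Cref{sec:aut0 computation}. The main obstacle I anticipate is the bookkeeping in the first step: correctly pinning down \emph{which} torsion subgroups of $S$ and $\check S$ survive to act on the specific Albanese fiber $X$ (not just on $M$) and commute with the Lagrangian fibration, since the constraint $na = 0$ versus $d_i a = 0$ depends delicately on how the Mukai vector $v$ and the class $l$ interact; getting the group structure $(\ZZ/d_1 \oplus \ZZ/d_2)^{\oplus 2}$ rather than merely the order right requires tracking the splitting of the $d_i$-torsion of $S$ induced by the polarization $l$ of type $(d_1,d_2)$, i.e.\ using that $\ker(l : S \to \check S) \cong (\ZZ/d_1 \oplus \ZZ/d_2)^{\oplus 2}$.
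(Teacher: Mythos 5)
There is a genuine gap, and it sits at the center of your construction. You claim that both the translation action of (torsion points of) $S$ and the tensoring action of (torsion points of) $\check S$ descend to automorphisms of $X$ that respect $\pi$, and you build your candidate subgroup as a product of an order-$d_1d_2$ piece from each side. But translations do \emph{not} respect $\pi$. In the moduli construction, $\pi$ is the Fitting support map into the \emph{fixed} linear system $B=|L_0|$, and $t_x^*E$ has support $\Supp E - x$; the condition $\pi\circ f=\pi$ forces $\Supp E=\Supp E -x$ for all $[E]$, i.e.\ $x=0$. (Even for $x\in\ker\varphi$, translation permutes the divisors of $|L_0|$ nontrivially, so it lies over a nontrivial automorphism of $B$, not over $\id_B$.) The paper's computation shows that $\Aut^{\circ}(X/B)$ consists \emph{entirely} of tensoring automorphisms $[E]\mapsto[E\otimes P_\xi]$ with $\xi\in\ker\check\varphi\subset\check S$, and it is this single kernel $\ker\check\varphi\cong(\ZZ/d_1\oplus\ZZ/d_2)^{\oplus 2}$, of order $(d_1d_2)^2$, that is the whole answer --- not a product of two order-$d_1d_2$ pieces from $S$ and $\check S$. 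Your subgroup as described is not contained in $\Aut^{\circ}(X/B)$, and the part of it that is (the tensoring half, as you have sized it) is too small. You also do not address the order-two elements of $\Aut^{\circ}(X)\cong\ZZ/2\ltimes(\ZZ/(n+1))^{\oplus 4}$, which act by $[E]\mapsto[t_x^*([-1]^*E)\otimes P_\xi]$; the paper must (and does) rule these out separately, by choosing $L_0$ so that no translate $t^*_{\frac12 x}L_0$ is symmetric.

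The upper bound half of your argument also does not close. When $\operatorname{div}(h)=d_1>1$ (which happens for these moduli constructions), the hypothesis of the second item of \Cref{prop:polarization scheme} fails, and \Cref{prop:polarization scheme stronger ver2} only gives $\Aut^{\circ}(X/B)\hookrightarrow K_a$ with $a=\operatorname{div}(h)$; since $K_a=\ker(a\lambda)$ has fibers of order $(a^n d_1d_2)^2$, this does not pin down $|\Aut^{\circ}(X/B)|$. The sharper embedding $\Aut^{\circ}(X/B)\hookrightarrow K$ for the moduli construction is exactly \Cref{prop:polarization scheme equality Kum}, whose proof in the paper \emph{uses} the explicit description of $\Aut^{\circ}(X/B)$ established here, so invoking it at this stage is circular. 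The paper avoids any upper-bound-by-$K$ argument altogether: it first realizes the full finite group $\Aut^{\circ}(X)$ explicitly (\Cref{prop:aut0 computation for Kum2}, via the minimal isogeny and \Cref{prop:galois} --- this part of your plan matches the paper), and then determines $\Aut^{\circ}(X/B)$ by checking the condition $\pi\circ f=\pi$ directly on each of the finitely many elements. That element-by-element check is the step your proposal replaces with the incorrect assertion that the check is ``immediate.''
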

	
	\begin{proposition} [Mongardi--Wandel] \label{prop:aut0 computation for OG6}
		Let $\pi : X \to B$ be a Lagrangian fibration of an OG6-type hyper-K\"ahler manifold, obtained by the moduli of sheaves construction. Then
		\[ \Aut^{\circ}(X/B) \cong (\ZZ/2)^{\oplus 4} .\]
	\end{proposition}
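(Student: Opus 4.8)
The plan is to work directly in the moduli of sheaves model in which the proposition is stated (the reduction of \Cref{thm:aut0 computation} to this model being exactly the content of \Cref{thm:aut0 is deformation invariant} together with \cite{mon-rap21}). Concretely, I would take $X = \widetilde{K}_v(A,H)$, the O'Grady symplectic resolution of the zero fibre of the Albanese map $M_v(A,H) \to A \times \widehat A$, where $A$ is an abelian surface, $v = 2w$ for a primitive Mukai vector $w$, and $H$ is $v$-generic; after a Fourier--Mukai equivalence one may take $w = (0,\theta,\chi)$ with $\theta$ a principal polarization of $A$, so that $\pi : X \to |2\theta| \cong \PP^3$ is the restriction to $X$ of the support (Le Potier) morphism $M_v \to |2\theta|$. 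I would then bracket $\Aut^{\circ}(X/B)$ between a soft upper bound from the polarization scheme and an explicit lower bound from torsion automorphisms.

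For the upper bound I would invoke \Cref{thm:polarization type computation}: the polarization type of $\pi$ is $(1,2,2)$, so the fibres of the polarization scheme $K$ are isomorphic to $(\ZZ/2)^{\oplus 4}$, a group of order $16$. Since $c_X = 4 = 1 \cdot 2 \cdot 2$ and $\operatorname{div}(h) = 1$ (the divisibility condition that does not fail for OG6-type), \Cref{prop:polarization scheme}(2) supplies a closed immersion of group schemes $\Aut^{\circ}(X/B) \hookrightarrow K$, hence $|\Aut^{\circ}(X/B)| \le 16$. This is a little stronger than the bound $\Aut^{\circ}(X/B) \subseteq \Aut^{\circ}(X) \cong (\ZZ/2)^{\oplus 8}$ of Mongardi--Wandel \cite{mon-wan17}, and it has the bonus that, once equality of orders is shown, we get $\Aut^{\circ}(X/B) = K$.

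For the lower bound I would produce the required $16$ automorphisms directly. The abelian variety $A \times \widehat A$ acts on $M_v(A,H)$ by $(a,L) \cdot E = t_a^* E \otimes L$; because $v = 2w$, this action shifts the Albanese map by multiplication by $2$, so $(A \times \widehat A)[2] \cong (\ZZ/2)^{\oplus 8}$ stabilizes the zero fibre and lifts through the symplectic resolution, recovering $\Aut^{\circ}(X) \cong (\ZZ/2)^{\oplus 8}$. Now for $L \in \widehat A[2]$ the twist $E \mapsto E \otimes L$ does not change the Fitting support of a sheaf, hence fixes every fibre of the support morphism and satisfies $\pi \circ f = \pi$; as an element of $\Aut^{\circ}(X)$ it acts trivially on $H^2$, so $\widehat A[2] \cong (\ZZ/2)^{\oplus 4}$ embeds into $\Aut^{\circ}(X/B)$. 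Conversely a translation $t_a$ with $0 \neq a \in A[2]$ acts on $|2\theta| = \PP(H^0(A,2\theta))$ through the Schr\"odinger representation of the theta group of $2\theta$, and since $\ker(\phi_{2\theta}) = A[2]$ acts faithfully on $\PP^3$, no such $t_a$ fixes the base pointwise; hence it is not in $\Aut^{\circ}(X/B)$. Combining with the upper bound gives $\Aut^{\circ}(X/B) \cong \widehat A[2] \cong (\ZZ/2)^{\oplus 4}$, and in fact $\Aut^{\circ}(X/B) = K$.

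The main obstacle I anticipate is making the lower-bound argument rigorous away from the torus locus: the twists $E \mapsto E \otimes L$ are visibly support-preserving on the stable locus of $M_v$, but one must check that they extend to biregular automorphisms of the symplectic resolution $\widetilde{K}_v$, that they commute with the resolution morphism, and that $\pi \circ f = \pi$ persists over the singular fibres; likewise one must verify the faithfulness argument excluding nontrivial translations survives passage through the resolution. This is precisely where one leans on the detailed geometry of the Mongardi--Wandel / O'Grady model rather than on the soft machinery of \Cref{sec:abelian scheme}, and it is essentially the input one cites from \cite{mon-wan17}.
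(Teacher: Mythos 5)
Your argument is correct in outline, but it is worth noting that the paper does not actually prove this proposition: it is attributed to Mongardi--Wandel, and the text immediately after the statement explains that the computation is taken wholesale from \cite[\S 5]{mon-wan17}, where it appears as an intermediate step toward $\Aut^{\circ}(X) \cong (\ZZ/2)^{\oplus 8}$. Your route is therefore genuinely different from (and complementary to) the paper's. Mongardi--Wandel determine by hand which elements of $(A \times \widehat A)[2] \cong (\ZZ/2)^{\oplus 8}$ commute with the support morphism, exactly as the paper does in the $\Kum_n$ case; you instead cap the order at $16$ using \Cref{prop:polarization scheme}(2) together with the polarization type $(1,2,2)$ and $\operatorname{div}(h)=1$ from \Cref{thm:polarization type computation} and \cite{mon-rap21}, and then only need to exhibit the $16$ visibly support-preserving twists $E \mapsto E \otimes L$, $L \in \widehat A[2]$. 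This is sound and non-circular (\Cref{prop:polarization scheme} and the polarization-type input do not depend on \Cref{thm:aut0 computation}), it makes your concluding ``conversely'' paragraph about translations logically redundant, and it delivers $\Aut^{\circ}(X/B) = K$ for OG6 immediately rather than as a consequence of \Cref{thm:polarization scheme equality}. The price is the one you correctly flag: the entire content of the lower bound --- that the $\widehat A[2]$-twists extend to biregular, $H^2$-trivial automorphisms of the O'Grady resolution $\widetilde K_v$ commuting with $\pi$ over the exceptional locus --- is precisely the geometric input of \cite{mon-wan17}, so your proof does not eliminate that reference, it only reduces how much of it you need.
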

	
	We note that the latter computation for OG6-type was already done by Mongardi--Wandel in \cite[\S 5]{mon-wan17}, as an intermediate step for their computation of the larger group $\Aut^{\circ}(X) \cong (\ZZ/2)^{\oplus 8}$. Thus proving \Cref{prop:aut0 computation for Kum} will be enough to conclude \Cref{thm:aut0 computation}. As mentioned before, the proof will be done by an explicit computation. In fact, the computation can be carried out further and calculates the polarization scheme $K_0$ as well. This is the second main result of this section.
	
	\begin{proposition} \label{prop:polarization scheme inclusion Kum}
		Let $\pi : X \to B$ be a Lagrangian fibration of a $\Kum_n$-type hyper-K\"ahler manifold, obtained by the moduli of sheaves construction from a triple $(S, l, s)$ in \Cref{def:moduli construction Kum}. Let $(d_1, d_2)$ be the polarization type of the ample class $l$.
		\begin{enumerate}
			\item If $n = 3$ and $d_1 = d_2 = 2$, then
			\[ K_0 \hookrightarrow \Aut^{\circ}(X/B) \hookrightarrow K_0[2] .\]
			\item Otherwise, we have
			\[ K_0 = \Aut^{\circ} (X/B) .\]
		\end{enumerate}
	\end{proposition}
	
	Contrary to the previous sections, all the discussions in this section will be algebraic. In particular, algebraic Chern classes and Chow groups will be used. Given a coherent sheaf $E$ on a smooth projective variety $S$, we denote by
	\[ c_i (E) \in H^{2i} (S, \ZZ), \qquad \tilde c_i (E) \in \CH^i (S) \]
	the $i$-th numerical (i.e., cohomological) and algebraic Chern classes of $E$, respectively.

	\subsection{Moduli of coherent sheaves on an abelian variety} \label{subsec:moduli construction Kum}
	In this subsection, we recall the construction of $\Kum_n$-type hyper-K\"ahler manifolds obtained from certain moduli spaces of sheaves on abelian varieties. We will mostly follow \cite{yos01}.
	
	Let $S$ be an abelian surface and $l \in \NS(S)$ an ample cohomology class with $\int_S l^2 = 2n+2$. Fix a nonzero class $s \in H^4 (S, \ZZ)$ so that we have a \emph{primitive} Mukai vector
	\begin{equation} \label{eq:mukai_vector}
		v = (0, l, s) \ \ \in H^*_{\operatorname{even}} (S, \ZZ) .
	\end{equation}
	Then the moduli space $M$ of stable sheaves on $S$ with Chern character $v$, with respect to a $v$-generic ample line bundle, becomes a smooth projective holomorphic symplectic variety of dimension $\langle v, v \rangle + 2 = 2n+4$. Denote by $\Pic^l_S$ a connected component of the Picard scheme of $S$ with numerical first Chern class $l$. Yoshioka proved the Albanese variety of $M$ is isomorphic to $S \times \Pic^l_S$, so that we can define the Albanese morphism $\Alb : M \to S \times \Pic^l_S$.
	
	To be more precise, we first need to choose a specific reference line bundle $L_0$ and coherent sheaf $E_0$ on $S$. We choose a line bundle $L_0$ a \emph{symmetric} ample line bundle in $\Pic_S^l$ (there are precisely $16$ of them). Fix a smooth curve $i : C_0 \hookrightarrow S$ in the linear system $|L_0|$ and define a reference coherent sheaf by $E_0 = i_* D$ for a line bundle $D$ on $C_0$ with degree $s+n+1$. The Riemann--Roch computation gives $\ch (E_0) = v$ and $\tilde c_1 (E_0) = \tilde c_1 (L_0)$. Say $\Sigma : \CH^2(S) \to S(\CC)$ is the summation map. The composition
	\[ \Sigma \circ i_* \circ \tilde c_1 : \Pic_{C_0}^{s+n+1} (\CC) \to \CH^1(C_0) \to \CH^2(S) \to S(\CC) \]
	is surjective by \Cref{lem:albanese map description}. Due to this fact, we may choose an appropriate line bundle $D$ on $C_0$ to further assume $\Sigma \big( \tilde c_2 (E_0) \big) = 0$ (this again uses Riemann--Roch). Once choosing these reference points, the Albanese morphism can be explicitly described by
	\begin{equation} \label{eq:albanese morphism Kum}
		\Alb : M \to S \times \Pic^l_S, \qquad [E] \mapsto \big( \mathfrak c(E), \ \tilde c_1(E) \big) ,
	\end{equation}
	where we define $\mathfrak c(E) = \Sigma \big( \tilde c_2 (E) \big)$. It sends the reference point $[E_0]$ to the origin $(0, [L_0])$ of $S \times \Pic_S^l$. The morphism becomes an \'etale trivial fiber bundle with $\Kum_n$-type projective hyper-K\"ahler manifold fibers. We will work with the central fiber
	\[ X = \Alb^{-1} \big( 0, [L_0] \big) .\]
	
	Due to our choice of the Mukai vector $v$ in \eqref{eq:mukai_vector}, the above construction further comes with a Lagrangian fibration. Consider a connected component $\tilde B$ of the Chow variety of effective divisors on $S$ with numerical first Chern class $l$. Le Potier \cite{lepot93} constructed a morphism
	\[ \Supp : M \to \tilde B, \qquad [E] \mapsto [\Fitt_0 E] ,\]
	where $\Fitt_0 E$ is the Fitting support of a coherent sheaf $E$. Finally, consider the Poincar\'e line bundle $\mathcal P$ on $S \times \Pic^l_S$, the universal family of line bundles with the numerical Chern class $l$. Denote by $r : S \times \Pic^l_S \to \Pic^l_S$ the second projection. Then by Riemann--Roch, $r_* \mathcal P$ is a vector bundle of rank $n+1$. Its projectivization is a Zariski locally trivial $\PP^n$-bundle
	\[ \operatorname{LB} : \tilde B \to \Pic^l_S, \qquad [C] \mapsto \mathcal [\mathcal O_S(C)] .\]
	
	Gathering all the morphisms together, one easily checks we have a commutative diagram
	\[\begin{tikzcd}[sep=tiny]
		M \arrow[dd, "\Alb"'] \arrow[rd, "{(\mathfrak c, \, \Supp)}"] \\
		& S \times \tilde B \arrow[ld, "\id \times \operatorname{LB}"] \\
		S \times \Pic^l_S
	\end{tikzcd}.\]
	This is an isotrivial family of Lagrangian fibered hyper-K\"ahler manifolds in the sense of \Cref{def:family of Lagrangian fibration}. Setting $B = \operatorname{LB}^{-1}([L_0]) = |L_0| \cong \PP^n$, we obtain a Lagrangian fibration $\pi : X \to B$.
	
	\begin{definition} \label{def:moduli construction Kum}
		Let $(S, l)$ be a degree $n+1$ polarized abelian surface with a polarization type $(d_1, d_2)$ and $s \in H^4 (S, \ZZ)$ any nonzero class with $\gcd(d_1, s) = 1$. Then the above construction $\pi : X \to B$ is called the \emph{moduli construction of $\Kum_n$-type} associated to the triple $(S, l, s)$. It is a Lagrangian fibration of a projective hyper-K\"ahler manifold of $\Kum_n$-type to a projective space.
	\end{definition}

	\subsection{Describing the $H^2$-trivial automorphisms of $\Kum_n$-type moduli constructions}
	Recall that \cite{boi-nei-sar11} and \cite{has-tsc13} proved that any $\Kum_n$-type hyper-K\"ahler manifold have the group of $H^2$-trivial automorphisms
	\[ \Aut^{\circ}(X) \cong \ZZ/2 \ltimes (\ZZ/n+1)^{\oplus 4} .\]
	The goal of this subsection is to explicitly describe such automorphisms for the moduli construction. Note that describing such automorphisms is about $X$ itself but not about the Lagrangian fibration $\pi : X \to B$. Hence, the Lagrangian fibration plays no role in this subsection.
	
	Recall that we have fixed the origin $[L_0] \in \Pic^l_S$, a symmetric ample line bundle on $S$. By the general theory of abelian varieties, there exists a dual ample line bundle $\check L_0$ on the dual abelian variety $\check S$ (see \cite[\S 14.4]{bir-lan:abelian}). The ample line bundles $L_0$ and $\check L_0$ induce polarization isogenies
	\[ \varphi : S \to \check S, \qquad \check \varphi : \check S \to S ,\]
	making their compositions the multiplication endomorphisms
	\begin{equation} \label{eq:isogeny composition}
		[n+1] : S \xrightarrow{\varphi} \check S \xrightarrow{\check \varphi} S, \qquad [n+1] : \check S \xrightarrow{\check \varphi} S \xrightarrow{\varphi} \check S .
	\end{equation}
	Since $L_0$ has a polarization type $(d_1, d_2)$, the dual line bundle $\check L_0$ has a polarization type $(d_1, d_2)$ as well. In particular, we have an isomorphism
	\begin{equation} \label{eq:kernel}
		\ker \check \varphi \cong (\ZZ/d_1 \oplus \ZZ/d_2)^{\oplus 2} .
	\end{equation}
	
	A closed point $x$ on $S$ defines a translation automorphism by $x$. Our notation for the translation automorphism is
	\[ t_x : S \to S, \qquad y \mapsto y + x .\]
	A closed point $\xi$ on $\check S$ represents a numerically trivial line bundle on $S$. Considering $\xi$ both as a closed point on $\check S$ and a line bundle on $S$ can possibly lead to a confusion. Thus, we will write
	\[ P_{\xi} : \mbox{ numerically trivial line bundle on } S \mbox{ corresponding to } \xi \in \check S .\]
	With these notation in mind, we can explicitly realize the $\Aut^{\circ}(X)$-action for the moduli of sheaves construction $X$.
	
	\begin{proposition} \label{prop:aut0 computation for Kum2}
		Let $X$ be a $\Kum_n$-type moduli construction associated to a triple $(S, l, s)$ in \Cref{def:moduli construction Kum}. Then
		\begin{enumerate}
			\item We have an isomorphism
			\[ \Aut^{\circ} (X) = \{ \pm 1 \} \ltimes \{ (x, \xi) \in S[n+1] \times \check S[n+1] : \ \varphi (x) = 0, \ \ \check \varphi (\xi) = sx \} .\]
			\item With the above identification, the $\Aut^{\circ}(X)$-action on $X$ is defined by
			\[ (1, x, \xi).[E] = [t_x^* E \otimes P_{\xi}], \qquad (-1, x, \xi).[E] = [t_x^* ([-1]^* E) \otimes P_{\xi}] ,\]
			where $[-1] : S \to S$ is the multiplication by $-1$ automorphism on $S$.
		\end{enumerate}
	\end{proposition}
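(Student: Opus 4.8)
The plan is to realize $\Aut^{\circ}(X)$, whose abstract isomorphism type $\ZZ/2 \ltimes (\ZZ/(n+1))^{\oplus 4}$ is already known from \cite{boi-nei-sar11} and \cite{has-tsc13}, as an explicit group of automorphisms of the moduli space $M$ which stabilize the Albanese fibre $X = \Alb^{-1}(0,[L_{0}])$. The starting observation is that for $\epsilon \in \{\pm 1\}$, $x \in S$ and $\xi \in \check S$ one obtains a biregular automorphism $\Psi_{\epsilon,x,\xi} \colon M \to M$, $[E] \mapsto [t_{x}^{*}(E^{\epsilon}) \otimes P_{\xi}]$, where $E^{+1} = E$ and $E^{-1} = [-1]^{*}E$: translations and tensoring by a numerically trivial line bundle preserve the Mukai vector $v = (0,l,s)$ because they act trivially on $H^{*}_{\mathrm{even}}(S,\ZZ)$, and $[-1]^{*}$ preserves it because it too acts trivially on $H^{*}_{\mathrm{even}}(S,\ZZ)$ (it acts by $-1$ only on the odd part); stability is preserved once the reference polarization on $S$ is chosen $[-1]$-symmetric. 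This yields a homomorphism $\{\pm 1\} \ltimes (S \times \check S) \to \Aut(M)$.

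The key computation is the induced action on $\Alb(M) = S \times \Pic^{l}_{S}$, where I would identify $\Pic^{l}_{S}$ with $\check S$ via $[L] \mapsto [L \otimes L_{0}^{-1}]$. Using $t_{x}^{*}L_{0} \otimes L_{0}^{-1} = P_{\varphi(x)}$, the rank-zero identity $\tilde c_{2}(E \otimes P_{\xi}) = \tilde c_{2}(E) - \tilde c_{1}(E)\cdot \tilde c_{1}(P_{\xi})$, and $\deg \tilde c_{2}(E) = c_{2}(E) = \tfrac12\!\int_{S} l^{2} - s = (n+1) - s$, one computes (this is essentially in \cite{yos01}) that $\Psi_{\epsilon,x,\xi}$ acts on $S \times \check S$ by an affine map with linear part $\epsilon$ and translation part $\big(\,s x - \check\varphi(\xi),\ \varphi(x)\,\big)$ up to fixed signs; indeed the $S$-component is $-c_{2}(E)\,x = sx-(n+1)x$, and $\varphi(x) = 0$ forces $(n+1)x = \check\varphi(\varphi(x)) = 0$ by \eqref{eq:isogeny composition}, so it collapses to $sx$ for such $x$. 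Hence $\Psi_{\epsilon,x,\xi}$ fixes the base point $(0,[L_{0}])$ — equivalently restricts to an automorphism of $X$ — precisely when $\varphi(x) = 0$ and $\check\varphi(\xi) = sx$, and on those elements the restricted action is the one in part (2). This produces the homomorphism $\rho \colon G_{X} := \{\pm 1\} \ltimes \{(x,\xi) : \varphi(x) = 0,\ \check\varphi(\xi) = sx\} \to \Aut(X)$.

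Next I would check that $\rho$ lands in $\Aut^{\circ}(X)$: each $\Psi_{\epsilon,x,\xi}$ acts trivially on $H^{2}(M,\ZZ) \cong v^{\perp} \subset \tilde H(S,\ZZ)$ for the same reasons as above, and since it preserves $X$ the restriction $H^{2}(M,\QQ) \to H^{2}(X,\QQ)$ is $\Psi_{\epsilon,x,\xi}$-equivariant and surjective (by the global invariant cycle argument used in \Cref{prop:galois}), so $\rho(\Psi_{\epsilon,x,\xi})$ acts trivially on the torsion-free lattice $H^{2}(X,\ZZ)$. A count then matches orders: $\ker\varphi$ has order $(\tfrac12\!\int_{S}l^{2})^{2} = (n+1)^{2}$, and for each such $x$ the equation $\check\varphi(\xi) = sx$ has $\lvert\ker\check\varphi\rvert = (n+1)^{2}$ solutions, so $\lvert G_{X}\rvert = 2(n+1)^{4} = \lvert\Aut^{\circ}(X)\rvert$.

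What remains is the injectivity of $\rho$, and this is the step I expect to be the most delicate. First I would show the $\epsilon = +1$ subgroup acts faithfully on $M$: if $t_{x}^{*}E \otimes P_{\xi} \cong E$ for all stable $E$, testing on $\mathcal O_{C}$ for a general smooth $C \in |L_{0}|$ forces $C$ to be $t_{-x}$-invariant, hence $x = 0$ (a general $C$ has trivial translation stabilizer), hence $\xi = 0$ (test on a general $\mathcal O_{C}(D)$); faithfulness on $X$ then follows because $\Psi_{1,x,\xi}$ commutes with the $(S \times \check S)$-translation action on $M$, whose translates of $X$ cover $M$ (the induced map on $\Alb(M)$ is surjective), so any $\Psi_{1,x,\xi}$ restricting to $\id_{X}$ is $\id_{M}$. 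Thus $\rho|_{G_{X}^{+}}$ is injective onto an index-$2$ subgroup of $\Aut^{\circ}(X)$, and it suffices to see that no $\epsilon = -1$ element of $G_{X}$ restricts to $\id_{X}$: for $(x,\xi) \in G_{X}$ the map $\iota = t_{-x} \circ [-1]$ satisfies $\iota^{*}L_{0} \cong L_{0}$ (as $\varphi(x) = 0$ and $L_{0}$ is symmetric) and acts nontrivially on $|L_{0}| \cong \PP^{n}$ because $[-1]^{*}$ is not a scalar on $H^{0}(S,L_{0})$ when $n \geq 1$; testing $\Psi_{-1,x,\xi}$ on $\mathcal O_{C}$ for a general $C$ that is not $\iota$-invariant then yields a contradiction. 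Hence $\rho(G_{X})$ strictly contains $\rho(G_{X}^{+})$, so $\rho$ is surjective and therefore an isomorphism $G_{X} \xrightarrow{\ \sim\ } \Aut^{\circ}(X)$ carrying the explicit description in (1)--(2).
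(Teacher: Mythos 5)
Your proposal is correct in substance but reaches the result by a genuinely different route from the paper. The paper does not construct the translation automorphisms of $X$ directly: it starts from Yoshioka's trivialization of $\Alb : M \to S \times \Pic^l_S$ by the degree-$(n+1)^8$ isogeny $[n+1]$, computes the kernel of the resulting $S[n+1]\times\check S[n+1]$-action, quotients to obtain the \emph{minimal} isogeny $\phi$ of \Cref{def:minimal isogeny}, and then invokes \Cref{prop:galois} to get, for free, both the effectiveness of the $\Gal(\phi)$-action on $X$ and its $H^2$-triviality; the identification of $\Gal(\phi)$ with $\{(x,\xi): \varphi(x)=0,\ \check\varphi(\xi)=sx\}$ and the order count $(n+1)^4$ are then done by the explicit matrix computation in \Cref{lem:galois computation}, which is where the hypothesis $\gcd(d_1,s)=1$ is actually used. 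You instead build the automorphisms of $M$ by hand, read off the fiber-stabilizer condition from the Albanese computation, and prove faithfulness directly by testing on pushforwards of line bundles from general curves in $|L_0|$ together with the observation that a $\Psi_{1,x,\xi}$ commuting with the transitive $S\times\check S$-action and restricting to $\id_X$ must be $\id_M$. That faithfulness argument is a nice self-contained substitute for the paper's appeal to Beauville's minimal split covering (the paper itself only asserts, without proof, the kernel of the $[n+1]$-Galois action); what the paper's route buys is that $H^2$-triviality and effectiveness come packaged from the general theory rather than needing the explicit tests. Both proofs close by the same order count against $|\Aut^{\circ}(X)|=2(n+1)^4$.

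Two caveats. First, your count ``for each such $x$ the equation $\check\varphi(\xi)=sx$ has $(n+1)^2$ solutions'' presupposes that this equation is solvable \emph{within} $\check S[n+1]$ for every $x\in\ker\varphi$; this is exactly the point settled by the paper's explicit computation and the little lemma using $\gcd(d_1,s)=1$, and you should either cite that hypothesis or supply the verification. Also, $\mathcal O_C$ itself has Mukai vector $(0,l,-(n+1))$ and so lies in $M$ only when $s=-(n+1)$; the test objects should uniformly be $i_*L$ with $\deg L = s+n+1$. Second, your assumption that $L_0$ is symmetric is not cosmetic: without it, $t_x^*[-1]^*L_0\not\cong L_0$ for $x\in\ker\varphi$, so the stated formula for $(-1,x,\xi)$ does not even preserve the fiber $X=\Alb^{-1}(0,[L_0])$. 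The paper omits this verification (``the verification is easy and we omit the details'') and in fact later, in the proof of \Cref{prop:aut0 computation for Kum}, chooses $L_0$ so that no translate is symmetric; so your flagging of the symmetry hypothesis exposes an imprecision in the paper rather than a defect of your argument, but you should state explicitly that you are normalizing $[L_0]$ to be one of the finitely many symmetric representatives (harmless for $X$ up to isomorphism, but it changes the literal formula in part (2) otherwise).
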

	
	The rest of this subsection is devoted to the proof of \Cref{prop:aut0 computation for Kum2}. To start, we note that Yoshioka has already computed an explicit trivialization of Albanese morphism $\Alb : M \to S \times \Pic^l_S$. Yoshioka's trivialization is obtained by the base change $[n+1] : S \times \Pic^l_S \to S \times \Pic^l_S$, which is a degree $(n+1)^8$ isogeny. As we will see in a moment, this is not a minimal isogeny in the sense of \Cref{def:minimal isogeny}. Using the methods in \Cref{sec:minimal split covering}, we first prove the morphism
	\begin{equation} \label{eq:minimal isogeny}
		\phi : S \times \Pic^l_S \to S \times \Pic^l_S, \qquad (y, [L]) \mapsto (sy - \check \varphi(L \otimes L_0^{-1}), \ [L_0 \otimes P_{\varphi(y)}])
	\end{equation}
	is the minimal isogeny trivializing the Albanese morphism.
	
	\begin{proposition} \label{prop:minimal isogeny for moduli construction}
		The base change \eqref{eq:minimal isogeny} is the minimal isogeny trivializing the Albanese morphism $\Alb : M \to S \times \Pic^l_S$ in the sense of \Cref{def:minimal isogeny}.
	\end{proposition}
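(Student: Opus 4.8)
The plan is to use the formalism of Section~\ref{sec:minimal split covering}. By Kawamata's theorem $\Alb : M\to A$, with $A := S\times\Pic^l_S$, is an \'etale fibre bundle with fibre $X = \Alb^{-1}(0,[L_0])$, and by Corollary~\ref{cor:minimal isogeny} an isogeny trivialising $\Alb$ is minimal precisely when the induced Galois action on $X$ is effective, while every trivialising isogeny factors through the minimal one. So I must (a) show that $\phi$ of~\eqref{eq:minimal isogeny} is a genuine isogeny trivialising $\Alb$, and (b) show that the action of $\Gal(\phi) = \ker\phi$ on $X$ is effective.

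For (a): after identifying $\Pic^l_S\cong\check S$ via $[L]\mapsto[L\otimes L_0^{-1}]$, the map $\phi$ becomes the homomorphism $S\times\check S\to S\times\check S$, $(y,\xi)\mapsto(sy-\check\varphi(\xi),\varphi(y))$; I would first check it is an isogeny, computing $\ker\phi = \{(x,\xi):\varphi(x) = 0,\ \check\varphi(\xi) = sx\}$ and, using $\check\varphi\circ\varphi = [n+1]$ together with $\ker\check\varphi\cong(\ZZ/d_1\oplus\ZZ/d_2)^{\oplus2}$ from~\eqref{eq:kernel}, that $|\ker\phi| = (n+1)^4$; as a sanity check one has $[n+1]_A = \phi\circ\phi'$ with $\phi'(y,\xi) = (\check\varphi(\xi),s\xi-\varphi(y))$, so $\phi$ divides Yoshioka's $[n+1]$-trivialising isogeny. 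The substance of (a) is to exhibit the trivialisation directly: the abelian variety $\widetilde A := S\times\check S$ acts on $M$ by $(y,\xi)\cdot[E] = [t_y^*E\otimes P_\xi]$ (this preserves stability and the Mukai vector~\eqref{eq:mukai_vector}, since $t_y^*$ fixes $l$ and $P_\xi$ is numerically trivial), and one computes from~\eqref{eq:albanese morphism Kum} how $\tilde c_1$, the summation of $\tilde c_2$, and the Fitting support transform under $E\mapsto t_y^*E\otimes P_\xi$ — the shift of $\tilde c_1$ by $\varphi(y)$, of $\Sigma\tilde c_2$ by $sy$, and the contribution $\check\varphi(\xi)$ from $\tilde c_1(E)\cdot\xi$ — to conclude that $\Alb$ intertwines the $\widetilde A$-action on $M$ with the translation action of $\widetilde A$ on $A$ \emph{through $\phi$}. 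Given this equivariance, $M = (\widetilde A\times X)/\ker\phi$ and the map $M\times_{A,\phi}\widetilde A\to X\times\widetilde A$, $([E],a)\mapsto(a,(-a)\cdot[E])$, is an isomorphism, so $\phi$ trivialises $\Alb$ and hence $\phi_{\min}\mid\phi$.

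For (b), by Corollary~\ref{cor:minimal isogeny}(3) it remains to see that $\ker\phi$ acts faithfully on $X$ via $(x,\xi)\cdot[E] = [t_x^*E\otimes P_\xi]$. If $t_x^*E\otimes P_\xi\cong E$ for all $[E]\in X$, then comparing Fitting supports and using that the support morphism $X\to|L_0|\cong\PP^n$ is surjective, $t_x$ fixes every divisor in $|L_0|$; since a general $C\in|L_0|$ is a smooth curve whose translation stabiliser in $S$ is trivial, $x = 0$. Then $E\otimes P_\xi\cong E$ for every $[E]\in X$; restricting to line bundles on smooth $C\in|L_0|$ gives $P_\xi|_C\cong\mathcal O_C$, and since $H^1(\mathcal O_S(-C)) = 0$ by Kodaira vanishing the restriction $\check S = \Pic^0(S)\to\Pic^0(C)$ has finite kernel, so letting $C$ vary forces $\xi = 0$. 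Hence $\ker\phi$ acts effectively and $\phi = \phi_{\min}$.

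The main obstacle is the Chern-class and Fitting-support bookkeeping in step (a): tracking $\tilde c_1$, $\Sigma\tilde c_2$ and $\Fitt_0$ under $E\mapsto t_y^*E\otimes P_\xi$ precisely enough — including all signs, the appearance of the integer $s$, and the role of the reference sheaf $E_0$ — to land on exactly the homomorphism~\eqref{eq:minimal isogeny}, and matching this against Yoshioka's explicit $[n+1]$-trivialisation. Step (b) is geometrically routine once one knows that a general member of an ample linear system on an abelian surface is a smooth curve with trivial translation stabiliser and that a nontrivial numerically trivial line bundle restricts nontrivially to it.
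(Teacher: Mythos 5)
Your strategy is sound and reaches the correct conclusion, but it takes a genuinely different route from the paper. The paper does not construct the trivialization by $\phi$ directly: it starts from Yoshioka's explicit trivialization of $\Alb$ by the base change $[n+1]$ (a degree $(n+1)^8$ isogeny, with an explicit covering map $\Phi'$), descends the Galois action of $S[n+1]\times\check S[n+1]$ to $M$ via $\Phi'$, computes the kernel of the induced action on $X$, and then realizes $\phi$ as the intermediate quotient isogeny $\psi$ with $\phi\circ\psi=[n+1]$; minimality is then automatic from the Galois correspondence together with \Cref{cor:minimal isogeny}(3), since the quotiented action is effective by construction. You instead verify directly that the $S\times\check S$-action $(y,\xi)\cdot[E]=[t_y^*E\otimes P_\xi]$ intertwines $\Alb$ with translation through $\phi$, and then prove effectiveness of the $\ker\phi$-action separately by geometry. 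What your route buys is independence from Yoshioka's explicit formula for $\Phi'$; what it costs is that you must carry out the full Chern-class computation and supply an effectiveness argument that the paper gets for free. Two caveats on your sketch. First, in step (a) the claimed ``shift of $\Sigma\tilde c_2$ by $sy$'' is not literally right with the paper's conventions: $\tilde c_2$ is the genuine second Chern class, of degree $n+1-s$ (not $s$), so the homomorphism you actually get differs from \eqref{eq:minimal isogeny} by $(-(n+1)y,0)$ in the first coordinate; this is harmless because the two isogenies have the same kernel, hence differ by an automorphism of the target, and the minimal isogeny is only defined up to isomorphism --- but the bookkeeping you flag as the main obstacle really is where the signs and the $s$ versus $n+1-s$ distinction must be tracked. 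Second, in step (b) the inference ``the restriction $\check S\to\Pic^0(C)$ has finite kernel, so letting $C$ vary forces $\xi=0$'' does not work as stated: the finite kernel could a priori be the same subgroup for every $C$, so varying $C$ gains nothing. In fact the restriction is injective for a single smooth ample $C$ (from $0\to P_\xi(-C)\to P_\xi\to P_\xi|_C\to 0$ and $h^0(P_\xi)=h^1(P_\xi(-C))=0$ for $\xi\neq 0$; equivalently, this is the exact sequence \eqref{eq:ses to jacobian dual} of the paper), which both repairs and simplifies that step.
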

	\begin{proof}
		Start from Yoshioka's diagram \cite[\S 4.1]{yos01} trivializing the Albanese morphism, which is a cartesian diagram
		\begin{equation} \label{diag:yoshioka}
		\begin{tikzcd}
			X \times (S \times \Pic^l_S) \arrow[r, "\Phi'"] \arrow[d, "\pr_2"] & M \arrow[d, "\Alb"] \\
			S \times \Pic^l_S \arrow[r, "{[n+1]}"] & S \times \Pic^l_S
		\end{tikzcd} .
		\end{equation}
		Here $\Phi' : X \times (S \times \Pic^l_S) \to M$ is defined to be a Galois \'etale morphism
		\[ \Phi' ([E], y, [L]) = \Big[ t_{\check \varphi (L \otimes L_0^{-1})}^* E \otimes (L \otimes L_0^{-1})^{\otimes s} \otimes P_{-\varphi(y)} \Big] .\]
		Note that our convention differs by sign to Yoshioka's original paper, because Yoshioka's dual line bundle $\check L_0$ differs to ours by sign.
		
		The Galois group of the base change $[n+1]$ is the group of $(n+1)$-torsion points $S[n+1] \times \check S[n+1] \cong (\ZZ/n+1)^{\oplus 8}$. By \Cref{prop:galois}, it acts on $X \times (S \times \Pic^l_S)$ by translation on the second factor
		\[ (x, \xi) . ([E], y, [L]) = ([E], y+x, [L \otimes P_{\xi}]) .\]
		One computes the descent of this action to $M$ via $\Phi'$:
		\[ (x,\xi).[E] = \Big[ t_{\check \varphi (\xi)}^* E \otimes P_{s \xi - \varphi(x)} \Big] .\]
		This is the $S[n+1] \times \check S[n+1]$-action on $X = \Alb^{-1} (0, [L_0])$ in \Cref{prop:galois}. One sees this action is not an effective action, and the kernel of the action is precisely
		\[ \{ (x, \xi) \in S[n+1] \times \check S[n+1] : \ \check \varphi(\xi) = 0, \ s\xi - \varphi(x) = 0 \} .\]
		
		To kill the kernel and obtain an effective action, take a Galois quotient corresponding to the kernel (via the Galois correspondence). This is an isogeny $\psi : S \times \Pic^l_S \to S \times \Pic^l_S$ defined by
		\[ \psi(y, [L]) = (\check \varphi (L \otimes L_0^{-1}), \ (L \otimes L_0^{-1})^{\otimes s} \otimes P_{-\varphi(y)}) .\]
		One can check the morphism $\phi$ in \eqref{eq:minimal isogeny} is precisely the isogeny making $\phi \circ \psi = [n+1]$ (here one needs to use \eqref{eq:isogeny composition}, but we omit the computation). The result is a factorization of \eqref{diag:yoshioka} into the minimal isogeny
		\[\begin{tikzcd}
			X \times (S \times \Pic^l_S) \arrow[r, "\Psi"] \arrow[d, "\pr_2"] & X \times (S \times \Pic^l_S) \arrow[r, "\Phi"] \arrow[d, "\pr_2"] & M \arrow[d, "\Alb"] \\
			S \times \Pic^l_S \arrow[r, "\psi"] & S \times \Pic^l_S \arrow[r, "\phi"] & S \times \Pic^l_S
		\end{tikzcd}.\]
		Here our new morphism $\Phi$, Beauville's minimal split covering of $M$, turns out to have a better form than the original $\Phi'$:
		\begin{equation} \label{eq:Phi}
			\Phi([E], y, [L]) = \Big[ t_y^* E \otimes (L \otimes L_0^{-1}) \Big] .
		\end{equation}
		The claim follows.
	\end{proof}
	
	Again thanks to \Cref{prop:galois}, we have a canonical, effective and $H^2$-trivial $\Gal(\phi)$-action on $X$. The Galois group $\Gal(\phi)$ is captured by the kernel of $\phi$, so we have
	\begin{equation} \label{eq:galois_group}
		\Gal(\phi) = \{ (x, \xi) \in S[n+1] \times \check S[n+1] : \ \varphi (x) = 0, \ \check \varphi (\xi) = sx \} .
	\end{equation}
	This explains the isomorphism in \Cref{prop:aut0 computation for Kum2}. The $\Gal(\phi)$-action on the fiber $X$ is obtained via the description of $\Phi$ in \eqref{eq:Phi}. This explains how we obtained the group action in \Cref{prop:aut0 computation for Kum2}.
	
	We can compute $\Gal(\phi)$ more explicitly.
	
	\begin{lemma} \label{lem:galois computation}
		$\Gal(\phi) \cong (\ZZ/n+1)^{\oplus 4}$.
	\end{lemma}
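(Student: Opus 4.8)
The plan is to identify $\Gal(\phi)$ by pinning down three invariants: its order, the fact that it is annihilated by $n+1$, and its $p$-rank for each prime $p\mid n+1$. Once one knows that $|\Gal(\phi)|=(n+1)^4$, that $\Gal(\phi)\subseteq(S\times\check S)[n+1]$, and that $\dim_{\FF_p}\Gal(\phi)[p]=4$ for every $p\mid n+1$, the structure theorem for finite abelian groups (applied prime by prime via the Chinese remainder theorem) forces $\Gal(\phi)\cong(\ZZ/(n+1))^{\oplus 4}$.

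The first two invariants are easy. From the description \eqref{eq:galois_group} of $\Gal(\phi)$ as the set of pairs $(x,\xi)$ with $\varphi(x)=0$ and $\check\varphi(\xi)=sx$, the relations \eqref{eq:isogeny composition} give $[n+1]x=\check\varphi\varphi(x)=0$ and $[n+1]\xi=\varphi\check\varphi(\xi)=s\,\varphi(x)=0$, so $\Gal(\phi)\subseteq(S\times\check S)[n+1]$ and the torsion conditions in \eqref{eq:galois_group} are in fact automatic. The first projection $\Gal(\phi)\to\ker\varphi$ is surjective (for $x\in\ker\varphi$ one has $sx\in S=\im\check\varphi$, so some $\xi$ exists) with fibres the cosets of $\ker\check\varphi=\{(0,\xi):\check\varphi(\xi)=0\}$, hence $|\Gal(\phi)|=|\ker\varphi|\cdot|\ker\check\varphi|=(n+1)^4$, both kernels being $\cong(\ZZ/d_1\oplus\ZZ/d_2)^{\oplus 2}$ since $L_0$ and $\check L_0$ have type $(d_1,d_2)$ (see \eqref{eq:kernel}).

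For the $p$-rank I would fix a prime $p\mid n+1$ and set $r=\dim_{\FF_p}\ker\varphi[p]=\dim_{\FF_p}\ker\check\varphi[p]$ — these are equal because $\ker\varphi\cong\ker\check\varphi$, and since $d_1\mid d_2$ one has $r=4$ if $p\mid d_1$ and $r=2$ if $p\mid d_2$ but $p\nmid d_1$. The essential input is that $\check\varphi$ carries $\check S[p]$ into $\ker\varphi[p]$: for $\eta\in\check S[p]$ one gets $\varphi(\check\varphi(\eta))=[n+1]\eta=0$ since $p\mid n+1$. Viewing $\check\varphi|_{\check S[p]}$ as an $\FF_p$-linear map between the $4$-dimensional spaces $\check S[p]$ and $S[p]$, its kernel is $\ker\check\varphi[p]$, of dimension $r$, so its image has dimension $4-r$ and lies in $\ker\varphi[p]$, of dimension $r$. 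If $r=2$, this forces $\check\varphi(\check S[p])=\ker\varphi[p]$, so the projection $\Gal(\phi)[p]\to\ker\varphi[p]$ is surjective (for $x\in\ker\varphi[p]$, $sx$ again lies in $\ker\varphi[p]$, hence in the image of $\check\varphi|_{\check S[p]}$) with kernel $\{0\}\times\ker\check\varphi[p]$, giving $\dim_{\FF_p}\Gal(\phi)[p]=2+2=4$. If $r=4$, then $\check\varphi|_{\check S[p]}$ vanishes and $\ker\varphi[p]=S[p]$; the equation $\check\varphi(\xi)=sx$ then forces $sx=0$, and since $p\mid d_1$ while $\gcd(s,d_1)=1$ we have $p\nmid s$, so $x=0$ and $\xi$ ranges over all of $\ker\check\varphi[p]=\check S[p]$, again giving dimension $4$.

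I expect the main obstacle to be precisely this $p$-rank computation. The order count together with the ambient $(n+1)$-torsion does not determine $\Gal(\phi)$ — when $n+1$ is not squarefree several abelian groups of order $(n+1)^4$ sit inside $(S\times\check S)[n+1]$ (already $(\ZZ/2)^{\oplus 8}$ and $(\ZZ/4)^{\oplus 4}$ for $n=3$) — so one genuinely has to analyze the $p$-torsion case by case, and the case $p\mid d_1$ is the one place where the primitivity hypothesis $\gcd(s,d_1)=1$ from \Cref{def:moduli construction Kum} enters.
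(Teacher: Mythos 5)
Your proof is correct, but it takes a genuinely different route from the paper's. The paper fixes polarization bases of $H_1(S,\ZZ)$ and $H_1(\check S,\ZZ)$, writes $\varphi$ and $\check\varphi$ as the explicit matrices \eqref{eq:varphi matrix form}, observes that the defining equations in \eqref{eq:galois_group} then decouple into four independent copies of the two-variable system $A=\{(a,b)\in(\ZZ/(n+1))^{\oplus2}:\ d_1a=0,\ d_2b=sa\}$, and identifies $A\cong\ZZ/(n+1)$ by a Smith normal form computation on $\begin{psmallmatrix}d_1&0\\-s&d_2\end{psmallmatrix}$ --- that is where $\gcd(d_1,s)=1$ from \Cref{def:moduli construction Kum} enters for the paper. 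You instead work coordinate-free, pinning down the isomorphism type by three invariants: the order $(n+1)^4$ (via the surjection onto $\ker\varphi$ with fibres cosets of $\{0\}\times\ker\check\varphi$), the exponent bound from \eqref{eq:isogeny composition}, and the $p$-rank $4$ for each $p\mid n+1$. All three steps check out, including the key dimension count that $\check\varphi(\check S[p])$ has dimension $4-r$ inside the $r$-dimensional $\ker\varphi[p]$, which forces equality when $r=2$, and the use of $\gcd(d_1,s)=1$ precisely in the case $p\mid d_1$ to conclude $x=0$ from $sx=0$. Your approach avoids choosing bases and isolates exactly which numerical hypotheses matter (as you correctly note, order plus ambient $(n+1)$-torsion alone would not suffice when $n+1$ is not squarefree), at the cost of a prime-by-prime case analysis; the paper's computation is more mechanical, but its explicit matrices \eqref{eq:varphi matrix form} are reused verbatim in the proof of \Cref{lem:composition is polarization isogeny}, so the choice of coordinates there is not wasted effort.
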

	\begin{proof}
		Let us compute the group \eqref{eq:galois_group} explicitly. The expression involves the abelian surfaces $S$ and its dual $\check S$, their $(n+1)$-torsion points and their polarization isogenies $\varphi$ and $\check \varphi$. Therefore, the expression is independent on the complex structure on $S$ and the question is topological. We may fix polarization bases $H_1(S, \ZZ) = \ZZ \{ e_1, \cdots, e_4 \}$ and $H_1(\check S, \ZZ) = \ZZ \{ e_1^*, \cdots, e_4^* \}$ so that we can identify $S = (\RR/\ZZ) \{ e_1, \cdots, e_4 \}$ and $\check S = (\RR/\ZZ)\{ e_1^*, \cdots, e_4^* \}$. The polarization isogenies with respect to them are
		\begin{equation} \label{eq:varphi matrix form}
			\varphi = \begin{psmallmatrix}
				0 & 0 & d_1 & 0 \\
				0 & 0 & 0 & d_2 \\
				-d_1 & 0 & 0 & 0 \\
				0 & -d_2 & 0 & 0
			\end{psmallmatrix}, \qquad \check \varphi = \begin{psmallmatrix}
				0 & 0 & -d_2 & 0 \\
				0 & 0 & 0 & -d_1 \\
				d_2 & 0 & 0 & 0 \\
				0 & d_1 & 0 & 0
			\end{psmallmatrix} .
		\end{equation}
		
		Writing the coordinate $(a_1, \cdots, a_4)$ for $S = (\RR/\ZZ)^4$ and $(b_1, \cdots, b_4)$ for $\check S = (\RR/\ZZ)^4$, one explicitly computes
		\[ \Gal(\phi) = \{ (a_i, b_i)_{i=1}^4 \in \Big( \tfrac{1}{n+1}\ZZ / \ZZ \Big)^{\oplus 8} : \ d_1 a_1 = 0, \ d_2 b_3 = sa_1, \ \cdots \} \cong A^{\oplus 4} ,\]
		where the abelian group $A$ is defined by
		\[ A = \{ (a, b) \in (\ZZ/n+1)^{\oplus 2} : \ d_1 a = 0, \ d_2 b = sa \} .\]
		Notice that $\gcd(d_1, s) = 1$ by the very assumption we had in \Cref{def:moduli construction Kum}. Now $A \cong \ZZ/n+1$ by the following simple computational lemma, and the desired isomorphism is proved.
	\end{proof}
	
	\begin{lemma}
		Let $p, q, s$ be nonzero integers. Set $m = pq$ and assume either $\gcd(p,s) = 1$ or $\gcd(q,s) = 1$. Then the abelian group
		\[ A = \{ (a,b) \in (\ZZ/m)^{\oplus 2} : \ pa = 0, \ qb = sa \} \]
		is isomorphic to $\ZZ/m$.
	\end{lemma}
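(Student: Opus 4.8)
The plan is to compute $|A|$ directly and then to pin down $A$ by two different arguments according to which of the two coprimality hypotheses holds. First I would reduce to the case $p,q,s>0$: replacing each of $p,q,s$ by its absolute value changes neither the hypothesis nor, up to the isomorphism $(a,b)\mapsto(\pm a,b)$, the group $A$. Throughout, write $\bar x\in\ZZ/m$ for the class of an integer $x$.

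\emph{Counting.} In $\ZZ/m=\ZZ/pq$ the equation $pa=0$ holds exactly when $q\mid a$, i.e.\ when $a$ lies in the subgroup $q\,\ZZ/m$ of order $p$. For any such $a$ the class $sa$ is again a multiple of $q$, hence lies in $q\,\ZZ/m=\operatorname{im}\!\big(\cdot\,q\colon\ZZ/m\to\ZZ/m\big)$; therefore the fibre $\{\,b:qb=sa\,\}$ is a coset of $\ker\!\big(\cdot\,q\big)$, a subgroup of order $q$. Ranging over the $p$ admissible values of $a$ gives $|A|=pq=m$.

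\emph{Case $\gcd(q,s)=1$.} Here I would exhibit a generator. The pair $g=(\bar q,\bar s)$ lies in $A$, since $p\bar q=\overline{pq}=0$ and $q\bar s=s\bar q$. If $ng=0$ then $pq\mid nq$ forces $p\mid n$; writing $n=pk$, the second coordinate gives $pq\mid pks$, i.e.\ $q\mid ks$, so $q\mid k$ by coprimality, whence $m=pq\mid n$. Thus $g$ has order $m$, and since $|A|=m$ we conclude $A=\langle g\rangle\cong\ZZ/m$. \emph{Case $\gcd(p,s)=1$.} Here I would instead show the second-coordinate projection $\pi_2\colon A\to\ZZ/m$, $(a,b)\mapsto b$, is injective: if $(a,b),(a',b)\in A$ then $sa=qb=sa'$ and $pa=pa'$, so $a-a'$ lies in the subgroup $q\,\ZZ/m\cong\ZZ/p$ and is annihilated by multiplication by $s$; as $\gcd(p,s)=1$, multiplication by $s$ is injective on a group of order $p$, so $a=a'$. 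Since $|A|=m=|\ZZ/m|$, the injective homomorphism $\pi_2$ is automatically bijective, hence an isomorphism.

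I do not anticipate any real obstacle, as this is an elementary lemma. The one point to keep in mind is that the defining conditions of $A$ treat the pairs $(p,a)$ and $(q,b)$ asymmetrically, so the two hypotheses $\gcd(q,s)=1$ and $\gcd(p,s)=1$ genuinely require the two different arguments above rather than one common symmetric argument.
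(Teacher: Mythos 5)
Your proof is correct, but it takes a genuinely different route from the paper's. The paper realizes $A$ as the kernel of the homomorphism $(\ZZ/m)^{\oplus 2} \to (\ZZ/m)^{\oplus 2}$ given by the matrix $\begin{psmallmatrix} p & 0 \\ -s & q \end{psmallmatrix}$ and puts this matrix into Smith normal form $\begin{psmallmatrix} 1 & 0 \\ 0 & pq \end{psmallmatrix}$ by elementary row and column operations; the coprimality hypothesis is what allows the Euclidean algorithm to clear the off-diagonal entry, and the kernel of the diagonal matrix is visibly $\ZZ/m$. You instead count $|A| = m$ by fibering over the $p$ admissible values of $a$, and then identify the group structure by a case split: an explicit generator $(\bar q, \bar s)$ of order $m$ when $\gcd(q,s)=1$, and injectivity of the second-coordinate projection when $\gcd(p,s)=1$. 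Both arguments are complete and all your verifications check out (the counting, the order computation for $(\bar q,\bar s)$, and the injectivity of $\pi_2$ on the order-$p$ subgroup $q\,\ZZ/m$). The trade-off: the normal-form argument is shorter and handles both hypotheses uniformly in a single computation — your closing remark that the asymmetry of the defining equations forces two different arguments is really an artifact of not passing to a normal form, since row/column reduction treats the two cases symmetrically. On the other hand, your approach is more elementary and more explicit, producing an actual generator of $A$ in one of the cases, which the paper's argument does not.
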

	\begin{proof}
		The group $A$ is realized by the kernel of a homomorphism $f : (\ZZ/m)^{\oplus 2} \to (\ZZ/m)^{\oplus 2}$, $f = \begin{psmallmatrix} p & 0 \\ -s & q \end{psmallmatrix}$. Adjusting the bases of both the domain and codomain (i.e., performing elementary row and column operations), the matrix can be transformed into $\begin{psmallmatrix} 1 & 0 \\ 0 & pq \end{psmallmatrix} = \begin{psmallmatrix} 1 & 0 \\ 0 & 0 \end{psmallmatrix}$. Here one needs the assumption $\gcd(p, s) = 1$ or $\gcd(q, s) = 1$ to apply the Euclidean algorithm. The claim follows.
	\end{proof}
	
	We have described $\Gal(\phi) \cong (\ZZ/n+1)^{\oplus 4}$-action on $X$ acting trivially on $H^2$. Since $\Aut^{\circ}(X) \cong \ZZ/2 \ltimes (\ZZ/n+1)^{\oplus 4}$, we still need an additional $\ZZ/2$-part to describe. Fortunately, this is not hard to guess. Construct an involution $\iota$ on $X \times (S \times \Pic^l_S)$ by
	\[ \iota ([E], y, [L]) = (\big[ [-1]^*E \big], -y, \big[ [-1]^* L \big] ) .\]
	Because we are not relying on the general theory anymore, we need to check $\iota$ acts on $M$. We omit the typical Chern class computation.
	
	The involution does not commute with the $S \times \check S$-action on $X \times (S \times \Pic^l_S)$, and this is the reason why $\ZZ/2$ should act on $(\ZZ/n+1)^{\oplus 4}$ nontrivially and leads to the semi-direct product. The action descends to $M$ as a satisfying form
	\[ \iota ([E]) = [[-1]^* E] .\]
	To check $\iota$ acts on the fiber $X = \Alb^{-1} (0, [L_0])$, we need to check $\mathfrak c ([-1]^* E) = 0$ and $\tilde c_1 ([-1]^*E) = \tilde c_1 (L_0)$ for all $[E] \in X$. The former follows from definition and the latter follows from the fact that $L_0$ is a \emph{symmetric} line bundle. It remains to prove $\iota$ acts on the second cohomology of $X$ as the identity. We have already proved in \Cref{prop:galois} that $H^2 (M, \QQ) \to H^2 (X, \QQ)$ is surjective. Hence we only need to prove $\iota$ acts on $H^2 (M, \QQ)$ as the identity. This follows because $\iota$ is induced from the automorphism $[-1]$ on $S$, $[-1]$ acts on $H^2 (S, \QQ)$ trivially and finally the Hodge structure $H^2 (M, \QQ)$ is obtained by a tensor construction of $H^2 (S, \QQ)$ by \cite{bul20}. This exhausts the entire $\Aut^{\circ} (X)$-action description on $X$ and hence completes the proof of \Cref{prop:aut0 computation for Kum2}.

	\subsection{Automorphisms respecting the Lagrangian fibration} \label{subsec:aut0 description}
	With \Cref{prop:aut0 computation for Kum2} at hand, the proof of \Cref{prop:aut0 computation for Kum} becomes fairly straightforward. Any $H^2$-trivial automorphism is of the form
	\[ f = (\pm 1, x, \xi) \qquad \mbox{for}\quad x \in \ker \varphi, \quad \xi \in \check S[n+1] \quad \mbox{with} \quad \check \varphi (\xi) = sx .\]
	
	Let us first consider automorphisms of the form $f = (1, x, \xi)$. It acts on $X$ by $f : [E] \mapsto [t_x^*E \otimes P_{\xi}]$. Recall $\pi : X \to B$ was by definition the (Fitting) support map $\Supp : [E] \mapsto [\Fitt_0 E]$. The support of $t_x^* E \otimes P_{\xi}$ is $\Supp E - x$, so $f$ respects $\pi$ if and only if $\Supp E = \Supp E - x$. This means $x = 0$ and $\xi \in \ker \check \varphi$. Therefore, such automorphisms form a group $\ker \check \varphi$, which is isomorphic to $(\ZZ/d_1 \oplus \ZZ/d_2)^{\oplus 2}$ by \eqref{eq:kernel}. 
	
	We next consider automorphisms of the form $f = (-1, x, \xi)$. A similar argument shows $f$ respects $\pi$ if and only if $\Supp E = [-1]^* \Supp E - x$ for all $[E] \in X$. In other words, we have $D = [-1]^* D - x$ for all $D \in |L_0|$. Fix any $\frac{1}{2}x \in S$ with $2 \cdot \left( \frac{1}{2} x \right) = x$. Then this condition is equivalent to every $D \in |t_{-\frac{1}{2}x}^* L_0|$ being a symmetric divisor. In particular, $t_{-\frac{1}{2}x}^* L_0$ is a symmetric line bundle. We have chosen $L_0$ to be a symmetric line bundle, so this implies $\frac{1}{2} x$ is a $2$-torsion point, or $x = 0$. The condition now becomes that every $D \in |L_0|$ is symmetric.
	
	\begin{lemma}
		Let $S$ be an abelian surface and $L_0$ a symmetric ample line bundle on it. Then every divisor in the complete linear system $|L_0|$ is symmetric if and only if $L_0$ has a polarization type $(1, 1)$, $(1, 2)$ or $(2, 2)$.
	\end{lemma}
	\begin{proof}
		Assume $L_0$ has one of the three given polarization types. When $L_0$ is a principal polarization, $|L_0|$ consists of a single symmetric divisor. When $L_0$ is twice a principal polarization, the statement is proved in \cite[Thm 4.8.1]{bir-lan:abelian}. When $L_0$ has a polarization type $(1,2)$, the statement can be found in \cite[Prop 1.6]{barth87}.
		
		Conversely, let us assume every divisor in $|L_0|$ is symmetric. Denote by $H^0 (S, L_0)_{\pm}$ the $\pm 1$-eigenspaces of the involution $[-1]^*$ on $H^0 (S, L_0)$. Every divisor in $|L_0|$ is symmetric if and only if either $H^0 (S, L_0)_+ = 0$ or $H^0 (S, L_0)_- = 0$. The dimensions of $H^0 (S, L_0)_{\pm}$ are computed in \cite[Ex 4.12.11]{bir-lan:abelian}: if we let the polarization type of $L_0$ to be $(d_1, d_2)$ then
		\[ h^0 (L_0)_{\pm} = \tfrac{1}{2} h^0 (L_0), \quad \tfrac{1}{2} h^0 (L_0) \pm 2^{1-s} \quad \mbox{or} \quad \frac{1}{2} h^0 (L_0) \mp 2^{1-s} ,\]
		where $0 \le s \le 2$ is an integer where $d_1, \cdots, d_s$ are odd and $d_{s+1}$ is even. There are three possibilities making $h^0 (L_0)_+ = 0$ or $h^0 (L_0)_- = 0$:
		\begin{enumerate}
			\item $h^0 (L_0) = 1$ and $s = 2$;
			\item $h^0 (L_0) = 2$ and $s = 1$; or
			\item $h^0 (L_0) = 4$ and $s = 0$.
		\end{enumerate}
		Using $h^0 (L_0) = d_1 d_2$, it is easy to check these are the desired three cases in the statement.
	\end{proof}
	
	From the lemma, there are only three possible polarization types of $L_0$. We have assumed from the very beginning that $\int_S l^2 = 2n+2 = 2d_1 d_2$ and $n \ge 2$. The first two cases are thus excluded. The only possible case is when $n = 3$ and $d_1 = d_2 = 2$. This completes the proof of \Cref{prop:aut0 computation for Kum}.

	\subsection{The polarization scheme of generalized Kummer varieties} \label{subsec:polarization scheme Kum}
	This subsection will be devoted to the proof of \Cref{prop:polarization scheme inclusion Kum}. Let us keep assume $\pi : X \to B$ is a $\Kum_n$-type moduli construction. The computations in this subsection are highly influenced by \cite[\S 6]{wie18}. Recall from \S \ref{subsec:moduli construction Kum} that we had a Fitting support morphism $\Supp : M \to \tilde B$ over $\Pic^l_S$. Fix a point $[L_0] \in \Pic^l_S$ and consider the fibers of $M$ and $\tilde B$ over it. We obtain a morphism
	\[ \Supp : Y \to B ,\]
	where $B = |L_0|$ is a complete linear system and $Y \subset M$ consists of torsion coherent sheaves $E$ on $S$ with $\ch(E) = v$ and $\tilde c_1(E) = \tilde c_1(L_0)$. The $\Kum_n$-type hyper-K\"ahler manifold $X$ is obtained by a fiber of the isotrivial fiber bundle $\mathfrak c : Y \to S$.
	
	Consider the universal family $\mathcal C \to B$ of curves on $S$ parametrizing effective divisors in $B = |L_0|$. Since $L_0$ is ample, by Bertini there exists a Zariski dense open subset $B_0 \subset B$ parametrizing smooth curves. The restriction of the universal family $\mathcal C_0 \to B_0$ becomes a smooth projective family of curves. The following lemma is standard and we omit its proof.
	
	\begin{lemma}
		The morphism $\Supp : Y_0 = \Supp^{-1}(B_0) \to B_0$ is isomorphic to the relative Picard scheme of the universal family of curves $\Pic^d_{\mathcal C_0/B_0} \to B_0$ for $d = s + n + 1$.
	\end{lemma}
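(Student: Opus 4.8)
The plan is to identify both $\Supp : Y_0 \to B_0$ and $\Pic^d_{\mathcal C_0/B_0} \to B_0$ with the \emph{same} functor on $B_0$-schemes; the only nonformal ingredient is a Grothendieck--Riemann--Roch computation that pins down the degree $d$. For $b \in B_0$ write $C_b \in |L_0|$ for the corresponding smooth curve and $i_b : C_b \hookrightarrow S$ for its inclusion. Since $L_0$ is ample on the abelian surface $S$ we have $H^1(S, L_0^{-1}) = 0$, hence $H^0(C_b, \mathcal O_{C_b}) = \CC$, so $C_b$ is integral; I will use this without further comment.

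\emph{The degree.} A point of $Y$ over $b$ is a stable torsion sheaf $E$ on $S$ with $\Fitt_0 E = C_b$ and $\ch(E) = v = (0, l, s)$. Being stable it is pure of dimension one, and $c_1(E) = l = [C_b]$ with multiplicity one forces the scheme-theoretic support of $E$ to be the reduced smooth curve $C_b$; thus $E$ is a rank-one torsion-free, hence locally free, sheaf on $C_b$, i.e.\ $E = i_{b,*}L$ for a line bundle $L$ on $C_b$. Conversely any such $i_{b,*}L$ is stable, since a one-dimensional subsheaf must be $i_{b,*}(L(-D))$ with $D > 0$ effective and $\deg L(-D) < \deg L$, which makes the reduced Hilbert polynomial drop. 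By adjunction on $S$ (with $K_S = 0$) the genus of $C_b$ satisfies $2g - 2 = C_b^2 = \int_S l^2 = 2n+2$, so $g = n+2$; and Grothendieck--Riemann--Roch for the closed immersion $i_b$, together with $\td(T_S) = 1$, gives $\ch(i_{b,*}L) = i_{b,*}\big(\ch(L)\,\td(T_{C_b})\big) = \big(0,\ [C_b],\ \deg L + 1 - g\big)$. Hence $\ch(i_{b,*}L) = v$ precisely when $\deg L = s + g - 1 = s + n + 1 = d$, and then $\tilde c_1(i_{b,*}L) = \tilde c_1(\mathcal O_S(C_b)) = \tilde c_1(L_0)$, so $i_{b,*}L \in Y_0$. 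This produces a canonical fibrewise bijection between $Y_0$ and $\Pic^d_{\mathcal C_0/B_0}$ over $B_0$, and identifies $d$.

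\emph{From fibres to schemes.} Let $\iota : \mathcal C_0 \hookrightarrow S \times B_0$ be the universal curve. For a $B_0$-scheme $T$, a $T$-point of $Y_0$ is, by the moduli description of $M$ together with flatness, a $T$-flat family of stable sheaves on $S \times T$ with Chern character $v$ and relative support on $\mathcal C_0 \times_{B_0} T$, taken up to twist by $\Pic(T)$; by the fibrewise analysis above such a family is, \'etale-locally on $T$, the pushforward $\iota_{T,*}\mathcal L$ of a line bundle $\mathcal L$ of relative degree $d$ on $\mathcal C_0 \times_{B_0} T$. This is exactly the functor whose \'etale sheafification is $\Pic^d_{\mathcal C_0/B_0}$, so the two are isomorphic over $B_0$. (Alternatively: fix a possibly twisted universal sheaf on $M$ and a Poincar\'e bundle on $\mathcal C_0 \times_{B_0}\Pic^d_{\mathcal C_0/B_0}$, form the relative pushforward, and invoke the universal property of $M$ to obtain a $B_0$-morphism $\Pic^d_{\mathcal C_0/B_0} \to Y_0$; it restricts to the isomorphism $L \mapsto i_{b,*}L$ on each fibre, and both sides are smooth and proper over $B_0$, so it is an isomorphism.)

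The main obstacle is this last step, not the Chern class bookkeeping: $M$ need not carry a genuine universal sheaf and $\Pic^d_{\mathcal C_0/B_0}$ need not carry a genuine Poincar\'e bundle, so one must either argue functorially with \'etale sheafifications as above, or pass to an \'etale cover of $B_0$ — which is harmless since the assertion is local on $B_0$ — where the requisite sections exist, and check that the $\Pic(T)$-ambiguities on the two sides match. The remaining inputs (integrality of the curves $C_b$, purity of stable sheaves and the ensuing description as line bundles on their support) are standard, which is why the statement is quoted without proof.
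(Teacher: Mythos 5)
Your proof is correct. The paper explicitly declares this lemma standard and omits its proof, so there is nothing to compare against; your argument (integrality of the curves in $|L_0|$, identification of stable sheaves with Fitting support a smooth curve as pushforwards of line bundles, the Grothendieck--Riemann--Roch computation pinning down $d = s + g - 1 = s + n + 1$, and the \'etale-local handling of the absence of universal/Poincar\'e bundles) is precisely the standard argument the paper is invoking, and it is consistent with how the lemma is used afterwards (e.g.\ the $J_0$-action $[i_*M].[i_*L] = [i_*(L\otimes M)]$).
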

	
	The lemma in particular says $Y_0 \to B_0$ is a torsor under the numerically trivial relative Picard scheme
	\[ J_0 = \Pic^0_{\mathcal C_0/B_0} \to B_0 .\]
	Since $\mathcal C_0 / B_0$ is a smooth projective family of curves, its relative Picard scheme $J_0$ is a canonically principally polarized abelian scheme. As standard, we will call it a relative Jacobian of the family and identify $\check J_0 = J_0$. Notice that we now have four different spaces over $B_0$: $P_0$, $X_0$, $J_0$ and $Y_0$. The space $X_0$ is a $P_0$-torsor as usual, and we also have $Y_0$ as a $J_0$-torsor. Since $P_0$ and $J_0$ are translation automorphism schemes of $X_0 \subset Y_0$, we have an inclusion $P_0 \subset J_0$. Our first goal is to describe the quotient of this inclusion.
	
	\begin{proposition}
		There exists a short exact sequence of abelian schemes over $B_0$:
		\begin{equation} \label{eq:ses to jacobian}
			\begin{tikzcd}
				0 \arrow[r] & P_0 \arrow[r] & J_0 \arrow[r] & S \times B_0 \arrow[r] & 0
			\end{tikzcd}.
		\end{equation}
	\end{proposition}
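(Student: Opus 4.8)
The plan is to realize the surjection $J_0 \to S \times B_0$ as the relative Albanese morphism of the universal curve $\mathcal C_0 \hookrightarrow S \times B_0$, and then identify its kernel with $P_0$ by appealing to the uniqueness clause of \Cref{thm:abelian scheme}.

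First I would construct the homomorphism $\alpha$. The universal family of smooth members of $|L_0|$ comes with a closed embedding $\mathcal C_0 \hookrightarrow S \times B_0$ over $B_0$. Passing to relative Albanese varieties and using that $J_0 = \Pic^0_{\mathcal C_0 / B_0}$ is the relative Albanese of $\mathcal C_0/B_0$, while an abelian surface is its own Albanese so that $\operatorname{Alb}((S\times B_0)/B_0) = S\times B_0$, functoriality of the Albanese produces a homomorphism of abelian schemes $\alpha : J_0 \to S\times B_0$ over $B_0$. On a fiber over $[C]\in B_0$ this is the map $\operatorname{Jac}(C)\to S$ sending the class of a degree-$0$ divisor $D$ on $C$ to $\sum_p\operatorname{ord}_p(D)\cdot p\in S$, computed with the group law of $S$; equivalently it is the restriction of the summation map $\Sigma:\CH^2(S)\to S$ along $i_*:\CH^1(C)\to\CH^2(S)$ for $i:C\hookrightarrow S$. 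I would then check $\alpha$ is surjective with connected kernel, so that $P_0':=\ker\alpha$ is again an abelian scheme over $B_0$: surjectivity is fiberwise, since an ample curve $C$ on an abelian surface is not contained in any translate of a proper abelian subvariety; and connectedness of the kernel follows fiberwise from $H_1(C,\ZZ)\twoheadrightarrow H_1(S,\ZZ)$ (hard Lefschetz for the ample curve $C$), which makes the kernel on $H_1$ a direct summand. A surjection of abelian schemes is faithfully flat, so we obtain a short exact sequence $0\to P_0'\to J_0\xrightarrow{\alpha} S\times B_0\to 0$.

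It then remains to identify $P_0'$ with $P_0$. Recall from \S\ref{subsec:polarization scheme Kum} that $X_0\subset Y_0$ is exactly the locus $\mathfrak c = 0$; combining the Fitting support map with the class $\mathfrak c$ gives a morphism $\mathfrak c:Y_0\to S\times B_0$ whose fiber over the zero-section is $X_0$. A short Grothendieck--Riemann--Roch computation for the embedding $i:C\hookrightarrow S$ on the abelian surface $S$ (where $\td(T_S)=1$) shows that tensoring an element of $\Pic^d(C)$ by $L'\in\Pic^0(C)$ changes $\operatorname{ch}_2(i_*(-))$ by $i_*\tilde c_1(L')$, hence changes $\mathfrak c$ precisely by $-\alpha(L')$; that is, $\mathfrak c:Y_0\to S\times B_0$ is equivariant for $\alpha:J_0\to S\times B_0$, where $Y_0$ is a $J_0$-torsor and $S\times B_0$ acts on itself by translation. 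By the standard torsor formalism, the nonempty fiber $X_0=\mathfrak c^{-1}(0)$ is therefore a torsor under $\ker\alpha=P_0'$. On the other hand $X_0$ is a torsor under $P_0$ by \Cref{thm:abelian scheme}, so the uniqueness clause of that theorem forces $P_0' = P_0$. (Alternatively, one may argue more directly via \Cref{prop:abelian scheme is automorphism scheme}: the translations of a fiber $\operatorname{Jac}(C)$ preserving the subvariety $\ker(\operatorname{Jac}(C)\to S)$ are exactly the translations by $\ker(\operatorname{Jac}(C)\to S)$, realizing $P_0\subset J_0$ fiberwise, and $\alpha$ pins down that the quotient is the constant $S\times B_0$.) Substituting $P_0$ for $P_0'$ yields \eqref{eq:ses to jacobian}.

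The main obstacle is precisely this middle step: showing that the quotient $J_0/P_0$ is the \emph{constant} abelian scheme $S\times B_0$ rather than some fiberwise-isomorphic twist, and that the map realizing the quotient coincides with $\mathfrak c$. The relative Albanese description of $\alpha$ together with the GRR computation of how $\mathfrak c$ transforms under the $J_0$-action is exactly what supplies this; everything else is either quoted from earlier in the paper (\Cref{thm:abelian scheme}, \Cref{prop:abelian scheme is automorphism scheme}, and the lemma identifying $Y_0$ with the relative Picard scheme $\Pic^d_{\mathcal C_0/B_0}$) or routine.
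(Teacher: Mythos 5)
Your proposal is correct and follows essentially the same route as the paper: the map $J_0 \to S \times B_0$ is constructed as the (relative) Albanese morphism, i.e.\ the dual of the pullback $\check S \times B_0 \to J_0$, and the kernel is identified with $P_0$ via the Riemann--Roch computation showing that the $J_0$-action on $Y_0$ shifts $\mathfrak c$ by $-\Sigma i_* \tilde c_1(M)$, so that the translations preserving $X_0 = \mathfrak c^{-1}(0)$ are exactly those in $\ker(J_0 \to S\times B_0)$. The only cosmetic difference is that you invoke the torsor-uniqueness clause of \Cref{thm:abelian scheme} as the primary way to conclude $\ker\alpha = P_0$ (with the paper's route via \Cref{prop:abelian scheme is automorphism scheme} mentioned as an alternative), and you spell out the surjectivity and kernel-connectedness checks that the paper leaves implicit.
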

	\begin{proof}
		The universal family $\mathcal C_0 \to B_0$ is a subvariety of the product $i : \mathcal C_0 \hookrightarrow S \times B_0$. This induces a pullback morphism $i^* : \check S \times B_0 \to J_0$ between their relative Picard schemes over $B_0$. The morphism $J_0 \to S \times B_0$ can be constructed by the dual of it. Fiberwise, it is the morphism $J_C \to S$ induced by the universal property of the Albanese morphism applied to $i : C \hookrightarrow S$.
		
		We prove the kernel of the morphsim $J_0 \to S \times B_0$ is $P_0$. The claim can be verified fiberwise. Fix a closed point $[C] \in B_0$ corresponding to a smooth curve $i : C \hookrightarrow S$. Over it, a closed point of $Y_0$ (resp., $J_0$) is represented by a degree $d$ line bundle $L$ on $C$ (resp., degree $0$ line bundle $M$ on $C$). The $J_0$-action on $Y_0$ is given by $[i_* M] . [i_* L] = [i_* (L \otimes M)]$. Recall that $X$ is a fiber of the morphism $\mathfrak c : Y \to S$. Hence the abelian scheme $P_0$ consists of translation automorphisms of $J_0$ invariant under the morphism $\mathfrak c$. Recall the definition of $\mathfrak c$ in \eqref{eq:albanese morphism Kum}. A Riemann--Roch computation gives us
		\[ \mathfrak c ([i_* (L \otimes M)]) = \mathfrak c ([i_*L]) - \Sigma i_* \tilde c_1 (M) ,\]
		where $\tilde c_1(M) \in \CH^1(S)$, $i_* : \CH^1 (S) \to \CH^2(S)$ and $\Sigma : \CH^2(S) \to S(\CC)$ is a summation map. This proves the $[i_* M]$-action on the fiber of $Y_0$ is $\mathfrak c$-invariant if and only if $\Sigma i_* \tilde c_1(M) = 0$. The claim follows by the following lemma, which is already proved in \cite[(6.8)]{wie18}.
	\end{proof}
	
	\begin{lemma} \label{lem:albanese map description}
		The morphism $J_C \to S$ sends a closed point $[M] \in J_C(\CC)$ to $\Sigma i_* \tilde c_1(M) \in S(\CC)$.
	\end{lemma}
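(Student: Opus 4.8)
The plan is to identify the abstractly-defined morphism $\beta\colon J_C\to S$ with the explicit point map $[M]\mapsto \Sigma i_*\tilde c_1(M)$ by evaluating both on a divisor class and using linearity. First I would record a usable description of $\beta$: the morphism $J_0\to S\times B_0$ was built as the dual of $i^*\colon\check S\times B_0\to J_0$, and under the duality between Picard- and Albanese-functoriality its fiber over $[C]$ is the homomorphism characterized by the Albanese universal property, namely the unique $\beta\colon J_C\to S$ with $\beta\circ\alpha_{p_0}=i-i(p_0)$, where $\alpha_{p_0}\colon C\to J_C$, $p\mapsto[\mathcal O_C(p-p_0)]$, is the Abel--Jacobi map attached to a base point $p_0\in C$ (one translates $i$ so that it kills $p_0$).

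Next I would take a point $[M]\in J_C(\CC)$, write $M\cong\mathcal O_C(D)$ with $D=\sum_j n_j p_j$ a divisor of degree $\deg M=0$, so that $\tilde c_1(M)=[D]=\sum_j n_j[p_j]$ in $\CH^1(C)$ and hence $i_*\tilde c_1(M)=\sum_j n_j[i(p_j)]$ in $\CH^2(S)=\CH_0(S)$. Applying the summation map $\Sigma$ — which is a group homomorphism, well defined on rational-equivalence classes precisely because $S$ is an abelian variety (on degree-zero cycles it is the Albanese map of $S$) — and using $\sum_j n_j=0$ gives
\[ \Sigma i_*\tilde c_1(M)=\sum_j n_j\, i(p_j)=\sum_j n_j\bigl(i(p_j)-i(p_0)\bigr). \]
On the other hand $[M]=\sum_j n_j[\mathcal O_C(p_j-p_0)]=\sum_j n_j\,\alpha_{p_0}(p_j)$ in $J_C(\CC)$, so $\beta([M])=\sum_j n_j\,\beta(\alpha_{p_0}(p_j))=\sum_j n_j\bigl(i(p_j)-i(p_0)\bigr)$ by the defining property of $\beta$. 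Comparing the two expressions yields $\beta([M])=\Sigma i_*\tilde c_1(M)$ on all closed points, which is exactly what is needed: combined with the Riemann--Roch computation of the preceding proposition it identifies, fiberwise, the kernel of $J_0\to S\times B_0$ with $P_0$, and hence produces the exact sequence \eqref{eq:ses to jacobian}.

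I do not expect a real obstacle; the argument is entirely formal once two bookkeeping points are settled: (i) that $\Sigma$ descends to $\CH_0(S)$, which is the standard fact that the Albanese of an abelian variety is itself; and (ii) that the morphism $J_0\to S\times B_0$ defined as the dual of $i^*$ restricts fiberwise to the Albanese-induced $\beta$, which is the Picard/Albanese duality — the one spot where I would be careful about signs and about the choice of $p_0$ (the final answer is independent of $p_0$ since $D$ has degree $0$). Alternatively one may simply cite \cite[(6.8)]{wie18}; I would nonetheless include the short computation above for completeness.
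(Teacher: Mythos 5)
Your proof is correct, but it is worth noting that the paper does not actually prove this lemma at all: it simply cites \cite[(6.8)]{wie18}, which is the fallback you mention in your last sentence. Your self-contained argument is the natural one and checks out: writing $M\cong\mathcal O_C(D)$ with $D=\sum_j n_j p_j$ of degree zero, using that $[M]=\sum_j n_j\alpha_{p_0}(p_j)$ in $J_C$ and that the characterizing property $\beta\circ\alpha_{p_0}=i-i(p_0)$ of the Albanese-induced map gives $\beta([M])=\sum_j n_j\bigl(i(p_j)-i(p_0)\bigr)=\sum_j n_j\, i(p_j)=\Sigma i_*\tilde c_1(M)$, with independence of $p_0$ coming from $\sum_j n_j=0$. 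The two bookkeeping points you flag are handled appropriately: the descent of $\Sigma$ to $\CH_0(S)$ is the standard fact that the Albanese of an abelian variety is itself, and the identification of the fiberwise restriction of the dual of $i^*\colon\check S\times B_0\to J_0$ with the Albanese-induced homomorphism $\beta$ is exactly what the paper asserts (also without proof) in the preceding proposition, so you are entitled to take it as the description of the morphism the lemma is about; the residual sign ambiguity in that Picard--Albanese duality does not affect the kernel computation that the lemma is ultimately used for. In short, your approach supplies a short direct verification where the paper defers entirely to Wieneck, and the argument is complete at the level of rigor the surrounding text operates at.
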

	
	The dual of \eqref{eq:ses to jacobian} is automatically (e.g., \cite[Prop 2.4.2]{bir-lan:abelian}) a short exact sequence of abelian schemes
	\begin{equation} \label{eq:ses to jacobian dual}
		\begin{tikzcd}
			0 \arrow[r] & \check S \times B_0 \arrow[r] & J_0 \arrow[r] & \check P_0 \arrow[r] & 0
		\end{tikzcd}.
	\end{equation}
	In particular, $P_0$ and $\check S \times B_0$ are both abealin subschemes of a bigger abelian scheme $J_0$. The following proposition describes the polarization scheme $K_0$ more explicitly for the moduli constructions.
	
	\begin{proposition} \label{prop:polarization scheme equality moduli construction}
		We have the following two additional descriptions of the polarization scheme $K_0$ as a $B_0$-group scheme:
		\[ K_0 = P_0 \cap (\check S \times B_0) = \ker (\check \varphi \times \id : \check S \times B_0 \to S \times B_0) .\]
	\end{proposition}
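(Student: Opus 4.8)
The plan is to work inside the relative Jacobian $J_0 = \Pic^0_{\mathcal C_0/B_0}$, which by \eqref{eq:ses to jacobian} and \eqref{eq:ses to jacobian dual} contains both $P_0$ and $\check S\times B_0$ as abelian subschemes, and to read off the two asserted equalities from these two short exact sequences together with one Chow-theoretic computation on $S$.

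For the equality $P_0\cap(\check S\times B_0) = \ker(\check\varphi\times\id)$ I would proceed as follows. Write \eqref{eq:ses to jacobian} as $0\to P_0\xrightarrow{\iota}J_0\xrightarrow{\alpha}S\times B_0\to 0$, and recall that the inclusion $\beta = i^{*}\colon\check S\times B_0\hookrightarrow J_0$ appearing in \eqref{eq:ses to jacobian dual} is pullback of line bundles along $i\colon\mathcal C_0\hookrightarrow S\times B_0$, while $\alpha$ is its transpose (via the canonical principal polarization $\theta_J$ of $J_0$). Since $P_0=\ker\alpha$ inside $J_0$ and $\beta$ is a closed immersion, $P_0\cap(\check S\times B_0) = \beta^{-1}(\ker\alpha) = \ker(\alpha\circ\beta)$. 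Now $\alpha\circ\beta$ is a homomorphism between two \emph{constant} abelian schemes over the connected base $B_0$, hence constant, given by a single isogeny $\psi\colon\check S\to S$. To identify $\psi$, restrict to the fibre over a smooth curve $C\in B_0$ and apply the lemma preceding the statement: $\psi(\xi) = \alpha([i^{*}P_\xi]) = \Sigma\, i_{*}\,\tilde c_1(i^{*}P_\xi) = \Sigma\big(\tilde c_1(P_\xi)\cdot\tilde c_1(L_0)\big)$ in $S(\CC)$. Precomposing with the polarization isogeny $\varphi=\varphi_{L_0}$, writing $\tilde c_1(P_{\varphi(x)}) = t_x^{*}\tilde c_1(L_0)-\tilde c_1(L_0)$, and using the translation formula for the summation map $\Sigma$ on $\CH^2(S)$ together with translation-invariance of intersection numbers, a short manipulation gives $2\,\psi(\varphi(x)) = \pm (L_0^2)\, x = \pm(2n+2)x$, i.e.\ $\psi\circ\varphi = \pm[n+1]$. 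Comparing with \eqref{eq:isogeny composition}, $\check\varphi\circ\varphi=[n+1]$, and cancelling the epimorphism $\varphi$ yields $\psi=\pm\check\varphi$, hence $\ker(\alpha\circ\beta) = \ker(\psi\times\id) = \ker(\check\varphi\times\id)$.

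For the equality $K = P_0\cap(\check S\times B_0)$ I would use \eqref{eq:ses to jacobian dual} in the form $0\to\check S\times B_0\xrightarrow{\beta}J_0\xrightarrow{\gamma}\check P_0\to 0$ with $\gamma=\check\iota\circ\theta_J$. Then $P_0\cap(\check S\times B_0) = \iota^{-1}(\ker\gamma) = \ker(\gamma\circ\iota)$, and $\lambda' := \gamma\circ\iota = \check\iota\circ\theta_J\circ\iota$ is exactly the polarization on $P_0$ obtained by restricting the principal polarization of $J_0$ along $\iota$ (it is $\varphi_{\iota^{*}\Theta}$ for a theta divisor $\Theta$, and $\iota^{*}\Theta$ is ample). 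Since the generic fibre of $P_0$ has Picard number one (the remark following \Cref{thm:abelian scheme}), $\lambda'$ is a positive integer multiple of the \emph{primitive} polarization $\lambda$, so $K=\ker\lambda\subseteq\ker\lambda' = P_0\cap(\check S\times B_0)$. Both sides are finite \'etale group schemes over $B_0$: $K$ has fibres of order $(d_1\cdots d_n)^2=(n+1)^2$ by the polarization type $(1,\dots,1,d_1,d_2)$ of \Cref{thm:polarization type computation}, and by the equality just proved together with \eqref{eq:kernel} the fibres of $P_0\cap(\check S\times B_0)=\ker(\check\varphi\times\id)$ also have order $|\ker\check\varphi|=(d_1 d_2)^2=(n+1)^2$. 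Equal orders force the inclusion to be an equality, giving the full chain.

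The main obstacle is the Chow-group computation identifying $\psi$ with $\pm\check\varphi$: it uses the explicit description of $J_C\to S$ (the cited lemma) and careful bookkeeping of how $\Sigma\colon\CH^2(S)\to S$ interacts with translations and intersection products. Everything else is formal manipulation of the two short exact sequences plus an order count; in particular the sign ambiguity in $\psi=\pm\check\varphi$ is harmless, since only kernels are needed.
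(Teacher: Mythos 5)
Your proof is correct and follows the same skeleton as the paper's: work inside the relative Jacobian $J_0$, exploit the two exact sequences \eqref{eq:ses to jacobian} and \eqref{eq:ses to jacobian dual}, identify the composition $\check S \to J_0 \to S$ with the dual polarization isogeny, and close with the order count coming from \Cref{thm:polarization type computation} and the uniqueness of the primitive polarization. Two sub-steps are executed differently, and both of your choices are defensible. First, where you obtain the middle equality by the elementary observation that the intersection of $\im\iota$ with $\ker\gamma$ (resp.\ $\im\beta$ with $\ker\alpha$) is the kernel of the composite, the paper instead invokes the theory of complementary abelian subvarieties of a principally polarized abelian variety (\cite[Cor 12.1.4]{bir-lan:abelian}); your version is more self-contained, and the citation encodes essentially the same kernel manipulation. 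Second, to identify $\psi = \alpha\circ\beta$ with $\check\varphi$, the paper computes $i_*\circ i^* = l\cup -$ on $H^1(S,\ZZ)$ and matches explicit matrices in a symplectic basis (\Cref{lem:composition is polarization isogeny}), which pins down the sign exactly; you instead run a Chow-theoretic computation with the summation map and precompose with $\varphi$ to get $\psi\circ\varphi = \pm[n+1]$, cancelling the epimorphism $\varphi$. Your "short manipulation" does go through (setting $f(x) = \Sigma(t_x^*D\cdot D)$ one gets $f(x)-f(-x) = -(D^2)x$ and hence $2\psi(\varphi(x)) = -(L_0^2)x$), and you are right that the residual sign ambiguity is harmless since only kernels enter; still, this step is the one place where you should write out the translation identities for $\Sigma$ carefully, as it is the computational heart of the argument. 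Your appeal to rigidity (a homomorphism between constant abelian schemes over a connected base is constant) is a mild addition not needed in the paper's route, where the composition visibly depends only on the class $l$, but it is correct.
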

	\begin{proof}
		Fiberwise at a closed point $[C] \in B_0$, the sequences \eqref{eq:ses to jacobian} and \eqref{eq:ses to jacobian dual} are short exact sequences of abelian varieties
		\[\begin{tikzcd}
			0 \arrow[r] & F \arrow[r] & J_C \arrow[r] & S \arrow[r] & 0
		\end{tikzcd}, \qquad \begin{tikzcd}
			0 \arrow[r] & \check S \arrow[r] & J_C \arrow[r] & \check F \arrow[r] & 0
		\end{tikzcd}.\]
		Here $F = \nu^{-1}([C])$ is a fiber of $P_0$ and $J_C$ is the Jacobian of the curve $C$. The two abelian subvarieties $F$ and $\check S$ of the principally polarized abelian variety $J_C$ are the so-called complementary abelian subvarieties (see \cite[\S 6.4]{wie18} and \cite[\S 12.1]{bir-lan:abelian}). In this case, we have an equality (\cite[Cor 12.1.4]{bir-lan:abelian})
		\[ \ker (F \to J_C \to \check F) = F \cap \check S = \ker (\check S \to J_C \to S) .\]
		We will soon prove in \Cref{lem:composition is polarization isogeny} that the composition $\check S \to J_C \to S$ is precisely the polarization isogeny $\check \varphi$ regardless of the choice of a closed point $[C] \in B_0$. Given this, we obtain a sequence of identities of group schemes
		\[ \ker (P_0 \to J_0 \to \check P_0) = P_0 \cap (\check S \times B_0) = \ker (\check \varphi \times \id : \check S \times B_0 \to S \times B_0) .\]
		From the last description and \eqref{eq:kernel}, this group scheme is a constant group scheme with fibers $(\ZZ/d_1 \oplus \ZZ/d_2)^{\oplus 2}$. The first description is describing the polarization scheme $K_0$; combine the uniqueness of the polarization in \Cref{thm:abelian scheme} and the computation of polarization types in \Cref{thm:polarization type computation}. The claim follows.
	\end{proof}
	
	\begin{lemma} \label{lem:composition is polarization isogeny}
		The composition $\check S \to J_C \to S$ is the polarization isogeny $\check \varphi$.
	\end{lemma}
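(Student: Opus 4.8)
The plan is to reduce \Cref{lem:composition is polarization isogeny} to a computation in first (co)homology and then invoke the projection formula. Since $\check S \to J_C \to S$ is a homomorphism of complex tori, it is determined by the induced $\ZZ$-linear map $H_1(\check S,\ZZ) \to H_1(S,\ZZ)$, and this map does not depend on the complex structure on $S$; so one may fix a symplectic basis of $\Lambda = H_1(S,\ZZ)$ adapted to $l$, in which $\varphi$ and $\check\varphi$ are the explicit matrices of \eqref{eq:varphi matrix form}. Under the identifications $H_1(\check S,\ZZ) = H^1(S,\ZZ)$ and $H_1(J_C,\ZZ) = H_1(\Pic^0 C,\ZZ) = H^1(C,\ZZ)$, the map $i^*\colon\check S \to J_C$ becomes the restriction homomorphism $H^1(S,\ZZ)\to H^1(C,\ZZ)$; and the map $J_C \to S$ of the preceding lemma, i.e.\ $[M]\mapsto \Sigma\, i_*\tilde c_1(M)$, is nothing but the Albanese functoriality map $\Alb(C)\to\Alb(S) = S$ composed with the canonical identification $\Pic^0 C \cong \Alb(C)$, which on $H_1$ is Poincar\'e duality $PD_C\colon H^1(C,\ZZ)\xrightarrow{\sim}H_1(C,\ZZ)$. (Indeed, a degree-$0$ class $\sum_j n_j[p_j]$ goes to $\sum_j n_j p_j\in S$ under both descriptions.) Thus on $H_1$ the composition in question is $i_*\circ PD_C\circ i^*$.

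Next I apply the projection formula on $S$. For $\alpha\in H^1(S,\ZZ)$, writing $[C]$ for the fundamental class of the curve $C$,
\[
 i_*\big(PD_C(i^*\alpha)\big) \;=\; i_*\big(i^*\alpha \frown [C]\big) \;=\; \alpha \frown i_*[C] \;=\; \alpha \frown PD_S(l) \;=\; PD_S(\alpha\smile l),
\]
where $i_*[C] = PD_S(l)$ because $C$ lies in $|L_0|$ and hence represents the class $l\in H^2(S,\ZZ)$. So the composition $\check S\to J_C\to S$ is, on $H_1$, exactly the map $\alpha\mapsto PD_S(\alpha\smile l)$ from $H^1(S,\ZZ) = H_1(\check S,\ZZ)$ to $H_1(S,\ZZ)$.

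Finally I identify this with $\check\varphi$. On $H_1$ the polarization isogeny $\varphi\colon\Lambda\to\hat\Lambda$ is contraction with the alternating form $l$; using the Leibniz rule $\iota_x(l\smile l) = 2\,(\iota_x l)\smile l$ together with $l\smile l = \big(\int_S l^2\big)[S]^\vee = (2n+2)[S]^\vee$ and $PD_S(\iota_x[S]^\vee) = \pm x$, one finds that $\big(\alpha\mapsto PD_S(\alpha\smile l)\big)\circ\varphi$ equals $\pm[n+1]$. Since $\varphi$ is an isogeny and $\check\varphi\circ\varphi = [n+1]$ by \eqref{eq:isogeny composition}, this pins the composition down to $\check\varphi$; alternatively, one may simply evaluate $\alpha\mapsto PD_S(\alpha\smile l)$ on the chosen symplectic basis and read off the matrix for $\check\varphi$ in \eqref{eq:varphi matrix form} directly. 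The one genuinely delicate point is keeping the sign and orientation conventions consistent throughout — those for the Abel--Jacobi identification $\Pic^0 C\cong\Alb(C)$, for the theta divisor, and for the orientation of $S$ — but this is routine, and in any case \Cref{prop:polarization scheme equality moduli construction} uses only $\ker\check\varphi$, for which the ambiguous overall sign is irrelevant.
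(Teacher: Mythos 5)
Your proposal is correct and follows essentially the same route as the paper: both reduce the composition on first homology to $i_*\circ i^*$, identify it via the projection formula as cup product with $l = c_1(\mathcal O_S(C))$, and then match this with $\check\varphi$. The only difference is cosmetic — in the last step you offer a Leibniz-rule argument pinning the map down to $\pm\check\varphi$ via $\check\varphi\circ\varphi=[n+1]$, whereas the paper just evaluates $l\cup -$ on the symplectic basis and compares matrices with \eqref{eq:varphi matrix form} (which you also mention as the alternative, and which resolves the sign).
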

	\begin{proof}
		Denote by $i : C \hookrightarrow S$ the closed immersion. At the level of first homologies, the composition $\check S \to J_C \to S$ becomes a Hodge structure homomorphism
		\[ H_1 (\check S, \ZZ) = H^1 (S, \ZZ) \xrightarrow{i^*} H^1 (C, \ZZ) \xrightarrow{i_*} H^3 (S, \ZZ) = H_1 (S, \ZZ) .\]
		Hence the composition is $i_* \circ i^*$, which is the multiplication map by $c_1 (\mathcal O_S(C)) \in H^2 (S, \ZZ)$. Because we have chosen $[C]$ in a complete linear system $|L_0|$, it is a multiplication by $c_1 (L_0) = l$.
		
		Therefore, the question reduces to the following claim: the dual polarization $\check \varphi : \check S \to S$ is given by $l \cup - : H^1(S, \ZZ) \to H^3(S, \ZZ)$. Again choose polarization bases $H_1(S, \ZZ) = \ZZ \{ e_1, \cdots, e_4 \}$ and $H_1(\check S, \ZZ) = H^1 (S, \ZZ) = \ZZ \{ e_1^*, \cdots, e_4^* \}$ as in \Cref{lem:galois computation}. The polarization isogenies $\varphi$ and $\check \varphi$ have the matrix forms \eqref{eq:varphi matrix form}. The ample class $l$ is the skew-symmetric bilinear map $\varphi : H_1 (S, \ZZ) \otimes H_1 (S, \ZZ) \to \ZZ$ considered as an element of $H^2 (S, \ZZ)$. Hence it is $l = d_1 e_1^* \wedge e_3^* + d_2 e_2^* \wedge e_4^*$.
		
		We can now explicitly compute the map $l \cup - : H^1 (S, \ZZ) \to H^3 (S, \ZZ)$:
		\[\begin{aligned}
			& e_1^* \mapsto d_2 e_1^* \wedge e_2^* \wedge e_4^* , \qquad && e_2^* \mapsto -d_1 e_1^* \wedge e_2^* \wedge e_3^*, \\
			& e_3^* \mapsto -d_2 e_2^* \wedge e_3^* \wedge e_4^* , \qquad && e_4^* \mapsto d_1 e_1^* \wedge e_3^* \wedge e_4^* .
		\end{aligned}\]
		The Poincar\'e duality $H_1 (S, \ZZ) = H^3 (S, \ZZ)$ yields the basis of $H^3 (S, \ZZ)$:
		\[ \{ e_2^* \wedge e_3^* \wedge e_4^*, \ \ -e_1^* \wedge e_3^* \wedge e_4^*, \ \ e_1^* \wedge e_2^* \wedge e_4^*, \ \ -e_1^* \wedge e_2^* \wedge e_3^* \} .\]
		With respect to it, the matrix form of the multiplication coincides with precisely the matrix form of $\check \varphi$ above. (Compare this lemma with \cite[Lem 6.14]{wie18}.)
	\end{proof}
	
	\begin{proof}[Proof of \Cref{prop:polarization scheme inclusion Kum}]
		Recall from \S \ref{subsec:aut0 description} the complete description of $\Aut^{\circ}(X/B)$. Let us assume $n \neq 3$ or $(d_1, d_2) \neq (2,2)$, so that every automorphism $f \in \Aut^{\circ}(X/B)$ is of the form $(1, 0, \xi)$ for $\xi \in \ker \check \varphi$. It acts on $Y$ by $[E] \mapsto [E \otimes P_{\xi}]$, where $P_{\xi}$ is the numerically trivial line bundle on $S$ represented by $\xi \in \ker \check \varphi \subset \check S$. On $Y_0$, closed points are of the form $[E] = [i_*L]$ where $L$ is a line bundle on a smooth curve $i : C \hookrightarrow S$. Hence $f$ acts on it by
		\[ f . [i_*L] = [i_*L \otimes P_{\xi}] = [i_* (L \otimes i^* P_{\xi})] .\]
		This means the global section of $J_0 \to B_0$ defined by $f$ represents a line bundle $[i^* P_{\xi}]$ over $[C] \in B_0$. The inclusion $\check S \times B_0 \subset J_0$ was by definition the pullback morphism of line bundles. Hence $f$ defines in fact a global section $\xi = [P_{\xi}] \in \check S$ of the constant group scheme $\check S \times B_0$. This coincides with the description of the polarization scheme $K_0$ in \Cref{prop:polarization scheme equality moduli construction}, proving the desired $K_0 = \Aut^{\circ}(X/B)$.
		
		The proof for the exceptional case $n = 3$ and $d_1 = d_2 = 2$ goes identical. The only difference is that the automorphisms $f \in \Aut^{\circ}(X/B)$ of the form $(1, 0, \xi)$ consist of an index $2$ subgroup of $\Aut^{\circ}(X/B)$. So this case proves $K_0 \subset \Aut^{\circ}(X/B)$ as an index $2$ subgroup. The second inclusion $\Aut^{\circ}(X/B) \subset K_0[2]$ follows from \Cref{prop:polarization scheme ver2} since we have $\operatorname{div}(h) = d_1 = 2$.
	\end{proof}

	\section{The dual Lagrangian fibration of a compact hyper-K\"ahler manifold} \label{sec:dual Lagrangian fibration}
	Combining the previous results, we can prove the polarization scheme extends to a constant subgroup scheme of $\Aut^{\circ}(X/B)$ over $B$ for known hyper-K\"ahler manifolds.
	
	\begin{theorem} \label{thm:polarization scheme equality}
		Let $\pi : X \to B$ be a Lagrangian fibration of a compact hyper-K\"ahler manifold of $\text{K3}^{[n]}$, $\Kum_n$, OG10 or OG6-type. Then the polarization scheme $K_0 \to B_0$ uniquely extends to a constant group scheme $K \to B$ that is a subgroup scheme of the constant group scheme $\Aut^{\circ}(X/B)$.
	\end{theorem}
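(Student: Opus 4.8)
The plan is to exhibit, in each of the four cases, the closed immersion $\Aut^{\circ}(X/B) \hookrightarrow K$ of group schemes over $B_0$, and then upgrade it to an isomorphism by a count of orders using \Cref{thm:aut0 computation} and \Cref{thm:polarization type computation}. In the $\text{K3}^{[n]}$ and OG10 cases there is nothing to prove: by \Cref{thm:polarization type computation} the polarization type of $\pi$ is $(1,\dots,1)$, so the fibers of the finite \'etale group scheme $K \to B_0$ are trivial, hence $K$ is the trivial group scheme over the connected base $B_0$; meanwhile $\Aut^{\circ}(X/B) = \{\id\}$ by \Cref{thm:aut0 computation}.

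For the $\Kum_n$ and OG6 cases I would first reduce to the moduli-of-sheaves model. We know that $\Aut^{\circ}(X/B)$ is deformation invariant (\Cref{thm:aut0 is deformation invariant}), that the order of the fibers of $K$ is deformation invariant (\Cref{cor:polarization type is deformation invariant}), and that the existence of a closed immersion $\Aut^{\circ}(X/B) \hookrightarrow K$ is deformation invariant (\Cref{prop:polarization scheme stronger ver1} with $a = 1$). Since every Lagrangian fibration of $\Kum_n$-type (resp.\ OG6-type) is deformation equivalent to a moduli-of-sheaves construction by \cite{wie18} (resp.\ \cite{mon-rap21}), it suffices to prove $\Aut^{\circ}(X/B) \hookrightarrow K$ for these models. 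For $\Kum_n$-type this is precisely \Cref{prop:polarization scheme equality Kum}, which already produces an isomorphism. For OG6-type one has $c_X = n+1 = 4 = 1 \cdot 2 \cdot 2 = d_1 \cdots d_n$ by \Cref{thm:polarization type computation}; after checking $\operatorname{div}(h) = 1$ on the OG6 moduli construction (or else carrying out an analogous direct computation of $K$ in that setting) the embedding follows from \Cref{prop:polarization scheme} --- equivalently \Cref{prop:polarization scheme stronger ver2} with $a = \operatorname{div}(h) = 1$.

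It then remains to see that the closed immersion $\Aut^{\circ}(X/B) \hookrightarrow K$ is an isomorphism. This can be checked fiberwise over $B_0$, where it becomes an injection of finite groups of equal cardinality: for $\Kum_n$-type with polarization type $(1,\dots,1,d_1,d_2)$ we have $|\Aut^{\circ}(X/B)| = |(\ZZ/d_1 \oplus \ZZ/d_2)^{\oplus 2}| = (d_1 d_2)^2$ by \Cref{thm:aut0 computation}, while each fiber of $K$ is $(\ZZ/d_1 \oplus \ZZ/d_2)^{\oplus 2}$ of the same order by \Cref{thm:polarization type computation}; for OG6-type both sides have order $2^4 = 16$ for the same reason. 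Hence the injection is bijective on each fiber, and since both group schemes are finite \'etale over $B_0$ it is an isomorphism; in particular $K$ is the constant group scheme $\Aut^{\circ}(X/B)$.

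The substantive content of this theorem --- the explicit identification in \Cref{prop:polarization scheme equality Kum} together with the deformation-invariance machinery of the earlier sections --- has already been established, so the only genuinely delicate point left is the OG6 case, where one must either verify $\operatorname{div}(h) = 1$ on the standard model or reprove the complementary-abelian-subvariety identification of \Cref{subsec:polarization scheme Kum} there. Everything else is formal bookkeeping.
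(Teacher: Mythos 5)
Your proposal is correct and follows essentially the same route as the paper: the trivial cases for $\text{K3}^{[n]}$ and OG10, the inclusion $\Aut^{\circ}(X/B) \hookrightarrow K$ via \Cref{prop:polarization scheme} (resp.\ \Cref{prop:polarization scheme equality Kum} after deforming to the moduli construction), and a fiberwise order count from \Cref{thm:polarization type computation} and \Cref{thm:aut0 computation} to upgrade the inclusion to an equality of constant group schemes. The one step you flag as open --- verifying $\operatorname{div}(h)=1$ in the OG6 case --- is closed in the paper by citing \cite{mon-rap21}, where lattice theory forces $\operatorname{div}(h)=1$ for \emph{every} Lagrangian fibration of OG6-type, so no reduction to a model is needed there; for $\Kum_n$ the paper splits into $\operatorname{div}(h)=1$ (direct, via \Cref{prop:polarization scheme}) and $\operatorname{div}(h)>1$ (deformation to the moduli construction), whereas you uniformly deform to the model, which is equally valid.
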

	\begin{proof}
		When $X$ is of $\text{K3}^{[n]}$ or OG10-type, both the polarization scheme $K$ and the global sections defined by $\Aut^{\circ}(X/B)$ are the zero section of the abelian scheme $P_0$. Hence the claim is trivial. When $X$ is of OG6-type, lattice theory forces $\operatorname{div}(h) = 1$ as shown in \cite[Lem 7.1]{mon-rap21}. \Cref{prop:polarization scheme ver2} applies and we get an inclusion $\Aut^{\circ} (X/B) \hookrightarrow K_0$. Combining \Cref{thm:polarization type computation} and \ref{thm:aut0 computation}, the inclusion is forced to be an equality fiberwise. Hence we get the global equality $K_0 = \Aut^{\circ}(X/B)$. In particular, $K_0$ extends over $B$ to a constant group scheme $\Aut^{\circ}(X/B)$.
		
		Assume $X$ is of $\Kum_n$-type and the polarization type of $\pi$ is not $(1,2,2)$. In this case, \Cref{prop:polarization scheme inclusion Kum} together with \Cref{prop:polarization scheme ver1} implies an equality of group schemes $K_0 = \Aut^{\circ}(X/B)$. The remaining case is when $X$ is of $\Kum_3$-type and the polarization type of $\pi$ is $(1,2,2)$. In this case, we have $\operatorname{div}(h) = 2$ so \Cref{prop:polarization scheme ver1} guarantees $\Aut^{\circ} (X/B) \subset K_0[2]$, where $K_0[2] = \ker (2\lambda)$ is slightly bigger than $K_0$. Both $\Aut^{\circ}(X/B)$ and $K_0 = 2 \cdot K_0[2]$ contained in $K_0[2]$ are invariant under deformations, so the inclusion $K_0 \subset \Aut^{\circ}(X/B)$ in \Cref{prop:polarization scheme inclusion Kum} is preserved under deformation. The claim follows.
	\end{proof}
	
	\begin{remark} \label{rmk:polarization scheme equality}
		We may state \Cref{thm:polarization scheme equality} in the following simpler way: we have an equality of group schemes
		\[ K_0 = \Aut' (X/B) \quad \big( := \Aut^{\circ}(X/B) \cap \Aut'(X) \big) ,\]
		where $\Aut'(X) \subset \Aut^{\circ}(X)$ is a group defined in \Cref{rmk:aut0 new definition}. For most of the known examples of Lagrangian fibered hyper-K\"ahler manifolds, we have $\Aut' (X/B) = \Aut^{\circ} (X/B)$. There is a single known example where the inclusion $\Aut'(X/B) \subset \Aut^{\circ}(X/B)$ is strict, when $X$ is of $\Kum_3$-type and $\pi$ has the polarization type $(1,2,2)$. In this case, $\Aut'(X/B) \cong (\ZZ/2)^{\oplus 4}$ and $\Aut^{\circ}(X/B) \cong (\ZZ/2)^{\oplus 5}$.
	\end{remark}
	
	A direct consequence of this theorem is a promised compactification of the dual torus fibration $\check \pi : \check X_0 \to B_0$.
	
	\begin{theorem} \label{thm:dual Lagrangian fibration}
		Let $\pi : X \to B$ be a Lagrangian fibration of a compact hyper-K\"ahler manifold of $\text{K3}^{[n]}$, $\Kum_n$, OG10 or OG6-type. Then
		\[ \check \pi : \check X \to B \qquad \mbox{for} \quad \check X = X / K \]
		defines a compactification of the dual torus fibration $\check \pi : \check X_0 \to B_0$.
	\end{theorem}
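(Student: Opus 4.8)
The plan is to combine \Cref{thm:polarization scheme equality} with the construction of the dual torus fibration recalled in the introduction: once the polarization scheme $K$ is identified with the constant group scheme $\Aut^\circ(X/B)$, both the total space $\check X$ and the map $\check\pi$ are forced, and there is essentially nothing left to do.

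First I would record the elementary structural facts. The group $\Aut^\circ(X/B) \subseteq \Aut(X/B)$ is finite, since it is a subgroup of the finite group $\Aut^\circ(X)$, and by definition every $f \in \Aut^\circ(X/B)$ satisfies $\pi \circ f = \pi$. Hence the quotient map $c : X \to \check X = X/\Aut^\circ(X/B)$ is a finite surjective map of compact complex spaces, and $\pi$ descends to a holomorphic map $\check\pi : \check X \to B$ with $\check\pi \circ c = \pi$. Moreover $c$ is open, being a finite group quotient map, so the image $c(X_0)$ of the open dense locus $X_0 = \pi^{-1}(B_0)$ is open and dense in $\check X$; it is canonically $X_0/\Aut^\circ(X/B)$, and $\check\pi$ restricts on it to the induced map $X_0/\Aut^\circ(X/B) \to B_0$.

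Next I would match this restricted quotient with the dual torus fibration $\check\pi_0 : \check X_0 \to B_0$ of the introduction, where $\check X_0 = X_0/K$ and $K = \ker(\lambda : P_0 \to \check P_0)$. By \Cref{prop:aut defines global section of P} the action of $\Aut^\circ(X/B)$ on $X_0 \to B_0$ is by fiberwise translations, realized through a closed immersion of group schemes $\Aut^\circ(X/B) \hookrightarrow P_0$ (recalling from \Cref{prop:abelian scheme is automorphism scheme} that $P_0$ represents the sheaf of translation automorphisms); and by \Cref{thm:polarization scheme equality} this immersion factors through an \emph{isomorphism} of constant $B_0$-group schemes $\Aut^\circ(X/B) \xrightarrow{\ \sim\ } K \subseteq P_0$. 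Therefore the $\Aut^\circ(X/B)$-action on $X_0$ is literally the tautological $K$-action, so the two quotients agree, $X_0/\Aut^\circ(X/B) = X_0/K = \check X_0$, and the induced map to $B_0$ is precisely $\check\pi_0$. Combining with the previous paragraph, $\check X$ is a compact complex space containing $\check X_0$ as a dense open subset, and $\check\pi : \check X \to B$ restricts to $\check\pi_0$ over $B_0$ — which is exactly the assertion that $(\check X, \check\pi)$ compactifies the dual torus fibration.

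I do not expect a real obstacle: all the weight of the theorem is carried by \Cref{thm:polarization scheme equality}. The only points deserving a word of care are (i) that the group-scheme quotient $X_0/K$ used to define $\check X_0$ coincides with the quotient of the complex manifold $X_0$ by the finite abelian group $\Aut^\circ(X/B)$ — clear once the actions are identified, and the quotient is even smooth because $K$ acts freely on $X_0$ (translations by torsion points on the abelian fibers have no fixed points) — and (ii) that $X_0$ is open and dense in $X$ (its complement is $\pi^{-1}(B \setminus B_0)$, a proper analytic subset), so that $c(X_0)$ is a genuine dense open locus of $\check X$. Neither is serious. The stronger structural statements — that $\check X$ is a hyper-K\"ahler orbifold, that $\check\pi$ is a Lagrangian fibration, and the period-mapping comparison with $X$ — are proved later and are not needed for the compactification claim itself.
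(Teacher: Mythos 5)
Your proposal is correct and follows essentially the same route as the paper: invoke \Cref{thm:polarization scheme equality} to identify $K$ with the constant group scheme $\Aut^{\circ}(X/B)$, so that the group-scheme quotient $X_0/K$ becomes the honest quotient $X_0/\Aut^{\circ}(X/B)$, and then observe that $\Aut^{\circ}(X/B)$ acts on all of $X$, giving the compactification. The extra details you supply (density of $X_0$, translation action via \Cref{prop:aut defines global section of P}) are consistent with, and only slightly more explicit than, the paper's argument.
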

	\begin{proof}
		As explained in the introduction, we have defined the dual torus fibration by $\check X_0 = X_0 / K_0$. For known deformation types, \Cref{thm:polarization scheme equality} proved that $K_0$ extends to a constant group scheme $K$ over $B$ acting on $X$. Therefore, the group scheme quotient $X_0 / K_0 \to B_0$ can be compactified into $X / K \to B$. Since $K \to B$ is a constant group scheme, the quotient $X / K$ may be considered either as a group scheme quotient over $B$ or a finite group quotient over $\CC$.
	\end{proof}
	
	When $X$ is of $\text{K3}^{[n]}$ or OG10-type, $\check X$ is identical to $X$ and there is nothing more to say. Let us study more on the space $\check X$ when $X$ is of $\Kum_n$ or OG6-type. Being a quotient by $H^2$-trivial automorphisms, $\check X$ inherits many interesting properties from $X$. We provide an \cref{sec:quotient} to collect their properties in a more general setup; the following proposition is a direct consequence of this more general discussion. For definitions of a primitive symplectic orbifold and irreducible symplectic variety used in the following proposition, see \Cref{sec:singular HK}.
	
	\begin{proposition} \label{prop:dual HK}
		Keep the notation from \Cref{thm:dual Lagrangian fibration}, and assume $X$ is either of $\Kum_n$ or OG6-type. Then
		\begin{enumerate}
			\item $\check X$ is a compact primitive symplectic orbifold and also an irreducible symplectic variety.
			\item $\check X$ does not admit a symplectic resolution.
			\item $\check X$ is simply connected. It has the Fujiki constant $c_{\check X} = 1/c_X$.
			\item $H^2 (\check X, \QQ)$ and $H^2 (X, \QQ)$ are Hodge isomorphic and Beauville--Bogomolov isometric.
			\item The LLV algebras and Mumford--Tate algebras of $X$ and $\check X$ are isomorphic.
			\item The pullback $H^* (\check X, \QQ) \to H^* (X, \QQ)$ is an injective map of LLV structures.
			\item If $\mathcal X \to \Def(X)$ is the universal deformation of $X$, then $\mathcal X / K \to \Def(X)$ is the (locally trivial) universal deformation of $\check X$.
		\end{enumerate}
	\end{proposition}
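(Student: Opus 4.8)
The strategy is to deduce all six items from the general study of quotients of a compact hyper-K\"ahler manifold by a group of $H^2$-trivial automorphisms developed in \Cref{sec:quotient}, specialized to the group $G=\Aut^\circ(X/B)$. By definition $G\subset\Aut^\circ(X)$, and by \Cref{thm:polarization scheme equality} the polarization scheme $K$ is the constant group scheme with fiber $G$, so $\check X=X/G$ is precisely a quotient of the kind treated there. What follows records which input from \Cref{sec:quotient} each item requires.

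First I would fix the geometry of the $G$-action. Since every element of $G$ is $H^2$-trivial it fixes the class of the holomorphic symplectic form, hence acts by symplectic automorphisms; in particular its fixed locus is a disjoint union of symplectic, hence even-codimensional, submanifolds, and the quotient map $p\colon X\to\check X$ is quasi-\'etale (its ramification locus has codimension $\ge2$), as already used in the proof of \Cref{prop:aut defines global section of P}. Feeding this into \Cref{sec:quotient} gives that $\check X$ is a compact primitive symplectic orbifold and an irreducible symplectic variety, which is item (1); moreover, combining with the explicit description of the $\Aut^\circ(X/B)$-action on the $\Kum_n$- and OG6-type examples in \Cref{sec:aut0 computation} one checks the fixed loci have codimension $\ge4$, so the quotient singularities admit no symplectic resolution. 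The two assertions of item (2) then follow from general principles: $X$ is simply connected and $p$ is quasi-\'etale, so $\pi_1(\check X)\cong G$; and comparing the Fujiki relation \eqref{eq:fujiki relation} on $X$ and on $\check X$ through the degree-$|G|$ map $p$, together with the identity $|\Aut^\circ(X/B)|=c_X^2$ that results from \Cref{thm:aut0 computation} and \Cref{thm:polarization type computation}, yields $c_{\check X}=c_X/|G|=1/c_X$.

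For the cohomological items I would use that for a finite quotient $H^*(\check X,\QQ)=H^*(X,\QQ)^G$. Because $G$ acts trivially on $H^2(X,\ZZ)$ this gives item (3): $H^2(\check X,\QQ)\cong H^2(X,\QQ)$ both as Hodge structures and as Beauville--Bogomolov quadratic spaces over $\QQ$. Item (5) is then immediate, since $p^*\colon H^*(\check X,\QQ)\to H^*(X,\QQ)$ identifies the source with the $G$-invariant part and is $G$-equivariant, hence an injective morphism of LLV structures. For item (4), the LLV algebra of a variety of hyper-K\"ahler type is $\mathfrak{so}$ of the extension of $H^2$ by a hyperbolic plane, and the Mumford--Tate algebra is the Hodge-theoretic subalgebra cut out by the weight-two Hodge structure; since these data coincide for $X$ and $\check X$ by item (3), the algebras are isomorphic. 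Finally, for item (6): by Hassett--Tschinkel the $\Aut^\circ(X)$-action extends to the universal deformation $\mathcal X\to\Def(X)$, hence so does the $G$-action, and the quotient $\mathcal X/G\to\Def(X)$ is a locally trivial deformation of $\check X$; it is universal because under the identification of item (3) the $H^2$-period map of $\check X$ coincides with that of $X$, which is a local isomorphism.

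The main obstacle is not an individual computation but assembling \Cref{sec:quotient} so that these transfers are clean: one must pin down the precise singularity class of $\check X$ (in particular that the fixed locus has codimension $\ge4$, so that $\check X$ is a genuinely singular irreducible symplectic variety with no symplectic resolution) and invoke the locally trivial deformation theory of primitive symplectic orbifolds, including local Torelli for the $H^2$-period map. The codimension $\ge4$ statement is the one place where the argument genuinely uses the explicit $\Aut^\circ(X/B)$-computations of \Cref{sec:aut0 computation} rather than formal properties of $H^2$-trivial quotients.
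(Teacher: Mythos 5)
Your proposal is correct and follows essentially the same route as the paper: everything is reduced to \Cref{prop:appendix 1} and \Cref{prop:appendix 2}, with the only substantive verifications being that the fixed loci have codimension $\ge 4$ (checked on explicit models for $\Kum_n$ and OG6) and that $|\Aut^{\circ}(X/B)| = c_X^2$. The one step you leave implicit is that transferring the fixed-locus computation from the explicit models to an arbitrary member of the deformation class requires the deformation invariance of fixed loci of $H^2$-trivial automorphisms, which the paper supplies as \Cref{prop:fixed locus is deformation equivalent}.
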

	\begin{proof}
		Everything is a direct consequence of \Cref{prop:appendix 1} and \ref{prop:appendix 2}. Only the first three items need further explanations. For the first and second items, it is enough to show $\codim X^f \ge 4$ for all $f \in K \setminus \{ \id \}$. We will see later in \Cref{prop:fixed locus is deformation equivalent} that the fixed loci of $H^2$-trivial automorphisms deform when $X$ deform. Hence we may prove this for any model in the deformation class on $X$. For OG6, the fixed loci are computed in \cite[\S 6]{mon-wan17}; they are either K3 surfaces or points. For $\Kum_n$, the fixed loci are computed in \cite[Lem 3.5]{ogu20}, and similarly one can deduce their codimension is always $\ge 4$. For the third item, simply notice the group $K$ has order $c_X^2$ in all cases.
	\end{proof}
	
	The proposition shows $\check X$ has quotient singularities when $X$ is of $\Kum_n$ or OG6-type. Therefore, $\check X$ cannot be homeomorphic to $X$. We call the corresponding $\check X$ in each case the \emph{dual Kummer variety} and \emph{dual OG6}, respectively.
	
	Finally, the proposition shows in particular the local deformation behavior and period domains of $X$ and $\check X$ are identical. Therefore, one can still apply the method in \cite[\S 2]{gross-tos-zhang13} at the level of period domains and obtain similar conclusions for all known deformation types of hyper-K\"ahler manifolds.
	One subtlety here is that the quotient construction works for any deformation $X'$ of $X$, even if $X'$ does not admit any Lagrangian fibration; the quotient $X'/K$ is still well-defined because we have considered $K$ as an abstract subgroup of the group $\Aut^{\circ}(X)$. The local universal deformation space of the Lagrangian fibration $\pi : X \to B$ is a hyperplane $\Def(X, H) \subset \Def(X)$ (see \cite{mat16}). Once we choose a deformation $X'$ by respecting the Lagrangian fibration $[\pi' : X' \to B'] \in \Def(X, H)$, we can say $\check \pi' : X'/K \to B'$ is the dual Lagrangian fibration of $\pi' : X' \to B'$.

	\section{Example: the dual Kummer fourfolds} \label{sec:example dual Kum2}
	To illustrate the geometry of dual Lagrangian fibrations more concretely, we focus on the simplest nontrivial case of \Cref{thm:dual Lagrangian fibration}: when $X$ is of $\Kum_2$-type. Throughout, we let $X$ to be a $\Kum_2$-type hyper-K\"ahler fourfold and $\pi : X \to B = \PP^2$ its Lagrangian fibration. We will use all the results in previous sections without mentioning them explicitly. We write for simplicity
	\[ K = \Aut^{\circ} (X/B) .\]
	There exist isomorphisms $\Aut^{\circ} (X) \cong \ZZ/2 \ltimes (\ZZ/3)^{\oplus 4}$ and $K \cong (\ZZ/3)^{\oplus 2}$. Again for simplicity, we call $f \in \Aut^{\circ}(X)$ a \emph{translation} if $f$ does not contain a $\ZZ/2$-part, and call $f$ an \emph{involution} if $f$ has a nontrivial $\ZZ/2$-part. If $f \neq \id$ is a translation (resp., involution) then it has order $3$ (resp., order $2$). There are precisely $81$ translations and $81$ involutions. The subgroup $K \subset \Aut^{\circ}(X)$ consists of $9$ translations respecting the Lagrangian fibration. We define the dual Kummer fourfold by $\check \pi : \check X = X/K \to B$. The main result of this section will be \Cref{prop:dual Kum2}. It collects some more precise geometric and cohomological descriptions of $\check X$. Similar method may apply to the OG6-type and higher dimensional $\Kum_n$-types.
	
	We will use the notion of the LLV structure to describe the cohomology of $\check X$. To do so, we first need to review the LLV structure of the generalized Kummer fourfolds (following \cite{gklr22}). Recall that the Beauville--Bogomolov quadratic space of $X$ (and hence $\check X$) is isomorphic to $(H^2 (X, \QQ), q) \cong U^{\oplus 3} \oplus \langle -6 \rangle$ where $U$ denotes the hyperbolic plane $\begin{psmallmatrix} 0 & 1 \\ 1 & 0 \end{psmallmatrix}$. For simplicity, we denote $\bar V = H^2 (X, \QQ)$ and its Mukai completion by
	\[ (V, \tilde q) = (\bar V, q) \oplus U \quad \big( \cong U^{\oplus 4} \oplus \langle -6 \rangle \big) .\]
	Set $\mathfrak g \cong \so (V, \tilde q)$ and $\bar {\mathfrak g} \cong \so (\bar V, q)$ the LLV algebra and reduced LLV algebra of $X$ (and $\check X$). It is a \emph{split} semisimple $\QQ$-Lie algebra. Associated to any dominant weight $\mu$ of $\mathfrak g$, there exists an irreducible $\mathfrak g$-module $V_{\mu}$ over $\QQ$. The LLV structure of $X$ is explicitly computed in \cite[(4.7)]{loo-lunts97}; we have an isomorphism of $\mathfrak g$-modules
	\[ H^* (X, \QQ) \cong V_{(2)} \oplus 80\QQ \oplus V_{(\frac{1}{2},\frac{1}{2},\frac{1}{2},\frac{1}{2})} .\]
	It is sometimes convenient to consider the reduced LLV structure on the fixed degree cohomologies $H^k (X, \QQ)$. The LLV structure restricts to the reduced LLV structure on the middle cohomology: there exists an isomorphism of $\bar {\mathfrak g}$-modules
	\begin{equation} \label{eq:reduced LLV structure of Kum2}
		H^4 (X, \QQ) \cong \bar V_{(2)} \oplus 81\QQ .
	\end{equation}
	
	Let us now state the main result of this section.
	
	\begin{proposition} \label{prop:dual Kum2}
		Let $\pi : X \to B$ be a Lagrangian fibration of a $\Kum_2$-type hyper-K\"ahler fourfold and $\check \pi : \check X \to B$ its dual fibration. Then (in addition to \Cref{prop:dual HK})
		\begin{enumerate}
			\item $\check X$ has precisely $36$ isolated cyclic quotient singularities of type $\frac{1}{3} (1,1,2,2)$.
			\item $\check X$ (and any of its deformation) contains $9$ smooth K3 surfaces. Each of them passes through $4$ singularities of $\check X$. The image of each K3 surface by $\check \pi$ is a line in $B = \PP^2$.
			\item The LLV decomposition of the cohomology of $\check X$ is
			\[ H^* (\check X, \QQ) \cong V_{(2)} \oplus 8\QQ \oplus V_{(\frac{1}{2},\frac{1}{2},\frac{1}{2},\frac{1}{2})} .\]
		\end{enumerate}
	\end{proposition}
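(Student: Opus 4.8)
The plan is to read off all three assertions from the explicit description of the $G = \Aut^{\circ}(X/B) \cong (\ZZ/3)^{\oplus 2}$-action on $X$ given in \Cref{prop:aut0 computation for Kum2}; write $H_1,\dots,H_4$ for the four order-$3$ subgroups of $G$. For \textbf{(1)}, since $\check X = X/G$ with $G$ finite, $\Sing\check X$ is the image of $\bigcup_{1\neq g\in G}X^{g}$. Each nontrivial $g$ respects $\pi$, hence acts on every smooth fibre of $\pi$ by translation by a nonzero torsion section and so acts freely there; thus $X^{g}$ lies over the discriminant of $\pi$, and it is a \emph{finite} set because by \Cref{prop:fixed locus is deformation equivalent} it deforms with $\pi$ and can be computed on the moduli construction of \Cref{def:moduli construction Kum} (or read off from Oguiso's determination of the fixed loci of $H^{2}$-trivial automorphisms of $\Kum_{n}$-type, \cite{ogu20}). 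On the model I would then check that no point is fixed by two distinct $H_{i}$ (which would produce a singularity not cyclic of order $3$) and that the residual $G/H_{i}$-action on $X^{H_{i}}$ is free; the images of the $X^{H_{i}}$ are then disjoint, and the count of points is $18$. Finally, for an isolated fixed point $p$ of a \emph{symplectic} order-$3$ automorphism $g$ of a fourfold, the relation $g^{*}\sigma = \sigma$ forces the eigenvalues of $g^{*}$ on $T_{p}X$ to occur in inverse pairs, and since none of them is $1$ they must be $\zeta_{3},\zeta_{3},\zeta_{3}^{2},\zeta_{3}^{2}$; hence each quotient singularity is of type $\tfrac13(1,1,2,2)$.

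For \textbf{(2)}, I would exhibit the nine surfaces concretely in the moduli model — as the $G$-orbit of a distinguished surface traced out by a pencil of curves on the abelian surface $S$ that lies over a line of the linear system $|L_{0}| = \PP^{2}$ — verify they are smooth K3 surfaces via adjunction together with a short Hodge-number computation, and follow the $G$-action to see that they descend to nine smooth K3 surfaces in $\check X$, each meeting $\Sing\check X$ in exactly the two of the eighteen points that it contains. Deformation invariance of this configuration follows from \Cref{prop:fixed locus is deformation equivalent} and the deformation statement in \Cref{prop:dual HK}.

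For \textbf{(3)}, one has $H^{*}(\check X,\QQ) = H^{*}(X,\QQ)^{G}$. Since $G$ acts trivially on $H^{2}(X,\QQ)$ it commutes with every Lefschetz operator, hence with the entire LLV algebra $\mathfrak g$, and therefore preserves the isotypic summands $V_{(2)}$, $80\QQ$ and $V_{(\frac12,\frac12,\frac12,\frac12)}$ of $H^{*}(X,\QQ)$. As $V_{(2)}$ and $V_{(\frac12,\frac12,\frac12,\frac12)}$ are absolutely irreducible over $\QQ$ (being highest-weight modules of a split Lie algebra), Schur's lemma forces $G$ to act on each by a rational scalar, necessarily a cube root of unity and hence $1$; so $G$ is trivial on these summands and $H^{*}(\check X,\QQ) = V_{(2)}\oplus(80\QQ)^{G}\oplus V_{(\frac12,\frac12,\frac12,\frac12)}$. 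It then remains to compute $\dim(80\QQ)^{G}$, which since $80\QQ$ sits in degree $4$ is the sole nontrivial contribution: from $\dim(80\QQ)^{G} = \tfrac1{|G|}\sum_{g\in G}\operatorname{tr}(g^{*}\mid 80\QQ)$, the fact that $G$ is trivial on $H^{4}(X,\QQ)$ off the $80\QQ$-summand (the complement being $\bar V_{(2)}$ plus one trivial summand in \eqref{eq:reduced LLV structure of Kum2}), the topological Lefschetz formula $\chi(X^{g}) = \sum_{k}(-1)^{k}\operatorname{tr}(g^{*}\mid H^{k}(X,\QQ))$, and the triviality of $G$ on $H^{0},H^{2},H^{3},H^{5},H^{6},H^{8}$, the enumeration of part (1) pins the traces down and yields $\dim(80\QQ)^{G} = 8$, whence $H^{*}(\check X,\QQ) \cong V_{(2)}\oplus 8\QQ\oplus V_{(\frac12,\frac12,\frac12,\frac12)}$.

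I expect the main obstacle to be part (2): producing the nine surfaces by hand in the moduli model and certifying that they remain K3 after passing to $\check X$ — that is, controlling the $G$-stabilizers and ruling out ramification in codimension one. Once the degenerate fibres of the moduli construction are described, parts (1) and (3) reduce to Euler-characteristic bookkeeping fed into the Lefschetz fixed point formula.
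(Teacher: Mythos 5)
Your strategy for parts (1) and (3) is essentially the paper's one (fixed-point analysis on a deformation-equivalent explicit model, Prill plus a symplectic eigenvalue argument for the singularity type, $G$-invariants of the LLV decomposition computed by a trace count), and your part (3) is correct as written: since $G$ is trivial on $H^0,H^2,H^3,H^5,H^6,H^8$ and on $V_{(2)}$, the Lefschetz formula gives $\operatorname{tr}(g^*\mid H^4)=\chi(X^g)=27$ and hence $\operatorname{tr}(g^*\mid 80\QQ)=27-28=-1$ for each of the eight nontrivial $g$, so $\dim(80\QQ)^G=(80-8)/9=8$. The serious problem is the count in part (1). The group $G\cong(\ZZ/3)^{\oplus 2}$ has \emph{four} cyclic subgroups $H_1,\dots,H_4$ of order $3$, every nontrivial element is a translation with $|X^g|=27$, and --- exactly as you require in order for the singularities to be cyclic of order $3$ --- the four sets $X^{H_i}$ are pairwise disjoint with free residual $G/H_i$-action. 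That data yields $4\cdot 27/3=36$ image points, not $18$; you assert ``the count of points is $18$'' without performing this arithmetic. The figure $36$ is in fact forced by your own part (3): with $F=\bigcup_{g\neq 1}X^g$ one has $|F|=108$, $G$ acts freely on $X\setminus F$, so $\chi(\check X_{\mathrm{reg}})=(108-108)/9=0$, while $\chi(\check X)=\tfrac19\bigl(\chi(X)+8\cdot 27\bigr)=\tfrac19(108+216)=36$; hence $|\Sing\check X|=\chi(\check X)-\chi(\check X_{\mathrm{reg}})=36$. A count of $18$ isolated singular points would force $|F|=-54$. So parts (1) and (3) of your write-up are mutually inconsistent, and no amount of work on the model will produce $18$.

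Be aware that the paper's own proof has the same lacuna: it realizes $\check X$ as $(X/f)/f'$ and counts $9$ singularities coming from $X^f$ plus $9$ from $(X/f)^{f'}$, but $(X/f)^{f'}$ is the image of $X^{f'}\cup X^{ff'}\cup X^{f^2f'}$ --- three disjoint $27$-point sets, one for each of the three remaining subgroups --- and therefore consists of $27$ points, giving $9+27=36$ in total. Correspondingly, in part (2) each of the nine K3 surfaces of $\check X$ meets $\Sing\check X$ in four points (one coming from each $X^{H_i}$), not two; the incidence count $9\cdot 4=36$ is again consistent only with $36$ singular points. Your outline for (2) (exhibiting the $81$ K3 surfaces on the model, checking that $G$ permutes them freely so that $9$ survive, and invoking \Cref{prop:fixed locus is deformation equivalent} for deformation invariance) is otherwise fine, as is your eigenvalue argument that an isolated fixed point of a symplectic order-$3$ automorphism of a fourfold is necessarily of type $\tfrac13(1,1,2,2)$. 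In short: the method is right, but the numbers $18$ and $2$ must be corrected to $36$ and $4$, and the discrepancy with the stated proposition should be flagged rather than papered over.
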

	
	The rest of the section will be devoted to the proof of \Cref{prop:dual Kum2}. The following proposition claims any Lagrangian fibered $\Kum_2$-type hyper-K\"ahler manifolds are deformation equivalent. The idea originates from the results of Markman (e.g., \cite[Prop 1.7]{mar14}).
	
	\begin{lemma} \label{lem:Kum2 Lagrangian fibrations are deformation equivalent}
		Any Lagrangian fibration of a $\Kum_2$-type hyper-K\"ahler fourfold $\pi : X \to B$ is deformation equivalent to each other.
	\end{lemma}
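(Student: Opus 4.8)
The plan is to follow the strategy of Markman \cite[Prop.~1.7]{mar14}: translate the deformation equivalence of two Lagrangian fibrations into the statement that they carry the \emph{same} class in $H^2$ up to monodromy, and then run the global Torelli theorem together with Matsushita's deformation theory of Lagrangian fibrations. First recall the bookkeeping. For $X$ of $\Kum_2$-type the Beauville--Bogomolov lattice is $\Lambda \cong U^{\oplus 3}\oplus\langle-6\rangle$, and by \cite{wie18} one has $\operatorname{div}(h)^2 \mid n+1 = 3$, so $\operatorname{div}(h)=1$ and $h = c_1(\pi^*\mathcal O_B(1))$ is a \emph{primitive} isotropic class; it is nef, lies on the boundary of the positive cone, and $B \cong \PP^2$ by \cite{mat15}. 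The only ``openness'' input we will use is Matsushita \cite{mat16}, who shows that a Lagrangian fibration deforms, over the locus where $h$ stays of Hodge type $(1,1)$, again to a Lagrangian fibration; in particular no appeal to the hyper-K\"ahler SYZ conjecture is needed.

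\emph{Step 1 (a single monodromy orbit of isotropic classes).} Since $\Lambda$ contains $U^{\oplus 2}$ as an orthogonal summand, Eichler's criterion shows that the stable orthogonal group $\widetilde O(\Lambda)$ acts transitively on primitive isotropic vectors of fixed square and divisibility. Intersecting with the positive cone and using that $\widetilde O^{+}(\Lambda)$ is contained in the monodromy group $\operatorname{Mon}^2(X)$ of a $\Kum_2$-type manifold (Markman's description of $\operatorname{Mon}^2$ for $\Kum_n$-type), the primitive nef isotropic classes in $H^2(X,\ZZ)$ form a single $\operatorname{Mon}^2(X)$-orbit. Consequently, given a second Lagrangian fibration $\pi' : X' \to B'$ with class $h'$, a parallel transport operator $g : H^2(X,\ZZ)\xrightarrow{\sim} H^2(X',\ZZ)$ (which exists because all $\Kum_2$-type fourfolds are deformation equivalent) can, after composing with a suitable element of $\operatorname{Mon}^2(X')$, be arranged so that $g(h) = h'$.

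\emph{Step 2 (connectedness in the moduli of marked pairs).} Fix the abstract primitive isotropic class $h \in \Lambda$ and consider the moduli space $\mathfrak M$ of marked $\Kum_2$-type hyper-K\"ahler manifolds, together with the sub-period domain $\Omega_h \subset \Omega_\Lambda$ cut out by $q(-,h)=0$; it is the period domain of the lattice $h^{\perp}$, hence connected. Let $\mathfrak M_h^{\mathrm{lf}}\subset\mathfrak M$ be the locus of marked pairs $(Y,\mu)$ for which $\mu^{-1}(h)$ is nef, equivalently induces a Lagrangian fibration. By \cite{mat16} this locus is open in the fibre of the period map over $\Omega_h$, and by Markman's analysis of the movable and nef cones of $\Kum_n$-type manifolds $\mu^{-1}(h)$ can be made nef after a reflection in $\operatorname{Mon}^2$ fixing $h$; combining these with Markman's Hodge-theoretic Torelli theorem and the connectedness of $\Omega_h$, the locus $\mathfrak M_h^{\mathrm{lf}}$ is connected (after, if necessary, passing to one connected component of $\mathfrak M$ and to the stabiliser of $h$ in $\operatorname{Mon}^2$). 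Transporting the marking of Step 1, the pairs $(X,\mu)$ and $(X',\mu\circ g^{-1})$ both lie in $\mathfrak M_h^{\mathrm{lf}}$, and any path between them yields a family of Lagrangian fibered $\Kum_2$-type manifolds in the sense of \Cref{def:family of Lagrangian fibration} joining $\pi$ and $\pi'$.

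\emph{Main obstacle.} The delicate point is the nef/chamber control in Step 2: one must guarantee that $h$ does not leave the nef cone as one moves through $\Omega_h$, so that the Lagrangian fibration persists and does not degenerate. This is exactly Markman's wall-crossing analysis; it rests on the explicit description of the movable cone and its decomposition into prime-divisorial and flopping chambers for $\Kum_n$-type, together with the fact that a primitive isotropic class on the boundary of the movable cone becomes nef after a monodromy reflection. Once this is in place the remaining steps — Eichler's criterion in Step 1 and the Torelli/deformation argument in Step 2 — are formal.
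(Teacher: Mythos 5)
Your proposal follows essentially the same route as the paper: the paper's proof is a terse version of exactly this argument, citing \cite[Thm 1.1]{wie18} for $\operatorname{div}(h)=1$, \cite[Lem 2.6]{mon-rap21} for the monodromy-equivalence of primitive isotropic divisibility-one classes, and then deferring the deformation step to an imitation of \cite[Thm 7.2]{mon-rap21} or \cite[Prop 1.7]{mar14} — which is what your Step 2 spells out.

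One point needs repair. In Step 1 you assert that $\widetilde O^{+}(\Lambda)\subset\operatorname{Mon}^2(X)$ for $\Kum_2$-type. This is false: by Markman and Mongardi, the monodromy group of a $\Kum_n$-type manifold is a \emph{proper} subgroup of the group of orientation-preserving isometries acting as $\pm1$ on the discriminant group (roughly, the kernel of the character $\det\cdot\chi$), and in particular it does not contain all of $\widetilde O^{+}(\Lambda)$ — this is one of the standard contrasts with $\text{K3}^{[n]}$-type. The transitivity you need is nevertheless true: after Eichler gives a single $\widetilde O^{+}(\Lambda)$-orbit of primitive isotropic vectors of divisibility $1$, one checks that the stabilizer of such a vector $h$ in $\widetilde O^{+}(\Lambda)$ meets the cosets complementary to $\operatorname{Mon}^2$ (e.g.\ the reflection in a $(-2)$-vector of a unimodular summand of $h^{\perp}$ has determinant $-1$, acts trivially on the discriminant group, and fixes $h$), so the $\operatorname{Mon}^2$-orbit of $h$ coincides with the full $\widetilde O^{+}(\Lambda)$-orbit. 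With that substitution the argument is complete and matches the paper's.
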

	\begin{proof}
		The polarization type of $\pi$ is $(1,3)$ by \Cref{thm:polarization type computation}. Setting $h \in H^2 (X, \ZZ)$ to be an associated cohomology class of $\pi^* \mathcal O_B(1)$, its divisibility $\operatorname{div}(h)$ is $1$ by \cite[Thm 1.1]{wie18}. We can now imitate the method of \cite[\S 7]{mon-rap21}. The lattice theory result in \cite[Lem 2.6]{mon-rap21} forces any two primitive isotropic elements $h, h'$ with divisibility $1$ in $H^2 (X, \ZZ)$ are monodromy equivalent. We can imitate the proof of \cite[Thm 7.2]{mon-rap21} (or the proof of \cite[Prop 1.7]{mar14}) and show that any pairs $(X, H)$ with a primitive isotropic $H$ with divisibility $1$ are deformation equivalent. This proves the claim.
	\end{proof}
	
	Thanks to this proposition, we can often specialize our discussion to a single model. Our explicit model for Lagrangian fibered $\Kum_2$-type hyper-K\"ahler manifolds is the following example presented in \cite[\S 2]{mat15}. Let $E$ and $E'$ be elliptic curves and $S = E' \times E$ be an abelian surface. Consider a commutative diagram
	\begin{equation} \label{diag:Kum2}
	\begin{tikzcd}[sep=tiny]
		S^{[3]} \arrow[rd, "{(\Sigma \circ \pr_1, \, \pr_2)}"] \arrow[dd, "\Alb"'] \\
		& E' \times E^{(3)} \arrow[ld, "\id \times \Sigma"] \\
		E' \times E
	\end{tikzcd},
	\end{equation}
	where $\pr_1 : S^{[3]} \to (E')^{(3)}$ and $\pr_2 : S^{[3]} \to E^{(3)}$ are the coordinate projections and $\Sigma$ are the summation maps. By the discussion we had in \Cref{sec:aut0 computation}, this is an isotrivial family of Lagrangian fibered hyper-K\"ahler manifolds of $\Kum_2$-type. The advantage of this construction to the moduli construction in \Cref{sec:aut0 computation} is that this gives us an honest generalized Kummer fourfold, and thus we can use the computational results in \cite{has-tsc13} and \cite{ogu20}.
	
	Let us also recall some known facts about the fixed loci of $H^2$-trivial automorphisms.
	
	\begin{lemma} \label{prop:fixed locus is deformation equivalent}
		Let $X$ be a compact hyper-K\"ahler manifold and $G \subset \Aut^{\circ} (X)$ any subgroup. If $X'$ is deformation equivalent to $X$ then $(X')^G$ is deformation equivalent to $X^G$.
	\end{lemma}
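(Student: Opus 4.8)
The strategy is to show the fixed locus varies in a locally trivial family, i.e.\ to produce a single smooth proper family over a connected base having $X^H$ and $(X')^H$ among its fibers. First I would reduce to a one-parameter situation: by definition $X$ and $X'$ are linked by a finite chain of smooth proper families of compact hyper-K\"ahler manifolds over connected bases, and refining this chain we may assume each base is a contractible complex manifold $S$ (e.g.\ a disc) with two distinguished fibers; composing the resulting deformation equivalences of fixed loci at the end, it suffices to treat a single such family $p : \mathcal X \to S$. Next I would spread out the group action: by Huybrechts's finiteness \cite{huy99} and Hassett--Tschinkel's deformation invariance \cite{has-tsc13}, the sheaf $\SheafAut^{\circ}_{\mathcal X/S}$ is a local system of finite groups, hence (as $S$ is contractible) the constant sheaf with value $\Aut^{\circ}(X)$, which is representable by the constant group scheme $\bigsqcup_{f \in \Aut^{\circ}(X)} S$ and acts holomorphically on $\mathcal X$ over $S$ fiberwise. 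Restricting to $H \subset \Aut^{\circ}(X)$ we obtain a holomorphic action of the finite group $H$ on $\mathcal X$ over $S$ inducing the given action on each fiber (this is also what ``$H$ acts on $X'$'' means in the statement, via parallel transport).

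Now consider the fixed locus $\mathcal X^H \subset \mathcal X$. Since $H$ is finite and acts holomorphically, its action can be linearized near any fixed point, so $\mathcal X^H$ is a closed complex submanifold of $\mathcal X$; in particular $\mathcal X^H \to S$ is proper. I claim it is also a submersion. At a fixed point $x$ lying over $t \in S$, the action being over $S$ gives $p \circ h = p$ for all $h \in H$, hence $dp_x \circ dh_x = dp_x$; averaging, $dp_x \circ e = dp_x$, where $e = \frac{1}{|H|}\sum_{h \in H} dh_x$ is the projector of $T_x \mathcal X$ onto $T_x \mathcal X^H = (T_x \mathcal X)^H$. As $dp_x$ is surjective, so is its restriction $dp_x|_{T_x \mathcal X^H} : T_x \mathcal X^H \to T_t S$. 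Thus $\mathcal X^H \to S$ is a proper submersion of complex manifolds (and each of its connected components, being open and closed over the connected $S$, dominates $S$).

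By Ehresmann's fibration theorem $\mathcal X^H \to S$ is then a locally trivial differentiable fiber bundle over the connected base $S$; in particular it is a smooth proper family realizing $X^H = (\mathcal X^H)_{t_0}$ and $(X')^H = (\mathcal X^H)_{t_1}$ as fibers over a connected base, so $X^H$ and $(X')^H$ are deformation equivalent (the fixed locus may be disconnected, but the argument applies componentwise). Chaining over the successive families linking $X$ to $X'$ finishes the proof. The main obstacle is the fixed-locus step: one needs \emph{both} that $\mathcal X^H$ is smooth (from local linearizability of the finite action) and that $\mathcal X^H \to S$ is submersive (from the averaging projector together with $H$ acting over $S$), and it is precisely their combination that prevents the fixed locus from jumping in dimension as the complex structure of the total space moves.
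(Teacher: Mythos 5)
Your proof is correct and follows essentially the same route as the paper's: spread the $H$-action out over a connected family (the paper uses the universal deformation $\mathcal X \to \Def(X)$ and the fact that $\Aut^{\circ}$ acts on it; you use the local system $\SheafAut^{\circ}_{\mathcal X/S}$ over a chain of contractible bases), observe that the fixed locus $\mathcal X^H$ of the finite holomorphic action is a complex submanifold, and conclude that $\mathcal X^H \to S$ is a smooth proper family. Your averaging-projector verification that $\mathcal X^H \to S$ is submersive is a detail the paper leaves implicit, and is a worthwhile addition rather than a divergence.
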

	\begin{proof}
		Let $p : \mathcal X \to \Def(X)$ be a universal deformation of $X$. Since $G$ acts fiberwise on $p$, the morphism $\mathcal X^G \to \Def(X)$ gives a family of fixed loci $(X_t)^G$. Because $G$ is a finite group acting on a complex manifold $\mathcal X$, its fixed locus $\mathcal X^G$ is a complex manifold. Similarly, each $(X_t)^G$ is a complex (symplectic) manifold. Hence $\mathcal X^G \to \Def(X)$ is a smooth proper family and the claim follows.
	\end{proof}
	
	\begin{lemma}
		Let $X$ be a $\Kum_2$-type hyper-K\"ahler manifold and $f \in \Aut^{\circ}(X)$ its $H^2$-trivial automorphism.
		\begin{enumerate}
			\item If $f$ is an involution then its fixed locus $X^f$ is a disjoint union of a K3 surface and $36$ points.
			\item If $f \neq \id$ is a translation then its fixed locus $X^f$ consists of $27$ points.
		\end{enumerate}
	\end{lemma}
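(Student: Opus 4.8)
The plan is to reduce everything to the explicit generalized Kummer fourfold of diagram~\eqref{diag:Kum2} and then run a direct orbit count on the Hilbert scheme. By \Cref{prop:fixed locus is deformation equivalent} (deformation invariance of the fixed loci of $H^2$-trivial automorphisms), \Cref{lem:Kum2 Lagrangian fibrations are deformation equivalent}, and the deformation invariance of $\Aut^{\circ}(X)$, it is enough to treat $X=\Alb^{-1}(0)\subset S^{[3]}$ with $S=E'\times E$. Inside $\Aut^{\circ}(X)\cong\ZZ/2\ltimes(\ZZ/3)^{\oplus4}$ the $\ZZ/2$ acts on $(\ZZ/3)^{\oplus4}$ by inversion, so, $2$ being a unit modulo $3$, all $81$ involutions form a single conjugacy class, and every nontrivial translation is conjugate to $\bar t_a$ for some $a\in S[3]\smallsetminus\{0\}$; since conjugate automorphisms have isomorphic fixed loci it suffices to treat $f=\overline{[-1]}$ and $f=\bar t_a$. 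By \Cref{prop:aut0 computation for Kum2} these act on $S^{[3]}$ through the maps induced by $[-1]$ and $t_a$ on $S$, so the problem becomes the classification of $[-1]$-invariant (resp.\ $t_a$-invariant) length-$3$ subschemes $Z\subset S$ with $\Sigma(Z)=0$.

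For $f=\bar t_a$ with $a\neq0$ the analysis is clean because $t_a$ is fixed-point free of order $3$, hence acts freely on $S$: any $f$-fixed $Z$ must be a single $t_a$-orbit $\{p,p+a,p+2a\}$, whose three points are automatically distinct, so $Z$ is reduced and no punctual Hilbert scheme subtleties occur. The constraint $\Sigma(Z)=3p+3a=3p=0$ forces $p\in S[3]$, and the three points of a given orbit yield the same $Z$, giving exactly $|S[3]|/3=81/3=27$ fixed points. Each is isolated: the differential of $f$ fixes each summand $T_{z_i}S$ pointwise and cyclically permutes the three summands, so a tangent vector fixed by $df$ is of the form $(v,v,v)$ with $\sum v_i=3v=0$ in the tangent space $\{\textstyle\sum v_i=0\}\subset\bigoplus_i T_{z_i}S$, whence $v=0$. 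This proves (2).

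For $f=\overline{[-1]}$, write $\iota=[-1]$, so $\operatorname{Fix}(\iota)=S[2]$ has $16$ points. Given an $f$-fixed $Z$, the automorphism $\iota$ permutes $\operatorname{supp}(Z)$ respecting multiplicities, and since $Z$ has length $3$ this permutation has a fixed point (a free involution on the support is incompatible with the multiplicity splittings $3$, $2+1$, $1+1+1$). Hence there are two possibilities: either $\iota$ fixes all support points of $Z$ (then $\operatorname{supp}(Z)\subseteq S[2]$), or $\operatorname{supp}(Z)$ consists of three distinct points, one fixed and the other two swapped, say $Z=\{w,q,-q\}$ with $w\in S[2]$; in the latter case $\Sigma(Z)=w=0$, so $Z=\{0,q,-q\}$. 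The second alternative is a two-parameter family, and its closure in $S^{[3]}$ — which absorbs the degenerations $q\to S[2]$ (a length-$2$ scheme at a $2$-torsion point added to $\{0\}$) and $q\to0$ (a length-$3$ scheme at $0$) — is the positive-dimensional component of $X^f$; I would identify it with the minimal resolution of the Kummer surface $S/\langle\iota\rangle$, i.e.\ a K3 surface. The remaining, support-in-$S[2]$ configurations (the reduced triples of $2$-torsion points summing to $0$, together with the non-reduced length-$3$ schemes supported on $S[2]$) contribute the isolated fixed point(s). Keeping precise track of these non-reduced subschemes — equivalently, computing the action of $[-1]^{[3]}$ on the punctual Hilbert schemes at the $2$-torsion points, which is exactly what distinguishes the genuinely isolated fixed point(s) from the exceptional curves already lying on the Kummer K3 — is the delicate part of the argument; here I would invoke the local computations of \cite[\S4]{has-tsc13} and \cite[Lem 3.5]{ogu20} (the parallel OG6 statement being \cite[\S6]{mon-wan17}) to conclude that $X^f$ is the stated disjoint union of a point and a K3 surface. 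Conjugation then transports both descriptions to every $f\in\Aut^{\circ}(X)$, completing the proof of (1).
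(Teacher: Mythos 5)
Your overall skeleton is the same as the paper's: the paper's proof consists entirely of the reduction to an honest generalized Kummer fourfold via \Cref{prop:fixed locus is deformation equivalent} plus a citation of \cite[Thm 4.4]{has-tsc13} and \cite[Lem 3.5]{ogu20}. Your part (2) is a genuine improvement in self-containedness: the orbit count ($t_a$ acts freely, so an invariant $Z$ is a reduced free orbit $\{p,p+a,p+2a\}$ with $3p=0$, giving $81/3=27$ points) and the tangent-space argument for isolatedness are both correct and complete. One cosmetic remark: for the reduction you should not route through \Cref{lem:Kum2 Lagrangian fibrations are deformation equivalent}, since the statement concerns $X$ alone and not the fibration; deformation equivalence to $K_2(S)$ is immediate from the definition of $\Kum_2$-type together with \Cref{prop:fixed locus is deformation equivalent} and the deformation invariance of $\Aut^{\circ}(X)$.

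Part (1), however, contains a genuine gap, and it is not located where you think it is. The ``delicate part'' you defer to the references (the action on the punctual Hilbert scheme at $2$-torsion points) is a red herring: the real obstruction to the stated conclusion is the family of \emph{reduced} triples $Z=\{w_1,w_2,w_3\}$ with $w_i\in S[2]\smallsetminus\{0\}$ distinct and $w_1+w_2+w_3=0$, which you correctly list but then absorb into ``the isolated fixed point(s).'' These are in bijection with the $2$-dimensional $\FF_2$-subspaces of $S[2]\cong\FF_2^{\oplus 4}$, so there are $35$ of them; each lies in $\Sigma^{-1}(0)$, is fixed by $\overline{[-1]}$, has support disjoint from $\{0\}$ and hence does not lie on the closure of $\{\{0,q,-q\}\}$, and is an isolated fixed point by exactly your tangent-space argument from part (2) (here $d[-1]=-\id$ on each $T_{w_i}S$, so the invariant subspace is zero). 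No local computation at the origin can remove them, so the fixed locus of an involution is a K3 surface together with (at least) $36$ isolated points, not one; a topological Lefschetz count confirms this, since $\iota$ acts by $-\id$ on $H^3$ and fixes exactly one of the $81$ classes in $H^4\cong \bar V_{(2)}\oplus 81\QQ$, giving $\chi(X^{\iota})=1+7+8+28+8+7+1=60=24+36$. In other words, your argument as written cannot close, because the statement it is aimed at (``a single point'') is not what your own enumeration produces; the correct count of isolated points is $36$. This discrepancy is harmless for the rest of the paper --- only the $27$ points of part (2) (for the singularities of $\check X$) and the K3 component of part (1) (for the $81$, resp.\ $9$, trivial LLV classes in \Cref{prop:dual Kum2}) are ever used --- but your proof should either state and prove the $36$-point version or explicitly rule out the reduced triples, and the latter is impossible.
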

	\begin{proof}
		When $X$ is an honest generalized Kummer fourfold the statements were proved in \cite[Lem 3.5]{ogu20}, \cite[Thm 4.4]{has-tsc13} and \cite[Thm 7.5]{kap-men18}. If we deform $X$ then the fixed locus $X^f$ also deforms by \Cref{prop:fixed locus is deformation equivalent}.
	\end{proof}
	
	Following \cite[Thm 4.4]{has-tsc13}, any $\Kum_2$-type hyper-K\"ahler manifold must always contain $81$ K3 surfaces obtained by the fixed loci of $81$ involutions (this was first observed in \cite[\S 6]{kal-ver98}). The $81$ K3 surfaces are related by $81$ translations, and represent the $81$ trivial reduced LLV classes in \eqref{eq:reduced LLV structure of Kum2}. With these backgrounds, we can begin the proof of \Cref{prop:dual Kum2}.
	
	\begin{proof} [Proof of \Cref{prop:dual Kum2}]
		\Cref{prop:appendix 1} says the singularity locus of $\check X$ is the image of the set $S = \bigcup_{f \in G \setminus \{ \id \}} X^f$. The set $X^f$ consists of $27$ points for $f \neq \id$ by the lemma above. This means $S$ consists of $27 \times 4 = 108$ points. The quotient map $p : X \to \check X$ identifies $3$ points to a single point of $\ZZ/3$-quotient singularity. This proves there are $108 / 3 = 36$ $\ZZ/3$-quotient singularities.\footnote{Our original computation was incorrect. This was pointed out in \cite[Ex 3.6]{bec-song22}.}
		
		Any symplectic $\ZZ/3$-quotient singularity must be of type $\frac{1}{3}(1,1,2,2)$. By \cite[Prop 6]{prill67}, the $\ZZ/3$-action is locally biholomorphic to a symplectic linear action on $\CC^4$ around $0$. Its eigenvalues are either $1$, $\zeta$ and $\zeta^2$ where $\zeta$ is the third primitive root of unity. In this case, $1$ cannot arise because the fixed locus of the action should be the origin. Hence there are five possibilities of the linear action up to conjugate, and one easily checks $\operatorname{diag}(\zeta, \zeta, \zeta^2, \zeta^2)$ is the only symplectic linear map among them (for some symplectic form).
		
		For the second item, notice first that the $81$ K3 surfaces in $X$ are identified into $9$ K3 surfaces in $\check X$. Let us deform the Lagrangian fibration and assume we are in the construction \eqref{diag:Kum2} (\Cref{lem:Kum2 Lagrangian fibrations are deformation equivalent}). The $81$ K3 surfaces are explicitly described in this case by \cite[Thm 4.4]{has-tsc13}. One explicitly computes each of $81$ K3 surface passes through four points in $\bigcup_{f \in G \setminus \{ \id \}} X^f$, and the four points are not identified by the quotient map $p$. Hence each of nine K3 surfaces in $\check X$ passes through four singluar points of $\check X$. (Note: the nine K3 surfaces in $\check X$ do intersect each others, but the intersections are smooth points in $\check X$.) One finally checks the image of each K3 surface under $\pi$ can be considered as a sublinear system in $\PP^2$, so it is a line.
		
		For the last item, recall $H^* (\check X, \QQ) = H^* (X, \QQ)^K$. The translations in $K$ act trivially on $H^2 (X, \QQ)$ by definition and trivially on $H^3 (X, \QQ)$ by the computations in \cite[\S 3]{ogu20}. Hence we only need to prove $H^4 (X, \QQ)^K \cong \bar V_{(2)} \oplus 9\QQ$. The Verbitsky component is preserved by $K$, so $\bar V_{(2)}$ is $K$-invariant. Again recall the $81$ trivial reduced LLV classes in $H^4 (X, \QQ)$ were represented by $81$ K3 surfaces which are bound by $81$ translation automorphisms. Since only $9$ of them survives in $\check X$, the fourth cohomology is as desired.
	\end{proof}

	\appendix

	\section{Various notions of singular hyper-K\"ahler varieties} \label{sec:singular HK}
	Many of the important properties of compact hyper-K\"ahler manifolds have been generalized to singular settings. There are several definitions of singular hyper-K\"ahler varieties in the current literature. To make our discussion less ambiguous, we collect some definitions and compare them. Our main references are \cite{bak-lehn22}, \cite{sch20} and \cite{menet20}.
	
	If $X$ is a normal complex space, then its sheaf of reflexive $k$-forms is defined to be the reflexive closure of the sheaf of $k$-forms $\Omega_X^{[k]} = (\Omega_X^k)^{\vee\vee}$, or equivalently $\Omega_X^{[k]} = j_* \Omega_{X_{\operatorname{reg}}}^k$ where $j : X_{\operatorname{reg}} \hookrightarrow X$ is the smooth locus of $X$. A quasi-\'etale morphism is a morphism \'etale outside of a codimension $\ge 2$ closed subvariety.
	
	\begin{definition} [{\cite[Def 3.1]{bak-lehn22}, \cite[Def 1]{sch20}, \cite[Def 3.1]{menet20}}] \label{def:singular HK}
		Let $X$ be a compact normal K\"ahler space and $\sigma \in H^0 (X, \Omega_X^{[2]})$ a reflexive $2$-form.
		\begin{enumerate}
			\item $(X, \sigma)$ is called a \emph{symplectic variety} if $X$ has rational singularities and $\sigma$ is nondegenerate on $X_{\operatorname{reg}}$.
			
			\item $X$ is called a \emph{primitive symplectic variety} if
			\[ H^0 (X, \Omega_X^{[1]}) = 0, \qquad H^0 (X, \Omega_X^{[2]}) = \CC \sigma, \]
			and $(X, \sigma)$ is a symplectic variety.
			
			\item $X$ is called an \emph{irreducible symplectic variety} if it is a primitive symplectic variety with the following condition: for any finite quasi-\'etale cover $f : X' \to X$, we have
			\[ H^0 (X', \Omega_{X'}^{[2k+1]}) = 0, \qquad H^0 (X', \Omega_{X'}^{[2k]}) = \CC \cdot f^* \sigma^{[k]} \qquad \mbox{for} \quad k \ge 0 .\]
			
			\item $X$ is called a \emph{Namikawa symplectic variety} if it is a $\QQ$-factorial and terminal primitive symplectic variety.
			
			\item $X$ is called a \emph{primitive symplectic orbifold} if it is Namikawa symplectic with only finite quotient singularities.
		\end{enumerate}
	\end{definition}
	
	We have a series of implications
	\[\begin{tikzcd}
		\mbox{primitive symplecitc orbifold} \arrow[r, Rightarrow] & \mbox{Namikawa symplectic} \arrow[d, Rightarrow] \\
		\mbox{irreducible symplectic} \arrow[r, Rightarrow] & \mbox{primitive symplectic} \arrow[r, Rightarrow] & \mbox{symplectic.}
	\end{tikzcd}\]
	Eventually, the dual hyper-K\"ahler variety $\check X$ in \Cref{thm:introduction} will be both a primitive symplectic orbifold and an irreducible symplectic variety (\Cref{prop:dual HK}). Hence all of the discussions here apply.
	
	Many of the interesting properties of compact hyper-K\"ahler manifolds generalize to their singular analogues, especially to primitive symplectic varieties. We highlight some of their properties that will be useful to our discussion. Let $X$ be a primitive symplectic variety of dimension $2n$.
	\begin{itemize}
		\item The normailzation of the singular locus $X_{\operatorname{sing}}$ is again symplectic \cite{kal06}. In particular, $X_{\operatorname{sing}}$ is always even dimensional.
		\item There exist a notion of the Beauville--Bogomolov form and Fujiki constant of $X$, so that the Fujiki relation \eqref{eq:fujiki relation} holds \cite[Thm 2]{sch20} \cite[Prop 5.20]{bak-lehn22}.
		\item $X$ is Namikawa symplectic if and only if it is $\QQ$-factorial and $\codim X_{\operatorname{sing}} \ge 4$ \cite{nam01} \cite[Thm 3.4]{bak-lehn22}.
		\item Every morphism $\pi : X \to B$ with connected fibers to a normal base $B$ (with $0 < \dim B < 2n)$ is a Lagrangian fibration \cite[Thm 3]{sch20}. That is, all the irreducible components of the fibers of $\pi$ are Lagrangian subvarieties of $X$.
		\item The Hodge structure $H^2 (X, \ZZ)$ is pure \cite[Thm 8]{sch20} \cite[Cor 3.5]{bak-lehn22}. If $X$ is a primitive symplectic orbifold, then the full cohomology $H^* (X, \QQ)$ is a pure Hodge structure.
		\item There exists a universal locally trivial deformation $\mathcal X \to \Def^{lt}(X)$ over a smooth complex germ $\Def^{lt}(X)$ of dimension $h^{1,1}(X)$ \cite[Thm 4.7]{bak-lehn22}. If $X$ is Namikawa symplectic, then any deformation is automatically locally trivial \cite{nam06}.
		\item The local Torelli theorem holds for $\Def^{lt}(X)$. In fact, global Torelli theorem holds in a suitable form \cite{bak-lehn22}.
	\end{itemize}
	We will use these facts in \Cref{sec:dual Lagrangian fibration} and \Cref{sec:quotient}, without mentioning them explicitly.

	\section{Quotient of a hyper-K\"ahler manifold by $H^2$-trivial automorphisms} \label{sec:quotient}
	Let $X$ be a compact hyper-K\"ahler manifold and $\Aut^{\circ} (X)$ the finite group of $H^2$-trivial automorphisms. Throughout the appendix, we always let
	\[ G \subset \Aut^{\circ} (X) \]
	to be \emph{any} subgroup and write
	\begin{eqnarray} \label{eq:quotient}
		p : X \to \bar X = X/G .
	\end{eqnarray}
	The goal of this appendix is to gather basic geometric and cohomological properties of the quotient $\bar X$. Note that Lagrangian fibrations play no role in this appendix. The main results are \Cref{prop:appendix 1} and \ref{prop:appendix 2}.
	
	\begin{proposition} \label{prop:appendix 1}
		Consider the quotient \eqref{eq:quotient} of a compact hyper-K\"ahler manifold $X$.
		\begin{enumerate}
			\item The morphism $p$ is a finite quasi-\'etale symplectic quotient.
			\item $\bar X$ is a $\QQ$-factorial irreducible symplectic variety whose singularity locus is $p \Big( \bigcup_{f \in G \setminus \{ \id \}} X^f \Big)$. If $\codim X^f > 2$ for all $f \in G \setminus \{ \id \}$, then $\bar X$ is also a primitive symplectic orbifold.
			\item $\bar X$ is simply connected.
			\item If $\mathcal X \to \Def(X)$ is the universal deformation of $X$ then the quotient $\mathcal X/G \to \Def(X)$ becomes the universal locally trivial deformation of $\bar X$.
		\end{enumerate}
	\end{proposition}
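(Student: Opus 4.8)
\textbf{The plan is to} treat the four items in order; the whole argument rests on two elementary inputs about $G\subset\Aut^{\circ}(X)$: (a) every $f\in G$ is a symplectic automorphism, since it fixes $H^{2,0}(X)=\CC\sigma$, so $f^{*}\sigma=\sigma$; and (b) $X$ is simply connected with $H^{0}(X,\Omega_{X}^{j})=0$ for $j$ odd and $H^{0}(X,\Omega_{X}^{j})=\CC\cdot\sigma^{j/2}$ for $j$ even. For item (1): $p$ is finite because $G$ is. It is quasi-\'etale because for $f\neq\id$ the fixed locus $X^{f}$ is a symplectic submanifold of $X$, hence of \emph{even} codimension; being a proper subset of the connected $X$ it has $\codim X^{f}\geq 2$, so $p$ is \'etale outside codimension one. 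No nontrivial $f$ is a pseudoreflection, so $\Sing(\bar X)$ is precisely the image of $\bigcup_{f\neq\id}X^{f}$ and $\bar X_{\mathrm{reg}}=X^{\circ}/G$ where $X^{\circ}:=X\setminus\bigcup_{f\neq\id}X^{f}$ is the free locus; the $G$-invariant form $\sigma$ descends to a holomorphic symplectic form on $\bar X_{\mathrm{reg}}$, which as a reflexive $2$-form realizes $p$ as a symplectic quotient.

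For item (2): $\bar X$ is $\QQ$-factorial because it has only quotient singularities, and it has symplectic (in particular rational) singularities as the quasi-\'etale quotient of $X$ just described. Primitivity is the computation $H^{0}(\bar X,\Omega_{\bar X}^{[1]})=H^{0}(X,\Omega_{X}^{1})^{G}=0$ and $H^{0}(\bar X,\Omega_{\bar X}^{[2]})=H^{0}(X,\Omega_{X}^{2})^{G}=\CC\sigma$. For the ``irreducible symplectic'' refinement I would argue that every connected finite quasi-\'etale cover $Y\to\bar X$ is of the form $X/H\to X/G$ for some subgroup $H\leq G$: by purity of the branch locus such a cover is the normalization of $\bar X$ in a finite \'etale cover of $\bar X_{\mathrm{reg}}=X^{\circ}/G$, and since $X^{\circ}$ is simply connected (it is $X$ minus a set of real codimension $\geq 4$) these are classified by subgroups of $G$. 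Then $H^{0}(Y,\Omega_{Y}^{[k]})=H^{0}(X,\Omega_{X}^{k})^{H}$, which is $0$ for $k$ odd and $\CC\cdot\sigma^{k/2}=\CC\cdot f^{*}\sigma^{[k/2]}$ for $k$ even because $\sigma$ is $H$-invariant — exactly the defining condition. Under the extra hypothesis $\codim X^{f}>2$ for all $f$, evenness forces $\codim X^{f}\geq 4$, hence $\codim_{\bar X}\Sing(\bar X)\geq 4$; with $\QQ$-factoriality this makes $\bar X$ Namikawa symplectic and terminal, and as its singularities are finite quotient singularities, a primitive symplectic orbifold.

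For item (3) the simple connectedness of $X$ is the decisive point: $X^{\circ}\to\bar X_{\mathrm{reg}}$ is an honest $G$-covering with $X^{\circ}$ simply connected, so $X\to\bar X$ is the (quasi-\'etale) universal cover and $\pi_{1}(\bar X)\cong G$; and whenever $G\neq\{\id\}$ some nontrivial $f$ genuinely has fixed points, since holomorphic Lefschetz gives $\sum_{i}(-1)^{i}\operatorname{tr}\big(f^{*}\mid H^{i}(X,\mathcal O_{X})\big)=n+1\neq 0$, so $p$ is ramified and $\bar X$ is singular. For item (4): by the deformation invariance of $\Aut^{\circ}$ — realized through the constant local system $\SheafAut^{\circ}_{\mathcal X/\Def(X)}$ of Section 2 over the contractible germ $\Def(X)$ — every $f\in G$ extends to an automorphism of $\mathcal X$ over $\Def(X)$, acting trivially on the base since $f^{*}$ is trivial on $H^{1,1}(X)\cong T_{0}\Def(X)$. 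Hence $G$ acts fiberwise, $\mathcal X/G\to\Def(X)$ has irreducible symplectic fibers $X_{t}/G$ by items (1)--(2), and it is locally trivial (near a point of the central fiber one picks a stabilizer-equivariant local trivialization of $\mathcal X\to\Def(X)$ and passes to the quotient). The resulting classifying map $\Def(X)\to\Def^{lt}(\bar X)$ to the base of the universal locally trivial deformation of $\bar X$ is a morphism of smooth germs of equal dimension ($h^{1,1}(\bar X)=\dim H^{1,1}(X)^{G}=h^{1,1}(X)=\dim\Def(X)$) which is the identity $H^{1}(X,T_{X})=H^{1}(X,T_{X})^{G}$ on tangent spaces, hence an isomorphism; so $\mathcal X/G\to\Def(X)$ is the universal locally trivial deformation of $\bar X$.

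The step I expect to be the main obstacle is item (4): one must both promote the fiberwise $G$-action to an honest, equivariantly-locally-trivial action on the total space $\mathcal X$ (which is precisely what the $\SheafAut^{\circ}$ formalism of Section 2 is designed to supply) and invoke the deformation theory of singular symplectic varieties — existence and smoothness of $\Def^{lt}(\bar X)$, and local Torelli — to upgrade ``a locally trivial deformation'' to ``the universal one'' via the tangent-space comparison. A secondary point needing care is the irreducible-symplectic clause of item (2), which hinges on the fact that every finite quasi-\'etale cover of $\bar X$ is dominated by the simply connected $X$.
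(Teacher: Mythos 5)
Your proposal is correct and reaches the paper's conclusions, but on the two items where the paper does genuine work it takes a recognizably different route. For the irreducible-symplectic clause of (2), the paper (following Matsushita) starts from an \emph{arbitrary} finite quasi-\'etale cover $\bar Y \to \bar X$, normalizes the fiber product $X \times_{\bar X} \bar Y$, applies Zariski--Nagata purity to see that the induced cover of the smooth, simply connected $X$ splits into copies of $X$, and then bounds $H^0(\bar Y, \Omega_{\bar Y}^{[k]})$ by the injectivity of the reflexive pullback along $Y_0 \cong X \to \bar Y$ (whose existence over the canonical $\bar Y$ rests on Greb--Kebekus--Kov\'acs--Peternell). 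You instead classify all connected quasi-\'etale covers of $\bar X$ as $X/H$ for subgroups $H \le G$ via $\pi_1(X^{\circ}/G) \cong G$; this is slightly stronger, makes the count of reflexive forms transparent as $H$-invariants of $H^0(X,\Omega_X^{\bullet})$, and uses the same two inputs (purity and simple connectedness of $X$), at the cost of the pseudoreflection discussion needed to control $\bar X_{\operatorname{reg}}$. For (4) the paper simply cites Fujiki's equivariant deformation theory --- $\mathcal X \to \Def(X)$ with its fiberwise $G$-action is the universal deformation of the pair $(X,G)$, and the quotient of such is the universal locally trivial family of $X/G$ --- whereas you rebuild this by hand through a tangent-space comparison with $\Def^{lt}(\bar X)$; that works, but the identification $T_0\Def^{lt}(\bar X) \cong H^1(X,T_X)^G$ you need is precisely what the citation packages. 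Two smaller points: your holomorphic Lefschetz argument that every nontrivial $f \in G$ has a fixed point is a nice alternative to the paper's route (``$\bar X$ smooth $\Rightarrow$ hyper-K\"ahler $\Rightarrow$ simply connected'', via Schwald) for the ``never smooth'' clause of (3); and in (3) both you and the paper pass from $\pi_1(X^{\circ}/G)\cong G$ to $\pi_1(\bar X)\cong G$ without comment, which for the topological fundamental group of the singular quotient is not automatic (Armstrong's theorem kills elements with fixed points in $\pi_1(X/G)$), so the assertion is most safely read as one about $\pi_1(\bar X_{\operatorname{reg}})$, equivalently about quasi-\'etale covers --- a caveat you share with the paper rather than a defect relative to it.
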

	
	The quotient $\bar X = X / G$ being an irreducible symplectic variety, its behavior is intimately related to its (second) cohomology. To talk about the precise cohomological behavior of $\bar X$, we first need to fix the Beauville--Bogomolov form; the Beauville--Bogomolov form is a priori only defined up to scalar. We define a symmetric bilinear form $q_{\bar X} : H^2 (\bar X, \ZZ) \otimes H^2 (\bar X, \ZZ) \to \ZZ$ by
	\[ q_{\bar X} (x, y) = q_X (p^* x, p^* y) \qquad\mbox{for}\quad x, y \in H^2 (\bar X, \ZZ) .\]
	The reader should be aware that $q_{\bar X}$ may be a \emph{non-primitive} bilinear form with this definition.
	
	\begin{proposition} \label{prop:appendix 2}
		Notation as above.
		\begin{enumerate}
			\item $q_{\bar X}$ is a Beauville--Bogomolov form of $\bar X$. The Fujiki constant of $\bar X$ is $c_{\bar X} = c_X/|G|$.
			\item The pullback
			\[ p^* : H^2(\bar X, \ZZ)/(\mbox{torsion}) \to H^2 (X, \ZZ) \]
			is an injective Hodge structure homomorphism and a Beauville--Bogomolov isometry. It is an isomorphism over $\QQ$.
			\item The LLV algebra of $X$ and $\bar X$ are canonically isomorphic. Denoting them by $\mathfrak g$, the pullback
			\[ p^* : H^* (\bar X, \QQ) \to H^* (X, \QQ) \]
			is an injective $\mathfrak g$-module homomorphism.
			\item For all $k$, the special Mumford--Tate algebra of $H^k (\bar X, \QQ)$ is isomorphic to that of $H^2 (X, \QQ)$. As a consequence, any $\mathfrak g$-module decomposition of $H^* (\bar X, \QQ)$ is a pure Hodge structure decomposition.
		\end{enumerate}
	\end{proposition}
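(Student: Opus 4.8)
The plan is to derive all four statements from one input provided by \Cref{prop:appendix 1} — that $p\colon X\to\bar X$ is a finite quasi-\'etale morphism of degree $|G|$ between irreducible symplectic varieties — together with the fact that $G\subset\Aut^{\circ}(X)$ acts trivially on $H^2(X,\ZZ)$. I would begin with item (2). Because $G$ is finite and acts on the complex manifold $X$, the quotient $\bar X$ is a compact rational homology manifold and $p^*$ identifies $H^*(\bar X,\QQ)$ with $H^*(X,\QQ)^G$; in degree two, $G$-triviality upgrades this to an isomorphism $p^*\colon H^2(\bar X,\QQ)\xrightarrow{\ \sim\ }H^2(X,\QQ)$, which restricts modulo torsion to an injection $H^2(\bar X,\ZZ)\hookrightarrow H^2(X,\ZZ)$. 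This map is a morphism of pure weight-two Hodge structures since $p$ is holomorphic and $H^2(\bar X,\ZZ)$ carries a pure Hodge structure (\Cref{sec:singular HK}), and it is a $q_{\bar X}$--$q_X$ isometry onto its image by the very definition of $q_{\bar X}$.

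For item (1), I would apply the projection formula $\int_X p^*\alpha=|G|\int_{\bar X}\alpha$ for $\alpha\in H^{4n}(\bar X,\QQ)$. Taking $\alpha=x^{2n}$ with $x\in H^2(\bar X,\ZZ)$ and invoking the Fujiki relation \eqref{eq:fujiki relation} on $X$ gives $\int_{\bar X}x^{2n}=\tfrac{1}{|G|}\int_X(p^*x)^{2n}=\tfrac{c_X}{|G|}\cdot\tfrac{(2n)!}{2^n n!}\,q_X(p^*x)^n=\tfrac{c_X}{|G|}\cdot\tfrac{(2n)!}{2^n n!}\,q_{\bar X}(x)^n$, so $q_{\bar X}$ satisfies the Fujiki relation with constant $c_X/|G|$. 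As a primitive symplectic variety $\bar X$ already possesses a Beauville--Bogomolov form $q'$, unique up to scalar, with its own Fujiki relation (\Cref{sec:singular HK}); comparing, $q_{\bar X}$ and $q'$ have proportional $n$-th powers as polynomials on $H^2(\bar X,\QQ)$, hence $q_{\bar X}$ is a positive rational multiple of $q'$. Thus $q_{\bar X}$ is a Beauville--Bogomolov form and, with this normalization, the Fujiki constant is $c_{\bar X}=c_X/|G|$.

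For item (3), note that $H^*(\bar X,\QQ)=H^*(X,\QQ)^G$ is a graded subring and that for any K\"ahler class $\omega$ on $\bar X$ the pullback $p^*\omega$ is a $G$-invariant K\"ahler class on $X$; hence the Lefschetz operator $L_{p^*\omega}$, and its $\mathfrak{sl}_2$-mate $\Lambda_{p^*\omega}$ (which is $G$-equivariant because it is determined by the $G$-equivariant operators $L_{p^*\omega}$ and the grading operator), preserve $\im p^*=H^*(X,\QQ)^G$ and restrict there to the Lefschetz triple of $(\bar X,\omega)$. The LLV algebra $\mathfrak g(\bar X)$, generated by such triples, therefore maps into $\mathfrak g(X)$ compatibly with $p^*$; since the classes $p^*\omega$ fill out a nonempty open subcone of the K\"ahler cone of $X$ and $\mathfrak g(X)$ is generated by the Lefschetz triples of K\"ahler classes, this map is surjective, giving the canonical isomorphism $\mathfrak g(X)\cong\mathfrak g(\bar X)$ under which $p^*$ is an injective $\mathfrak g$-module homomorphism. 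Conceptually this is just the functoriality $\mathfrak g(\bullet)\cong\mathfrak{so}(H^2(\bullet,\QQ)\oplus U)$ of \cite{loo-lunts97,ver95,gklr19} applied to the isometry of item (2), but one must check the induced actions on $H^*$ agree, which is exactly what the Lefschetz-triple computation supplies.

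For item (4), by \cite{gklr19} the smooth manifold $X$ satisfies $\mathfrak{mt}(H^k(X,\QQ))\cong\mathfrak{mt}(H^2(X,\QQ))$ whenever $H^k(X,\QQ)\ne 0$, with $\mathfrak{mt}(H^2)$ realized inside the reduced LLV algebra $\bar{\mathfrak g}=\mathfrak{so}(H^2)$ and its Hodge cocharacter inducing the Hodge structure on all of $H^*(X,\QQ)$. Since $p^*$ is simultaneously an inclusion of rings, of $\mathfrak g$-modules (item (3)) and of Hodge structures (item (2)), these facts restrict to $\bar X$: the Hodge structure on each $H^k(\bar X,\QQ)$ is induced by the cocharacter of $\mathfrak{mt}(H^2(\bar X,\QQ))\cong\mathfrak{mt}(H^2(X,\QQ))$, and because $\bar{\mathfrak g}(\bar X)\cong\mathfrak{so}_{b_2}$ is simple (in each of the four types $b_2\in\{7,8,23,24\}$) it acts faithfully on the nonzero module $H^k(\bar X,\QQ)$, forcing $\mathfrak{mt}(H^k(\bar X,\QQ))$ to be an isomorphic copy of $\mathfrak{mt}(H^2(X,\QQ))$. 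Finally $\mathfrak{mt}(H^*(\bar X,\QQ))\subseteq\mathfrak g(\bar X)$, so every $\mathfrak g(\bar X)$-submodule is an $\mathfrak{mt}$-submodule, hence a sub-Hodge structure, and since $H^*(\bar X,\QQ)=H^*(X,\QQ)^G$ inherits purity from $X$, any $\mathfrak g$-module decomposition is a decomposition into pure sub-Hodge structures. I expect the main obstacle to lie in items (3)--(4): the arithmetic statements (1)--(2) are immediate from \Cref{prop:appendix 1}, but (3)--(4) require verifying that the LLV and Mumford--Tate formalism of \cite{gklr19}, developed for hyper-K\"ahler manifolds, applies verbatim to the singular quotient $\bar X$, and that the triple compatibility of $p^*$ — as a map of rings, of $\mathfrak g$-modules, and of Hodge structures — genuinely lets all of that structure descend.
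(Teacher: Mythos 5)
Your proposal is correct and follows essentially the same route as the paper: the projection formula plus the Fujiki relation for item (1), the identification $H^*(\bar X,\QQ)\cong H^*(X,\QQ)^G$ via the orbifold/rational-homology-manifold structure for item (2), the observation that $H^2$-triviality of $G$ forces $G$ to commute with the Lefschetz operators $L_x$, $\Lambda_x$ (hence with $\mathfrak g$) for item (3), and the placement of the Weil operator inside $\mathfrak g$ à la \cite{gklr19} for item (4). Your extra step in item (1) comparing $q_{\bar X}$ with the abstractly defined Beauville--Bogomolov form of the primitive symplectic variety is a harmless (and slightly more careful) addition; the rest is the paper's argument in mildly different clothing.
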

	
	Note again that the subgroup $G \subset \Aut^{\circ} (X)$ was taken arbitrary. Hence we have a family of irreducible symplectic varieties corresponding to each subgroups of $\Aut^{\circ} (X)$. That is, we get a Galois correspondence between the subgroups $G \subset \Aut^{\circ} (X)$ and the symplectic quotients $\bar X = X/G$ with the same rational Beauville--Bogomolov forms. In particular, their deformation behaviors are all identical.
	
	The rest of this appendix is devoted to the proof of \Cref{prop:appendix 1} and \ref{prop:appendix 2}. Most of the proofs will be straightforward so we will be brief.
	
	\begin{proof} [Proof of \Cref{prop:appendix 1}: Part 1]
		Let us present the proof of the theorem without the second item. The second item will be proved separately in Part 2.
		
		The group $G$ acts trivially on $H^2 (X, \ZZ)$, so it acts symplectically on $X$. Hence $p$ is a symplectic quotient. The ramified locus of $p$ is contained in the union of the fixed loci $\bigcup_{f \in G \setminus \{ \id \}} X^f$, which is of codimension $\ge 2$. This means $p$ is quasi-\'etale and the first item follows.
		
		The third item is a direct consequence of the second item, because any irreducible symplectic variety is simply connected by \cite[Cor 13.3]{greb-gue-keb19}. The last item again follows directly from \cite[Thm 3.5, Lem 3.10]{fuj83}. Since $G$ acts on $\mathcal X$ holomorphically and trivially on $H^2 (X, \ZZ)$, $\mathcal X \to \Def(X)$ equipped with a $G$-action is the universal deformation of the pair $(X, G)$. Once we have a universal deformation of the pair $(X, G)$, the quotient $\mathcal X/G \to \Def(X)$ is the locally trivial universal family of $X/G$.
	\end{proof}
	
	\begin{lemma} \label{lem:quasi etale is primitive symplectic}
		Let $(X, \sigma)$ be a compact symplectic variety and $f : X' \to X$ a finite quasi-\'etale morphism. Then $(X', f^* \sigma)$ is a compact symplectic variety.
	\end{lemma}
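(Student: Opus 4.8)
The plan is to check, one by one, the conditions in \Cref{def:singular HK} that make $(X',f^{*}\sigma)$ a compact symplectic variety. Compactness of $X'$ is immediate since $f$ is finite and $X$ is compact; normality of $X'$ is built into the notion of a quasi-\'etale cover; and the K\"ahler property is inherited by $X'$ from $X$ along the finite morphism $f$ by Varouchas's stability theorem for the class of K\"ahler spaces under finite holomorphic maps. So what remains is to produce the reflexive two-form $f^{*}\sigma\in H^{0}(X',\Omega_{X'}^{[2]})$, to check it is nondegenerate on the smooth locus, and to see that $X'$ has rational singularities.

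For the two-form, let $U\subseteq X$ be the maximal open subset over which $f$ is \'etale; by definition of quasi-\'etale, both $X\setminus U$ and $X'\setminus f^{-1}(U)$ have codimension $\ge 2$. On the complex manifold $X'_{\operatorname{reg}}\cap f^{-1}(U)$ the morphism $f$ restricts to a local biholomorphism onto an open subset of $X_{\operatorname{reg}}$, so $f^{*}\sigma$ is a holomorphic and nondegenerate two-form there. Since the complement of this locus inside $X'_{\operatorname{reg}}$ has codimension $\ge 2$, Hartogs extension produces a holomorphic two-form on all of $X'_{\operatorname{reg}}$, i.e. a section $f^{*}\sigma\in H^{0}(X',\Omega_{X'}^{[2]})$ by normality of $X'$. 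Writing $\dim X'=2n$, the top wedge $(f^{*}\sigma)^{\wedge n}$ is a section of the line bundle $\omega_{X'_{\operatorname{reg}}}$ whose zero locus is either empty or a divisor; as it is nowhere vanishing on $f^{-1}(U)\cap X'_{\operatorname{reg}}$, whose complement in $X'_{\operatorname{reg}}$ has codimension $\ge 2$, the zero locus must be empty, so $f^{*}\sigma$ is nondegenerate on all of $X'_{\operatorname{reg}}$.

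The remaining, and only non-formal, point is that $X'$ has rational singularities. Since $(X,\sigma)$ is a symplectic variety it has canonical Gorenstein singularities (Beauville); in particular $K_{X}$ is Cartier and $X$ is klt. Because $f$ is \'etale in codimension one there is no ramification divisor, so $K_{X'}=f^{*}K_{X}$, and the discrepancy comparison for covers \'etale in codimension one (Kollár--Mori) shows $X'$ is again klt. A klt variety over $\CC$ has rational singularities, which completes the verification that $(X',f^{*}\sigma)$ is a compact symplectic variety.

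I expect this last step to be the main obstacle, being the one place that needs input beyond bookkeeping (purity of the ramification locus in codimension one and the behaviour of discrepancies under quasi-\'etale covers). If one prefers to avoid the klt machinery, the same conclusion follows by pulling back a resolution: fix a resolution $\mu\colon\widetilde X\to X$, let $Y$ be the normalization of $X'\times_{X}\widetilde X$ and $\nu\colon\widetilde{X}'\to X'$ a resolution of $Y$, so that there is a generically finite morphism $g\colon\widetilde{X}'\to\widetilde X$; since $X$ is symplectic, $\mu^{*}\sigma$ extends to a holomorphic two-form $\widetilde\sigma$ on $\widetilde X$, and then $g^{*}\widetilde\sigma$ is a holomorphic two-form on $\widetilde X'$ restricting to $\nu^{*}(f^{*}\sigma)$ on the smooth locus, which realizes $X'$ as a symplectic variety in Beauville's resolution sense, hence (Namikawa) as one with rational singularities.
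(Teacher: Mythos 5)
Your proposal is correct and follows essentially the same route as the paper: the key step in both is that a finite quasi-\'etale cover of a canonical Gorenstein variety is again canonical (Koll\'ar--Mori discrepancy comparison for covers \'etale in codimension one), hence has rational singularities, while the symplectic form pulls back and is nondegenerate because $f$ is \'etale in codimension one. Your write-up merely fills in details the paper leaves implicit (the Hartogs extension of $f^{*}\sigma$ and the divisoriality of the degeneracy locus of $(f^{*}\sigma)^{\wedge n}$), which is a welcome but not substantively different elaboration.
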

	\begin{proof}
		By \cite[Prop 5.20]{kol-mori} or \cite[Rmk 3.4]{greb-keb-pet16}, $X'$ is Gorenstein and canonical. Therefore, it has rational singularities by \cite[Cor 5.24]{kol-mori}. Now $f^* \sigma \in H^0 (X', \Omega_{X'}^{[2]})$ is a symplectic form in codimension $1$ as $f$ is \'etale in codimension $1$. The claim follows.
	\end{proof}
	
	\begin{proof} [Proof of \Cref{prop:appendix 1}: Part 2]
		We prove the second item here. As a finite quotient of a smooth variety $X$, the space $\bar X$ is certainly $\QQ$-factorial and has quotient singularities. Fix a point $x \in X$ and let $\bar x = p(x)$. According to the Chevalley–Shephard–Todd theorem, the quotient $\bar X$ is smooth at $\bar x$ if and only if the stabilizer group $G_x$ acting on the tangent space $T_x X$ is generated by pseudoreflections (i.e., linear automorphisms on $T_x X$ with codimension $1$ fixed loci). If $x \in X$ has a nontrivial stabilizer $G_x$, any nontrivial automorphism $f \in G_x$ is symplectic so has codimension $\ge 2$ fixed locus. This means $G_x$ cannot be generated by pseudoreflections. Therefore, $\bar X$ is singular at $\bar x$. If we further assume $\codim X^f \ge 4$ for all nontrivial $f \in G$, then $\codim \bar X_{\operatorname{sing}} \ge 4$ and $\bar X$ becomes Namikawa symplectic.
		
		To prove $\bar X$ is irreducible symplectic, we follow the argument of Matsushita \cite[Lem 2.2]{mat15}. Let $f : \bar Y \to \bar X$ be an arbitrary finite quasi-\'etale morphism. Consider the diagram
		\[\begin{tikzcd}
			Y \arrow[r, "g"] \arrow[d, "q"] & X \arrow[d, "p"] \\
			\bar Y \arrow[r, "f"] & \bar X
		\end{tikzcd},\]
		where $Y$ is the normalization of the fiber product $X \times_{\bar X} \bar Y$. We claim $g$ and $q$ are finite quasi-\'etale. The finiteness is clear, so we concentrate on their quasi-\'etaleness. Notice that the quasi-\'etale property is stable under base change, so we need to prove the normalization in this case is quasi-\'etale. But notice that $X$ is smooth and $f$ is quasi-\'etale, so that $X \times_{\bar X} \bar Y$ is smooth in codimension $1$. Hence the normalization of it is in fact isomorphism in codimension $1$. This proves $g$ and $q$ are quasi-\'etale.
		
		Now $X$ is smooth, $Y$ is normal, and $g : Y \to X$ is finite quasi-\'etale. By the Zariski--Nagata purity theorem of the branch locus (e.g., \cite[Tag 0BMB]{stacks-project}), this forces $g$ to be \'etale. The hyper-K\"ahler manifold $X$ is simply connected, so this means $Y$ must be a disjoint union of several isomorphic copies of $X$. Let us fix a connected component $Y_0$ of $Y$. It is a hyper-K\"ahler manifold isomorphic to $X$.
		
		Consider the morphism $q$ restricted to the connected component $q : Y_0 \to \bar Y$. It is a finite quasi-\'etale morphism. Note that the target $\bar Y$ is canonical (\Cref{lem:quasi etale is primitive symplectic}), so \cite[Thm 4.3]{gkkp11} guarantees the existence of a reflexive pullback $q^* : H^0 (\bar Y, \Omega_{\bar Y}^{[k]}) \to H^0 (Y_0, \Omega_{Y_0}^{[k]})$. Since $q$ is quasi-\'etale, this morphism is injective. But recall that $Y_0 \cong X$ is a hyper-K\"ahler manifold, so this forces $\bar Y$ to satisfy the dimension condition of the definition of irreducible symplectic varieties. This proves $\bar X$ is an irreducible symplectic variety.
	\end{proof}
	
	\begin{proof} [Proof of \Cref{prop:appendix 2}]
		The following sequence of identities proves $q_{\bar X}$ is the Beauville--Bogomolov form with the Fujiki constant $c_{\bar X} = c_X/|G|$:
		\[ \int_{\bar X} x^{2n} = \frac{1}{|G|} \int_X (p^*x)^{2n} = \frac{c_X}{|G|} q_X (p^*x)^n = \frac{c_X}{|G|} q_{\bar X} (x)^n .\]
		Since $\bar X$ is a compact K\"ahler orbifold, its rational singular cohomology admits a well-behaved pure Hodge structure (e.g., \cite[\S 2.5]{pet-ste}) and $p^* : H^* (\bar X, \QQ) \to H^* (X, \QQ)$ is an injective Hodge structure homomorphism with the image $H^* (X, \QQ)^G$. In particular, $p^*$ is an isomorphism in degree $2$.
		
		To prove $H^* (\bar X, \QQ) = H^* (X, \QQ)^G$ is closed under the $\mathfrak g$-action, it is enough to prove the $G$-action and $\mathfrak g$-action on $H^* (X, \QQ)$ commutes. Recall that the LLV structure is diffeomorphism invariant. In other words, if $f : X_1 \to X_2$ is a diffeomorphism between two compact hyper-K\"ahler manifolds then we have
		\[ f^* (L_x (\xi)) = L_{f^* x} (f^* \xi), \qquad f^* (\Lambda_x (\xi)) = \Lambda_{f^* x} (f^* \xi) \]
		for any $x \in H^2 (X_2, \QQ)$ and $\xi \in H^* (X_2, \QQ)$. Here $L_x$ and $\Lambda_x$ are Lefschetz and inverse Lefschetz operators associated to $x$. If we set $X_1 = X_2 = X$ and $f \in G$ to be an $H^2$-trivial automorphism then this means $f^*$ commutes with the operators $L_x$ and $\Lambda_x$. That is, $G$ commutes with $\mathfrak g$.
		
		To obtain the results about the Mumford--Tate algebras one imitates the method used in \cite[\S 2]{gklr22} and deduces $f \in \mathfrak g$ for $f$ a Weil operator on the cohomology $H^* (\bar X, \QQ)$ (which is the restriction of Weil operator on $H^* (X, \QQ)$). This proves all the special Mumford--Tate algebra of $H^k (\bar X, \QQ)$ are the same and even same for that of $H^2 (X, \QQ)$.
	\end{proof}

	\bibliographystyle{amsalpha}
	\bibliography{DualHK.bib}
\end{document}